%
%
%
%
\documentclass[12pt]{amsart}
\date{14 August 2012}
\usepackage{amscd,amssymb,mathdots}
\usepackage{hyperref}
\input xy
\xyoption{all}



\setlength{\marginparwidth}{1.9cm}
\let\oldmarginpar\marginpar
\renewcommand\marginpar[1]{\oldmarginpar{\tiny\bf\begin{flushleft} #1
\end{flushleft}}}

 \setlength{\parskip}{0.3\baselineskip}
 \setlength{\oddsidemargin}{0pt}
 \setlength{\evensidemargin}{0pt}
 \setlength{\textwidth}{460pt}
 \setlength{\textheight}{670pt}
 \setlength{\topmargin}{-20pt}





\numberwithin{table}{section}

\newcommand{\la}{\langle}
\newcommand{\ra}{\rangle}

\newcommand{\xra}{\xrightarrow}
\theoremstyle{plain}  
\newtheorem{theorem}{Theorem}[section]

\newtheorem*{theorem*}{Theorem}
\newtheorem{corollary}[theorem]{Corollary}
\newtheorem{lemma}[theorem]{Lemma}
\newtheorem{proposition}[theorem]{Proposition}

\theoremstyle{definition}
\newtheorem{definition}[theorem]{Definition}
\newtheorem{notation}[theorem]{Notation}

\theoremstyle{remark}

\newtheorem{remark}[theorem]{Remark}
\newtheorem*{remark*}{Remark}

\newtheorem*{claim*}{Claim}

\newcommand{\curly}{\mathcal}

\newcommand{\bE}{{\bf{E}}}
\newcommand{\dbar}{\bar{\partial}}
\newcommand{\cR}{\mathcal{R}}
\newcommand{\cM}{\mathcal{M}}

\newcommand{\CC}{\mathbb{C}}

\newcommand{\FF}{\mathbb{F}}
\newcommand{\HH}{\mathbb{H}}

\newcommand{\RR}{\mathbb{R}}
\newcommand{\VV}{\mathbb{V}}
\newcommand{\WW}{\mathbb{W}}
\newcommand{\ZZ}{\mathbb{Z}}
\newcommand{\lie}{\mathfrak}

\newcommand{\glie}{\mathfrak{g}}
\newcommand{\hlie}{\mathfrak{h}}

\newcommand{\mlie}{\mathfrak{m}}
\newcommand{\olie}{\mathfrak{o}}

\newcommand{\sllie}{\mathfrak{sl}}
\newcommand{\splie}{\mathfrak{sp}}

\newcommand{\lieg}{\mathfrak{g}}
\newcommand{\lieh}{\mathfrak{h}}

\newcommand{\liem}{\mathfrak{m}}

\newcommand{\liez}{\mathfrak{z}}

\newcommand{\Ad}{\operatorname{Ad}}
\newcommand{\aut}{\operatorname{aut}}
\newcommand{\Aut}{\operatorname{Aut}}

\newcommand{\diag}{\operatorname{diag}}

\newcommand{\End}{\operatorname{End}}

\newcommand{\GL}{\operatorname{GL}}

\newcommand{\Hom}{\operatorname{Hom}}
\newcommand{\Id}{\operatorname{Id}}

\renewcommand{\O}{\operatorname{O}}

\newcommand{\rk}{\operatorname{rk}}
\newcommand{\rank}{\operatorname{rank}}
\newcommand{\SL}{\operatorname{SL}}
\newcommand{\SO}{\operatorname{SO}}

\newcommand{\Sp}{\operatorname{Sp}}
\newcommand{\Pin}{\operatorname{Pin}}

\newcommand{\SU}{\operatorname{SU}}

\newcommand{\Tr}{\operatorname{Tr}}

\newcommand{\U}{\operatorname{U}}

\newcommand{\Mg}{\mathcal{M}^{\operatorname{gauge}}}

\newcommand{\HHH}{{\curly H}}

\newcommand{\OOO}{{\curly O}}
\newcommand{\PPP}{{\curly P}}

\newcommand{\qu}{/\kern-.7ex/}
\newcommand{\exh}{\to\kern-1.8ex\to}

\newcommand{\VP}{{\curly V}\kern-0.9ex\PPP}

\newcommand{\spn}{\Sp(2n,\RR)}

\newcommand{\abs}[1]{\lvert#1\rvert}
\newcommand{\norm}[1]{\lVert#1\rVert}
\newcommand{\suchthat}{\;|\;}

\newcommand{\into}{\hookrightarrow}
\newcommand{\ad}{\mathrm{ad}}

\newcommand{\Kto}{\ar@{*+=[o][F]{\scriptscriptstyle{K}}->}[r]}

\setcounter{tocdepth}{2}

\newcommand{\HC}{H^{\CC}}

\newcommand{\mclie}{\mlie^{\CC}}

\title[Higgs bundles and surface group representations]
{Higgs bundles and surface group representations in the
real symplectic group}
\author{O. Garc\'{\i}a-Prada, P.~B. Gothen, I. Mundet i Riera}
\subjclass[2010]{Primary 14H60; Secondary 53C07, 58D29}
\keywords{Representations of surface groups, real symplectic group,
  character varieties, moduli spaces, Higgs bundles.}
\thanks{
  Members of the Research Group VBAC (Vector Bundles on Algebraic
  Curves).
  Research partially supported by Ministerio de Educaci\'{o}n y
  Ciencia, CSIC, Conselho de Reitores das Universidades Portuguesas
  and FCT (Portugal) through Spain--Portugal bilateral research
  projects.
  First and Third authors partially supported by Ministerio de
  Educaci{\'o}n y Ciencia (Spain) through Projects
  MTM2004-07090-C03-01 and MTM2007-67623.
  Second author partially supported by the FCT (Portugal) with EU
  (FEDER/COMPETE) and national funds through the projects
  PTDC/MAT/099275/2008 and PTDC/MAT/098770/2008, and through Centro de
  Matem\'atica da Universidade do Porto (PEst-C/MAT/UI0144/2011).
}

\begin{document}

\begin{abstract}
  In this paper we study the moduli space of representations of a
  surface group (i.e., the fundamental group of a closed oriented
  surface) in the real symplectic group $\Sp(2n,\RR)$. The moduli
  space is partitioned by an integer invariant, called the Toledo
  invariant. This invariant is bounded by a Milnor--Wood type
  inequality. Our main result is a count of the number of connected
  components of the moduli space of maximal representations, i.e.\
  representations with maximal Toledo invariant. Our approach uses the
  non-abelian Hodge theory correspondence proved in a companion paper
  \cite{garcia-prada-gothen-mundet:2009a}
  to identify the space of
  representations with the moduli space of polystable
  $\Sp(2n,\RR)$-Higgs bundles. A key step is provided by the discovery
  of new discrete invariants of maximal representations. These new
  invariants arise from an identification, in the maximal case, of the
  moduli space of $\Sp(2n,\RR)$-Higgs bundles with a moduli space of
  twisted Higgs bundles for the group $\GL(n,\RR)$.
\end{abstract}

\maketitle



\section{Introduction}
 
\begin{flushright}
{\it Valeu a pena? Tudo vale a pena \\
Se a alma n\~ao \'e pequena.} \\ \vspace*{0.25cm} F. Pessoa
\end{flushright}

\vspace*{0.25cm}

In this paper we study representations of the fundamental group of
a compact oriented surface $X$ in $\Sp(2n,\RR)$ --- the group of
linear transformations of $\RR^{2n}$ which preserve the standard
symplectic form.  By a representation we mean a homomorphism from
$\pi_1(X)$ to $\Sp(2n,\RR)$.  Given a representation of $\pi_1(X)$
in $\Sp(2n,\RR)$ there is an integer, often referred to as the
{\em Toledo invariant}, associated to it. This integer can be
obtained geometrically by considering the flat
$\Sp(2n,\RR)$-bundle corresponding to the representation and
taking a reduction of the structure group of the underlying smooth
vector bundle to $\U(n)$
--- a maximal compact subgroup of $\Sp(2n,\RR)$. The degree of
the resulting $\U(n)$-bundle is the Toledo invariant.
As shown by Turaev \cite{turaev:1984} the Toledo invariant $d$ of
a representation satisfies the inequality
\begin{equation}\label{mw1}
\abs{d} \leq n(g-1),
\end{equation}
where $g$ is the genus of the surface.  When $n=1$, one has
$\Sp(2,\RR)\simeq\SL(2,\RR)$, the Toledo invariant coincides with
the Euler class of the $\SL(2,\RR)$-bundle, and (\ref{mw1}) is the
classical inequality of Milnor \cite{milnor:1957} which was later
generalized by Wood \cite{wood:1971}. We shall follow custom and
refer to (\ref{mw1}) as as the Milnor--Wood inequality.

Given two representations, a basic question to ask is whether one can
be continuously deformed into the other. Put in a more precise way, we are
asking for the connected components of the space of representations
\begin{displaymath}
  \Hom(\pi_1(X),\Sp(2n,\RR)).
\end{displaymath}
As shown in \cite{garcia-prada-mundet:2004}, this space has the same
number of connected components as the moduli space, or character
variety,
\begin{displaymath}
  \cR(\pi_1(X),\Sp(2n,\RR))
    = \Hom^{\mathrm{red}}(\pi_1(X),\Sp(2n,\RR)) / \Sp(2n,\RR)
\end{displaymath}
of reductive representations $\rho\colon \pi_1(X) \to
\Sp(2n,\RR)$, modulo the natural equivalence given by the action
of $\Sp(2n,\RR)$ by overall conjugation.

The Toledo invariant descends to the quotient so, for any $d$
satisfying (\ref{mw1}), we can define
\begin{displaymath}
  \cR_d(\pi_1(X),\Sp(2n,\RR)) \subset \cR(\pi_1(X),\Sp(2n,\RR))
\end{displaymath}
to be the subspace of representations with Toledo invariant $d$.  For
ease of notation, for the remaining part of the Introduction, we shall
write $\cR_d$ for $\cR_d(\pi_1(X),\Sp(2n,\RR))$ and $\cR$ for
$\cR(\pi_1(X),\Sp(2n,\RR))$.  Since the Toledo invariant varies
continuously with the representation, the subspace $\cR_d$ is a union
of connected components, and our basic problem is that of counting the
number of connected components of $\cR_d$ for $d$ satisfying
(\ref{mw1}). This has been done for $n=1$ by Goldman
\cite{goldman:1980,goldman:1988} and Hitchin \cite{hitchin:1987a}, and
for $n=2$ in \cite{gothen:2001} (in the cases $d=0$ and
$\abs{d}=2g-2$) and \cite{garcia-prada-mundet:2004} (in the cases
$\abs{d}<2g-2$). In this paper we count the number of connected
components of $\cR_d$ for $n>2$ when $d=0$ and $\abs{d}=n(g-1)$ ---
the maximal value allowed by the Milnor--Wood inequality. Our main
result is the following (Theorem~\ref{connected-components-rep}
below).

\begin{theorem}
  Let $X$ be a compact oriented surface of genus $g$.  Let $\cR_d$ be
  the moduli space of reductive representations of $\pi_1(X)$ in
  $\Sp(2n,\RR)$ with Toledo invariant $d$.  Let $n\geq 3$. Then
\begin{enumerate}
\item $\cR_0$ is non-empty and connected;
\item $\cR_{\pm n(g-1)}$ has $3. 2^{2g}$ non-empty connected components.
\end{enumerate}
\end{theorem}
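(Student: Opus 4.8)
The plan is to move the whole problem into the theory of Higgs bundles, where Morse theory becomes available. By the non-abelian Hodge correspondence of the companion paper \cite{garcia-prada-gothen-mundet:2009a}, $\cR_d$ is homeomorphic to the moduli space $\cM_d$ of polystable $\Sp(2n,\RR)$-Higgs bundles with Toledo invariant $d$, and such a Higgs bundle is a triple $(V,\beta,\gamma)$ with $V$ a rank $n$ holomorphic bundle of degree $d$, together with $\beta\in\H^0(X,\Sym^2 V\otimes K)$ and $\gamma\in\H^0(X,\Sym^2 V^*\otimes K)$. It therefore suffices to count the connected components of $\cM_d$, and the engine for this is the Hitchin functional $f(V,\beta,\gamma)=\norm{(\beta,\gamma)}_{L^2}^2$. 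Since $f$ is proper and bounded below, every connected component of $\cM_d$ meets the subvariety $\mathcal{N}_d$ of local minima of $f$, so the induced map $\pi_0(\mathcal{N}_d)\to\pi_0(\cM_d)$ is surjective. In both parts the strategy is to describe $\mathcal{N}_d$ explicitly and then to promote this surjection to a bijection.

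For part~(1) I would first solve for the local minima when $d=0$. A Hodge-theoretic computation of the downward gradient flow characterises the minima; for $d=0$ I expect the minimal subvariety to be connected, since it deformation retracts onto the locus $\beta=\gamma=0$, which is the (connected) moduli space of polystable degree-zero bundles, equivalently the connected character variety of $\pi_1(X)$ into the maximal compact $\U(n)$. Connectedness of $\mathcal{N}_0$ then forces $\cM_0$ to be connected, because the minima meet every component; this yields statement~(1).

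The substance of the theorem is part~(2). When $d=n(g-1)$ the Milnor--Wood bound (\ref{mw1}) is saturated, and I would show that this forces $\gamma\colon V\to V^*\otimes K$ to be an isomorphism. This is the Cayley correspondence: the isomorphism $\gamma$ endows $V$ with a nondegenerate $K$-valued symmetric form, i.e.\ a $K$-twisted orthogonal structure, while $\beta$ becomes, after composition with $\gamma$, a $\gamma$-symmetric endomorphism valued in $K^2$. Thus I would identify $\cM_{n(g-1)}$ with a moduli space of $K^2$-twisted $\GL(n,\RR)$-Higgs bundles. The new discrete invariants are now visible: taking determinants in $\gamma$ gives $(\det V)^2\cong K^n$, so $\det V$ is one of the $2^{2g}$ square roots of $K^n$ — a first invariant refining a Stiefel--Whitney class in $\H^1(X,\ZZ/2)$ — and the orthogonal structure carries in addition a second Stiefel--Whitney class $w_2\in\H^2(X,\ZZ/2)$.

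It remains to count, and here I expect the main obstacle. Running the Morse theory of $f$ on the $\GL(n,\RR)$-moduli space, I would show that for $n\geq 3$ the piece attached to each fixed value of the first invariant contributes exactly three connected components, yielding the total $3\cdot 2^{2g}$. Surjectivity of $\pi_0(\mathcal{N})\to\pi_0(\cM)$ is free; the hard point is injectivity, namely that the minima lying over a single topological type form a connected family. I expect the count to organise as $1+2$: one of the two values of $w_2$ gives a connected moduli space, while the other — the value realised by the Hitchin section — carries a genuine Hitchin--Teichm\"uller splitting into two components, exactly as Teichm\"uller space forms a separate component in the rank-one case. Establishing this requires a deformation-theoretic study of the minimal $\GL(n,\RR)$-Higgs bundles and a proof that $(\det V, w_2)$ together with the Hitchin component form a complete set of invariants; the hypothesis $n\geq 3$ should enter precisely here, excluding the low-rank coincidences that make $n=1,2$ exceptional.
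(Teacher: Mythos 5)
Your overall route is the same as the paper's: non-abelian Hodge correspondence, properness of the Hitchin functional, the Cayley correspondence to $K^2$-twisted $\GL(n,\RR)$-Higgs pairs, Stiefel--Whitney invariants, and the $2^{2g}$ Hitchin--Teichm\"uller components. But as written the proposal has two genuine gaps. The first and largest: the classification of the local minima of $f$ is asserted (``I would show'', ``I expect''), and this is where essentially all of the paper's work lies. In particular, nothing in your plan addresses minima at \emph{singular} points of $\cM_d$ --- strictly polystable Higgs bundles and stable non-simple ones --- where the moment-map/Hessian argument is unavailable; the paper must prove a structure theorem for polystable objects and then rule out spurious minima by constructing explicit families of polystable deformations (its Section 7), and on the smooth locus it needs the cohomological criterion that $\ad(\varphi)$ be an isomorphism on positive-weight pieces, followed by the full Hodge-bundle/quiver analysis. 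For part (1) the same issue appears in milder form: the statement is not that the minimal set ``retracts onto'' $\beta=\gamma=0$, but that the minima are \emph{exactly} $\beta=\gamma=0$, and proving there are no other minima among the fixed points of the circle action is precisely the nontrivial step. Moreover, the hypothesis $n\geq 3$ enters here, in the minima classification (for $n=2$ the quiver-type minima form positive-dimensional families, which is why Gothen's count for $\Sp(4,\RR)$ exceeds $3\cdot 2^{2g}$), not in the completeness-of-invariants step where you place it.

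The second gap is that your counting scheme is wrong for $n$ even. You propose that $\det V$ (equivalently $w_1$) and $w_2$, together with membership in a Hitchin component, form a complete set of invariants, with each value of $\det V$ contributing exactly $1+2$ components. This is correct for $n$ odd, where the Hitchin component built from a square root $L$ of $K$ has $w_2=0$ and $w_1$ corresponding to $L^{-1}L_0$, so the $2^{2g}$ Hitchin components distribute one per value of $w_1$. But for $n$ even one computes $\det V = K^{-n/2}$ independently of $L$, so all $2^{2g}$ Hitchin components have the \emph{same} invariants $(w_1,w_2)=(0,0)$ and the same $\det V$; they are distinguished only by the choice of $L$, and the sector $(0,0)$ contains $2^{2g}+1$ components while all others are connected. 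The paper avoids this trap by organizing the count differently: $2^{2g+1}$ non-Hitchin components, one for each $(w_1,w_2)$ --- each connected because its minima form the $\psi=0$ locus, i.e.\ the moduli of polystable $\O(n,\CC)$-bundles of fixed type, whose connectedness itself requires care since $\O(n,\CC)$ is disconnected and Ramanathan's theorem does not apply directly --- plus the $2^{2g}$ Hitchin components. Your total $3\cdot 2^{2g}$ is right, but the decomposition you would prove it from is false for half the values of $n$ covered by the theorem.
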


The main tool we employ to count connected components is the theory of
Higgs bundles, as pioneered by Hitchin \cite{hitchin:1987a} for
$\SL(2,\RR) = \Sp(2,\RR)$. Fix a complex structure on $X$ endowing
it with a structure of a compact Riemann surface, which we will
denote, abusing notation, also by $X$. An $\Sp(2n,\RR)$-Higgs bundle
over $X$ is a triple $(V,\beta,\gamma)$ consisting of a rank $n$
holomorphic vector bundle $V$ and holomorphic sections $\beta\in
H^0(X,S^2V\otimes K)$ and $\gamma\in H^0(X,S^2V^*\otimes K)$, where
$K$ is the canonical line bundle of $X$. The sections $\beta$ and
$\gamma$ are often referred to as Higgs fields.  Looking at $X$ as an
algebraic curve, algebraic moduli spaces for $\Sp(2n,\RR)$-Higgs
bundle exist as a consequence of the work of Schmitt
\cite{schmitt:2005,schmitt:2008}. Fixing $d\in \ZZ$, we denote by
$\cM_d$ the moduli space of $\Sp(2n,\RR)$-Higgs bundles on $X$ with
$\deg(V)=d$. As usual, one must introduce an appropriate stability
condition (with related conditions of poly- and semistability) in
order to have good moduli spaces.  Thus $\mathcal{M}_d$ parametrizes
isomorphism classes of polystable $\Sp(2n,\RR)$-Higgs bundles. A basic
result of non-abelian Hodge theory, growing out of the work of
Corlette \cite{corlette:1988}, Donaldson \cite{donaldson:1987},
Hitchin \cite{hitchin:1987a} and Simpson
\cite{simpson:1988,simpson:1992,simpson:1994,simpson:1995}, is the
following (Theorem~\ref{na-Hodge} below).

\begin{theorem}
The moduli spaces $\cR_d$ and $\cM_d$ are homeomorphic.
\end{theorem}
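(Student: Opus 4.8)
The plan is to realize the homeomorphism $\cR_d \cong \cM_d$ as the composite of the two halves of non-abelian Hodge theory, mediated by a third moduli space: that of solutions of Hitchin's self-duality equations. Write $G=\Sp(2n,\RR)$, with maximal compact subgroup $H=\U(n)$, complexification $\HC=\GL(n,\CC)$, and Cartan decomposition $\glie = \hlie \oplus \mlie$ of $\glie=\splie(2n,\RR)$. The isotropy representation of $\HC$ on $\mclie$ decomposes as $\mclie \cong S^2V \oplus S^2V^*$, where $V=\CC^n$ is the standard representation; this is precisely the reason an $\Sp(2n,\RR)$-Higgs bundle takes the form $(V,\beta,\gamma)$, the Higgs field being $\varphi=\beta+\gamma\in H^0(X,(S^2V\oplus S^2V^*)\otimes K)$. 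First I would fix the notions of stability, semistability and polystability for these $G$-Higgs bundles in terms of parabolic reductions of $V$ and the associated degrees, and check that they are the ones defining $\cM_d$.

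The first half of the correspondence is the Hitchin--Kobayashi theorem: a polystable $\Sp(2n,\RR)$-Higgs bundle $(V,\beta,\gamma)$ of degree $d$ carries a Hermitian metric $h$ on $V$ solving Hitchin's equation, of the form $F_h+[\varphi,\varphi^{*_h}]=-\imag\,\mu\,\omega\,\Id_V$, where $\mu$ depends only on $d$ and the topology and $F_h$ is the curvature of the Chern connection $A_h$. Building on Hitchin \cite{hitchin:1987a} and Simpson \cite{simpson:1988,simpson:1992}, this is proved by a continuity or heat-flow method, the algebraic polystability condition being exactly what is needed to run the analysis. Given such $h$, the connection $D=A_h+\varphi+\varphi^{*_h}$ on the associated $\Sp(2n,\CC)$-bundle is flat, and the reality structure defining the real form $\Sp(2n,\RR)$, together with the reduction to $\U(n)$ furnished by $h$, guarantees that $D$ is in fact a flat $\Sp(2n,\RR)$-connection. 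Its holonomy is a representation $\rho\colon\pi_1(X)\to\Sp(2n,\RR)$, reductive because it admits a harmonic reduction, with Toledo invariant $\deg V=d$. Conversely, given a reductive $\rho$, the Corlette--Donaldson theorem \cite{corlette:1988,donaldson:1987} provides a $\rho$-equivariant harmonic map $\tilde X\to\Sp(2n,\RR)/\U(n)$, i.e.\ a harmonic metric on the flat bundle; combined with the complex structure of $X$ this yields a holomorphic structure on $V$ and a Higgs field $(\beta,\gamma)$, with harmonicity translating into $\dbar\varphi=0$ and Hitchin's equation, so that the resulting $\Sp(2n,\RR)$-Higgs bundle is polystable.

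Finally I would verify that these two constructions are mutually inverse on gauge, respectively isomorphism, classes and continuous for the natural topologies --- the quotient topology on the character variety and the moduli-space topology on $\cM_d$ --- thereby upgrading the bijection to a homeomorphism; properness of the gauge-fixing and continuous dependence of solutions on the data are the technical inputs here. The \textbf{main obstacle} is the analytic existence step, namely the Hitchin--Kobayashi correspondence of the first half: one must prove that polystability implies solvability of Hitchin's equation, matching a purely algebraic condition with a nonlinear elliptic PDE, and do so uniformly enough to control the topology. A secondary but essential difficulty is keeping the reality structure compatible at every stage, so that one lands in $\Sp(2n,\RR)$ rather than merely its complexification $\Sp(2n,\CC)$; this is where the interplay between the Cartan involution, the metric $h$, and the decomposition $\mclie\cong S^2V\oplus S^2V^*$ must be handled with care.
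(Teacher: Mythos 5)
Your route is the same as the paper's: both factor the homeomorphism through the gauge-theoretic moduli space $\Mg_d$ of solutions to Hitchin's equations, using the Hitchin--Kobayashi correspondence (Theorem~\ref{higgs-hk}, whose polystable non-stable case is the content of the companion paper \cite{garcia-prada-gothen-mundet:2009a}) to obtain the identification $\cM_d\simeq\Mg_d$ of Theorem~\ref{hk}, and the Corlette--Donaldson theorem \cite{corlette:1988,donaldson:1987} to identify $\Mg_d$ with the moduli space of flat reductive $\Sp(2n,\RR)$-connections, i.e.\ with $\cR_d$. So in structure your proposal reproduces the paper's proof, and you correctly isolate the analytic existence statements as the real content.

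However, one step is wrong as written and would make the construction fail precisely for $d\neq 0$: the form of Hitchin's equation. Since $\Sp(2n,\RR)$ is semisimple, the equation solved by the harmonic metric on a polystable $(V,\beta,\gamma)$ is $F_h-[\varphi,\tau(\varphi)]=0$ (equivalently $F_h+[\varphi,\varphi^{*_h}]=0$), with \emph{no} central term, whatever the value of $d=\deg V$. Your normalization $F_h+[\varphi,\varphi^{*_h}]=-\imag\,\mu\,\omega\,\Id_V$ with $\mu$ determined by $d$ is the one appropriate to $\GL(n,\CC)$-Higgs bundles, where $[\Phi,\Phi^{*}]$ is pointwise traceless and integrating the trace of the equation forces a central term proportional to the degree; here, by contrast, $\varphi=\beta+\gamma$ is not endomorphism-valued, $[\varphi,\tau(\varphi)]$ has a nonzero trace component, and integrating its trace produces $d$ as a universal multiple of $\norm{\gamma}^2-\norm{\beta}^2$ (this identity is the analytic source of the Milnor--Wood inequality), so no central term is needed or allowed. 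Concretely, the failure is at the flatness step: if the right-hand side were $-\imag\,\mu\,\omega\,\Id_V$ with $\mu\neq 0$, the connection $D=A_h+\varphi+\varphi^{*_h}$ would have curvature $-\imag\,\mu\,\omega\,\Id_V\neq 0$, so its holonomy would define a representation not of $\pi_1(X)$ but of its universal central extension (cf.\ the remark following Theorem~\ref{na-Hodge}), and the map to $\cR_d$ would never get off the ground. With the right-hand side set to zero --- i.e.\ with $0$-polystability throughout, which is the stability notion the paper uses to define $\cM_d$ --- your argument coincides with the paper's.
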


An essential part of the proof of this Theorem follows from a
Hitchin--Kobayashi correspondence between polystable
$\Sp(2n,\RR)$-Higgs bundles and solutions to certain gauge theoretic
equations, known as Hitchin's equations (see
Theorem~\ref{higgs-hk}).
%
In the generality required
for stable $\Sp(2n,\RR)$-Higgs bundles, the Hitchin--Kobayashi
correspondence is provided by the general theory of
\cite{bradlow-garcia-prada-mundet:2003}, together with the extension to
the case of polystable (non-stable) pairs in general proved in
\cite{garcia-prada-gothen-mundet:2009a}  

Using the homeomorphism $\cR_d\simeq \cM_d$, our problem is reduced to
studying the connectedness properties of $\cM_d$. This is done by
using the Hitchin functional. This is a non-negative function
$f$ which is defined on $\cM_d$ using the solution to Hitchin's
equations. This function arises as the moment map for the Hamiltonian
circle action on the moduli space obtained by multiplying the Higgs
field by an element of $\U(1)$ and is (essentially) the $L^2$-norm of
the Higgs field. It was proved by Hitchin
\cite{hitchin:1987a,hitchin:1992} that $f$ is proper, and this implies
that $f$ has a minimum on each connected component of
$\mathcal{M}_d$. Using this fact, our problem essentially reduces to
characterizing the subvariety of minima of the Hitchin functional and
studying its connectedness properties.

While we characterize the minima for every value of $d$ satisfying the
Milnor--Wood inequality (see Theorem \ref{thm:all-minima}), we only
carry out the full programme for $d=0$ and $\abs{d}=n(g-1)$, the
extreme values of $d$.  For $d=0$, the subvariety of minima of the
Hitchin functional on $\cM_0$ coincides with the set of Higgs bundles
$(V,\beta,\gamma)$ with $\beta=\gamma=0$. This, in turn, can be
identified with the moduli space of polystable vector bundles of rank
$n$ and degree $0$. Since this moduli space is connected by the
results of Narasimhan--Seshadri \cite{narasimhan-seshadri:1965},
$\cM_0$ is connected and hence $\cR_0$ is connected.

The analysis for the \emph{maximal case}, $\abs{d}=n(g-1)$, is far
more involved and interesting.  It turns out that in this case one
of the Higgs fields $\beta$ or $\gamma$ for a semistable Higgs
bundle $(V,\beta,\gamma)$ becomes an isomorphism. Whether it is
$\beta$ or $\gamma$, actually depends on the sign of the Toledo
invariant. Since the map $(V,\beta,\gamma)\mapsto
(V^\ast,\gamma^t,\beta^t)$ defines an isomorphism $\cM_{-d}\simeq
\cM_d$, there is no loss of generality in assuming that $0\leq
d\leq n(g-1)$. Suppose that $d=n(g-1)$. Then $\gamma: V\to
V^\ast\otimes K$ is an isomorphism (see Proposition
\ref{mw-higgs}). Since $\gamma$ is furthermore symmetric, it
equips $V$ with a $K$-valued non-degenerate quadratic form. In
order to have a proper quadratic bundle, we fix a square root
$L_0=K^{-1/2}$  of $K^{-1}$, and
define $ W:=V\otimes L_0$. Then 
$q:=\gamma\otimes I_{L_0}: W \to W^*$ 
is a symmetric isomorphism defining a non-degenerate 
symmetric bilinear form $Q$ on $W$, in other words,  $(W,Q)$ is an
$\O(n,\CC)$-holomorphic bundle. The $K^2$-twisted endomorphism
$\psi:W\to W \otimes K^2$ defined by $\psi: = 
\beta \otimes I_{L_0^{-1}}\circ  (\gamma\otimes I_{L_0})$ is $Q$-symmetric and
hence $(W,Q,\psi)$ defines
what we
call a $K^2$-twisted $\GL(n,\RR)$-Higgs pair, from which we can
recover the original $\Sp(2n,\RR)$-Higgs bundle.  The main result
is the following (Theorem~\ref{thm:cayley} below).

\begin{theorem}\label{cayley}
Let $\cM_{\max}$ be the moduli space of polystable
$\Sp(2n,\RR)$-Higgs bundles with $d=n(g-1)$,
 and let $\cM'$ be
the moduli space of polystable $K^2$-twisted $\GL(n,\RR)$-Higgs pairs.
The map $(V,\beta,\gamma)\mapsto (W,Q,\psi)$ defines an
isomorphism of complex algebraic varieties
$$
\cM_{\max}\simeq \cM'.
$$
\end{theorem}

We refer to this isomorphism as the {\em Cayley correspondence}. This
name is motivated by the geometry of the bounded symmetric domain
associated to the Hermitian symmetric space $\Sp(2n,\RR)/\U(n)$. The
Cayley transform defines a biholomorphism between this domain and a
tube type domain defined over the symmetric cone $\GL(n,\RR)/\O(n)$
--- the Siegel upper half-space.  In fact, there is a similar
correspondence to that given in Theorem~\ref{cayley} for every
semisimple Lie group $G$ which, like $\Sp(2n,\RR)$, is the group of
isometries of a Hermitian symmetric space of tube type (see
\cite{bradlow-garcia-prada-gothen:2005} for a survey on this subject).

A key point is that the Cayley correspondence brings to the surface
new topological invariants. These invariants, hidden a priori, are
naturally attached to an $\Sp(2n,\RR)$-Higgs bundle with maximal
Toledo invariant and generalize those obtained in the case $n=2$ in
\cite{gothen:2001}.  The invariants are the first and second
Stiefel-Whitney classes $(w_1,w_2)$ of a reduction to $\O(n)$ of the
$\O(n,\CC)$-bundle defined by $(W,Q)$. It turns out that there is a
connected component for each possible value of $(w_1,w_2)$, containing
$K^2$-twisted $\GL(n,\RR)$-Higgs pairs $(W,Q,\psi)$ with
$\psi=0$. This accounts for $2.2^{2g}$ of the $3.2^{2g}$ connected
components of $\mathcal{M}_{\max}$. Thus it remains to account for the
$2^{2g}$ ``extra'' components. As already mentioned, the group
$\Sp(2n,\RR)$ is the group of isometries of a Hermitian symmetric
space, but it also has the property of being a split real form. In
fact, up to finite coverings, it is the only Lie group with this
property.  In \cite{hitchin:1992} Hitchin shows that for every
semisimple split real Lie group $G$, the moduli space of reductive
representations of $\pi_1(X)$ in $G$ has a topological component which
is isomorphic to $\RR^{\dim G (2g-2)}$, and which naturally contains
Teichm\"uller space. Indeed, when $G=\SL(2,\RR)$, this component can
be identified with Teichm\"uller space, via the Riemann uniformization
theorem. Since $\Sp(2n,\RR)$ is split, the moduli space for
$\Sp(2n,\RR)$ must have a Hitchin component. It turns out that there
are $2^{2g}$ isomorphic Hitchin components (this is actually true for
arbitrary $n$). As follows from Hitchin's construction, the
$K^2$-twisted Higgs pairs $(W,Q,\psi)$ in the Hitchin component all
have $\psi \neq 0$.

As already mentioned, in the cases $n=1$ (see Goldman
\cite{goldman:1988} and Hitchin \cite{hitchin:1987a}) and $n=2$
(see \cite{garcia-prada-mundet:2004}) the subspaces $\mathcal{R}_d$ are
connected for $0 < d < n(g-1)$. It appears natural to expect that this
should be true for general $n$. Indeed, given the analysis of the
minima of the Hitchin this functional carried out in this paper (cf.\
Theorem~\ref{thm:all-minima}), this would follow if one could prove
connectedness of the $\beta=0$ locus of $\mathcal{M}_d$. Of course,
this locus can be viewed as a moduli space of quadric bundles and as
such is a natural object to study. However, not much is known about
this question for general rank and we will leave a detailed study
for another occasion.

A second reason for our focus on maximal representations in the
present paper is that from many points of view they are the most
interesting ones. They have been the object of intense study in recent
years, using methods from diverse branches of geometry, and it has
become clear that they enjoy very special properties. In particular,
at least in many cases, maximal representations have a close
relationship to geometric structures on the surface. The prototype of
this philosophy is Goldman's theorem \cite{goldman:1980,goldman:1985}
that the maximal representations in $\SL(2,\RR)$ are exactly the
Fuchsian ones. In the following, we briefly mention some results of
this kind.

Using bounded cohomology methods, maximal representations in
general Hermitian type groups have been studied by Burger--Iozzi
\cite{burger-iozzi:2004,burger-iozzi:2007} and
Burger--Iozzi--Wienhard \cite{burger-iozzi-wienhard:2003,
burger-iozzi-wienhard:2006,burger-iozzi-wienhard:2007}. Among many
other results, they have given a very general Milnor--Wood
inequality and they have shown that maximal representations are
discrete, faithful and completely reducible. One consequence of
this is that the restriction to reductive representations is
unnecessary in the case of the moduli space $\mathcal{R}_{\max}$
of maximal representations. Building on this work and the work of
Labourie \cite{labourie:2006}, Burger--Iozzi--Labourie--Wienhard
\cite{burger-iozzi-labourie-wienhard:2005} have shown that maximal
representations in $\Sp(2n,\RR)$ are Anosov (in the sense of
\cite{labourie:2006}).  Furthermore, it has been shown that the
action of the mapping class group on $\mathcal{R}_{\max}$ is
proper, by Wienhard \cite{wienhard:2006} (for classical simple Lie
groups of Hermitian type), and by Labourie \cite{labourie:2005}
(for $\Sp(2n,\RR)$), who also proves further geometric properties
of maximal representations in $\Sp(2n,\RR)$.

From yet a different perspective, representations in the
Hitchin component have been studied in the work on higher
Teichm\"uller theory of Fock--Goncharov
\cite{fock-goncharov:2006}, using methods of tropical geometry. In
particular, the fact that representations in the Hitchin
component for $\Sp(2n,\RR)$ are faithful and discrete also follows
from their work

Thus, while Higgs bundle techniques are very efficient in the study of
topological properties of the moduli space (like counting components),
these other approaches have been more powerful in the study of special
properties of individual representations. It would be very interesting
indeed to gain a better understanding of the relation between these
distinct methods.

We describe now briefly the content of the different sections of the paper.

In Section \ref{sec:G-higgs-and-reps} we review the general theory of
$L$-twisted $G$-Higgs pairs, of which $G$-Higgs bundles are a
particular case. We explain the general Hitchin--Kobayashi
correspondence from \cite{garcia-prada-gothen-mundet:2009a} and the
corresponding non-abelian Hodge theorem. We also review some general
properties about moduli spaces and deformation theory of
$G$-Higgs bundles.

In Section \ref{spn-reps}, we specialize the non-abelian Hodge theory
correspondence of Section~\ref{sec:na-hodge} to $G=\Sp(2n,\RR)$ --- our case
of interest in this paper. We recall some basic facts about the moduli
space of $\Sp(2n,\RR)$-Higgs bundles, including the Milnor--Wood
inequality and we carry out a careful study of stable, non-simple
$\Sp(2n,\RR)$-Higgs bundles. To do this, we study and exploit the
relation between the polystability of a $\Sp(2n,\RR)$-Higgs bundles
and the $\SL(2n,\CC)$-Higgs bundle naturally associated to it.

In Section~\ref{maximal} we study the Cayley correspondence between
$\Sp(2n,\RR)$-Higgs bundles with maximal Toledo invariant and
$K^2$-twisted $\GL(n,\RR)$-Higgs pairs.

The rest of the paper is mostly devoted to the study of the
connectedness properties of the moduli space of $\Sp(2n,\RR)$-Higgs
bundles and, in particular, to prove
Theorem~\ref{connected-components-higgs}.  In
Section~\ref{sec:connected-components} we introduce the Hitchin
functional on the moduli space of $\Sp(2n,\RR)$-Higgs bundles and
characterize its minima.  We then use this and the Cayley
correspondence of Section~\ref{maximal} to count the number of
connected components of the moduli space of $\Sp(2n,\RR)$-Higgs
bundles for $d=0$ and $\abs{d}=n(g-1)$.  The proof of the
characterization of the minima is split in two cases: the case of
minima in the smooth locus of the moduli space, given in
Section~\ref{sec:stable-simple-minima} and the case of the remaining
minima, treated in Section~\ref{sec:poly-stable-minima}.

The results of this paper have been announced in several conferences
over the last few years, while several preliminary versions of
this paper have been circulating.  The main results, together with
analogous results for other groups of Hermitian type have appeared in
the review paper \cite{bradlow-garcia-prada-gothen:2005}.  The authors
apologize for having taken so long in producing this final version.

\subsubsection*{Acknowledgments}

The authors thank Olivier Biquard, Steven Bradlow,  Marc Burger, 
Bill Goldman, Nigel
Hitchin,  Alessandra Iozzi, Fran\c{c}ois Labourie, S.~Ramanan,
Roberto Rubio, Domingo Toledo, and Anna Wienhard for numerous useful
conversations and shared insights. We also thank Olivier Guichard
for carefully reading an earlier version of this paper and
pointing out a number of typos and mistakes.

\section{$L$-twisted $G$-Higgs pairs,  $G$-Higgs bundles and
  representations of surface groups}
\label{sec:G-higgs-and-reps}

\subsection{$L$-twisted $G$-Higgs pairs, $G$-Higgs bundles,
stability  and moduli spaces}
\label{sec:g-higgs-defs}

Let $G$ be a real reductive Lie group, let $H\subset G$ be a
maximal compact subgroup and let $\glie=\hlie\oplus\mlie$ be a
Cartan decomposition, so that the Lie algebra structure on $\glie$
satisfies
$$[\hlie,\hlie]\subset\hlie,\qquad
[\hlie,\mlie]\subset\mlie,\qquad [\mlie,\mlie]\subset\hlie.$$ The
group $H$ acts linearly on $\mlie$ through the adjoint
representation, and this action extends to a linear holomorphic
action of $\HC$ on $\mclie=\mlie\otimes\CC$. This is the \textbf{isotropy
representation:}
\begin{equation}
    \label{eq:isotropy-rep}
  \iota\colon \HC \to \GL(\mclie).
\end{equation}
Furthermore, the Killing form on $\glie$ induces
on $\mclie$ a Hermitian structure which is preserved by the action
of $H$.

Let $X$ be a compact Riemann surface and let $E$ be a holomorphic
principal $H^\CC$-bundle on $X$.
Let $E(\mclie)= E \times_{\HC}\mclie$ be the
$\mclie$-bundle associated to $E$ via the isotropy representation.
Let $K$ be the canonical bundle of $X$ and  $L$ be a holomorphic line
bundle on $X$.

\begin{definition}
  \label{def:g-higgs}
  An \textbf{$L$-twisted $G$-Higgs pair} on $X$ is a pair
  $(E,\varphi)$, where $E$ is a holomorphic $\HC$-principal bundle
  over $X$ and $\varphi$ is a holomorphic section of $E(\mclie)\otimes
  L$.  A \textbf{$G$-Higgs bundle} on $X$ is a $K$-twisted $G$-Higgs
  pair. Two $L$-twisted $G$-Higgs pairs $(E,\varphi)$ and
  $(E',\varphi')$ are \textbf{isomorphic} if there is an isomorphism
  $f\colon E \xra{\simeq} E'$ such that $\varphi = f^*\varphi'$.
\end{definition}

A general notion of (poly,semi)stability for $G$-Higgs bundles is
given in \cite{garcia-prada-gothen-mundet:2009a}.  This
notion depends on an element $\alpha\in \sqrt{-1}\liez$, where $\liez$
is the Lie algebra of the centre of $H$, and is formulated (similarly
to \cite{ramanathan:1975}) in terms of parabolic subgroups of $H^\CC$,
antidominant characters of their Lie algebras and reductions of
structure group of the $H^\CC$-bundle $E$ to the parabolic subgroups.
A workable notion can be obtained in terms of filtrations of a certain
vector bundle associated to $E$.  In particular, when $H^\mathbb{C}$
is a classical group ---which is the situation for the particular
groups considered in this paper---, this vector bundle is the one
associated to $E$ via the standard representation
$\rho:H^\mathbb{C}\rightarrow\GL(n,\mathbb{C})$.  For details, see
Sections~2.8 and 4 of \cite{garcia-prada-gothen-mundet:2009a}.

When studying $G$-Higgs bundles we shall mainly
be interested in the case when $\alpha = 0$, since this is the
relevant value for the applications to non-abelian Hodge theory. Thus
we will talk about \textbf{stability} of a $G$-Higgs bundle, meaning
$0$-stability, and analogously for \textbf{semistability} and
\textbf{polystability}.

Henceforth, we shall assume that $G$ is connected. Then the
topological classification of $H^\CC$-bundles $E$ on $X$ is given by a
characteristic class $c(E)\in H^2(X,\pi_1(H^\CC))=\pi_1(H^\CC)=\pi_1(H)=\pi_1(G)$.  For a
fixed $d \in \pi_1(G)$, the \textbf{moduli space of polystable
  $G$-Higgs bundles} $\mathcal{M}_d(G)$ is the set of isomorphism
classes of polystable $G$-Higgs bundles $(E,\varphi)$ such that
$c(E)=d$.  When $G$ is compact, the moduli space $\cM_d(G)$ coincides
with $M_d(G^\CC)$, the moduli space of polystable $G^\CC$-bundles
with topological invariant $d$.

The moduli space $\cM_d(G)$ has the structure of a complex analytic
variety.  This can be seen by the standard slice method (see, e.g.,
Kobayashi \cite{kobayashi:1987}).  Geometric Invariant Theory
constructions are available in the literature for $G$ real compact
algebraic (Ramanathan \cite{ramanathan:1996}) and for $G$ complex
reductive algebraic (Simpson \cite{simpson:1994,simpson:1995}).  The
case of a real form of a complex reductive algebraic Lie group follows
from the general constructions of Schmitt
\cite{schmitt:2005,schmitt:2008}. We thus have the following.
\begin{theorem}\label{alg-moduli}
The moduli space $\cM_d(G)$ is a complex analytic variety, which is
algebraic when $G$ is algebraic.
\end{theorem}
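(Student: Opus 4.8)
The plan is to treat the two assertions separately, since the complex-analytic structure can be produced intrinsically by a slice argument whereas the algebraicity requires matching our stability notion with an existing Geometric Invariant Theory construction.

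For the analytic structure I would follow the standard Kuranishi slice method. Fix a polystable $L$-twisted $G$-Higgs pair $(E,\varphi)$ and consider its deformation complex, the two-term complex of sheaves
$$
C^\bullet(E,\varphi)\colon\quad E(\hclie)\xra{\ \ad(\varphi)\ } E(\mclie)\otimes L,
$$
whose hypercohomology groups $\HH^0,\HH^1,\HH^2$ control infinitesimal automorphisms, deformations and obstructions respectively. Using a hermitian metric on $E$ solving Hitchin's equations (whose existence in the polystable case is the content of the Hitchin--Kobayashi correspondence of \cite{garcia-prada-gothen-mundet:2009a} invoked above), one realises a neighbourhood of the isomorphism class of $(E,\varphi)$ as the quotient of a Kuranishi space --- the zero locus of a holomorphic obstruction map $\HH^1\to\HH^2$ --- by the isotropy group $\Aut(E,\varphi)$, which is reductive by polystability and has Lie algebra $\HH^0$. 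These local models are the analytic charts, and verifying that they glue to a separated complex analytic variety is routine once reductivity of the stabiliser is in hand; this is precisely where polystability is used, via the Jordan--H\"older reduction of Remark \ref{rem:jordan-holder}. This is the slice method referred to in Kobayashi \cite{kobayashi:1987}.

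For algebraicity I would argue by cases according to the structure of $G$, reducing in each case to an existing GIT quotient. When $G$ is compact, $\cM_d(G)=M_d(G^\CC)$ is the moduli space of polystable $G^\CC$-bundles constructed by Ramanathan \cite{ramanathan:1996}. When $G$ is complex reductive, $(E,\varphi)$ is an ordinary $G$-Higgs bundle and the variety is Simpson's moduli space \cite{simpson:1994,simpson:1995}. For a general real form $G$ of a complex reductive algebraic group, the pair $(E,\varphi)$ is a holomorphic $\HC$-bundle decorated with a holomorphic section of the associated bundle $E(\mclie)\otimes L$, that is, a decorated principal bundle; such objects are constructed as GIT quotients in the general framework of Schmitt \cite{schmitt:2005,schmitt:2008}, which produces a quasi-projective scheme.

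The main obstacle is the same in both halves of the argument: one must check that the intrinsic notion of $\alpha$-(poly/semi)stability of Definition \ref{def:L-twisted-pairs-stability}, phrased through antidominant characters and reductions to parabolic subgroups, coincides with the Hilbert--Mumford stability underlying Schmitt's GIT quotient (and, on the analytic side, with the numerical condition guaranteeing reductivity of the slice). Establishing this equivalence --- translating the character and parabolic formulation into the filtration language of the standard representation $\rho\colon\HC\to\GL(n,\CC)$, as indicated after Definition \ref{def:L-twisted-pairs-stability} --- is the technical heart of the matter. Once it is in place the identification of $\cM_d(G)$ with Schmitt's quotient is immediate, and the scheme structure so obtained is compatible with the analytic structure built by the slice method, giving the asserted algebraicity.
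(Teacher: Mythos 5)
Your proposal is correct and follows essentially the same route as the paper, which establishes the analytic structure by the standard slice method (citing Kobayashi) and the algebraicity by appeal to the GIT constructions of Ramanathan (compact case), Simpson (complex reductive case) and Schmitt (real forms of complex reductive algebraic groups). The extra detail you supply --- the Kuranishi model controlled by the deformation complex, reductivity of $\Aut(E,\varphi)$, and the need to match the intrinsic stability of Definition~\ref{def:L-twisted-pairs-stability} with Schmitt's Hilbert--Mumford criterion --- is exactly what those references provide, so there is no gap.
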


\begin{remark}
  More generally, moduli spaces of $L$-twisted $G$-Higgs pairs can be
  constructed (see Schmitt \cite{schmitt:2008}). We shall need this in
  Section~\ref{maximal} below.
\end{remark}

\subsection{Surface group representations and non-abelian Hodge theory}
\label{sec:na-hodge}

Let $X$ be a closed oriented surface of genus $g$ and let
\begin{displaymath}
  \pi_{1}(X) = \langle a_{1},b_{1}, \dotsc, a_{g},b_{g} \suchthat
  \prod_{i=1}^{g}[a_{i},b_{i}] = 1 \rangle
\end{displaymath}
be its fundamental group.  Let $G$ be a connected reductive real Lie
group and let $H \subseteq G$ be a maximal compact subgroup
By a \textbf{representation} of $\pi_1(X)$ in
$G$ we understand a homomorphism $\rho\colon \pi_1(X) \to G$.
The set of all such homomorphisms,
$\Hom(\pi_1(X),G)$, can be naturally identified with the subset
of $G^{2g}$ consisting of $2g$-tuples
$(A_{1},B_{1}\dotsc,A_{g},B_{g})$ satisfying the algebraic equation
$\prod_{i=1}^{g}[A_{i},B_{i}] = 1$.  This shows that
$\Hom(\pi_1(X),G)$ is a real analytic  variety, which is algebraic
if $G$ is algebraic.

The group $G$ acts on $\Hom(\pi_1(X),G)$ by conjugation:
\[
(g \cdot \rho)(\gamma) = g \rho(\gamma) g^{-1}
\]
for $g \in G$, $\rho \in \Hom(\pi_1(X),G)$ and $\gamma\in
\pi_1(X)$. If we restrict the action to the subspace
$\Hom^{\mathrm{red}}(\pi_1(X),G)$ consisting of reductive
representations, the orbit space is Hausdorff (see Theorem 11.4
in \cite{Ri}).  By a \textbf{reductive representation} we mean one
that composed with the adjoint representation in the Lie algebra
of $G$ decomposes as a sum of irreducible representations. If $G$
is algebraic this is equivalent to the Zariski closure of the
image of $\pi_1(X)$ in $G$ being a reductive group. (When $G$ is
compact every representation is reductive.) Define the
\emph{moduli space of representations} of $\pi_1(X)$ in $G$ to be
the orbit space
\[
\mathcal{R}(G) = \Hom^{\mathrm{red}}(\pi_1(X),G) / G. \]

One has the following (see e.g.\ Goldman \cite{goldman:1984}).

\begin{theorem}
The moduli space $\cR(G)$ has the structure of a real analytic variety, which
is algebraic if $G$ is algebraic and is a complex variety if $G$ is complex.
\end{theorem}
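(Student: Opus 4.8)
The plan is to transport the variety structure from the representation space to its quotient, handling the algebraic and complex cases by geometric invariant theory and the general real analytic case by a local slice construction.

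First I would observe that $\Hom(\pi_1(X),G)$ is itself a real analytic variety. Writing $F\colon G^{2g}\to G$ for the product-of-commutators map $F(A_1,B_1,\dots,A_g,B_g)=\prod_{i=1}^g[A_i,B_i]$, the representation space is the fibre $F^{-1}(1)$; the map $F$ is real analytic, and it is algebraic (respectively holomorphic) when $G$ is algebraic (respectively complex), so the zero set inherits the same regularity. The conjugation action of $G$ is real analytic, and algebraic or holomorphic in the corresponding cases. Everything thus reduces to endowing the orbit space $\Hom^{\mathrm{red}}(\pi_1(X),G)/G$ with a compatible structure, knowing already that it is Hausdorff (Theorem 11.4 in \cite{Ri}).

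For $G$ a complex reductive algebraic group I would form the affine GIT quotient $\Hom(\pi_1(X),G)\qu G$. The decisive input is that the closed $G$-orbits are exactly those of the reductive (completely reducible) representations, so that the closed points of $\Hom(\pi_1(X),G)\qu G$ are in natural bijection with the points of $\cR(G)$. This exhibits $\cR(G)$ as an affine complex algebraic variety. For $G$ a real reductive algebraic group the same formalism applies over $\RR$, giving a real algebraic structure, the closed-orbit criterion again singling out the reductive representations.

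The remaining and hardest case is a general reductive real Lie group $G$ which need not be algebraic, where GIT is unavailable. Here I would build the structure locally by a real analytic slice theorem. For a reductive representation $\rho$ the orbit $G\cdot\rho$ is closed and the stabilizer $\Stab(\rho)$ is a reductive subgroup of $G$ --- the representation-side analogue of the reductivity of automorphism groups recorded in Theorem~\ref{lemma:reductive-stabilizer}. Choosing a $\Stab(\rho)$-invariant slice $\mathcal{S}$ transverse to the orbit inside the reductive locus, a neighbourhood of $[\rho]$ is modelled on $\mathcal{S}/\Stab(\rho)$, which is real analytic because $\Stab(\rho)$ is reductive. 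The main obstacle is precisely this non-algebraic case: the conjugation action fails to be proper on all of $\Hom(\pi_1(X),G)$, so no general slice theorem applies directly. I would circumvent this by working on the reductive locus, where closedness of orbits and reductivity of stabilizers are available, and then invoke the already-established Hausdorffness to verify that the slice charts are mutually compatible and patch to a global real analytic structure; finally I would check that for $G$ algebraic or complex these charts recover the GIT structure.
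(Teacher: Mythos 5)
Your strategy is the standard one, and in fact the paper itself offers no proof of this theorem at all --- it simply quotes it from the literature (Goldman \cite{goldman:1984}), so the comparison here is with the known arguments rather than with anything in the text. Your complex algebraic case is correct: closed $G$-orbits in the affine variety $\Hom(\pi_1(X),G)$ are exactly the orbits of reductive representations, and the affine GIT quotient realizes $\cR(G)$ as an affine complex variety. The genuine gap is in both real cases. For $G$ real reductive algebraic it is \emph{not} true that ``the same formalism applies over $\RR$''. Since $G$ (e.g.\ $\Sp(2n,\RR)$) is Zariski dense in its complexification $G^{\CC}$, any $G$-invariant real polynomial $f$ on $\Hom(\pi_1(X),G)$ has the property that, for each real $\rho$, the regular function $g \mapsto f(g\cdot\rho)$ on $G^{\CC}$ is constant on the Zariski-dense subgroup $G$ and hence constant; therefore $f$ cannot separate two closed $G$-orbits that are $G^{\CC}$-conjugate. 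This really happens, and in a way central to this very paper: with $D=\diag(I_n,-I_n)$ one has $iD\in\Sp(2n,\CC)$, conjugation by $iD$ preserves $\Hom(\pi_1(X),\Sp(2n,\RR))$ and reverses the Toledo invariant, so a reductive representation in $\cR_d$ and its image in $\cR_{-d}$ take equal values under every invariant polynomial although for $d\neq 0$ they are not $\Sp(2n,\RR)$-conjugate (they lie in different connected components). Hence the spectrum of the real invariant ring is not $\cR(G)$; the natural map from $\cR(G)$ to it is only finite-to-one. The correct substitute is Kempf--Ness/Richardson--Slodowy theory: one identifies $\Hom^{\mathrm{red}}(\pi_1(X),G)/G$ with the quotient of the set of minimal vectors (representations minimizing the norm functional along their orbit) by a maximal compact subgroup $K\subset G$, and then derives the (semi-)algebraic, respectively real analytic, structure from compact-group theory.

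The same missing input undermines your final step. Restricting to the reductive locus does not restore properness: the stabilizer of a reductive representation is reductive and in general non-compact (for the trivial representation it is all of $G$), so Palais' slice theorem still does not apply, and the existence of a slice --- a $\Stab(\rho)$-invariant $\mathcal{S}$ such that $\mathcal{S}/\Stab(\rho)$ is a local model for $\cR(G)$ --- is precisely what must be proved; mere transversality of $\mathcal{S}$ to the orbit gives no control over how nearby orbits meet $\mathcal{S}$ when the action is not proper. Likewise, the claim that $\mathcal{S}/\Stab(\rho)$ is real analytic ``because $\Stab(\rho)$ is reductive'' is circular: analytic quotients by non-compact reductive groups are exactly the difficulty being addressed (in the algebraic category this is Luna's \'etale slice theorem; in the analytic category one again has to pass through the maximal-compact reduction above). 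Finally, Hausdorffness of the orbit space, which you invoke for patching, neither produces slices nor makes transition maps analytic. So the complex case stands, but both real cases need the Richardson--Slodowy (moment-map/minimal-vector) argument rather than the ones you give.
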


Given a representation $\rho\colon\pi_{1}(X) \to
G$, there is an associated flat $G$-bundle on
$X$, defined as
\begin{math}
  E_{\rho} = \widetilde{X}\times_{\rho}G
\end{math}, where $\widetilde{X} \to X$ is the universal cover and
$\pi_{1}(X)$ acts on $G$ via $\rho$.  We can then assign a topological
invariant to a representation $\rho$ given by the characteristic class
$c(\rho):=c(E_{\rho})\in \pi_1(G) \simeq \pi_1(H)$ corresponding to $E_{\rho}$.
For a fixed $d\in \pi_1(G)$, the \textbf{moduli space of reductive
  representations} $\mathcal{R}_d(G)$ with topological invariant $d$
is defined as the subvariety
\begin{equation}\label{eq:RdG}
  \mathcal{R}_d(G):=\{[\rho] \in \mathcal{R}(G) \suchthat
  c(\rho)=d\},
\end{equation}
where as usual $[\rho]$ denotes the $G$-orbit $G\cdot \rho$ of
$\rho\in\Hom^{\mathrm{red}}(\pi_1(X),G)$.

The non-abelian Hodge theorem states the following.

\begin{theorem}\label{na-Hodge}
  Let $G$ be a connected semisimple real Lie group with maximal
  compact subgroup $H \subseteq G$. Let $d \in \pi_1(G) \simeq
  \pi_1(H)$. Then there is a homeomorphism $\mathcal{R}_d(G) \simeq
  \mathcal{M}_d(G)$.
\end{theorem}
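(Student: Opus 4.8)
The plan is to factor the desired homeomorphism through the gauge-theoretic moduli space $\Mg_d(G)$ of solutions to Hitchin's equations~(\ref{hitchin}). By Theorem~\ref{hk} we already have a homeomorphism $\mathcal{M}_d(G)\simeq\Mg_d(G)$ --- this is, as the text notes, one half of the non-abelian Hodge theorem. It therefore suffices to produce a homeomorphism $\mathcal{R}_d(G)\simeq\Mg_d(G)$ and compose. As recalled just before the statement, $\mathcal{R}_d(G)$ is identified with the moduli space of flat reductive $G$-connections of topological type $d$ (see \cite{simpson:1994,simpson:1995}), so the real task is to match flat reductive connections with solutions of~(\ref{hitchin}) modulo gauge.

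For the easy direction, given a solution $(A,\varphi)$ to~(\ref{hitchin}) I would form the connection $D=d_A+\varphi-\tau(\varphi)$ of~(\ref{eq:flat-hitchin}) and check flatness by decomposing its curvature into types on the Riemann surface $X$. Since $X$ has complex dimension one there are no $(2,0)$- or $(0,2)$-forms, so the brackets $[\varphi,\varphi]$ and $[\tau(\varphi),\tau(\varphi)]$ vanish and the $(2,0)$- and $(0,2)$-components of $F_D$ are automatically zero; the $(1,1)$-component reduces to $F_A-[\varphi,\tau(\varphi)]$, which vanishes by the first equation of~(\ref{hitchin}), while the remaining $d_A$-terms vanish using $\dbar_A\varphi=0$ (the second equation) together with the compatibility of $\tau$ with the $H$-connection $d_A$. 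Thus $F_D=0$, the connection $D$ is reductive, its holonomy defines a class $[\rho]\in\mathcal{R}(G)$, and the underlying topological type is preserved, so $[\rho]\in\mathcal{R}_d(G)$. As the construction is gauge-equivariant, it descends to a continuous map $\Mg_d(G)\to\mathcal{R}_d(G)$.

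For the inverse I would invoke the harmonic metric theorem of Corlette \cite{corlette:1988} and Donaldson \cite{donaldson:1987}: a reductive flat $G$-connection $D$ admits a harmonic reduction $h$ of structure group to $H$, unique up to automorphisms of $D$. Decomposing $D=d_A+\psi$ with respect to $h$, where $d_A$ is an $H$-connection and $\psi\in\Omega^1(E(\mlie))$, and writing $\psi=\varphi-\tau(\varphi)$ with $\varphi\in\Omega^{1,0}(E(\mlie^\CC))$, flatness of $D$ yields the curvature equation and harmonicity of $h$ yields $\dbar_A\varphi=0$, so $(A,\varphi)$ solves~(\ref{hitchin}). This assignment is gauge-equivariant and inverse to the map of the previous paragraph. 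I expect the existence and uniqueness of the harmonic metric for a \emph{general} reductive real group $G$ to be the main obstacle: it is the analytic heart of the correspondence, established for $G=\SL(2,\CC)$ by Corlette and Donaldson and proved in the required generality in the companion paper \cite{garcia-prada-gothen-mundet:2009a}.

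Finally I would verify that both maps are continuous for the natural analytic topologies on the two moduli spaces, so that they define a homeomorphism $\mathcal{R}_d(G)\simeq\Mg_d(G)$ rather than a mere bijection; here the restriction to reductive representations guarantees that the relevant orbit spaces are Hausdorff. Composing with the homeomorphism of Theorem~\ref{hk} then gives $\mathcal{R}_d(G)\simeq\mathcal{M}_d(G)$, as claimed.
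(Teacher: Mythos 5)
Your proposal is correct and follows essentially the same route as the paper: identify $\mathcal{R}_d(G)$ with flat reductive connections, pass to solutions of Hitchin's equations via the flat connection $D=d_A+\varphi-\tau(\varphi)$ and the Corlette--Donaldson harmonic metric theorem, and compose with the homeomorphism $\mathcal{M}_d(G)\simeq\Mg_d(G)$ of Theorem~\ref{hk}. The only slip is attributional: the harmonic metric theorem for general reductive $G$ is due to Corlette (Donaldson treated $\SL(2,\CC)$), rather than being supplied by the companion paper, whose role here is the Hitchin--Kobayashi correspondence for polystable pairs.
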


One half of this theorem is proved by solving Hitchin's equations.
To explain this, let $(E,\varphi)$ be a
$G$-Higgs bundle over a compact Riemann surface $X$. By a slight abuse
of notation, we shall denote the $C^\infty$-objects underlying $E$ and
$\varphi$ by the same symbols. In particular, the Higgs field can be
viewed as a $(1,0)$-form: $\varphi \in
\Omega^{1,0}(E(\liem^{\CC}))$. Let $\tau\colon
\Omega^{1}(E(\lieg^{\CC})) \to \Omega^{1}(E(\lieg^{\CC}))$ be the
compact conjugation of $\glie^\CC$ combined with complex conjugation
on complex $1$-forms. Given a reduction $h$ of structure group to $H$ in
the smooth $H^{\CC}$-bundle $E$, we denote by $F_h$ the curvature of
the unique connection compatible with $h$ and the holomorphic
structure on $E$.

\begin{theorem} \label{higgs-hk}
  There exists a reduction $h$ of the structure group of $E$
  from $H^\CC$ to $H$ satisfying the Hitchin equation
  $$
  F_h -[\varphi,\tau(\varphi)]= 0
  $$
  if and only if $(E,\varphi)$ is polystable.
\end{theorem}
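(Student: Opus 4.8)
The plan is to prove the two implications separately, in the standard pattern of a Hitchin--Kobayashi correspondence: the implication ``solution $\Rightarrow$ polystable'' is obtained by an integration (Chern--Weil type) argument, while the converse ``polystable $\Rightarrow$ solution'' is the deep analytic existence statement, which I would deduce from the general theory of Hitchin--Kobayashi correspondences for holomorphic pairs in \cite{bradlow-garcia-prada-mundet:2003}, together with its extension to the polystable case in \cite{garcia-prada-gothen-mundet:2009a}.

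For the first (easy) direction, suppose $h$ solves $F_h-[\varphi,\tau(\varphi)]=0$. Fix a parabolic $P_A\subset\HC$, an antidominant character $\chi$ of $\liep_A$, and a holomorphic reduction $\sigma$ to $P_A$ with $\varphi\in H^0(E_\sigma(\liem_\chi^-)\otimes K)$; I must show $\deg(E)(\sigma,\chi)\geq 0$. The idea is to express this degree as a curvature integral using the element $s_\chi\in\imag\hlie$ attached to $\chi$ (Proposition~\ref{lema1-estabilidad}), in the schematic form
$$\deg(E)(\sigma,\chi)=\frac{\imag}{2\pi}\int_X\langle F_h,s_\chi\rangle+(\text{nonnegative gradient term}),$$
where the pairing uses the invariant form and the nonnegative term measures the failure of $h$ to be adapted to $P_{s_\chi}$. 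Substituting the Hitchin equation replaces $\langle F_h,s_\chi\rangle$ by $\langle[\varphi,\tau(\varphi)],s_\chi\rangle$, and the hypothesis that $\varphi$ lies in the bounded weight space $\liem_\chi^-$ forces this pointwise pairing to be nonnegative, since the weights of $\varphi$ under $\Ad(e^{ts_\chi})$ are all nonpositive. This yields $0$-semistability, and in the equality case the vanishing of the gradient term produces the reduction to the Levi $L_A$ and confines $\varphi$ to $\liem_\chi^0$, which is precisely polystability.

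For the converse (hard) direction I would reformulate the problem as the existence of a Hermite--Einstein metric for a holomorphic pair and invoke the general correspondence of \cite{bradlow-garcia-prada-mundet:2003}. Concretely, view $(E,\varphi)$ as the $\HC$-bundle $E$ together with the holomorphic section $\varphi$ of the bundle with fibre the K\"ahler vector space $\mclie\otimes K$, on which $\HC$ acts through $\iota$ (and trivially on $K$). The Hermite--Einstein condition for this pair, with moment map $\mu(\varphi)=-[\varphi,\tau(\varphi)]$ coming from the Killing form, is exactly the Hitchin equation. The general theorem asserts that a solution exists if and only if an abstract stability condition, phrased in terms of maximal weights of the $\HC$-action along parabolic reductions, holds. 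The essential verification is that this condition coincides term by term with $\alpha$-stability of Definition~\ref{def:L-twisted-pairs-stability} for $\alpha=0$: the maximal weight of the $\HC$-action on $\mclie\otimes K$ along $(\sigma,\chi)$ is finite precisely when $\varphi\in H^0(E_\sigma(\liem_\chi^-)\otimes K)$, and in that case it contributes exactly the term $\deg(E)(\sigma,\chi)$.

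The genuinely hard analytic input is this existence result, and the main obstacle for the statement as written is that \cite{bradlow-garcia-prada-mundet:2003} treats \emph{stable} pairs only, producing irreducible solutions, whereas Theorem~\ref{higgs-hk} is stated for \emph{polystable} pairs. To bridge this gap I would use the Jordan--H\"older reduction of Remark~\ref{rem:jordan-holder}: a polystable $(E,\varphi)$ admits a reduction to a \emph{stable} $G'$-Higgs pair for a reductive subgroup $G'\subset G$; solving the Hitchin equation there by the stable case and re-inducing the metric, using that the complementary centralizer directions carry a canonical Hermite--Einstein structure, yields the desired reduction $h$ for $(E,\varphi)$. This passage from stable to polystable is exactly what is carried out in \cite{garcia-prada-gothen-mundet:2009a}, on which I would rely for the technical details.
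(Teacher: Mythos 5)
Your proposal is correct and takes essentially the same route as the paper: the paper establishes this theorem by citation, attributing the stable case for an arbitrary real reductive group to \cite{bradlow-garcia-prada-mundet:2003} and the extension to general polystable pairs to \cite{garcia-prada-gothen-mundet:2009a}, which are precisely the two inputs your argument rests on. Your supplementary sketches --- the Chern--Weil argument for the ``solution $\Rightarrow$ polystable'' direction, the identification of the Hitchin equation with the Hermite--Einstein condition for the pair, and the Jordan--H\"older reduction handling the passage from stable to polystable --- are consistent with how those references carry out the proof.
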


Theorem \ref{higgs-hk} was proved by Hitchin \cite{hitchin:1987a} for
$G=\SL(2,\CC)$ and Simpson \cite{simpson:1988,simpson:1992} for an
arbitrary semisimple complex Lie group $G$.  The proof for an
arbitrary reductive real Lie group $G$ when $(E,\varphi)$ is stable is
given in \cite[Theorem~2.13]{bradlow-garcia-prada-mundet:2003}, and the general
polystable case follows as a particular case of the more general
Hitchin--Kobayashi correspondence given in
\cite{garcia-prada-gothen-mundet:2009a} (see \emph{loc.\ cit.}\ Theorem~3.21).

To explain the other half of Theorem~\ref{na-Hodge}, recall that
$\mathcal{R}_d(G)$ can be identified with the moduli space of flat
reductive connections on a fixed $G$-bundle of topological class $d
\in \pi_1(G)$ (see, e.g., Simpson
\cite{simpson:1994,simpson:1995}). Now any solution $h$ to Hitchin's
equations defines a flat reductive $G$-connection
\begin{equation}
  \label{eq:flat-hitchin}
  D = D_h + \varphi - \tau(\varphi),
\end{equation}
where $D_h$ is the
unique $H$-connection on $E$ compatible with its holomorphic
structure.

It is a fundamental theorem of Corlette \cite{corlette:1988} and
Donaldson \cite{donaldson:1987} (for $G=\SL(2,\CC)$) that this process
can be inverted: given a flat reductive connection $D$ in a $G$-bundle
$E_G$, there exists a \emph{harmonic metric}, i.e.\ a reduction of
structure group to $H \subset G$ corresponding to a harmonic section
of $E_G/H \to X$. This reduction produces a solution $h$ to Hitchin's
equations such that (\ref{eq:flat-hitchin}) holds.  (See
\cite[Section~3.6]{garcia-prada-gothen-mundet:2009a} for a fuller
explanation).

\begin{remark}
These moduli spaces may be empty.
\end{remark}

\begin{remark}
On the open subvarieties defined by the smooth
points of $\mathcal{R}_d$ and $\mathcal{M}_d$, this correspondence is in
fact an isomorphism of real analytic varieties.
\end{remark}

\begin{remark} There is a similar correspondence when $G$ is reductive
but not semisimple. In this case, it makes sense to consider
nonzero values of the stability parameter $\alpha$. The resulting
Higgs bundles can be geometrically interpreted in terms of
representations of the universal central extension of the
fundamental group of $X$, and the value of $\alpha$ prescribes the
image of a generator of the centre in the representation.
\end{remark}

\subsection{Deformation theory of $G$-Higgs bundles}
\label{sec:deformation-theory}

In this section we recall some standard facts about the deformation
theory of $G$-Higgs bundles, following Biswas--Ramanan
\cite{biswas-ramanan:1994}. The results summarized here are explained
in more detail in
\cite[Section~3.3]{garcia-prada-gothen-mundet:2009a}.

\begin{definition}\label{def:def-complex}
Let $(E,\varphi)$ be a $G$-Higgs bundle.  The \emph{deformation complex}
of $(E,\varphi)$ is the following complex of sheaves:
\begin{equation}\label{eq:def-complex}
  C^{\bullet}(E,\varphi)\colon E(\lieh^\CC) \xrightarrow{\ad(\varphi)}
  E(\liem^\CC)\otimes K.
\end{equation}
\end{definition}
This definition makes sense because $\varphi$ is a section of
$E(\lie{m}^{\CC})\otimes K$ and $[\lie{m}^{\CC},\lie{h}^{\CC}]
\subseteq \lie{m}^{\CC}$.

The following result generalizes the fact that the infinitesimal
deformation space of a holomorphic vector bundle $V$ is
isomorphic to $H^1(\End V)$.

\begin{proposition}
  \label{prop:deform}
  The space of infinitesimal deformations of a $G$-Higgs bundle
  $(E,\varphi)$ is naturally isomorphic to the hypercohomology group
  $\HH^1(C^{\bullet}(E,\varphi))$.
\end{proposition}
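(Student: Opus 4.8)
The plan is to compute the space of first-order deformations directly in the Dolbeault picture and to identify it with $\HH^1(C^\bullet(E,\varphi))$ via the total complex of the associated Čech--Dolbeault double complex, following the scheme of Biswas--Ramanan \cite{biswas-ramanan:1994}. Fix the smooth $H^\CC$-bundle underlying $E$; its holomorphic structure is encoded by a Dolbeault operator $\dbar_E$, and $\varphi$ is a $\dbar_E$-holomorphic section of $E(\liem^\CC)\otimes K$. A deformation over $\operatorname{Spec}\CC[\epsilon]/(\epsilon^2)$ is then a pair $(\dot\dbar,\dot\varphi)$ with $\dot\dbar\in\Omega^{0,1}(E(\lieh^\CC))$ and $\dot\varphi\in\Omega^0(E(\liem^\CC)\otimes K)$, obtained by differentiating the integrability condition $\dbar_E^2=0$ and the Higgs condition $\dbar_E\varphi=0$; two such pairs are equivalent when they differ by an infinitesimal complex gauge transformation $h\in\Omega^0(E(\lieh^\CC))$. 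This is precisely the linearization, at a solution, of the equation $\dbar_A\varphi=0$ from Section~\ref{sec:g-higgs-bundles-hitchin}.

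First I would write out the total complex obtained by tensoring $C^\bullet(E,\varphi)$ with the Dolbeault resolution. Since $X$ is a curve, the terms $\Omega^{0,2}$ vanish, and in total degrees $0,1,2$ it reads
\[
\Omega^0(E(\lieh^\CC))\xrightarrow{\ D\ }\Omega^{0,1}(E(\lieh^\CC))\oplus\Omega^0(E(\liem^\CC)\otimes K)\xrightarrow{\ D\ }\Omega^{0,1}(E(\liem^\CC)\otimes K),
\]
where the total differential is $D=\dbar\pm\ad(\varphi)$. By the standard fact that this total complex computes hypercohomology, its degree-one cohomology is $\HH^1(C^\bullet(E,\varphi))$. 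The crux is then to match the two descriptions term by term: differentiating $\dbar_E^2=0$ imposes no condition on a curve, so $\dot\dbar$ ranges freely over $\Omega^{0,1}(E(\lieh^\CC))$, while differentiating $\dbar_E\varphi=0$ yields exactly $\dbar\dot\varphi+\ad(\varphi)\dot\dbar=0$, which is the $D$-cocycle condition in degree one; and the infinitesimal gauge action sends $h$ to $(\dbar h,\ad(\varphi)h)=Dh$, which is precisely the degree-one $D$-coboundary. This exhibits a natural bijection between equivalence classes of first-order deformations and $\HH^1(C^\bullet(E,\varphi))$.

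I expect the main subtlety to be the passage from this hands-on Dolbeault/gauge-theoretic description to the abstract deformation functor on Artin rings, namely verifying that \emph{every} first-order deformation of the pair $(E,\varphi)$ --- as a holomorphic $H^\CC$-bundle together with a holomorphic section of $E(\liem^\CC)\otimes K$ --- is represented by such a $(\dot\dbar,\dot\varphi)$, and that isomorphisms of deformations correspond exactly to the coboundaries. This is most transparently handled in the Čech picture over a good cover $\{U_i\}$: the bundle deformation is a $1$-cocycle $a_{ij}\in E(\lieh^\CC)(U_{ij})$, the Higgs deformation a $0$-cochain $\dot\varphi_i$, and compatibility of the deformed section with the deformed transition functions gives $\dot\varphi_j-\dot\varphi_i=\ad(\varphi)(a_{ij})$, the Čech hypercocycle condition for $C^\bullet(E,\varphi)$; the coboundaries are again the infinitesimal isomorphisms. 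Naturality in $(E,\varphi)$ is manifest from either construction, and the equivalence of the Čech and Dolbeault computations of $\HH^1$ is standard. The remaining work is routine bookkeeping of signs and identifications, for which I would refer to \cite{biswas-ramanan:1994,garcia-prada-gothen-mundet:2009a}.
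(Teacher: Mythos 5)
Your proof is correct: the Dolbeault computation (no integrability constraint on $\dot\dbar$ over a curve, the linearized holomorphicity condition $\dbar\dot\varphi+\ad(\varphi)\dot\dbar=0$ as the degree-one cocycle condition, infinitesimal gauge transformations as coboundaries) together with the Čech matching of the abstract deformation functor is precisely the standard argument of Biswas--Ramanan, which is all the paper itself invokes --- the proposition is stated there without proof and deferred to \cite{biswas-ramanan:1994} and \cite{garcia-prada-gothen-mundet:2009a}. Since your route coincides with that of the cited references, the two approaches are essentially the same.
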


For any $G$-Higgs bundle there is a natural long exact sequence
\begin{equation}
  \label{eq:hyper-les}
  \begin{split}
  0 &\to \HH^0(C^{\bullet}(E,\varphi)) \to H^0 (E(\lieh^\CC))
  \xrightarrow{\ad(\varphi)} H^0(E(\liem^\CC)\otimes K) \\
  &\to \HH^1(C^{\bullet}(E,\varphi))
  \to H^1 (E(\lieh^\CC)) \xrightarrow{\ad(\varphi)}
  H^1(E(\liem^\CC)\otimes K) \to
  \HH^2(C^{\bullet}(E,\varphi)) \to 0.
  \end{split}
\end{equation}
This justifies the following definition.

\begin{definition}
  \label{def:inf-aut-space}
  The \textbf{infinitesimal automorphism space} of $(E,\varphi)$ is
  \begin{displaymath}
    \aut(E,\varphi) = \HH^0(C^{\bullet}(E,\varphi)).
  \end{displaymath}
\end{definition}
Note that this agrees with the general notion of the infinitesimal
automorphism space of a pair introduced in
\cite[Section~2.9]{garcia-prada-gothen-mundet:2009a}.

Let $d\iota\colon \lieh^{\CC} \to \End(\liem^{\CC})$ be the derivative
at the identity of the complexified isotropy representation $\iota =
\Ad_{|H^{\CC}}\colon H^{\CC} \to \Aut(\liem^{\CC})$
(cf. (\ref{eq:isotropy-rep})). Let $\ker d\iota \subseteq \lieh^{\CC}$
be its kernel and let $E(\ker d\iota) \subseteq E(\lieh^{\CC})$ be the
corresponding subbundle.  Then there is an inclusion $H^0(E(\ker
d\iota)) \into \HH^0(C^{\bullet}(E,\varphi))$.
\begin{definition}
  \label{def:i-simple}
  A $G$-Higgs bundle $(E,\varphi)$ is said to be
  \textbf{infinitesimally simple} if the infinitesimal automorphism
  space $\aut(E,\varphi)$ is isomorphic to
$ H^0(E(\ker d\iota \cap \liez))$.
\end{definition}

\begin{remark}
  \label{rem-gr-gc-ss}
  If $\ker d \iota = 0$, then $(E,\varphi)$ is infinitesimally simple
  if and only if the vanishing $\HH^0(C^{\bullet}(E,\varphi)) = 0$
  holds.  A particular case of this situation is when the group $G$
  is a complex semisimple group: indeed, in this case
  the isotropy representation is just the adjoint
  representation.
\end{remark}

Similarly, we have an inclusion $\ker\iota\cap Z(H^{\CC})\into
\Aut(E,\varphi)$.

\begin{definition}
  \label{def:simple}
  A $G$-Higgs bundle $(E,\varphi)$ is said to be
  \textbf{simple} if $\Aut(E,\varphi)=\ker \iota \cap Z(H^\CC)$,
where $Z(H^\CC)$ is the centre of $H^\CC$.
\end{definition}

Unlike the case of ordinary vector bundles, a stable $G$-Higgs bundle
is not necessarily simple. However, we have the following
infinitesimal result.

\begin{proposition}
  \label{prop:stable-simple-G-higgs}
  Any stable $G$-Higgs bundle $(E,\varphi)$ with $\varphi\neq 0$
  is infinitesimally simple.
\end{proposition}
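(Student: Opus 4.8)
The plan is to establish the non-trivial inclusion $\aut(E,\varphi)\subseteq H^0(E(\ker d\iota\cap\liez))$, the reverse inclusion being immediate since elements of $\ker d\iota$ annihilate $\mclie$ and hence any $\varphi$. By Definition~\ref{def:inf-aut-space} and the exact sequence~(\ref{eq:hyper-les}) one has
\[
\aut(E,\varphi)=\HH^0(C^{\bullet}(E,\varphi))=\ker\bigl(\ad(\varphi)\colon H^0(E(\liehc))\to H^0(E(\mclie)\otimes K)\bigr),
\]
so I must take a holomorphic section $s\in H^0(E(\liehc))$ with $\ad(\varphi)(s)=d\iota(s)\varphi=0$ and show that $s$ is valued in $\ker d\iota\cap\liez$. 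The governing idea is the classical one that a stable object carries no superfluous endomorphisms: a section that is neither central nor trivial on the Higgs field should produce a destabilizing parabolic reduction.

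First I would use that $X$ is compact: the coefficients of the characteristic polynomial of $\ad(s)$ are holomorphic, hence constant, so the eigenvalues of $d\iota(s)$ are constant on $X$ and the fibrewise Jordan decomposition $s=s_{\mathrm{ss}}+s_{\mathrm{nil}}$ is holomorphic. Splitting $\mclie=\bigoplus_{\nu}\mclie_{\nu}$ into eigenspaces of $d\iota(s_{\mathrm{ss}})$ and using that $s_{\mathrm{ss}}$ and $s_{\mathrm{nil}}$ commute, the equation $d\iota(s)\varphi=0$ forces $\varphi$ into the zero-weight part, i.e.\ $\varphi\in H^0(E(\mclie_{0})\otimes K)$, and in particular $\ad(\varphi)(s_{\mathrm{ss}})=0$. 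Now $s_{\mathrm{ss}}$ takes values in $\imag\lieh$, so by Proposition~\ref{lema1-estabilidad} it determines a parabolic subgroup $P_{s_{\mathrm{ss}}}\subseteq\HC$, an antidominant character $\chi$ with $s_{\chi}=s_{\mathrm{ss}}$, and a holomorphic reduction $\sigma$ of $E$ to $P_{s_{\mathrm{ss}}}$; since $\mclie_{0}\subseteq\liem_{\chi}^{-}$ we obtain $\varphi\in H^0(E_{\sigma}(\liem_{\chi}^{-})\otimes K)$. The identical construction applied to $-s_{\mathrm{ss}}$ yields the opposite parabolic $P_{-s_{\mathrm{ss}}}$, an antidominant character $\chi'$, and a reduction $\sigma'$, again with $\varphi\in H^0(E_{\sigma'}(\liem_{\chi'}^{-})\otimes K)$ because $\mclie_{0}\subseteq\liem_{\chi'}^{-}$.

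The heart of the proof is the degree computation. Because $\sigma$ and $\sigma'$ arise from the \emph{same} global holomorphic section $s_{\mathrm{ss}}$, passing from $s_{\mathrm{ss}}$ to $-s_{\mathrm{ss}}$ simultaneously exchanges the parabolic for its opposite and reverses all the weights, and one checks that this forces
\[
\deg(E)(\sigma,\chi)+\deg(E)(\sigma',\chi')=0.
\]
If $s_{\mathrm{ss}}$ were not central then $P_{s_{\mathrm{ss}}}\neq\HC$, so the defining subset $A$ is non-empty and $\chi\notin\zlie_{\RR}^{*}$; stability of $(E,\varphi)$ (Definition~\ref{def:L-twisted-pairs-stability} with $\alpha=0$) would then give $\deg(E)(\sigma,\chi)>0$ and, symmetrically, $\deg(E)(\sigma',\chi')>0$, contradicting the displayed identity. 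Hence every semisimple element of $\aut(E,\varphi)$ is central. Since $(E,\varphi)$ is stable it is polystable, so Theorem~\ref{lemma:reductive-stabilizer} shows $\Aut(E,\varphi)$ is reductive and its Lie algebra $\aut(E,\varphi)$ is therefore reductive; its semisimple part $[\aut(E,\varphi),\aut(E,\varphi)]$ would otherwise contain non-central semisimple elements, so it vanishes, $\aut(E,\varphi)$ is abelian, and consequently $s=s_{\mathrm{ss}}\in H^0(E(\liez))$.

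Finally I would invoke $\varphi\neq0$. As $s$ is now central, $d\iota(s)$ is semisimple and commutes with the whole isotropy representation, so $\mclie=\bigoplus_{\nu}\mclie_{\nu}$ is a decomposition into $\HC$-subrepresentations and $d\iota(s)\varphi=0$ places $\varphi$ in $H^0(E(\mclie_{0})\otimes K)$. For the connected semisimple groups at hand (in particular $\Sp(2n,\RR)$, where $\liez$ is spanned by an element acting with nonzero eigenvalues on all of $\mclie$), a central element with $d\iota(s)\neq0$ acts as a nonzero scalar on every summand carrying a component of $\varphi$; hence $d\iota(s)\varphi=0$ together with $\varphi\neq0$ forces $d\iota(s)=0$, that is $s\in\ker d\iota\cap\liez$, as required. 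The main obstacle is the degree identity of the third paragraph --- verifying that a reduction built from a global holomorphic section of the adjoint bundle pairs to opposite degrees under $s_{\mathrm{ss}}\mapsto-s_{\mathrm{ss}}$; the hypothesis $\varphi\neq0$ enters only in this last step, precisely to exclude the central scalings that would otherwise enlarge $\aut(E,\varphi)$.
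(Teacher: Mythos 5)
The paper itself does not prove Proposition~\ref{prop:stable-simple-G-higgs}: it is one of the facts recalled from the companion paper \cite{garcia-prada-gothen-mundet:2009a}, so what follows is a correctness check of your argument rather than a comparison of routes. Your overall strategy is the standard one, and several of its steps are sound: the identification of $\aut(E,\varphi)$ with the kernel of $\ad(\varphi)$ on $H^0(E(\liehc))$, the constancy of eigenvalues on a compact $X$, the holomorphic fibrewise Jordan decomposition (via spectral polynomials with constant coefficients), the fact that $d\iota(s)\varphi=0$ forces $\varphi$ into the zero-weight space of $d\iota(s_{\mathrm{ss}})$, and the degree identity $\deg(E)(\sigma,\chi)+\deg(E)(\sigma',\chi')=0$ for the two opposite reductions induced by one global section (both refine the same Levi reduction, and $\chi'=-\chi$).

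The genuine gap is the assertion that ``$s_{\mathrm{ss}}$ takes values in $\imag\lieh$''. First, $\imag\lieh$ is not $\Ad(\HC)$-invariant, so this can only mean ``fibrewise $\HC$-conjugate into $\imag\lieh$''; and in that reading it is false: elements conjugate into $\imag\lieh$ are exactly the semisimple elements with \emph{real} eigenvalues, and nothing forces the constant eigenvalues of $s_{\mathrm{ss}}$ to be real. For elements with purely imaginary spectrum your mechanism collapses entirely: take $\HC=\GL(2,\CC)$ and $s_{\mathrm{ss}}$ with eigenvalues $\pm\imag$; then $e^{ts_{\mathrm{ss}}}$ is conjugate to a unitary one-parameter group, so conjugation by it is bounded on all of $\HC$ and $P_{s_{\mathrm{ss}}}=\HC$ even though $s_{\mathrm{ss}}$ is not central. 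Thus your implication ``not central $\Rightarrow P_{s_{\mathrm{ss}}}\neq\HC$'' fails, no admissible pair $(A,\chi)$ with $A\neq\emptyset$, $\chi\notin\zlie_\RR^*$ is produced, and the degree identity reads $0+0=0$ with no contradiction. What your argument actually proves is only that the \emph{hyperbolic} part of $s_{\mathrm{ss}}$ is central. The gap is fixable with one extra idea: write $s_{\mathrm{ss}}=s_{\mathrm{hyp}}+s_{\mathrm{ell}}$ as the sum of its commuting parts with real, respectively purely imaginary, eigenvalues (this decomposition is holomorphic, being given by spectral projections that are polynomials in $s_{\mathrm{ss}}$ with constant coefficients); your parabolic argument shows $s_{\mathrm{hyp}}$ is central, and since $\aut(E,\varphi)$ is a complex vector space you may run the same argument on $\imag s\in\aut(E,\varphi)$, whose hyperbolic part is $\imag s_{\mathrm{ell}}$, to conclude that $s_{\mathrm{ell}}$ is central as well. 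Two further caveats: your last step, that a central $s$ with $d\iota(s)\neq 0$ acts by a nonzero scalar on every summand of $\mclie$ meeting $\varphi$, is a property of the particular groups in this paper (true for $\Sp(2n,\RR)$, where the centre acts by $\pm 2\lambda$ on $S^2\VV$ and $S^2\VV^*$) and not of an arbitrary reductive $G$, so it needs justification rather than assertion; and your use of Theorem~\ref{lemma:reductive-stabilizer} to kill the nilpotent part and force abelianness imports a result proved via the full Hitchin--Kobayashi correspondence, which is logically much heavier than the proposition being proved and, at the level of the underlying literature, risks circularity --- a direct argument for the nilpotent part would be preferable.
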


With respect to the question of the vanishing of $\HH^2$ of the
deformation complex, we have the following useful result.

\begin{proposition}
  \label{prop:hh-vanishing}
  Let $G$ be a real semisimple group and let $G^{\CC}$ be its
  complexification.  Let $(E,\varphi)$ be a $G$-Higgs bundle which is
  stable viewed as a $G^{\CC}$-Higgs bundle.  Then the vanishing
  \begin{displaymath}
    \HH^0(C^{\bullet}(E,\varphi)) = 0 =
    \HH^2(C^{\bullet}(E,\varphi))
  \end{displaymath}
  holds.
\end{proposition}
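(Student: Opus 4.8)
The plan is to reduce the two required vanishings for the real deformation complex $C^\bullet_G(E,\varphi)$ to the analogous statement for $(E,\varphi)$ regarded as a $G^\CC$-Higgs bundle, exploiting the decomposition of $\lieg^\CC$ under the isotropy representation. Since $\lieg^\CC=\lieh^\CC\oplus\liem^\CC$ is invariant under the adjoint action of $H^\CC$, the adjoint bundle splits as $E(\lieg^\CC)=E(\lieh^\CC)\oplus E(\liem^\CC)$. As $\varphi$ is a section of $E(\liem^\CC)\otimes K$ and $[\liem^\CC,\lieh^\CC]\subseteq\liem^\CC$, $[\liem^\CC,\liem^\CC]\subseteq\lieh^\CC$, the operator $\ad(\varphi)$ is off-diagonal for this splitting. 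Hence the deformation complex of $(E,\varphi)$ as a $G^\CC$-Higgs bundle,
$$
C^\bullet_{G^\CC}(E,\varphi)\colon\quad E(\lieg^\CC)\xrightarrow{\ad(\varphi)}E(\lieg^\CC)\otimes K,
$$
decomposes as a direct sum $C^\bullet_G(E,\varphi)\oplus {C'}^{\bullet}$, where ${C'}^{\bullet}$ is the two-term complex $E(\liem^\CC)\xrightarrow{\ad(\varphi)}E(\lieh^\CC)\otimes K$. Taking hypercohomology gives $\HH^k(C^\bullet_{G^\CC})=\HH^k(C^\bullet_G)\oplus\HH^k({C'}^{\bullet})$, so it is enough to prove $\HH^0(C^\bullet_{G^\CC})=0=\HH^2(C^\bullet_{G^\CC})$.

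For $\HH^0$, I would invoke the earlier results on simplicity. Since $G^\CC$ is complex semisimple its isotropy representation is the adjoint representation and $\ker d\iota=0$, so by Remark~\ref{rem-gr-gc-ss} infinitesimal simplicity of the $G^\CC$-Higgs bundle is equivalent to $\aut(E,\varphi)=\HH^0(C^\bullet_{G^\CC})=0$. Applying Proposition~\ref{prop:stable-simple-G-higgs} to the reductive group $G^\CC$, the stable pair $(E,\varphi)$ is infinitesimally simple whenever $\varphi\neq 0$, which gives the vanishing. In the remaining case $\varphi=0$ the bundle $E$ is a stable $G^\CC$-bundle and $\HH^0(C^\bullet_{G^\CC})=H^0(E(\lieg^\CC))$ is the Lie algebra of $\Aut(E)$; for a stable bundle with semisimple structure group this is the centre $\liez(\lieg^\CC)=0$.

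For $\HH^2$, the idea is to combine Serre duality on $X$ with a self-duality of $C^\bullet_{G^\CC}$ furnished by the Killing form $B$ of $\lieg^\CC$. Being nondegenerate and $\ad$-invariant, $B$ induces an $H^\CC$-equivariant isomorphism $E(\lieg^\CC)\cong E(\lieg^\CC)^*$ with respect to which $\ad(\varphi)$ is skew-adjoint. This makes the two-term complex $C^\bullet_{G^\CC}$ isomorphic to its own Serre--Grothendieck dual twisted by $K$, so that $\HH^i(C^\bullet_{G^\CC})^*\cong\HH^{2-i}(C^\bullet_{G^\CC})$ on the curve $X$. Taking $i=2$ gives $\HH^2(C^\bullet_{G^\CC})^*\cong\HH^0(C^\bullet_{G^\CC})=0$, and combined with the splitting of the first paragraph this yields $\HH^2(C^\bullet_G)=0$ as well.

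The step I expect to require the most care is the Serre-duality computation: one must keep track of the degree shift when dualising a two-term complex, and check via $B$ that the dual differential is again (a sign times) $\ad(\varphi)$ on all of $\lieg^\CC$, so that the \emph{full} complex is self-dual. This last point is genuinely subtle, since under $B$ the summands get interchanged---$C^\bullet_G$ dualises to ${C'}^{\bullet}$ and not to itself---so the self-duality lives only at the level of $C^\bullet_{G^\CC}$, which is exactly why it is cleanest to run the duality argument there and deduce the real case from the splitting. The splitting itself is purely formal, and the $\HH^0$ vanishing is essentially quoted from Remark~\ref{rem-gr-gc-ss} and Proposition~\ref{prop:stable-simple-G-higgs}; it is the passage $\HH^2\cong(\HH^0)^*$ that carries the real content.
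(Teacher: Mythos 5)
Your proof is correct. The paper itself states Proposition~\ref{prop:hh-vanishing} without proof, deferring to the companion paper \cite{garcia-prada-gothen-mundet:2009a}, and your argument is precisely the standard one: split $C^{\bullet}_{G^{\CC}}(E,\varphi) = C^{\bullet}_{G}(E,\varphi) \oplus {C'}^{\bullet}$ using the Cartan decomposition, deduce $\HH^0(C^{\bullet}_{G^{\CC}})=0$ from stability via Proposition~\ref{prop:stable-simple-G-higgs} and Remark~\ref{rem-gr-gc-ss} (with the $\varphi=0$ case handled by the classical fact that infinitesimal automorphisms of a stable principal bundle lie in the centre), and obtain $\HH^2(C^{\bullet}_{G^{\CC}})^* \cong \HH^0(C^{\bullet}_{G^{\CC}})$ from Serre duality combined with the Killing-form skew-self-duality of the full complex --- including your correct observation that only the $G^{\CC}$-complex is self-dual, the real complex $C^{\bullet}_{G}$ dualising to ${C'}^{\bullet}$.
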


The following result on smoothness of the moduli space can be proved,
for example, from the standard slice method construction referred to
above.
\begin{proposition}
  \label{prop:smoothness}
  Let $(E,\varphi)$ be a stable $G$-Higgs bundle. If $(E,\varphi)$ is
  simple and
$$
\HH^2(C^{\bullet}(E,\varphi))=0,
$$ then $(E,\varphi)$
  is a smooth point in the moduli space. In particular, if
  $(E,\varphi)$ is a simple $G$-Higgs bundle which is stable as a
  $G^{\CC}$-Higgs bundle, then it is a smooth point in the moduli space.
\end{proposition}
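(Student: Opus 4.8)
The plan is to use the local description of the moduli space furnished by the slice method referred to above. Near the isomorphism class of a polystable $G$-Higgs bundle $(E,\varphi)$, this construction presents the moduli space as the quotient of a Kuranishi space by the action of the automorphism group $\Aut(E,\varphi)$. Concretely, the space of infinitesimal deformations is $\HH^1(C^{\bullet}(E,\varphi))$ by Proposition~\ref{prop:deform}, the obstructions to integrating a first-order deformation lie in $\HH^2(C^{\bullet}(E,\varphi))$, and the versal deformation space is cut out inside a neighbourhood of the origin in $\HH^1(C^{\bullet}(E,\varphi))$ by a holomorphic Kuranishi map $\HH^1(C^{\bullet}(E,\varphi)) \to \HH^2(C^{\bullet}(E,\varphi))$ whose linear term vanishes at the origin. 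First I would invoke this slice model, so that the local structure of the moduli space at $(E,\varphi)$ is that of the quotient $Z/\Aut(E,\varphi)$, where $Z \subseteq \HH^1(C^{\bullet}(E,\varphi))$ denotes the zero locus of the Kuranishi map.

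The first step is then immediate: since $\HH^2(C^{\bullet}(E,\varphi)) = 0$ by hypothesis, the Kuranishi map takes values in the zero vector space, so there are no obstructions and the versal deformation space $Z$ fills out a whole neighbourhood of the origin in $\HH^1(C^{\bullet}(E,\varphi))$. Thus the deformation problem is unobstructed and $Z$ is a smooth complex manifold of dimension $\dim \HH^1(C^{\bullet}(E,\varphi))$.

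The key remaining point, and the one requiring care, is to check that passing to the quotient by $\Aut(E,\varphi)$ introduces no singularity. Here I would use the simplicity hypothesis, which gives $\Aut(E,\varphi) = \ker\iota \cap Z(\HC)$. Every element $g$ of this group is central in $\HC$, so it acts trivially on $E(\lieh^\CC)$ through the adjoint action; and it lies in $\ker\iota$, so it acts trivially on $E(\liem^\CC)\otimes K$ through the isotropy representation. Hence $g$ acts trivially on both terms of the deformation complex $C^{\bullet}(E,\varphi)$, and therefore trivially on the hypercohomology $\HH^1(C^{\bullet}(E,\varphi))$. The stabilizer thus acts trivially on $Z$, so the local quotient $Z/\Aut(E,\varphi)$ coincides with $Z$, and $(E,\varphi)$ is a smooth point of the moduli space. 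I expect this verification that $\Aut(E,\varphi)$ acts trivially on $\HH^1$ to be the main substantive step, since it is precisely what distinguishes a genuine smooth point from an orbifold point left behind by a nontrivial stabilizer action.

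Finally, the ``in particular'' statement follows by combining this with Proposition~\ref{prop:hh-vanishing}: if $(E,\varphi)$ is stable when regarded as a $G^\CC$-Higgs bundle, that proposition gives $\HH^2(C^{\bullet}_{G}(E,\varphi)) = 0$, so the hypothesis of the first part is satisfied and simplicity alone then yields smoothness.
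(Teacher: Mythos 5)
Your proposal is correct and follows exactly the route the paper indicates: the paper proves this proposition by appeal to the standard slice (Kuranishi) method, which is precisely what you carry out — unobstructedness from $\HH^2(C^{\bullet}_{G}(E,\varphi))=0$, triviality of the $\Aut(E,\varphi)$-action on the deformation space via $\Aut(E,\varphi)=\ker\iota\cap Z(\HC)$, and Proposition~\ref{prop:hh-vanishing} for the ``in particular'' clause. Your spelling out of why the central automorphisms act trivially on both terms of the deformation complex is a useful elaboration of a point the paper leaves implicit, but it is the same argument.
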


Suppose that $G$ is semisimple and $(E,\varphi)$ is stable and
simple. If a (local) universal family exists then the dimension of the
component of the moduli space containing $(E,\varphi)$ equals the
dimension of the infinitesimal deformation space
$\HH^1(C^{\bullet}(E,\varphi))$.  We shall refer to this dimension as
the \textbf{expected dimension} of the moduli space.

Moreover, since in this situation
$\HH^0(C^{\bullet}(E,\varphi))=\HH^2(C^{\bullet}(E,\varphi))=0$, the
expected dimension can be calculated from Riemann--Roch as follows.

\begin{proposition}
  \label{prop:dim-moduli}
  Let $G$ be semisimple.  Then the expected dimension of the moduli space
  of $G$-Higgs bundles is
  \begin{math}
    (g-1)\dim G^{\CC}.
  \end{math}
\end{proposition}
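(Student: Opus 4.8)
The plan is to compute the dimension of $\HH^1(C^\bullet(E,\varphi))$ directly. By the discussion immediately preceding the statement, when $(E,\varphi)$ is stable and simple the expected dimension is by definition $\dim\HH^1(C^\bullet(E,\varphi))$, and moreover in this situation $\HH^0(C^\bullet(E,\varphi)) = \HH^2(C^\bullet(E,\varphi)) = 0$. Hence the entire computation reduces to evaluating an Euler characteristic by Riemann--Roch, and the point is that the answer will turn out to be purely numerical, depending only on $g$ and $\dim G^\CC$, as claimed.

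First I would feed the vanishing of $\HH^0$ and $\HH^2$ into the long exact sequence (\ref{eq:hyper-les}). Since the alternating sum of the dimensions of the terms of an exact sequence of finite-dimensional vector spaces vanishes, this yields
$$\dim\HH^1(C^\bullet(E,\varphi)) = \chi\bigl(E(\liem^\CC)\otimes K\bigr) - \chi\bigl(E(\lieh^\CC)\bigr),$$
where $\chi(\mathcal{F}) = \dim H^0(X,\mathcal{F}) - \dim H^1(X,\mathcal{F})$ denotes the Euler characteristic of a coherent sheaf on $X$. I would then apply Riemann--Roch, $\chi(\mathcal{F}) = \deg\mathcal{F} + \rank(\mathcal{F})(1-g)$, to each of the two terms. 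Here the ranks are $\dim_\CC\lieh^\CC$ and $\dim_\CC\liem^\CC$, and, since $\deg K = 2g-2$, one has $\deg(E(\liem^\CC)\otimes K) = \deg E(\liem^\CC) + (\dim_\CC\liem^\CC)(2g-2)$.

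The one step that is not pure bookkeeping, and which I expect to require the most care, is the vanishing of the two degrees $\deg E(\lieh^\CC) = \deg E(\liem^\CC) = 0$; this is precisely where semisimplicity of $G$ enters. The Killing form $B$ of $\glie^\CC$ is non-degenerate and $\Ad$-invariant, and under the complexified Cartan decomposition $\glie^\CC = \lieh^\CC \oplus \liem^\CC$ it restricts to non-degenerate bilinear pairings on each summand. These pairings are preserved by the action of $\HC$ --- by $\Ad$ on $\lieh^\CC$ and by the isotropy representation $\iota$ (see (\ref{eq:isotropy-rep})) on $\liem^\CC$. Consequently each associated bundle $E(\lieh^\CC)$ and $E(\liem^\CC)$ carries a fibrewise non-degenerate bilinear form, hence is isomorphic to its own dual, and a bundle isomorphic to its dual has degree zero.

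Finally I would assemble the pieces. With the degrees killed, Riemann--Roch gives $\chi(E(\lieh^\CC)) = (\dim_\CC\lieh^\CC)(1-g)$ and $\chi(E(\liem^\CC)\otimes K) = (\dim_\CC\liem^\CC)(2g-2) + (\dim_\CC\liem^\CC)(1-g) = (\dim_\CC\liem^\CC)(g-1)$. Substituting into the displayed formula yields
$$\dim\HH^1(C^\bullet(E,\varphi)) = (\dim_\CC\liem^\CC)(g-1) + (\dim_\CC\lieh^\CC)(g-1) = (g-1)\dim_\CC\glie^\CC = (g-1)\dim G^\CC,$$
using $\dim_\CC\glie^\CC = \dim_\CC\lieh^\CC + \dim_\CC\liem^\CC$ and $\dim G^\CC = \dim_\CC\glie^\CC$. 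This is exactly the asserted expected dimension, completing the computation.
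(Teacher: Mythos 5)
Your proof is correct and follows exactly the route the paper intends: the paper states that, given the vanishing $\HH^0(C^{\bullet}_{G}(E,\varphi))=\HH^2(C^{\bullet}_{G}(E,\varphi))=0$ for a stable, simple pair, the expected dimension is computed from Riemann--Roch, and your calculation is precisely that computation spelled out. The only detail the paper leaves implicit --- that $\deg E(\lieh^\CC)=\deg E(\liem^\CC)=0$, which you justify via the $\HC$-invariant nondegenerate pairings induced by the Killing form --- is handled correctly.
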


For a proper understanding of many aspects of the geometry of the
moduli space of Higgs bundles, one needs to consider the moduli space
as the gauge theory moduli space $\Mg_d(G)$. To define this, fix a
$C^\infty$ principal $H$-bundle $\bE_H$ with fixed topological class
$d\in \pi_1(H)$ and consider the moduli space of solutions to
\textbf{Hitchin's equations} for a pair $(A,\varphi)$ consisting of an
$H$-connection $A$ and $\varphi\in \Omega^{1,0}(X,\bE_H(\mlie^\CC))$:
\begin{equation}\label{hitchin}
\begin{array}{l}
F_A -[\varphi,\tau(\varphi)]= 0\\
\dbar_A\varphi=0.
\end{array}
\end{equation}
Here $d_A$ is the covariant derivative associated to $A$ and
$\dbar_A$ is the $(0,1)$ part of $d_A$, which defines a
holomorphic structure on $\bE_H$. The gauge group $\HHH$  of
$\bE_H$ acts on the space of solutions and the moduli space of
solutions is
$$
\Mg_d(G):= \{ (A,\varphi)\;\;\mbox{satisfying}\;\;
(\ref{hitchin})\}/\HHH.
$$
Now, as a consequence of Theorem \ref{higgs-hk}, we have
that there is a homeomorphism $\cM_d(G)\simeq \Mg_d(G)$.

This point of view is very relevant in
    particular for the Morse theoretic approach to the count of
    connected components, as explained in Section~\ref{sec:morse}. Thus
    one should consider the infinitesimal deformation space of a
    solution $(A,\varphi)$ to Hitchin's equations, which can be
    described as the first cohomology group of a certain elliptic
    deformation complex (cf.\ Hitchin~\cite{hitchin:1987a}). On the
    other hand, the formulation of the deformation theory in terms of
    hypercohomology is very convenient. Fortunately, at a smooth point
    of the moduli space, there is a natural isomorphism between the
    gauge theory deformation space and the infinitesimal deformation
    space $\HH^1(C^\bullet(E,\varphi))$ (where the holomorphic
    structure on the Higgs bundle $(E,\varphi)$ is given by
    $\dbar_A$).  As in Donaldson--Kronheimer
    \cite[\S~6.4]{donaldson-kronheimer:1990} this can be seen by using
    a Dolbeault resolution to calculate $\HH^1(C^\bullet(E,\varphi))$
    and using harmonic representatives of cohomology classes, via
    Hodge theory.  For this reason we can freely apply the complex
    deformation theory described in this Section to the gauge theory
    situation.

\section{$\Sp(2n,\RR)$-Higgs bundles}
\label{spn-reps}

\subsection{Stability and simplicity of $\Sp(2n,\RR)$-Higgs bundles}
\label{sec:spnr-higgs}

Let $G=\Sp(2n,\RR)$. The maximal compact subgroup of $G$ is
$H=\U(n)$ and hence  $H^\CC=\GL(n,\CC)$. Now, if  $\VV=\CC^n$ is
the fundamental representation of $\GL(n,\CC)$, then the isotropy
representation space is:
$$
\lie{m}^\CC=S^2\VV\oplus  S^2\VV^*.
$$
Let $X$ be a compact Riemann surface.  According to
Definition~\ref{def:g-higgs}, an $\Sp(2n,\RR)$-Higgs bundle over $X$
is a triple $(V,\beta,\gamma)$ consisting of a rank $n$ holomorphic
vector bundle $V$ and holomorphic sections $\beta\in H^0(X,S^2V\otimes
K)$ and $\gamma\in H^0(X,S^2V^*\otimes K)$, where $K$ is the canonical
line bundle of $X$. Some times we denote $\varphi=\beta +\gamma$.

\begin{remark}
When $H^\CC$ is a classical group, like for $G=\Sp(2n,\RR)$,
we  prefer to work with the vector bundle $V$ associated to
the standard representation rather than the
$H^\CC$-principal bundle. It is indeed in terms of $V$ that we will
describe the stability condition as we will see below.
\end{remark}

\begin{notation}
  \label{not:symmetrized-tensor}
  Before giving a precise statement we introduce some notation. If $W$
  is a vector bundle and $W',W''\subset W$ are subbundles, then
  $W'\otimes_S W''$ denotes the subbundle of the second symmetric
  power $S^2W$ which is the image of $W'\otimes W''\subset W\otimes W$
  under the symmetrization map $W\otimes W\to S^2W$ (of course this
  should be defined in sheaf theoretical terms to be sure that
  $W'\otimes_SW''$ is indeed a subbundle, since the intersection of
  $W'\otimes W''$ and the kernel of the symmetrization map might
  change dimension from one fibre to the other). Also, we denote by
  $W'^{\perp}\subset W^*$ the kernel of the restriction map $W^*\to
  W'^*$.
\end{notation}

According to  \cite[Theorem~4.9]{garcia-prada-gothen-mundet:2009a},
the (semi)stability
condition for an $\Sp(2n,\RR)$-Higgs bundle is equivalent to the following.

\begin{definition}\label{spnr-stability}
  An $\Sp(2n,\RR)$-Higgs bundle $(V,\beta,\gamma)$ over $X$ is {\bf
    semistable} if for any filtration of subbundles
\begin{displaymath}
  0 \subset V_1 \subset V_2 \subset V
\end{displaymath}
such that
\begin{equation}
  \beta\in H^0(K\otimes(S^2V_2 + V_1\otimes_S V)),
  \quad \gamma\in H^0(K\otimes(S^2V_1^{\perp} + V_2^{\perp}\otimes_S V^*)),
  \label{eq:condicions2}
\end{equation}
we have 
\begin{equation}
  \deg(V) - \deg(V_1) - \deg(V_2) \geq 0.
\end{equation}

Furthermore, $(V,\beta,\gamma)$ is {\bf stable} if for any filtration
as above, except the filtration $0=V_1\subset V_2=V$ we have
\begin{equation}
  \label{eq:spn-simplified-stab}
  \deg(V) - \deg(V_1) - \deg(V_2) > 0.
\end{equation}
\end{definition}

\begin{remark}\label{phi=0}
Note that when $\beta=\gamma = 0$, the
semistability of $(V,\beta,\gamma)$ is equivalent to the semistability of
$V$ with $\deg(V)=0$.
\end{remark}

\begin{definition}
  \label{def:beta-gamma-invariant}
  A filtration $0 \subset V_1 \subset V_2 \subset V$ satisfying the
  first condition in (\ref{eq:condicions2}) will be called
  $\beta$-{\bf invariant}; if it satisfies the second condition will
  be called $\gamma$-{\bf invariant}; and if it satisfies both will be
  said to be $(\beta,\gamma)$-{\bf invariant}, or $\varphi$-{\bf invariant},
  where $\varphi=\beta+\gamma$.
\end{definition}

The following will be useful many times below and  explains the
terminology just introduced.

\begin{remark}
\label{rem:matrix-form}
Let $V_i^{\perp}$ be the kernel of the projection
$V^*\to V_i^*$ and view $\beta$ and $\gamma$ as symmetric maps
$\beta:V^*\to K\otimes V$ and $\gamma:V\to K\otimes V^*$.
If $0\subset V_1\subset V_2\subset V$ is a filtration of vector
bundles then for any $\beta\in H^0(K\otimes S^2V)$ the condition $\beta\in H^0(K\otimes(S^2V_2 +
V_1\otimes_S V))$ is equivalent to $\beta (V_2^{\perp})\subset
K\otimes V_1$ which, by symmetry of $\beta$, is equivalent to $\beta
(V_1^{\perp})\subset K\otimes V_2$. Similarly, for any and $\gamma\in
H^0(K\otimes S^2V^*)$, the condition $\gamma\in
H^0(K\otimes(S^2V_1^{\perp} + V_2^{\perp}\otimes_S V^*))$ is
equivalent to $\gamma (V_1)\subset K\otimes V_2^{\perp}$ which, by
symmetry of $\gamma$, is equivalent to $\gamma(V_2)\subset K\otimes
V_1^{\perp}$.  Thus, if we
use a local basis of $V$ adapted to the filtration $0\subseteq V_1
\subseteq V_2 \subseteq V$ and the dual basis of $V^*$, then the
matrix of $\gamma$ is of the form
\begin{displaymath}
  \begin{pmatrix}
    0 & 0 & * \\
    0 & * & * \\
    * & * & *
  \end{pmatrix},
\end{displaymath}
while the matrix of $\beta$ has the form
\begin{displaymath}
  \begin{pmatrix}
    * & * & * \\
    * & * & 0 \\
    * & 0 &0
  \end{pmatrix}.
\end{displaymath}
\end{remark}

\begin{remark}\label{K-pairing}
There is yet another useful interpretation of (\ref{eq:condicions2})
that will be used later. To explain this, let $Q_\gamma:V\times V\to K$
be the $K$-twisted symmetric bilinear pairing defined by $\gamma$ as
$$
Q_{\gamma}(u, v):=\gamma(v)(u), \;\;\text{for}\;\; u,v\in V,
$$ 
and denote, for a subbundle  $V'\subset V$, 
$$
V'^{\perp_\gamma}:=\{v\in V\suchthat Q_\gamma(u,v)=0\;\;\mbox{for every}\;\;
u\in V'\} = \ker(V \to {V'}^*\otimes K; v\mapsto \gamma(v)_{|V'}).
$$
Then it is immediate that
\begin{equation}
  \label{eq:perp-gamma}
  \gamma(V'^{\perp_\gamma}) = V'^\perp\otimes K \subset V^*\otimes K.
\end{equation}
From this it follows that, if we have a filtration $0\subset
V_1\subset V_2\subset V$ then the condition 
$\gamma(V_1)\subset K\otimes V_2^\perp$ is equivalent to $V_1\subset
V_2^{\perp_\gamma}$. This is clearly equivalent to $V_2\subset
V_1^{\perp_\gamma}$ which, in turn, is equivalent to
$\gamma(V_2)\subset K\otimes V_1^\perp$.  Similar reasoning applies to
$\beta$.
\end{remark}

The deformation complex (\ref{eq:def-complex}) for an
$\Sp(2n,\RR)$-Higgs bundle $(V,\varphi=\beta + \gamma)$ is
\begin{equation}
  \label{eq:spnr-def-complex}
  \begin{aligned}
    C^\bullet(V,\varphi)\colon \End(V) &\xrightarrow{\ad(\varphi)} S^2 V
    \otimes K \oplus S^2V^* \otimes K \\
    \psi & \mapsto (-\beta\psi^t-\psi\beta,\gamma\psi+\psi^t\gamma)
  \end{aligned}
\end{equation}

\begin{proposition}
  \label{prop:i-simple-spnr}
  An $\Sp(2n,\RR)$-Higgs bundle $(V,\varphi)$ is infinitesimally
  simple if and only if $\HH^0(C^{\bullet}(V,\varphi)) =
  0$. Equivalently, $(V,\varphi)$ is infinitesimally simple if and
  only if there is a non-zero $\psi \in H^0(\End(V))$ such that
  \begin{displaymath}
    \ad(\varphi)(\psi) =
    (-\beta\psi^t-\psi\beta,\gamma\psi+\psi^t\gamma) = (0,0).
  \end{displaymath}
\end{proposition}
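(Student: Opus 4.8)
The plan is to derive both equivalences from the general deformation theory of Section~\ref{sec:deformation-theory}, the only genuinely group-specific input being the kernel of the infinitesimal isotropy representation for $G=\Sp(2n,\RR)$. By Remark~\ref{rem-gr-gc-ss}, once we verify that $\ker d\iota=0$ (where $d\iota$ is the derivative of the isotropy representation (\ref{eq:isotropy-rep})), infinitesimal simplicity of $(V,\varphi)$ is equivalent to the vanishing $\HH^0(C^\bullet(V,\varphi))=0$; this yields the first equivalence. For the reformulation I would read off from the long exact sequence (\ref{eq:hyper-les}) that $\HH^0(C^\bullet(V,\varphi))$ is exactly the kernel of $\ad(\varphi)\colon H^0(\End V)\to H^0(S^2V\otimes K\oplus S^2V^*\otimes K)$, whose explicit form is recorded in (\ref{eq:spnr-def-complex}); thus $\HH^0=0$ is precisely the statement that the only $\psi\in H^0(\End V)$ with $\ad(\varphi)(\psi)=0$ is $\psi=0$.

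The main step is therefore to show $\ker d\iota=0$ for $H^\CC=\GL(n,\CC)$ acting on $\mclie=S^2\VV\oplus S^2\VV^*$. I would argue that already the restriction of $d\iota$ to the first summand $S^2\VV$ has trivial kernel. The infinitesimal action of $\psi\in\gllie(n,\CC)=\End(\VV)$ on $S^2\VV$ is the natural derivation, which on the class of a decomposable element $v\otimes v$ gives the class of $\psi v\otimes v+v\otimes\psi v$. If $\psi\in\ker d\iota$, this vanishes in $S^2\VV$ for every $v$; but for $v\neq 0$ the symmetric product $\psi v\otimes v+v\otimes\psi v$ vanishes if and only if $\psi v=0$ (if $\psi v$ and $v$ are independent the symmetrization is nonzero, while if $\psi v=\mu v$ it equals $2\mu\,v\otimes v$). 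Hence $\psi v=0$ for all $v$, i.e.\ $\psi=0$. Equivalently one may note that $S^2\VV$ is an irreducible $\GL(n,\CC)$-module on which both the semisimple part $\mathfrak{sl}(n,\CC)$ and the centre $\liez=\CC\cdot\Id$ act nontrivially, hence faithfully. Either way $\ker d\iota=0$, so in particular $\ker d\iota\cap\liez=0$ and the target $H^0(E(\ker d\iota\cap\liez))$ of Definition~\ref{def:i-simple} is zero.

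With $\ker d\iota=0$ in hand, the first equivalence is immediate from Remark~\ref{rem-gr-gc-ss}, and the reformulation follows by combining the identification $\HH^0(C^\bullet(V,\varphi))=\ker\ad(\varphi)$ coming from the exactness of (\ref{eq:hyper-les}) at the term $H^0(\End V)$ with the explicit expression for $\ad(\varphi)$ in (\ref{eq:spnr-def-complex}). I expect the kernel computation to be the only point requiring care: it is elementary, but it is exactly where the specific structure of the isotropy representation $S^2\VV\oplus S^2\VV^*$ of $\Sp(2n,\RR)$ enters, and it is what distinguishes this case from groups (such as complex semisimple ones) where $\ker d\iota=0$ holds for trivial reasons. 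Everything else is bookkeeping with the long exact sequence of the deformation complex.
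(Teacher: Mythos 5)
Your proof is correct and follows essentially the same route as the paper's: check that $\ker d\iota = 0$ for the isotropy representation of $\Sp(2n,\RR)$, invoke Definition~\ref{def:i-simple} (via Remark~\ref{rem-gr-gc-ss}) for the first equivalence, and read off the second from the long exact sequence (\ref{eq:hyper-les}) together with the explicit complex (\ref{eq:spnr-def-complex}). The only difference is that you actually supply the elementary verification that the derivation action of $\gllie(n,\CC)$ on $S^2\VV$ has trivial kernel, which the paper asserts without proof, and you correctly interpret the second statement as saying that \emph{no} non-zero $\psi\in H^0(\End V)$ satisfies $\ad(\varphi)(\psi)=0$, which fixes an evident typo in the paper's wording.
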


\begin{proof}
  For $\Sp(2n,\RR)$-Higgs bundles one has that $\ker(d\iota) =
  0$. Thus the first statement is immediate from
  Definition~\ref{def:i-simple}. The equivalent statement now follows
  from the long exact sequence (\ref{eq:hyper-les}), recalling that
  in this case the deformation complex (\ref{eq:def-complex}) is given
  by (\ref{eq:spnr-def-complex}).
\end{proof}

\begin{proposition}
  \label{prop:simple-spnr}
  An $\Sp(2n,\RR)$-Higgs bundle $(V,\varphi)$ is simple if and only if
  $\Aut(V,\varphi) = \{\pm \Id \}$.
\end{proposition}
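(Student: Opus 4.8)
The plan is to use the definition of simplicity from Definition~\ref{def:simple}, which says that $(V,\varphi)$ is simple precisely when $\Aut(V,\varphi) = \ker\iota \cap Z(H^\CC)$. Here $H^\CC = \GL(n,\CC)$, so $Z(H^\CC)$ consists of the scalar matrices $\lambda\Id$ with $\lambda\in\CC^\times$. Thus the entire proof reduces to computing the subgroup of scalars $\lambda\Id$ that lie in $\ker\iota$, where $\iota\colon \GL(n,\CC)\to\GL(\mclie)$ is the isotropy representation on $\mclie = S^2\VV\oplus S^2\VV^*$. So first I would make explicit how a scalar $\lambda\Id$ acts under $\iota$.

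The key computation is the following. An element $g\in\GL(n,\CC)$ acts on $S^2\VV$ as $g\cdot(v\otimes_S w) = gv\otimes_S gw$, so a scalar $g=\lambda\Id$ acts on $S^2\VV$ by multiplication by $\lambda^2$; dually, it acts on $S^2\VV^*$ by multiplication by $\lambda^{-2}$. Hence $\iota(\lambda\Id) = \Id_{\mclie}$ if and only if $\lambda^2 = 1$ and $\lambda^{-2}=1$, i.e.\ if and only if $\lambda = \pm 1$. Therefore $\ker\iota\cap Z(H^\CC) = \{\pm\Id\}$. Combining this with Definition~\ref{def:simple}, the condition $\Aut(V,\varphi) = \ker\iota\cap Z(H^\CC)$ becomes exactly $\Aut(V,\varphi) = \{\pm\Id\}$, which is the claimed equivalence.

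To present this cleanly I would first recall that, for $\Sp(2n,\RR)$, one has $\ker d\iota = 0$ (as already used in the proof of Proposition~\ref{prop:i-simple-spnr}); this is the infinitesimal counterpart, but for the group-level statement I need the kernel of $\iota$ itself intersected with the centre, which the scalar computation above handles directly. I would then note that $\{\pm\Id\}$ is always contained in $\Aut(V,\varphi)$ for any $\Sp(2n,\RR)$-Higgs bundle, since $-\Id$ fixes $V$ and commutes with $\beta$ and $\gamma$ (indeed $-\Id$ acts trivially on $S^2V\otimes K$ and $S^2V^*\otimes K$ by the same $\lambda^2$ computation with $\lambda=-1$), so the inclusion $\ker\iota\cap Z(H^\CC)\hookrightarrow\Aut(E,\varphi)$ mentioned before Definition~\ref{def:simple} is consistent and the equality $\Aut(V,\varphi)=\{\pm\Id\}$ is the genuine content of simplicity in this case.

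I do not anticipate a real obstacle here, as the statement is essentially a matter of unwinding Definition~\ref{def:simple} and performing the elementary weight computation for how scalars act on the symmetric square and its dual. The only point requiring a little care is to verify that every scalar acting trivially on $\mclie$ must satisfy $\lambda^2=1$ \emph{and} $\lambda^{-2}=1$; since these two conditions are equivalent, the kernel is genuinely $\{\pm\Id\}$ rather than all of $\U(1)$ or some larger finite group, and this is what pins down the right-hand side of the equivalence.
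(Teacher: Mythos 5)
Your proof is correct and follows essentially the same route as the paper's: both reduce the statement to Definition~\ref{def:simple} and the computation that a scalar $\lambda\Id \in Z(H^\CC)=\CC^\times$ acts on $\mlie^\CC = S^2\VV\oplus S^2\VV^*$ by $(\beta,\gamma)\mapsto(\lambda^2\beta,\lambda^{-2}\gamma)$, so that $\ker\iota\cap Z(H^\CC)=\{\pm\Id\}$. The extra remarks you add (the aside on $\ker d\iota=0$ and the observation that $\{\pm\Id\}\subseteq\Aut(V,\varphi)$ always holds) are harmless but not needed.
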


\begin{proof}
  Since $\lambda \in \CC^* = Z(H^{\CC})$ acts
  on the isotropy representation $\lie{m}^\CC=S^2\VV\oplus S^2\VV^*$
  by $(\beta,\gamma) \mapsto (\lambda^2\beta,\lambda^{-2}\gamma)$
  we have $\ker \iota\cap Z(H^{\CC})=\{\pm 1\}$, so the statement
  follows directly from Definition~\ref{def:simple}.
\end{proof}

\subsection{Stable and non-simple $\Sp(2n,\RR)$-Higgs
  bundles}
\label{sec:stable-non-simple-spnr-higgs}

The goal of this section is to obtain a complete understanding of how
a stable $\Sp(2n,\RR)$-Higgs bundle can fail to be simple. The main
result is Theorem~\ref{thm:stable-not-simple-spnr-higgs}.
In order to state this theorem,  we need to describe some special
$\Sp(2n,\RR)$-Higgs bundles arising from $G$-Higgs bundles associated
to certain real subgroups $G \subseteq \Sp(2n,\RR)$.

\subsubsection*{The subgroup $G = \U(n)$}

Observe that a $\U(n)$-Higgs bundle is nothing but a holomorphic
vector bundle $V$ of rank $n$. The standard inclusion
$\upsilon^{\U(n)}\colon\U(n) \into \Sp(2n,\RR)$ gives the
correspondence
\begin{equation}
  \label{eq:un-sp2n}
  V \mapsto \upsilon^{\U(n)}_*V = (V,0)
\end{equation}
associating the $\Sp(2n,\RR)$-Higgs bundle $\upsilon^{\U(n)}_*V =
(V,0)$ to the holomorphic vector bundle $V$.

\subsubsection*{The subgroup $G = \U(p,q)$}

In the following we assume that $p,q \geq 1$. As is easily seen, a
$\U(p,q)$-Higgs bundle (cf.\
\cite[Definition~3.3]{bradlow-garcia-prada-gothen:2003}) is given by the data
$(\tilde{V},\tilde{W},\tilde\varphi=\tilde\beta+\tilde\gamma)$,
where $\tilde{V}$ and $\tilde{W}$ are holomorphic vector bundles
of rank $p$ and $q$, respectively, $\tilde\beta\in H^0(K\otimes
\Hom(\tilde{W},\tilde{V}))$ and $\tilde\gamma\in H^0(K\otimes
\Hom(\tilde{V},\tilde{W}))$. Let $n=p+q$. The imaginary part of
the standard indefinite Hermitian metric of signature $(p,q)$ on
$\CC^n$ is a symplectic form, and thus there is an inclusion
$\upsilon^{\U(p,q)}\colon\U(p,q) \into \Sp(2n,\RR)$. At the level
of $G$-Higgs bundles, this gives rise to the correspondence
\begin{equation}
\label{eq:upq-sp2n}
(\tilde{V},\tilde{W},\tilde\varphi=\tilde\beta+\tilde\gamma)
\mapsto \upsilon^{\U(p,q)}_*(\tilde{V},\tilde{W},\tilde\varphi)
= (V, \varphi=\beta+\gamma),
\end{equation}
where
\begin{displaymath}
    V = \tilde{V} \oplus \tilde{W}^*,\quad
    \beta =
    \begin{pmatrix}
      0 & \tilde{\beta} \\
      \tilde\beta & 0
    \end{pmatrix}\quad\text{and}\quad
    \gamma =
    \begin{pmatrix}
      0 & \tilde\gamma \\
      \tilde\gamma & 0
    \end{pmatrix}.
\end{displaymath}

In the following we shall occasionally slightly abuse language, saying
simply that $\upsilon^{\U(n)}_*V$ is a $\U(n)$-Higgs bundle and that
$\upsilon^{\U(p,q)}_*(\tilde{V},\tilde{W},\tilde\varphi)$ is a
$\U(p,q)$-Higgs bundle.

Another piece of convenient notation is the following. Let
$(V_i,\varphi_i)$ be $\Sp(2n_i,\RR)$-Higgs bundles and let $n = \sum
n_i$. We can define an $\Sp(2n,\RR)$-Higgs bundle $(V,\varphi)$ by
setting
\begin{displaymath}
  V = \bigoplus V_i \quad\text{and}\quad
  \varphi = \sum \varphi_i
\end{displaymath}
by using the canonical inclusions $H^0(K \otimes (S^2V_i \oplus
S^2V_i^*)) \subset H^0(K \otimes (S^2V \oplus S^2V^*))$. We shall
slightly abuse language and write $(V,\varphi) = \bigoplus
(V_i,\varphi_i)$, referring to this as \textbf{the direct sum} of the
$(V_i,\varphi_i)$.

\begin{remark}
  \label{rem:V-non-simple}
  Note that $\upsilon^{\U(n)}_*V = (V,0)$  is never simple as an
  $\Sp(2n,\RR)$-Higgs bundle, since its automorphism group contains
  the non-zero scalars $\CC^*$.
  Similarly, the $\Sp(2n,\RR)$-Higgs bundle
  $\upsilon^{\U(p,q)}_*(\tilde{V},\tilde{W},\tilde\varphi)$ is not
  simple, since it has the automorphism $ \left(
    \begin{smallmatrix}
      1 & 0 \\
      0 & -1
    \end{smallmatrix}
  \right)
  $.
\end{remark}

We shall need a few lemmas for the proof of
Theorem~\ref{thm:stable-not-simple-spnr-higgs}.
\begin{lemma}
  \label{lem:summand-nonstable}
  Let $(V,\varphi)$ be an $\Sp(2n,\RR)$-Higgs bundle and assume that
  there is a non-trivial splitting $(V,\varphi) = (V_a \oplus
  V_b,\varphi_a + \varphi_b)$ such that $\varphi_{\nu} \in H^0(K \otimes
  (S^2V_\nu \oplus S^2V_\nu^*))$ for $\nu=a,b$.  Assume that the
  $\Sp(2n_a,\RR)$-Higgs bundle $(V_a,\varphi_a)$ is not stable. Then
  $(V,\varphi)$ is not stable.
\end{lemma}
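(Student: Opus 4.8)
The plan is to read off the failure of stability for $(V,\varphi)$ from the simplified stability criterion of Proposition~\ref{spnr-stability}, by transporting a destabilizing filtration of $V_a$ to one of $V$. Since $(V_a,\varphi_a)$ is not stable, that criterion applied to $V_a$ produces a filtration $0\subset V_1^a\subset V_2^a\subset V_a$ satisfying the containment conditions of the shape (\ref{eq:condicions2}) for $(\beta_a,\gamma_a)$ and violating the corresponding inequality, i.e.\ with $\deg V_a-\deg V_1^a-\deg V_2^a\leq 0$. The goal is to build from this a filtration of $V$ carrying the same non-positive numerical value and still satisfying (\ref{eq:condicions2}), now for $(\beta,\gamma)$.

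First I would record that the splitting makes $\beta=\beta_a\oplus\beta_b$ and $\gamma=\gamma_a\oplus\gamma_b$ block-diagonal for $V=V_a\oplus V_b$. The naive choice $V_1=V_1^a$, $V_2=V_2^a$ then fails, because $\beta_b$ sends $V_b^*$ into $K\otimes V_b\not\subset K\otimes V_1$; the fix is to absorb $V_b$ into the second step, setting
$$
V_1=V_1^a,\qquad V_2=V_2^a\oplus V_b,\qquad 0\subset V_1\subset V_2\subset V.
$$
The numerical value is unchanged, because $V_b$ contributes equally to $\deg V$ and to $\deg V_2$:
$$
\deg V-\deg V_1-\deg V_2=(\deg V_a+\deg V_b)-\deg V_1^a-(\deg V_2^a+\deg V_b)=\deg V_a-\deg V_1^a-\deg V_2^a\leq 0.
$$
Properness requires only a short check: both $V_1$ and $V_2$ fail to be proper subbundles of $V$ only when $V_1^a=0$ and $V_2^a=V_a$, but that filtration has numerical value $0$ and no proper subbundle, so it cannot be the one witnessing non-stability of $(V_a,\varphi_a)$; hence at least one of $V_1,V_2$ is proper.

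The step I expect to be the main obstacle is checking that the enlarged filtration still satisfies (\ref{eq:condicions2}) for $(\beta,\gamma)$. I would verify this through the matrix reformulation of Remark~\ref{rem:matrix-form}, tracking how the annihilators decompose in $V^*=V_a^*\oplus V_b^*$: writing $A_i\subset V_a^*$ for the annihilator of $V_i^a$ inside $V_a^*$, one gets $V_1^\perp=A_1\oplus V_b^*$ and $V_2^\perp=A_2\oplus 0$. Feeding the four containments that Remark~\ref{rem:matrix-form} extracts from (\ref{eq:condicions2}) for the $V_a$-filtration (namely $\beta_a A_2\subset K\otimes V_1^a$, $\beta_a A_1\subset K\otimes V_2^a$, $\gamma_a V_1^a\subset K\otimes A_2$, $\gamma_a V_2^a\subset K\otimes A_1$) into the block-diagonal $\beta,\gamma$, each of the four containments needed for $V$ drops out. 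The decisive cancellations are $\beta(V_2^\perp)=\beta_a(A_2)\subset K\otimes V_1$, where the troublesome factor $V_b^*$ is simply absent from $V_2^\perp$, and $\gamma(V_2)=\gamma_a(V_2^a)\oplus\gamma_b(V_b)\subset K\otimes A_1\oplus K\otimes V_b^*=K\otimes V_1^\perp$, where the extra piece $\gamma_b(V_b)$ is exactly accommodated by the $V_b^*$ summand of $V_1^\perp$. With (\ref{eq:condicions2}) confirmed, a proper subbundle present, and the numerical value $\leq 0$, the strict inequality (\ref{eq:spn-simplified-stab}) fails, and Proposition~\ref{spnr-stability} gives that $(V,\varphi)$ is not stable.
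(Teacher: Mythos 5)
Your proof is correct and is essentially the paper's own argument: the same enlarged filtration $V_1=V_{a1}$, $V_2=V_{a2}\oplus V_b$, the same verification of (\ref{eq:condicions2}) via Remark~\ref{rem:matrix-form}, and the same degree identity. You merely spell out the details the paper leaves as ``one readily sees'' (the four containments in terms of annihilators, and the properness check ruling out the filtration $(0,V_a)$), which is a welcome but inessential refinement.
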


\begin{proof}
  Since $(V_a,\varphi_a)$ is not stable there is a filtration $0
  \subset V_{a1} \subset V_{a2} \subset V_a$ such that
  \begin{displaymath}
  \beta\in H^0(K\otimes(S^2V_{a2} + V_{a1}\otimes_S V)),
  \quad \gamma\in H^0(K\otimes(S^2V_{a1}^{\perp} + V_{a2}^{\perp}\otimes_S V^*))
  \end{displaymath}
  and
  \begin{equation}
    \label{eq:degVa}
    \deg(V_a) - \deg(V_{a1}) - \deg(V_{a1}) \leq 0.
  \end{equation}
  Consider the filtration $0 \subset V_1 \subset V_2 \subset V$
  obtained by setting
  \begin{displaymath}
    V_1 = V_{a1},\quad V_2 = V_{a2} \oplus V_b.
  \end{displaymath}
  Using Remark~\ref{rem:matrix-form} one readily sees that this
  filtration satisfies the conditions (\ref{eq:condicions2}). Since
  \begin{displaymath}
    \deg(V) - \deg(V_1) - \deg(V_2) = \deg(V_a) - \deg(V_{a1}) - \deg(V_{a1}),
  \end{displaymath}
  it follows from (\ref{eq:degVa}) that $(V,\varphi)$ is not stable.
\end{proof}

\begin{lemma}
  \label{lem:gln-summand-nonstable}
  Let $(V,\varphi)$ be an $\Sp(2n,\RR)$-Higgs bundle and assume that
  there is a non-trivial splitting $V = V_a \oplus V_b$ such that
  $\varphi \in H^0(K \otimes (S^2V_a \oplus S^2V_a^*))$. In other
  words, $(V,\varphi) = (V_a\oplus V_b, \varphi_a + 0)$ with $(V_b,0) =
  \upsilon_*^{\U(n_b)}V_b$.  Then $(V,\varphi)$ is not stable.
\end{lemma}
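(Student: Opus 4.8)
The plan is to apply the simplified stability criterion of Proposition~\ref{spnr-stability} directly, by exhibiting a filtration $0 \subset V_1 \subset V_2 \subset V$ that satisfies the support conditions (\ref{eq:condicions2}) but for which $\deg(V) - \deg(V_1) - \deg(V_2) \leq 0$, contradicting strict stability. First I would unpack the hypothesis. Writing $\varphi = (\beta,\gamma)$ and using the splitting $V = V_a \oplus V_b$, the decomposition $S^2 V = S^2 V_a \oplus (V_a \otimes_S V_b) \oplus S^2 V_b$ shows that the assumption $\varphi \in H^0(K \otimes (S^2 V_a \oplus S^2 V_a^*))$ means precisely $\beta \in H^0(K \otimes S^2 V_a)$ and $\gamma \in H^0(K \otimes S^2 V_a^*)$; both Higgs fields are supported on the $V_a$ summand. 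Non-triviality of the splitting guarantees that $V_a$ and $V_b$ are each proper nonzero subbundles of $V$.

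The key observation is that the sign of $\deg V_b$ dictates which of two filtrations to use, and since every integer satisfies $\deg V_b \leq 0$ or $\deg V_b \geq 0$, one of them is always available. If $\deg V_b \leq 0$, I take $V_1 = 0$ and $V_2 = V_a$: here $\deg(V) - \deg(V_1) - \deg(V_2) = \deg V - \deg V_a = \deg V_b \leq 0$. If instead $\deg V_b \geq 0$, I take $V_1 = V_b$ and $V_2 = V$: here $\deg(V) - \deg(V_1) - \deg(V_2) = -\deg V_b \leq 0$. In the first filtration the jump is provided by the proper nonzero subbundle $V_2 = V_a$, and in the second by $V_1 = V_b$, so in either case stability would force the strict inequality $> 0$, which fails.

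It then remains to verify the support conditions (\ref{eq:condicions2}) in each case, which is most transparent using the block-matrix description of Remark~\ref{rem:matrix-form}. For the first filtration, the $\beta$-condition is $\beta \in H^0(K \otimes (S^2 V_a + 0)) = H^0(K \otimes S^2 V_a)$, which holds by hypothesis, while the $\gamma$-condition is automatic because $V_1 = 0$ gives $V_1^\perp = V^*$. For the second filtration, the $\beta$-condition is automatic because $V_2 = V$, while the $\gamma$-condition reads $\gamma \in H^0(K \otimes S^2 V_b^\perp)$; since $V_b^\perp = V_a^*$ under the splitting $V^* = V_a^* \oplus V_b^*$, this is exactly $\gamma \in H^0(K \otimes S^2 V_a^*)$, again true by hypothesis. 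Combining, $(V,\varphi)$ admits a destabilizing filtration and is therefore not stable.

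I do not expect a serious obstacle here: the argument is a short two-case computation whose only delicate point is the bookkeeping of the annihilator subbundles $V_i^\perp$ and the careful checking of (\ref{eq:condicions2}). I would note in passing that for $\deg V_b \neq 0$ one could alternatively deduce the result from Lemma~\ref{lem:summand-nonstable} together with Remark~\ref{phi=0} (since $(V_b,0)$ fails to be stable unless $\deg V_b = 0$), but the direct filtration argument above has the advantage of treating all values of $\deg V_b$ uniformly, including the borderline case $\deg V_b = 0$ where $(V,\varphi)$ is merely semistable but not stable.
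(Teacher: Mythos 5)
Your proof is correct, and it takes a route that overlaps with but is more self-contained than the paper's. The paper's proof runs through Lemma~\ref{lem:summand-nonstable}: if $(V,\varphi)$ were stable, that lemma (in contrapositive) forces the summand $(V_b,0)$ to be a stable $\Sp(2n_b,\RR)$-Higgs bundle, which by Remark~\ref{phi=0} means $V_b$ is a stable vector bundle of degree zero; then $\deg V=\deg V_a$, and the single filtration $V_1=0$, $V_2=V_a$ --- your first one --- gives $\deg(V)-\deg(V_1)-\deg(V_2)=\deg V_b=0$, contradicting the strict inequality in Proposition~\ref{spnr-stability}. You dispense with Lemma~\ref{lem:summand-nonstable} and Remark~\ref{phi=0} entirely by adding the dual filtration $V_1=V_b$, $V_2=V$ and splitting into cases according to the sign of $\deg V_b$; your verification of the support conditions (\ref{eq:condicions2}) is correct in both cases (in particular the identification $V_b^{\perp}=V_a^*$ under $V^*=V_a^*\oplus V_b^*$, which settles the $\gamma$-condition). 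What the paper's argument buys is brevity, given that Lemma~\ref{lem:summand-nonstable} is already available; what yours buys is logical independence from that lemma and a uniform treatment of all values of $\deg V_b$, and you correctly identify the paper's strategy as the alternative in your closing remark. One small caveat: your parenthetical claim that in the borderline case $\deg V_b=0$ the bundle ``is merely semistable but not stable'' is not justified by your argument --- semistability of $(V,\varphi)$ need not hold there (for instance $V_b$, or $(V_a,\varphi_a)$, could itself be unstable); since the lemma only asserts failure of stability, this aside does not affect the proof, but it should be deleted or weakened.
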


\begin{proof}
  It is immediate from Lemma~\ref{lem:summand-nonstable} and
  Remark~\ref{phi=0} that $V_b$ is a stable vector bundle with
  $\deg(V_b) = 0$. Hence
  \begin{displaymath}
    \deg(V) = \deg(V_a).
  \end{displaymath}
  Consider the filtration $0 \subset V_1
  \subset V_2 \subset V$ obtained by setting $V_1 = 0$ and $V_2 = V_{a}$.
  As before this filtration satisfies
  (\ref{eq:condicions2}). Therefore the calculation
  \begin{displaymath}
    \deg(V) - \deg (V_1) -\deg(V_2) = \deg(V) - \deg(V_a) = 0
  \end{displaymath}
  shows that $(V,\varphi)$ is not stable.
\end{proof}

\begin{lemma}
  \label{lem:upq-nonstable}
  Let $(V,\varphi) = \upsilon^{\U(p,q)}_*(V_a, V_b^*, \tilde
  \varphi)$ be an $\Sp(2n,\RR)$-Higgs bundle arising from a
  $\U(p,q)$-Higgs bundle $(V_a, V_b^*, \tilde
  \varphi)$ with $p,q \geq 1$. Then $(V,\varphi)$ is not
  stable.
\end{lemma}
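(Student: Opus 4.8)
The plan is to produce a destabilizing filtration directly from the block structure of $(V,\varphi)$. By (\ref{eq:upq-sp2n}) with $\tilde{V}=V_a$ and $\tilde{W}=V_b^*$, the underlying bundle is $V=V_a\oplus V_b$ with $\rk V_a=p\geq 1$ and $\rk V_b=q\geq 1$, and, viewing $\beta\colon V^*\to K\otimes V$ and $\gamma\colon V\to K\otimes V^*$ as symmetric maps, the off-diagonal form in (\ref{eq:upq-sp2n}) says that $\gamma(V_a)\subset K\otimes V_b^*$, $\gamma(V_b)\subset K\otimes V_a^*$, $\beta(V_a^*)\subset K\otimes V_b$ and $\beta(V_b^*)\subset K\otimes V_a$. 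In other words both $V_a$ and $V_b$ are isotropic for $\gamma$ (and dually $V_a^*,V_b^*$ for $\beta$). The key observation is that this off-diagonal shape means neither $V_a$ nor $V_b$ is $\varphi$-invariant, so the earlier splitting lemmas (Lemma~\ref{lem:summand-nonstable}, Lemma~\ref{lem:gln-summand-nonstable}) do not apply; instead one must destabilize using an isotropic subbundle.

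First I would test the filtration $0\subset V_1\subset V_2\subset V$ with $V_1=V_2=V_a$ in Proposition~\ref{spnr-stability} (equivalently, the strictly increasing filtration $0\subsetneq V_a\subsetneq V$ with weights $\lambda=(-1,1)$ in Proposition~\ref{prop:sp2n-alpha-stability}, which legitimizes the degenerate choice). Here $V_a^{\perp}=V_b^*$, so by the reformulation in Remark~\ref{rem:matrix-form} the conditions (\ref{eq:condicions2}) reduce to $\beta(V_a^{\perp})\subset K\otimes V_a$ and $\gamma(V_a)\subset K\otimes V_a^{\perp}$, both of which hold by the isotropy noted above. This filtration is therefore admissible, and its test quantity is
\begin{displaymath}
  \deg(V)-\deg(V_1)-\deg(V_2)=\deg(V)-2\deg(V_a)=\deg(V_b)-\deg(V_a).
\end{displaymath}
The symmetric choice $V_1=V_2=V_b$ is admissible in the same way and yields the test quantity $\deg(V_a)-\deg(V_b)$.

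Since these two numbers are negatives of one another, at least one of them is $\leq 0$; and because $p,q\geq 1$ both $V_a$ and $V_b$ are proper nonzero subbundles, so the corresponding admissible filtration violates the strict inequality (\ref{eq:spn-simplified-stab}) demanded by stability. (When $\deg V_a=\deg V_b$ both filtrations give test quantity $0$, still violating strictness.) Hence $(V,\varphi)$ is not stable. The only genuinely delicate step is the bookkeeping of the previous paragraph: checking that the isotropic filtration $V_1=V_2=V_a$ is an admissible test object and that its conditions (\ref{eq:condicions2}) reduce exactly to isotropy of $V_a$; once this is established, the symmetry between $V_a$ and $V_b$ finishes the argument immediately.
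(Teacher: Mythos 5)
Your proof is correct and follows essentially the same route as the paper: both use the two degenerate admissible filtrations $V_1=V_2=V_a$ and $V_1=V_2=V_b$, verify the conditions (\ref{eq:condicions2}) from the off-diagonal block structure, and derive a sign contradiction between the two test quantities $\deg(V_b)-\deg(V_a)$ and $\deg(V_a)-\deg(V_b)$. The only cosmetic difference is that the paper phrases the endgame as ``both strict inequalities would force $\deg(V)<\deg(V)$,'' while you observe directly that at least one test quantity is $\leq 0$; these are logically equivalent.
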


\begin{proof}
  The $\Sp(2n,\RR)$-Higgs bundle $(V,\varphi)$ is given by
  \begin{displaymath}
    V = V_a \oplus V_b,\quad
    \beta =
    \begin{pmatrix}
      0 & \tilde\beta \\
      \tilde\beta & 0
    \end{pmatrix}
    \quad\text{and}\quad
    \gamma =
    \begin{pmatrix}
      0 & \tilde\gamma \\
      \tilde\gamma & 0
    \end{pmatrix}.
  \end{displaymath}
  Let $V_1 = V_2 = V_a$ and consider the filtration
  \begin{math}
    0 \subset V_1 \subset V_2 \subset V.
  \end{math}
  Again this filtration satisfies the conditions
  (\ref{eq:condicions2}). Thus, if $(V,\varphi)$ is stable, we have
  from (\ref{eq:spn-simplified-stab})
  \begin{displaymath}
    \deg(V) - 2\deg(V_a) < 0.
  \end{displaymath}
  Similarly, considering $V_1 = V_2 = V_b$, we obtain
  \begin{displaymath}
    \deg(V) - 2\deg(V_b) < 0,
  \end{displaymath}
  so we conclude that
  \begin{displaymath}
    \deg(V) = \deg(V_a) + \deg(V_b) < \deg(V),
  \end{displaymath}
  which is absurd.

\end{proof}

\begin{lemma}
  \label{lem:iso-unstable}
  Let $(\tilde V,\tilde\varphi)$ be an $\Sp(2\tilde{n},\RR)$-Higgs
  bundle. Then the $\Sp(4\tilde n,\RR)$-Higgs bundle $(\tilde V \oplus
  \tilde V, \tilde\varphi + \tilde\varphi)$ is not stable.
\end{lemma}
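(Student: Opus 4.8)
The plan is to produce a single explicit destabilizing filtration and feed it into Proposition~\ref{spnr-stability}. Write $(V,\varphi)=(\tilde V\oplus\tilde V,\tilde\varphi+\tilde\varphi)$ with $\varphi=(\beta,\gamma)$; by the direct sum convention the Higgs fields are block-diagonal, i.e. as symmetric maps $\beta\colon V^*\to V\otimes K$ and $\gamma\colon V\to V^*\otimes K$ we have $\beta=\left(\begin{smallmatrix}\tilde\beta&0\\0&\tilde\beta\end{smallmatrix}\right)$ and $\gamma=\left(\begin{smallmatrix}\tilde\gamma&0\\0&\tilde\gamma\end{smallmatrix}\right)$. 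The guiding idea is that the two isomorphic copies of $\tilde V$ endow $V$ with a pencil of ``graph'' subbundles isomorphic to $\tilde V$, and the task is to select one that is simultaneously adapted to the symmetric structures of $\beta$ and $\gamma$ via Remark~\ref{rem:matrix-form}. Note that no hypothesis on $(\tilde V,\tilde\varphi)$ (such as stability) will be needed, and we may assume $\tilde n\geq 1$, the case $\tilde n=0$ being vacuous.

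Concretely, I would take $V_1=V_2=\Delta:=\{(v,\imag v)\mid v\in\tilde V\}\subset\tilde V\oplus\tilde V$, the graph of $\imag\cdot\mathrm{id}_{\tilde V}$ (equivalently, $\tilde V$ tensored with an isotropic line of the standard symmetric form on $\CC^2$). A direct computation of the annihilator gives $\Delta^{\perp}=\{(-\imag\eta,\eta)\mid\eta\in\tilde V^*\}\subset V^*$. Using the block-diagonal form above one finds $\gamma(v,\imag v)=(\tilde\gamma v,\imag\tilde\gamma v)$, which lies in $K\otimes\Delta^{\perp}$ (take $\eta=\imag\tilde\gamma v$), and $\beta(-\imag\eta,\eta)=(-\imag\tilde\beta\eta,\tilde\beta\eta)$, which lies in $K\otimes\Delta$. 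By Remark~\ref{rem:matrix-form} these two inclusions $\gamma\Delta\subset K\otimes\Delta^{\perp}$ and $\beta\Delta^{\perp}\subset K\otimes\Delta$ are precisely the conditions (\ref{eq:condicions2}) for the filtration $V_1=V_2=\Delta$.

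Since $\Delta\cong\tilde V$ we have $\deg V_1=\deg V_2=\deg\tilde V$ while $\deg V=2\deg\tilde V$, so $\deg V-\deg V_1-\deg V_2=0$. As $\Delta$ is a proper nonzero subbundle (here $\tilde n\geq 1$), were $(V,\varphi)$ stable Proposition~\ref{spnr-stability} would force the strict inequality (\ref{eq:spn-simplified-stab}), $\deg V-\deg V_1-\deg V_2>0$, contradicting the computed value $0$. Hence $(V,\varphi)$ is not stable. The one genuinely delicate point is the choice of graph: the naive diagonal $\{(v,v)\}$ fails the $\gamma$-condition, because $\gamma$ sends it into the diagonal of $V^*$ rather than into its annihilator. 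The factor $\imag$ — isotropy for the standard form on $\CC^2$ — is exactly what makes $\gamma\Delta$ land in $\Delta^{\perp}$ and $\beta\Delta^{\perp}$ land in $\Delta$, so it is here that the symmetric nature of $\beta$ and $\gamma$ is used; the remaining verifications are routine.
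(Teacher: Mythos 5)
Your proof is correct and is essentially the same argument as the paper's: the paper conjugates the block-diagonal Higgs field by the automorphism $f=\frac{1}{\sqrt{2\imag}}\left(\begin{smallmatrix}1&\imag\\ \imag&1\end{smallmatrix}\right)$ so that $\beta,\gamma$ become off-diagonal and then destabilizes with the coordinate filtration $V_1=V_2=\tilde V$, whereas you destabilize the original bundle directly with $V_1=V_2=\Delta=\{(v,\imag v)\}$, which is precisely $f^{-1}$ of a coordinate copy of $\tilde V$. The two proofs thus differ only by this change of basis; both rest on the same application of Proposition~\ref{spnr-stability} to a proper subbundle with $V_1=V_2$ of degree $\deg\tilde V$, with the isotropy factor $\imag$ doing the same work in each.
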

\begin{proof}
  Consider the automorphism
  $
    f =
    \frac{1}{\sqrt{2\mathbf{i}}}
    \left(
      \begin{smallmatrix}
      1 & \mathbf{i} \\
      \mathbf{i} & 1
    \end{smallmatrix}
    \right)
  $
  of $V = \tilde V \oplus \tilde V$. Write
  $\beta = \left(
    \begin{smallmatrix}
      \tilde\beta & 0 \\
      0 & \tilde\beta
    \end{smallmatrix}
    \right)$
  and
  $\gamma = \left(
    \begin{smallmatrix}
      \tilde\gamma & 0 \\
      0 & \tilde\gamma
    \end{smallmatrix}
  \right)$.
  Then we have that
  $$
  (V,\varphi) \simeq (\tilde V \oplus \tilde
  V,f\cdot\beta+f\cdot\gamma),
  $$
  where
  \begin{displaymath}
      f \cdot \beta = f\beta f^t =
      \begin{pmatrix}
        0 & \tilde\beta \\
        \tilde\beta & 0
      \end{pmatrix}\quad\text{and}\quad
      f \cdot\gamma  = (f^t)^{-1}\gamma f^{-1} =
      \begin{pmatrix}
        0 & \tilde\gamma \\
        \tilde\gamma & 0
      \end{pmatrix}.
  \end{displaymath}
  We shall see that $(\tilde V \oplus \tilde
  V,f\cdot\beta+f\cdot\gamma)$ is not stable. To this end, consider
  the filtration $0 \subset V_1 \subset V_2 \subset \tilde V \oplus
  \tilde V$ obtained by setting $V_1 = V_2 = \tilde V$. This satisfies
  (\ref{eq:condicions2}). But, on the other hand,
  \begin{displaymath}
    \deg(\tilde V \oplus \tilde V) - \deg(V_1) - \deg(V_2) = 0
  \end{displaymath}
  so $(\tilde V \oplus \tilde V,f\cdot\beta+f\cdot\gamma)$ is not
  stable.
\end{proof}

\begin{theorem}
  \label{thm:stable-not-simple-spnr-higgs}
  Let $(V,\varphi)$ be a stable $\Sp(2n,\RR)$-Higgs bundle. If
  $(V,\varphi)$ is not simple, then one of the following alternatives
  occurs:
  \begin{enumerate}
  \item The vanishing $\varphi=0$ holds and $V$ is a stable vector
    bundle of degree zero. In this case, $\Aut(V,\varphi) \simeq \CC^*$.
    \label{item:phi-eq-0}
  \item There is a nontrivial decomposition, unique up to reordering,
    $$(V,\varphi) = (\bigoplus_{i=1}^{k} V_i,\sum_{i=1}^{k}
    \varphi_i)$$
    with $\varphi_i = \beta_i + \gamma_i \in H^0(K \otimes (S^2V_i \oplus
    S^2V_i^*))$, such that each $(V_i,\varphi_i)$ is a stable and simple
    $\Sp(2n_i,\RR)$-Higgs bundle. Furthermore, each $\varphi_i \neq 0$
    and $(V_i,\varphi_i) \not\simeq (V_j,\varphi_j)$ for $i \neq j$.
    The automorphism group of $(V,\varphi)$ is
    \begin{displaymath}
      \Aut(V,\varphi) \simeq \Aut(V_1,\varphi_1) \times \dots \times
      \Aut(V_k,\varphi_k) \simeq (\ZZ/2)^k.
    \end{displaymath}
    \label{item:phi-neq-0}
  \end{enumerate}
\end{theorem}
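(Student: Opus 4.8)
The plan is to follow the dichotomy of the statement, extracting everything from the structure of the group $\Aut(V,\varphi)$. First I would dispose of the case $\varphi=0$: by Remark~\ref{phi=0} stability of $(V,0)$ is equivalent to $V$ being a stable vector bundle of degree zero, so $V$ is simple and every bundle automorphism is a nonzero scalar; as all scalars fix $\varphi=0$, this gives $\Aut(V,\varphi)=\CC^*$, which is alternative~\eqref{item:phi-eq-0}. From now on assume $\varphi\neq 0$. The first key point is that $\Aut(V,\varphi)$ is then \emph{finite}: by Proposition~\ref{prop:stable-simple-G-higgs} the bundle is infinitesimally simple, so $\HH^0(C^{\bullet}(V,\varphi))=0$ by Proposition~\ref{prop:i-simple-spnr}; since this hypercohomology group is the Lie algebra $\aut(V,\varphi)$ and $\Aut(V,\varphi)$ is reductive by Theorem~\ref{lemma:reductive-stabilizer}, a reductive group with trivial Lie algebra is finite.

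The crux is to show that every $g\in\Aut(V,\varphi)$ is an involution. Having finite order, $g$ is fibrewise diagonalizable with root-of-unity eigenvalues, giving a holomorphic eigenbundle decomposition $V=\bigoplus_\mu V_\mu$. Writing out the isotropy action $(\beta,\gamma)\mapsto(g\beta g^t,(g^t)^{-1}\gamma g^{-1})$, the condition $g\cdot\varphi=\varphi$ forces the $V_\mu\otimes_S V_\nu$ components of $\beta$ and of $\gamma$ to vanish unless $\mu\nu=1$; hence $\beta$ and $\gamma$ couple $V_\mu$ only to $V_{\mu^{-1}}$. Suppose an eigenvalue $\mu\neq\pm1$ occurs. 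If $\mu^{-1}$ is not an eigenvalue, then $V_\mu$ splits off as a $\U(n_\mu)$-summand carrying no Higgs field, and Lemma~\ref{lem:gln-summand-nonstable} contradicts stability; if $\mu^{-1}$ is an eigenvalue, then $V_\mu\oplus V_{\mu^{-1}}$ splits off as a $\U(p,q)$-summand with $p,q\geq 1$, and Lemmas~\ref{lem:upq-nonstable} and~\ref{lem:summand-nonstable} again contradict stability. Therefore $g^2=\Id$, so $\Aut(V,\varphi)$ has exponent two, is abelian, and is isomorphic to $(\ZZ/2)^k$ for some $k$.

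Finally I would build the decomposition. Choosing $g\neq\pm\Id$ (which exists since $(V,\varphi)$ is not simple), its $\pm1$-eigenbundles yield $(V,\varphi)=(V_+,\varphi_+)\oplus(V_-,\varphi_-)$ with $\varphi_\pm\in H^0(K\otimes(S^2V_\pm\oplus S^2V_\pm^*))$; each summand is stable by Lemma~\ref{lem:summand-nonstable}, and each has $\varphi_\pm\neq 0$, since otherwise Lemma~\ref{lem:gln-summand-nonstable} would contradict stability. Thus each summand is again infinitesimally simple with finite automorphism group, and I can iterate on rank until reaching stable, simple summands $(V_i,\varphi_i)$ with $\varphi_i\neq 0$. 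These are pairwise non-isomorphic, for an isomorphism $(V_i,\varphi_i)\simeq(V_j,\varphi_j)$ would produce a $(\tilde V\oplus\tilde V,\tilde\varphi+\tilde\varphi)$-summand, destabilizing $(V,\varphi)$ via Lemmas~\ref{lem:iso-unstable} and~\ref{lem:summand-nonstable}. For the automorphism group, all diagonal involutions $\diag(\pm\Id_{V_i})$ lie in $\Aut(V,\varphi)$; conversely, since $\Aut(V,\varphi)$ is abelian, any $\Phi$ commutes with the involution equal to $-\Id$ on $V_i$ and $+\Id$ elsewhere, hence preserves each $V_i$, and simplicity of $(V_i,\varphi_i)$ forces $\Phi|_{V_i}=\pm\Id$. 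This gives $\Aut(V,\varphi)=\prod_i\{\pm\Id_{V_i}\}\cong(\ZZ/2)^k$, which is alternative~\eqref{item:phi-neq-0}.

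The hard part will be the involution argument: reading off precisely, from $g\cdot\varphi=\varphi$, that $\beta$ and $\gamma$ pair $V_\mu$ only with $V_{\mu^{-1}}$, and then packaging the offending eigenbundles as genuine $\U(n)$- or $\U(p,q)$-Higgs subbundles that split off as honest direct summands, so that the destabilizing lemmas apply. One must also check that finiteness of the automorphism group propagates through the iteration, which is guaranteed exactly because stability forces every summand to carry a nonzero Higgs field.
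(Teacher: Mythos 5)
Your proposal is correct and takes essentially the same route as the paper: eigenbundle decomposition of a semisimple automorphism, the constraint that invariance of $\varphi$ forces $\beta$ and $\gamma$ to couple $V_\mu$ only to $V_{\mu^{-1}}$, and the same destabilizing Lemmas~\ref{lem:gln-summand-nonstable}, \ref{lem:upq-nonstable}, \ref{lem:summand-nonstable} and~\ref{lem:iso-unstable}, iterated to reach stable, simple, pairwise non-isomorphic summands. The only (harmless) difference is organizational: you first prove $\Aut(V,\varphi)$ is finite via infinitesimal simplicity and Theorem~\ref{lemma:reductive-stabilizer}, so that every automorphism is diagonalizable, and show all of them are involutions at once, whereas the paper picks a single semisimple $\sigma$ by reductivity and deduces $\sigma^2=\Id$ only after killing the off-diagonal Higgs components --- just note that in your first case the splitting can be trivial (when $g=\mu\Id$ with $\mu\neq\pm 1$), where Lemma~\ref{lem:gln-summand-nonstable} does not apply, but then $\varphi=0$ directly contradicts the standing assumption.
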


\begin{proof}
  First of all, we note that if $\varphi = 0$ then it is immediate
  from Remark~\ref{phi=0} that alternative
  (\ref{item:phi-eq-0}) occurs.

  Next, consider the case $\varphi\neq 0$. Since $(V,\varphi)$ is not
  simple, there is an automorphism $\sigma\in
  \Aut(V,\varphi)\setminus\{\pm 1\}$.  We know from Lemma~2.25 in
  \cite{garcia-prada-gothen-mundet:2009a} that $\Aut(V,\varphi)$ is
  reductive. This implies that $\sigma$ may be chosen to be
  semisimple, so that there is a splitting $V = \bigoplus V_i$ in
  eigenbundles of $\sigma$ such that the action of $\sigma$ on $V_i$
  is given by multiplication by some $\sigma_i \in \CC^*$.  If
  $\sigma$ were a multiple of the identity, say $\sigma=\lambda\Id$
  with $\lambda\in \CC^*$, then it would act on $\varphi =
  \beta+\gamma$ by $\beta\mapsto \lambda^2\beta$ and $\gamma\mapsto
  \lambda^{-2}\gamma$. Since $\varphi\neq 0$ this would force $\sigma$
  to be equal to $1$ or $-1$, in contradiction with our choice.  Hence
  $\sigma$ is not a multiple of the identity, so the decomposition $V
  = \bigoplus V_i$ has more than one summand.  The action of $\sigma$
  on $S^2V \oplus S^2V^*$ is given by
  \begin{equation}\label{eq:sigma-ij-1}
    \sigma = \sigma_i\sigma_j\colon V_i\otimes V_j \to V_i\otimes V_j
    \qquad\text{and}\qquad
    \sigma = \sigma_i^{-1}\sigma_j^{-j}\colon V_i^*\otimes V_j^* \to
    V_i^*\otimes V_j^*.
  \end{equation}
  If we denote by $\varphi_{ij}= \beta_{ij} + \gamma_{ij}$ the
  component of $\varphi$ in $H^0(K \otimes (V_i \otimes V_j \oplus V^*_i
  \otimes V^*_j))$ (symmetrizing the tensor product if $i=j$), then
  \begin{equation}
    \label{eq:sigma-ij}
    \sigma_i\sigma_j \neq 1 \implies \varphi_{ij} = 0.
  \end{equation}

  Suppose that $\varphi_{i_0j_0} \neq 0$ for some $i_0 \neq j_0$. From
  (\ref{eq:sigma-ij}) we conclude that $\sigma_{i_0}\sigma_{j_0} =
  1$. But then $\sigma_{i}\sigma_{j_0} \neq 1$ for $i \neq i_0$ and
  $\sigma_{i_0}\sigma_{j} \neq 1$ for $j \neq j_0$. Hence, again by
  (\ref{eq:sigma-ij}), $\varphi_{ij_0} = 0 = \varphi_{i_0j}$ if $i\neq
  i_0$ or $j \neq j_0$. Thus $(V_{i_0}, V_{j_0}^*,\varphi_{i_0j_0})$
  is a $\U(p,q)$-Higgs bundle and we have a non-trivial decomposition
  $(V,\varphi) = (V_a\oplus V_b,\varphi_a + \varphi_b)$ with
  $(V_a,\varphi_a) = \upsilon_*^{\U(p,q)}(V_{i_0},
  V_{j_0}^*,\varphi_{i_0j_0})$. By Lemma~\ref{lem:upq-nonstable}
  the $\Sp(2n_a,\RR)$-Higgs bundle $(V_a,\varphi_a)$ is not stable so, by
  Lemma~\ref{lem:summand-nonstable}, $(V,\varphi)$ is not stable.
  This contradiction shows that $\varphi_{ij} = 0$ for $i\neq j$.

  It follows that $\varphi = \sum \varphi_i$ with $\varphi_i \in H^0(K
  \otimes (S^2V_i \oplus S^2V_i^*))$. By
  Lemma~\ref{lem:summand-nonstable} each of the summands
  $(V_i,\varphi_i)$ is a stable $\Sp(2n,\RR)$-Higgs bundle and by
  Lemma~\ref{lem:gln-summand-nonstable} each $\varphi_i$ must be
  non-zero. Also, from (\ref{eq:sigma-ij-1}), $\sigma\cdot\beta_i =
  \sigma_i^2\beta_i $ and $\sigma\cdot\gamma_i = \sigma_i^{-2}\gamma_i$ so
  we conclude that the only possible eigenvalues of $\sigma$ are $1$
  and $-1$. Thus the decomposition $(V,\varphi) = \bigoplus
  (V_i,\varphi_i)$ has in fact only two summands and, more
  importantly, $\sigma^2 = 1$. This means that all non-trivial
  elements of $\Aut(V,\varphi)$ have order two and therefore
  $\Aut(V,\varphi)$ is abelian (indeed: if $\sigma,\tau\in\Aut(V,\varphi)$
  then we have $\sigma^2=\tau^2=(\tau\sigma)^2=1$, but
  $(\tau\sigma)^2=\tau\sigma\tau\sigma=1$ implies, multiplying
  both sides on the left by $\tau$ and then by $\sigma$, that
  $\tau\sigma=\sigma\tau$).

  Now, the summands $(V_i,\varphi_i)$ may not be simple but, applying
  the preceding argument inductively to each of the $(V_i,\varphi_i)$,
  we eventually obtain a decomposition $(V,\varphi) = (\bigoplus V_i,
  \sum \varphi_i)$ where each $(V_i,\varphi_i)$ is stable and simple,
  and $\varphi_i \neq 0$. Since $\Aut(V,\varphi)$ is abelian, the
  successive decompositions of $V$ in eigenspaces can in fact be
  carried out simultaneously for all $\sigma\in \Aut(V,\varphi)
  \setminus \{\pm 1 \}$. From this the uniqueness of the decomposition
  and the statement about the automorphism group of $(V,\varphi)$ are
  immediate.

  Finally, Lemma~\ref{lem:gln-summand-nonstable} and
  Lemma~\ref{lem:iso-unstable} together imply that the
  $(V_i,\varphi_i)$ are mutually non-isomorphic.
\end{proof}

\subsection{Polystable  $\Sp(2n,\RR)$-Higgs bundles}
\label{sec:polystability}

According to  \cite[Theorem~4.9]{garcia-prada-gothen-mundet:2009a} the
polystability condition for an
$\Sp(2n,\RR)$-Higgs bundle is equivalent to the following.

\begin{definition}\label{spn-poly}
  A $\Sp(2n,\RR)$-Higgs bundle $(V,\varphi)$ is {\bf polystable} if is
  semistable and for any filtration of subbundles
$$
  0 \subset V_1 \subset V_2 \subset V
$$
distinct from the filtration $0=V_1\subset V_2=V$
such that
\begin{equation}
  \beta\in H^0(K\otimes(S^2V_2 + V_1\otimes_S V)),
  \quad \gamma\in H^0(K\otimes(S^2V_1^{\perp} + V_2^{\perp}\otimes_S V^*)),
\end{equation}
and
$$
  \deg(V) - \deg(V_1) - \deg(V_2) = 0,
$$
there exists an isomorphism of holomorphic vector bundles

$$
\sigma: V\to  V_1\oplus V_2/V_1\oplus V/V_2
$$
satisfying the following properties:

(a) $V_1=\sigma^{-1}(V_1)$, $V_2=\sigma^{-1}(V_1\oplus V_2/V_1)$,

(b)
$ \beta\in H^0(K\otimes 
(S^2(\sigma^{-1}(V_2/V_1)) \oplus
\sigma^{-1}(V_1)\otimes_S\sigma^{-1}(V/V_2)))$,

(c) $ \gamma\in H^0(K\otimes
(S^2(\sigma^*(V_2/V_1)^*)\oplus
\sigma^*(V_1^*)\otimes_S\sigma^*(V/V_2)^*)))$.
\end{definition}

\begin{remark}\label{f-decomposition}
  The polystability condition can be rephrased by saying that there
  are subbundles $F_1$, $F_2$ and $F_3$ of $V$, such that $V=F_1\oplus
  F_2\oplus F_3$, with $V_1=F_1$, $V_2=F_1\oplus F_2$, and
  $\beta(F_i^*)\subset F_{4-i}\otimes K$ and $\gamma(F_i)\subset
  F_{4-i}^*\otimes K$ for $i=1,2,3$.
\end{remark}

We have the following result
\cite[Proposition 4.14]{garcia-prada-gothen-mundet:2009a} on the structure of
polystable $\Sp(2n,\RR)$-Higgs bundles.

\begin{proposition}
  \label{prop:polystable-spnr-higgs}
A $\Sp(2n,\RR)$-Higgs bundle $(V,\varphi)$ is  polystable if and
only if there is a decomposition
  $$(
  V,\varphi) = (V_1,\varphi_1) \oplus \dots \oplus (V_k,\varphi_k),
  $$ unique up to reordering, such that each
  $(V_i,\varphi_i)$ is a stable and simple $G_i$-Higgs bundle
  of one of the following mutually exclusive types:

(1) $G_i=\Sp(2n_i,\RR)$ and non-vanishing Higgs field;

(2)  $G_i=\U(p_i,q_i)$ with $\deg \tilde{V_i} + \deg  \tilde{W_i}=0$,
where $\tilde{V_i}$ and    $\tilde{W_i}  $ are defined by (\ref{eq:upq-sp2n});

(3) $G_i=\U(n_i)$  and  $\deg(V_i)=0$.

\end{proposition}

\subsection{$L$-twisted $\GL(n,\RR)$-Higgs pairs}
\label{glnr-higgs}

We study  now $L$-twisted $G$-Higgs pairs for $G=\GL(n,\RR)$.
They will appear for $L=K$ in Section \ref{sec:spnr-sl-higgs}.
When
$L=K^2$, these will be related to maximal degree
$\Sp(2n,\RR)$-Higgs bundles as we will see in Section \ref{maximal}.

A maximal compact subgroup of $\GL(n,\RR)$ is $H=\O(n)$ and hence
$H^\CC=\O(n,\CC)$. Now, if  $\WW$ is the standard $n$-dimensional
complex vector  space representation of $\O(n,\CC)$, and $Q$ is the 
bilinear form defining $\O(n,\CC)$,  then the
isotropy representation space is:
$$\liem^{\CC}=S^2_{Q}\mathbb{W}:=\{\xi\in\End(\mathbb{W})\,|\,Q(\xi\cdot,\cdot)=
Q(\cdot,\xi\cdot)\}\subset\End(\mathbb{W}).$$ 
Given a $\O(n,\CC)$-bundle  $(W,Q)$, denote by $S^{2}_{Q}W$ the bundle
of endomorphisms $\xi$ of $W$ which are symmetric with respect to $Q$ i.e.\ 
such that $Q(\xi\,\cdot,\cdot)=Q(\cdot,\xi\,\cdot)$.
An $L$-twisted $\GL(n,\RR)$-Higgs pair over $X$  is thus a triple
$(W,Q,\psi)$ consisting of a holomorphic $\O(n,\CC)$-bundle,
i.e.\ a rank $n$ holomorphic vector bundle $W$ over $X$ equipped
with a non-degenerate quadratic form $Q$, and a section
$$
\psi \in H^0(L\otimes S^2_{Q}W).
$$
Note that when $\psi = 0$ a twisted $\GL(n,\RR)$-Higgs  pair is
simply  an orthogonal bundle.

\begin{remark}
Given a $\GL(n,\RR)$-Higgs pair  $(W,Q,\psi)$, we can consider the symmetric
isomorphism  $q:W\stackrel{\simeq}{\longrightarrow}W^*$ given by $q(v)=Q(v,\cdot)$
and define the homomorphism $$\tilde\psi:W^*\longrightarrow W\otimes L$$  by
\begin{displaymath}
\tilde\psi:=\psi q^{-1}.
\end{displaymath}
It follows from the symmetry of $\psi$ with respect to $Q$ that
$\tilde\psi$ is 
symmetric i.e. $\tilde\psi^t\otimes I_L=\tilde\psi$.
In other words, $$\tilde\psi\in H^0(S^{2}W\otimes L).$$
We can thus regard a $\GL(n,\RR)$-Higgs bundle as a triple $(W,Q,\tilde\psi)$, 
where $(W,Q)$ is a $\O(n,\CC)$-bundle and $\tilde\psi\in H^0(S^{2}W\otimes
L)$.
\end{remark}

\begin{remark}
Since the centre of $\olie(n)$ is trivial, $\alpha=0$ is the only
possible value for which stability of an $L$-twisted
$\GL(n,\RR)$-Higgs pair  is defined.
\end{remark}

According to \cite[Theorem 4.17]{garcia-prada-gothen-mundet:2009a}
the stability conditions (for $\alpha=0$) for an
$L$-twisted $\GL(n,\RR)$-Higgs pair can be defined as follows.

\begin{definition}\label{thm:orthogonal-stability}
  Let $(W,Q,\psi)$ be a $L$-twisted  $\GL(n,\RR)$-Higgs pair. Then 
$(W,Q,\psi)$ is 
{\bf semistable} if and only if $  \deg(W')\leq 0$ for any isotropic and 
$\psi$-invariant subbundle $W'\subset W$. Furthermore,
$(W,Q,\psi)$ is  
{\bf stable} if and only if it is semistable and $\deg(W')<0$ for any isotropic and 
$\psi$-invariant strict subbundle $0\neq W' \subset W$.
Finally, $(W,Q,\psi)$ is  
 {\bf polystable} if and only if it is semistable and, for any isotropic 
 (respectively.\ coisotropic) and $\psi$-invariant strict
  subbundle $0\neq W'\subset W$ such that $\deg(W')=0$, there is another
  coisotropic (resp.\ isotropic) and $\psi$-invariant subbundle 
$0\neq W''\subset W$ such that $W\simeq W'\oplus W''$.
\end{definition}

\begin{remark}
  \label{rem:iso-co-iso}
  Let $W' \subset W$ be a subbundle and let ${W'}^{\perp_Q}$ be its
  orthogonal complement with respect to $Q$. Then $W'$ is isotropic if
  and only if ${W'}^{\perp_Q}$ is coisotropic. Moreover, since $\psi$
  is symmetric with respect to $Q$, the subbundle $W'$ is
  $\psi$-invariant if and only if ${W'}^{\perp_Q}$ is
  $\psi$-invariant. 
\end{remark}

\begin{proposition}
  \label{prop:GL-poly-decomposition}
  Let $(W,Q,\psi)$ be a polystable $L$-twisted $\GL(n,\RR)$-Higgs pair
  and let $W' \subset W$ be an isotropic $\psi$-invariant subbundle
  such that $\deg(W') = 0$. Then there is a decomposition
  \begin{displaymath}
    W \simeq W' \oplus W_2 \oplus \tilde{W}'
  \end{displaymath}
  in which
  \begin{enumerate}
  \item each of the subbundles $W'$, $W_2$ and $\tilde{W}'$ is
    $\psi$-invariant,
  \item the quadratic form $Q$ restricted to $W_2$ is non-degenerate, and
  \item the subbundles $W'$ and $\tilde{W}'$ are isotropic and in
    duality with each other under $Q$.
  \end{enumerate}
  Moreover, if we let $W_1 = W' \oplus \tilde{W}'$ and, for $i=1,2$,
  $\psi_i = \psi_{|W_i}$ and $Q_i = Q_{|W_i}$, then the $(W_i,Q_i,\psi_i)$
  are polystable $L$-twisted $\GL(n_i,\RR)$-Higgs pairs and
  \begin{displaymath}
    (W,Q,\psi) = (W_1 \oplus W_2, Q_1 + Q_2, \psi_1 + \psi_2).
  \end{displaymath}
\end{proposition}

\begin{proof}
  We have from Remark~\ref{rem:iso-co-iso} that $W'^{\perp_Q}$ is
  coisotropic and $\psi$-invariant. Thus, since $(W,Q,\psi)$ is
  polystable, $W'^{\perp_Q}$ has an isotropic $\psi$-invariant
  complement $\tilde{W}'$. Clearly, the subbundles $W'$ and
  $\tilde{W}'$ are isotropic and in duality with each other under $Q$,
  so that the restriction $Q_1 = Q_{|W_1}$ is non-degenerate (where
  $W_1 = W' \oplus \tilde{W}'$). Now define $W_2 = (W' \oplus
  W'^{\perp_Q})^{\perp_Q}$. Then, clearly, the restriction $Q_2 =
  Q_{|W_2}$ is non-degenerate and $\psi$-invariant.

  For the final statement, it only remains to prove polystability of
  $(W_i,Q_i,\psi_i)$. But this follows from polystability of
  $(W,Q,\psi)$, noting the following facts: firstly, that any
  $\psi$-invariant isotropic subbundle of $(W_i,Q_i,\psi_i)$ is also a
  $\psi$-invariant isotropic subbundle of $(W,Q,\psi)$ and, secondly,
  that any $\psi$-invariant coisotropic subbundle of
  $(W_i,Q_i,\psi_i)$ extends naturally to a $\psi$-invariant
  coisotropic subbundle of $(W,Q,\psi)$.
\end{proof}

\subsection{Comparison of stability conditions for  $\Sp(2n,\RR)$-,
$\Sp(2n,\CC)$-  and  $\SL(2n,\CC)$-Higgs
  bundles}
\label{sec:spnr-sl-higgs}

An $\Sp(2n,\RR)$-Higgs bundle can be viewed as a Higgs bundle for the
larger complex groups $\Sp(2n,\CC)$ and $\SL(2n,\CC)$. The goal of
this section is to understand the relation between the various
corresponding stability notions. The main results are
Theorems~\ref{thm:stability-equivalence} and
\ref{thm:stability-equivalence-sp2nC} below.

If $G=\SL(n,\CC)$ then the maximal compact subgroup of $G$ is
$H=\SU(n)$ and hence  $H^\CC$ coincides with $\SL(n,\CC)$. Now, if
$\WW=\CC^{n}$ is the fundamental representation of $\SL(n,\CC)$,
the isotropy representation space is given by the traceless
endomorphisms of $\WW$
$$
\lie{m}^\CC=\sllie(\WW)=\{\xi\in\End(\WW)\mid \Tr\xi
=0\}\subset\End\WW,
$$
so it coincides again with the adjoint representation of
$\SL(n,\CC)$ on its Lie algebra. An  $\SL(n,\CC)$-Higgs
bundle is thus a pair consisting of a rank $n$ holomorphic vector
bundle $W$ over $X$ endowed with a trivialization $\det
W\simeq\OOO$ and a holomorphic section
$$
\Phi \in H^0(K\otimes \End_0W),
$$
where $\End_0W$ denotes the bundle of traceless endomorphisms of
$W$.

Again we refer the reader to
\cite{garcia-prada-gothen-mundet:2009a} for the general
statement of the stability conditions for $\SL(n,\CC)$-Higgs
bundles. (Semi)stability is equivalent in this case to the original
notions given by Hitchin in \cite{hitchin:1987a} (see \cite[Theorem
4.4]{garcia-prada-gothen-mundet:2009a}).

\begin{definition}
\label{thm:sl(n,C)-stability} An  $\SL(n,\CC)$-Higgs
bundle $(W,\Phi)$ is {\bf semistable} if and only if for any subbundle
$W'\subset W$ such that $\Phi(W')\subset K\otimes W'$ we have
$\deg W'\leq 0$. Furthermore, $(W,\Phi)$ is {\bf stable} if for any
nonzero and strict subbundle $W'\subset W$ such that
$\Phi(W')\subset K\otimes W'$ we have $\deg W'<0$. Finally,
$(W,\Phi)$ is {\bf polystable} if it is semistable and for each
subbundle $W'\subset W$ such that $\Phi(W')\subset K\otimes W'$
and $\deg W'=0$ there is another subbundle $W''\subset W$
satisfying $\Phi(W'')\subset K\otimes W''$ and $W=W'\oplus W''$.
\end{definition}

Consider now the case $G=\Sp(2n,\CC)$. A maximal compact subgroup
of $G$ is $H=\Sp(2n)$ and hence  $H^\CC$ coincides with
$\Sp(2n,\CC)$. Now, if $\WW=\CC^{2n}$ is the fundamental
representation of $\Sp(2n,\CC)$ and $\omega$ denotes the standard
symplectic form on $\WW$, the isotropy representation space is
$$
\lie{m}^\CC=\splie(\WW)=\splie(\WW,\omega):= \{\xi\in\End(\WW)\mid
\omega(\xi\cdot,\cdot)+\omega(\cdot, \xi\cdot)=0\}\subset\End\WW,
$$
so it coincides with the adjoint representation of $\Sp(2n,\CC)$
on its Lie algebra. An $\Sp(2n,\CC)$-Higgs bundle is
thus a pair consisting of a rank $2n$ holomorphic symplectic
vector bundle $(W,\Omega)$ over $X$ (so $\Omega$ is a holomorphic
section of $\Lambda^2W^*$ whose restriction to each fibre of $W$
is non degenerate) and a section
$$
\Phi \in H^0(K\otimes \splie(W)),
$$
where $\splie(W)$ is the vector bundle whose fibre over $x$ is
given by $\splie(W_x,\Omega_x)$.

As for $\SL(n,\CC)$, we refer the reader to
\cite{garcia-prada-gothen-mundet:2009a} for the general statement of
the stability conditions for $\Sp(2n,\CC)$-Higgs bundles. We now have
the following analogue of Proposition \ref{spnr-stability}, which
implies that the definition of (semi)stability from \cite[Theorem
4.2]{garcia-prada-gothen-mundet:2009a} coincides with the usual one in
the literature in the case $\Phi = 0$ (cf.\ Ramanathan
\cite{ramanathan:1975}).  Recall that if $(W,\Omega)$ is a symplectic
vector bundle, a subbundle $W'\subset W$ is said to be isotropic if
the restriction of $\Omega$ to $W'$ is identically zero.

\begin{definition}
  \label{thm:sp(2n,C)-stability} An $\Sp(2n,\CC)$-Higgs bundle
  $((W,\Omega),\Phi)$ is {\bf semistable} if and only if for any
  isotropic subbundle $W'\subset W$ such that $\Phi(W')\subset
  K\otimes W'$ we have $\deg W'\leq 0$. Furthermore,
  $((W,\Omega),\Phi)$ is {\bf stable} if for any nonzero and strict
  isotropic subbundle $0\neq W'\subset W$ such that $\Phi(W')\subset
  L\otimes W'$ we have $\deg W'<0$. Finally, $((W,\Omega),\Phi)$ is
  {\bf polystable} if it is semistable and for any nonzero and strict
  isotropic (resp., coisotropic) subbundle $W'\subset W$ such that
  $\Phi(W')\subset L\otimes W'$ and $\deg W'=0$ there is a coisotropic
  (resp., isotropic) subbundle $W''\subset W$ such that
  $\Phi(W'')\subset L\otimes W''$ and $W=W'\oplus W''$.
\end{definition}

Given an $\Sp(2n,\RR)$-Higgs bundle  $(V,\varphi)$ with
$\varphi=(\beta,\gamma)\in H^0(K\otimes(S^2V\oplus S^2V^*))$ one
can associate to it an $\Sp(2n,\CC)$-Higgs bundle
$((W,\Omega),\Phi)$ given by
\begin{equation}
  \label{eq:sp2nC-associated-higgs}
        W = V \oplus V^*,\quad
              \Phi =
      \begin{pmatrix}
        0 & \beta \\
        \gamma & 0
      \end{pmatrix}
      \quad\text{and}\quad
      \Omega\bigl((v,\xi),(w,\eta)\bigr) = \xi(w) - \eta(v),
\end{equation}
for local holomorphic sections $v,w$ of $V$ and $\xi,\eta$ of $V^*$
(i.e.\ $\Omega$ is the canonical symplectic structure on $V \oplus
V^*$).

Since $\Sp(2n,\CC)\subset \SL(2n,\CC)$, every $\Sp(2n,\CC)$-Higgs
bundle $((W,\Omega),\Phi)$ gives rise to an $\SL(2n,\CC)$-Higgs
bundle $(W,\Phi)$. If $((W,\Omega),\Phi)$ is obtained from an
$\Sp(2n,\RR)$-Higgs bundle $(V,\varphi)$ we denote the associated
$\SL(2n,\CC)$-Higgs bundle by
\begin{displaymath}
  H(V,\varphi) = (W,\Phi) =
  (V \oplus V^*,
  \begin{pmatrix}
    0 & \beta \\
    \gamma & 0
  \end{pmatrix}).
\end{displaymath}

\begin{theorem}\label{thm:stability-equivalence}
Let $(V,\varphi=(\beta,\gamma))$ be an $\Sp(2n,\RR)$-Higgs bundle
and let $(W,\Phi)=H(V,\varphi)$ be the corresponding
$\SL(2n,\CC)$-Higgs bundle. Then
\begin{enumerate}
\item if $(W,\Phi)$ is stable then $(V,\varphi)$ is stable;
\item\label{item:intertwined} if $(V,\varphi)$ is stable and simple
  then $(W,\Phi)$ is stable unless there is an isomorphism
  $f:V\xrightarrow{\simeq} V^*$ such that $\beta f=f^{-1}\gamma$, in
  which case $(W,\Phi)$ is polystable;
\item $(W,\Phi)$ is semistable if and only if $(V,\varphi)$ is semistable.
\item $(W,\Phi)$ is polystable if and only if $(V,\varphi)$ is polystable;
\end{enumerate}
In particular, if $\deg(V)\neq 0$ then $(W,\Phi)$ is stable if and
only if $(V,\varphi)$ is stable.
\end{theorem}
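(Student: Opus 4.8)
The plan is to compare the two stability conditions by translating $\Phi$-invariant subbundles of $W=V\oplus V^{*}$ into the weighted filtrations of $V$ that govern $\Sp(2n,\RR)$-stability. The basic dictionary sends a filtration $0\subseteq V_{1}\subseteq V_{2}\subseteq V$ to the subbundle $W'=V_{1}\oplus V_{2}^{\perp}\subseteq V\oplus V^{*}$, where $V_{2}^{\perp}=\ker(V^{*}\to V_{2}^{*})$. By Remark~\ref{rem:matrix-form} the conditions (\ref{eq:condicions2}) are precisely $\beta(V_{2}^{\perp})\subseteq K\otimes V_{1}$ and $\gamma(V_{1})\subseteq K\otimes V_{2}^{\perp}$ (together with the transposed pair), so the first two say exactly that $W'$ is $\Phi$-invariant; moreover $W'$ is isotropic for the canonical symplectic form $\Omega$ of (\ref{eq:sp2nC-associated-higgs}), and an annihilator computation gives the key degree identity $\deg W'=\deg V_{1}+\deg V_{2}-\deg V$. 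Thus the simplified quantity $\deg V-\deg V_{1}-\deg V_{2}$ of Proposition~\ref{spnr-stability} equals $-\deg W'$.

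Running the dictionary from filtrations to subbundles is the easy half. If $(V,\varphi)$ is not semistable (respectively not stable), Proposition~\ref{spnr-stability} supplies a filtration with $\deg V-\deg V_{1}-\deg V_{2}<0$ (respectively $\le 0$ with a proper term), hence a $\Phi$-invariant $W'$ with $\deg W'>0$ (respectively $\ge 0$) which is proper and nonzero whenever the filtration is nontrivial. By Proposition~\ref{thm:sl(n,C)-stability} this contradicts semistability (respectively stability) of $(W,\Phi)$. This already proves item (1) and the implication that $(W,\Phi)$ semistable forces $(V,\varphi)$ semistable.

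The reverse implications require passing from an \emph{arbitrary} $\Phi$-invariant subbundle $W'\subseteq W$ back to a filtration of $V$, and this is where the real work lies. Since $\beta,\gamma$ are symmetric, $\Phi$ lies in $\splie(W,\Omega)$, so $W'^{\perp_\Omega}$ is again $\Phi$-invariant and $\deg W'^{\perp_\Omega}=\deg W'$ (because $\deg W=0$). Comparing degrees in $0\to W'\cap W'^{\perp_\Omega}\to W'\oplus W'^{\perp_\Omega}\to W'+W'^{\perp_\Omega}\to 0$ shows that the isotropic $\Phi$-invariant subbundle $U=W'\cap W'^{\perp_\Omega}$ satisfies $\deg U\ge\deg W'$. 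One is thus reduced to matching an isotropic $\Phi$-invariant subbundle $U$ with a filtration $0\subseteq V_{1}\subseteq V_{2}\subseteq V$ obeying (\ref{eq:condicions2}) and with $\deg U\le\deg V_{1}+\deg V_{2}-\deg V$, built from $U\cap V$, $p_{V}(U)$ and their $\Omega$-annihilators. Verifying (\ref{eq:condicions2}) from $\Phi$-invariance and, above all, establishing the degree inequality (with the attendant saturation and annihilator bookkeeping, where the length of the filtration may need to grow as in Proposition~\ref{prop:sp2n-alpha-stability}) is the main obstacle. Granting it, semistability of $(V,\varphi)$ forces $\deg W'\le\deg U\le 0$ for every $\Phi$-invariant $W'$, completing item (3).

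It remains to treat items (2) and (4) and the closing assertion. For item (2) assume $(V,\varphi)$ stable and simple; by the above every proper nonzero isotropic $\Phi$-invariant subbundle has strictly negative degree. If $(W,\Phi)$ fails to be stable there is a proper nonzero $\Phi$-invariant $W'$ with $\deg W'=0$; its isotropic reduction $U$ then has $\deg U\ge 0$ and so must vanish, forcing a $\Phi$-invariant symplectic splitting $W=W'\oplus W'^{\perp_\Omega}$ with $\deg W'=0$. Here simplicity is used to show that such a splitting can only arise from an isomorphism $f\colon V\xrightarrow{\simeq}V^{*}$; invariance of the graph $\Gamma_{f}=\{(v,f(v))\}$ translates exactly into $\gamma=f\beta f$, i.e.\ $\beta f=f^{-1}\gamma$, and conversely such an $f$ yields the $\Phi$-invariant decomposition $V\oplus V^{*}=\Gamma_{f}\oplus\Gamma_{-f}$ into degree-zero pieces, exhibiting $(W,\Phi)$ as polystable. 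Extracting $f$ from the symplectic splitting is the delicate point, and is where I expect simplicity to be indispensable. Item (4) then follows by combining (2) and (3): a polystable $(V,\varphi)$ decomposes by Proposition~\ref{prop:polystable-spnr-higgs} into stable summands, to each of which (2) (after reducing the non-simple summands via Theorem~\ref{thm:stable-not-simple-spnr-higgs}) applies to give a polystable slope-zero $\SL$-Higgs bundle, and a direct sum of such is polystable; conversely, a degree-zero $\Sp(2n,\RR)$-filtration produces through the dictionary an isotropic degree-zero $W'=V_{1}\oplus V_{2}^{\perp}$, whose $\Phi$-invariant complement (guaranteed by polystability of $(W,\Phi)$) transports back to the required splitting of $(V,\varphi)$. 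Finally, for the closing statement, note that an isomorphism $V\cong V^{*}$ forces $\deg V=\deg V^{*}=-\deg V$, so when $\deg V\neq 0$ the exceptional $f$ of (2) cannot exist; hence a stable simple $(V,\varphi)$ gives a stable $(W,\Phi)$, while conversely a stable $(W,\Phi)$ gives by item (1) a stable $(V,\varphi)$ which must moreover be simple, since otherwise Theorem~\ref{thm:stable-not-simple-spnr-higgs} would decompose it and hence decompose $(W,\Phi)$.
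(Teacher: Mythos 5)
Your overall architecture is sound, and your dictionary is correct: with $W'=V_1\oplus V_2^{\perp}$, Remark~\ref{rem:matrix-form} shows that the conditions (\ref{eq:condicions2}) are exactly $\Phi$-invariance of $W'$, and $\deg W'=\deg V_1+\deg V_2-\deg V$, so the easy half (item (1) and one implication of (3)) is complete; moreover your observation that this $W'$ is $\Omega$-isotropic, and your plan to reduce arbitrary invariant subbundles to isotropic ones via $U=W'\cap W'^{\perp_\Omega}$, is a legitimate route (relating $\Sp(2n,\RR)$- to $\Sp(2n,\CC)$-stability) that the paper does not take. But the proposal has genuine gaps, and you flag the main one yourself: the passage from an arbitrary (or isotropic) $\Phi$-invariant subbundle of $V\oplus V^*$ back to data on $V$ satisfying the degree inequality is exactly where the content of the theorem lies, and you prove it nowhere --- you write ``Granting it''. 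The paper's Lemma~\ref{segonpas} does this directly, with no isotropic reduction: it sets $A=p(W')$, $B=q(W')$, $A'=W'\cap V$, $B'=W'\cap V^*$, proves $\deg A+\deg B\leq 0$ and $\deg A'+\deg B'\leq 0$ by feeding the subsheaves $A+B^{\perp}$ and $B+A^{\perp}$ (and their primed analogues) into Lemma~\ref{primerpas}, and then combines the two inequalities through the exact sequences $0\to B'\to W'\to A\to 0$ and $0\to A'\to W'\to B\to 0$ to get $2\deg W'\leq 0$. Incidentally, your intermediate inequality $\deg U\geq\deg W'$ does not follow from the displayed exact sequence alone: one also needs $U=(W'+W'^{\perp_\Omega})^{\perp_\Omega}$ together with $\deg S^{\perp_\Omega}=\deg S$ for the saturation $S$ of the sum, so even your reduction step has a hole.

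The same pattern affects (2) and (4). In (2) you correctly see that a $\Phi$-invariant symplectic splitting $W=W'\oplus W'^{\perp_\Omega}$ should come from a graph $\Gamma_f$, but the extraction of $f$ --- which you call ``the delicate point'' you ``expect'' simplicity to handle --- is left undone; in the paper this is the equality analysis in step 4 of Lemma~\ref{segonpas}: equality in both degree estimates forces $A\oplus B^{\perp}=V$ and $A'\oplus B'^{\perp}=V$, simplicity forces these splittings to be trivial, whence $p$ and $q$ restrict to isomorphisms $u\colon W'\to V$, $v\colon W'\to V^*$ and $f=v\circ u^{-1}$. (One must also verify that the two graph summands are \emph{stable} Higgs bundles to conclude polystability of $(W,\Phi)$; you assert polystability from the degree-zero splitting alone.) For (4), the direction ``$(W,\Phi)$ polystable $\Rightarrow$ $(V,\varphi)$ polystable'' is dispatched in one sentence, but your recipe (take the $\Phi$-invariant complement of $V_1\oplus V_2^{\perp}$ and ``transport back'') does not explain why that complement again has the product form needed to induce a splitting of $V$ with $\beta,\gamma$ supported on the correct pieces; this is the content of the long inductive argument of Lemma~\ref{lem:semi-quad-higgs}, where one shows that every degree-zero invariant subbundle is, after an automorphism commuting with $\Phi$, a partial sum of the stable summands of $W$, and then splits $V$ recursively. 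Your forward direction of (4) also quietly needs the $\U(n_i)$- and $\U(p_i,q_i)$-summands of Proposition~\ref{prop:polystable-spnr-higgs} to yield polystable $\SL(2n_i,\CC)$-Higgs bundles, which item (2) does not cover. Your treatment of the final assertion is, if anything, more careful than the paper's wording (you correctly insist on simplicity), but taken as a whole the proposal is an outline whose hardest steps are acknowledged placeholders rather than proofs.
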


For the statement of the following theorem, recall from
Section~\ref{glnr-higgs} that a $\GL(n,\RR)$-Higgs bundle is given
by $((W,Q),\psi)$, where $(W,Q)$ is rank $n$ orthogonal bundle and
$\psi \in H^0(K \otimes S^2 W)$. The stability condition for
$\GL(n,\RR)$-Higgs bundles is given in
Proposition~\ref{thm:orthogonal-stability}.

\begin{theorem}\label{thm:stability-equivalence-sp2nC}
  Let $(V,\varphi)$ be a stable and simple $\Sp(2n,\RR)$-Higgs
  bundle. Then $(V,\varphi)$ is stable as an $\Sp(2n,\CC)$-Higgs
  bundle, unless there is a symmetric isomorphism $f\colon V
  \xrightarrow{\simeq} V^*$ such that $\beta f =
  f^{-1}\gamma$. Moreover, if such an $f$ exists, let $\psi = \beta =
  f^{-1}\gamma f^{-1} \in H^0(K \otimes S^2 V)$. Then the
  $\GL(n,\RR)$-Higgs bundle $((V,f),\psi)$ is stable, even as a
  $\GL(n,\CC)$-Higgs bundle.
\end{theorem}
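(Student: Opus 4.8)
The plan is to read off the statement from Theorem~\ref{thm:stability-equivalence} together with an analysis of the $\Omega$-isotropic $\Phi$-invariant subbundles of the $\Sp(2n,\CC)$-Higgs bundle $(W,\Phi)=H(V,\varphi)$, where $W=V\oplus V^*$ carries its canonical symplectic form $\Omega$. First I would note that $\SL(2n,\CC)$-stability implies $\Sp(2n,\CC)$-stability, because the isotropic $\Phi$-invariant subbundles tested in Proposition~\ref{thm:sp(2n,C)-stability} form a subclass of all $\Phi$-invariant subbundles tested for $\SL(2n,\CC)$. Hence, by Theorem~\ref{thm:stability-equivalence}(\ref{item:intertwined}), if there is no isomorphism $f\colon V\to V^*$ with $\beta f=f^{-1}\gamma$, then $(W,\Phi)$ is $\SL(2n,\CC)$-stable and so $(V,\varphi)$ is $\Sp(2n,\CC)$-stable; this disposes of the generic case. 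I may therefore assume such an $f$ exists, which forces $\deg V=0$ since $f$ identifies $V$ with $V^*$.

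The computational heart is a graph lemma. For an isomorphism $f\colon V\to V^*$ the subbundle $W_f=\{(v,fv)\mid v\in V\}$ has degree $\deg V=0$, it is $\Phi$-invariant exactly when $\beta f=f^{-1}\gamma$, and since $\Omega((v,fv),(w,fw))=(fv)(w)-(fw)(v)$ it is $\Omega$-isotropic if and only if $f$ is symmetric. Thus a symmetric $f$ yields a degree-zero isotropic $\Phi$-invariant subbundle, so $(V,\varphi)$ is not $\Sp(2n,\CC)$-stable; this is the asserted exceptional case. For the converse I would use that $(V,\varphi)$, being stable, is polystable, so $(W,\Phi)$ is $\Sp(2n,\CC)$-polystable (a solution of Hitchin's equation for the real form $\Sp(2n,\RR)$ induces one for $\Sp(2n,\CC)$, via Theorem~\ref{higgs-hk}), and moreover, by Theorem~\ref{thm:stability-equivalence}(\ref{item:intertwined}), $(W,\Phi)\cong(V,\beta f)\oplus(V,-\beta f)$ realised as $W_f\oplus W_{-f}$.

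Assuming $(V,\varphi)$ is not $\Sp(2n,\CC)$-stable, there is a nonzero proper isotropic $\Phi$-invariant subbundle of degree $0$, and I would locate it inside the above decomposition: when the two summands are non-isomorphic the only proper invariant subbundles are $W_{\pm f}$, whose isotropy is equivalent to symmetry of $f$, producing the required symmetric isomorphism. I expect the main obstacle to be the case in which the two summands $(V,\pm\beta f)$ are isomorphic: then the invariant subbundles vary in a one-parameter family, and extracting a genuine (nondegenerate) symmetric isomorphism amounts to solving an auxiliary quadratic condition, where simplicity of $(V,\varphi)$ (Proposition~\ref{prop:simple-spnr}) is needed to control the intertwiners.

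For the last assertion, suppose $f$ is symmetric, set $Q=f$, $\psi=\beta=f^{-1}\gamma f^{-1}$ and $A=\beta f$; then $A$ is $Q$-self-adjoint because $fA=\gamma$ is symmetric. Given a nonzero proper $Q$-isotropic $A$-invariant subbundle $W'$, I would run the filtration $0\subset V_1\subset V_2\subset V$ with $V_1=W'$ and $V_2=W'^{\perp_Q}$; self-adjointness makes $V_2$ also $A$-invariant, and from $\gamma=fA$, $\beta=Af^{-1}$ together with $f(V_1)=V_2^{\perp}$ and $f(V_2)=V_1^{\perp}$ one checks conditions~(\ref{eq:condicions2}) through Remark~\ref{rem:matrix-form}. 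Since $\deg V_1=\deg V_2=\deg W'$ and $\deg V=0$, Proposition~\ref{spnr-stability} gives $0<\deg V-\deg V_1-\deg V_2=-2\deg W'$, that is $\deg W'<0$, which is stability as a $\GL(n,\RR)$-Higgs bundle by Proposition~\ref{thm:orthogonal-stability}. To upgrade to $\GL(n,\CC)$-stability I would take an arbitrary $A$-invariant $W'$ and set $U=W'\cap W'^{\perp_Q}$, which is isotropic, $A$-invariant, and satisfies $\deg U=\deg W'$: if $U\neq0$ the previous step forces $\deg W'<0$, while if $U=0$ then $V=W'\oplus W'^{\perp_Q}$ is an $A$-invariant $Q$-orthogonal splitting and $\sigma=\Id_{W'}\oplus(-\Id_{W'^{\perp_Q}})$ is an automorphism of $(V,\varphi)$ other than $\pm\Id$, contradicting simplicity. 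Hence every proper $A$-invariant subbundle has negative degree, giving stability even as a $\GL(n,\CC)$-Higgs bundle.
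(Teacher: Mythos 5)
Your proof of the first assertion follows the paper's own route: reduce via Theorem~\ref{thm:stability-equivalence} to Case~(\ref{item:intertwined}), use the splitting of $H(V,\varphi)$ into the graphs $W_f\oplus W_{-f}$ coming from Lemma~\ref{segonpas}, and translate isotropy of a graph into symmetry of the intertwiner. Your proof of the second assertion is genuinely different: the paper simply invokes stability of $(W',\Phi')\simeq (V,\beta f)$, already established inside Lemma~\ref{segonpas}, whereas you verify $\GL(n,\RR)$-stability directly from Proposition~\ref{spnr-stability} using the filtration $0\subset W'\subset W'^{\perp_Q}\subset V$, and then upgrade to $\GL(n,\CC)$-stability via simplicity; this is valid and more self-contained. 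One slip needs fixing: for $U=W'\cap W'^{\perp_Q}$ one has $\deg W'\le \deg U$, not $\deg U=\deg W'$ (the form $Q$ descends to $W'/U$ and induces a sheaf injection $W'/U\into (W'/U)^*$, whence $\deg(W'/U)\le 0$); the inequality is all you need, and in the case $U=0$ the orthogonal splitting $V=W'\oplus W'^{\perp_Q}$, and hence the contradiction with simplicity, is available only once $\deg W'=0$, which again suffices.

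The case you flag but do not resolve --- when $(V,\beta f)\simeq(V,-\beta f)$, so that the degree-zero $\Phi$-invariant subbundles form a $\PP^1$-family --- is a genuine gap in your write-up: you stop at a gesture towards ``an auxiliary quadratic condition.'' (In fairness, the paper elides exactly the same point: it asserts both that $(W',\Phi')\simeq(W'',\Phi'')$ and that $W'$, $W''$ are the \emph{only} degree-zero $\Phi$-invariant subbundles, and the latter is precisely what fails if the former holds.) The gap can be closed along the lines you anticipate, with simplicity doing the work. If $g$ satisfies $g(\beta f)=-(\beta f)g$, then $g^2$ commutes with $\beta f$, so by stability (hence simplicity) of $(V,\beta f)$ we get $g^2=c\in\CC^*$, and the members of the family other than $W_{\pm f}$ are the graphs $U_{\lambda g}$, $\lambda\in\CC^*$. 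For $\lambda^2c\neq 1$ the endomorphisms $1\pm\lambda g$ are invertible and $U_{\lambda g}=W_{f_\lambda}$ with $f_\lambda=f(\lambda g-1)(1+\lambda g)^{-1}$; if such a member is the destabilizing isotropic invariant subbundle, then invariance makes $f_\lambda$ an intertwiner and isotropy makes it symmetric, and you are done. For the two values with $\lambda^2c=1$, $h=\lambda g$ is an involution whose eigenbundles $V_\pm$ are swapped by $\beta f$, and $U_h=V_+\oplus f(V_-)$. If $f$ is symmetric you are done; otherwise, by Remark~\ref{rem:intertwined-cases} (which uses simplicity), $f$ is antisymmetric, and isotropy of $U_h$ then forces $f$ to be block-diagonal and $\beta$, $\gamma$ to be purely off-diagonal with respect to $V=V_+\oplus V_-$, so that $(V,\varphi)$ is of $\U(p,q)$-type and admits the automorphism $\diag(i\Id_{V_+},-i\Id_{V_-})\neq\pm\Id$, contradicting simplicity (cf.\ Remark~\ref{rem:V-non-simple}). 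With this supplement your strategy yields a complete proof, indeed one more careful than the paper's at exactly the point where its argument is loosest.
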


The proof of Theorem~\ref{thm:stability-equivalence} is given below in
Section~\ref{sec:proof-thm:stability-equivalence} and the proof of
Theorem~\ref{thm:stability-equivalence-sp2nC} is given below in
Section~\ref{sec:proof-thm:stability-equivalence-sp2nC}.

The following observation is not essential for our main line of
argument. We include it since it might be of independent interest.
\begin{remark}
  \label{rem:intertwined-cases}
  Suppose we are in Case~(\ref{item:intertwined}) of
  Theorem~\ref{thm:stability-equivalence}. Decompose $f = f_s +
  f_a\colon V \xrightarrow{\simeq} V$ in its symmetric and
  anti-symmetric parts, given by $f_s = \frac{1}{2}(f+f^t)$ and $f_a =
  \frac{1}{2}(f-f^t)$. Let $V_a = \ker(f_s)$ and $V_s =
  \ker(f_a)$. Both $V_a$ and $V_s$ are vector bundles, since the ranks
  of $f_s$ and $f_a$ (which coincide with the multiplicities of $-1$
  and $1$ respectively as eigenvalues of $f$) are constant.
  There is then a decomposition $V = V_a \oplus
  V_s$ and $f$ decomposes as
  \begin{displaymath}
    f =
    \begin{pmatrix}
      f_s & 0 \\
      0 & f_a
    \end{pmatrix}
    \colon V_s \oplus V_a \to V_s^* \oplus V_a^*.
  \end{displaymath}
  Write $\gamma_{sa}\colon V_a \to V_s^* \otimes K$ for the component
  of $\gamma$ in $H^0(K \otimes V_a^*\otimes V_s^*)$ and similarly for
  the other mixed components of $\beta$ and $\gamma$. Since $f$
  intertwines $\beta$ and $\gamma$, one has that $\gamma_{as} = f_a
  \beta_{as} f_s$. Hence
  \begin{displaymath}
    \gamma_{sa} = \gamma_{as}^t = f_s^t\beta_{as}^tf_a^t =
    -f_s\beta_{sa}f_a = -\gamma_{sa}.
  \end{displaymath}
  It follows that $\gamma_{sa} = 0$ and similarly for the other mixed
  terms. Thus there is a decomposition $(V,\varphi) = (V_s \oplus V_a,
  \varphi_s + \varphi_a)$. If $(V,\varphi)$ is simple then one of the
  summands must be trivial. The case when $(V,\varphi) =
  (V_s,\varphi_s)$ is the one covered in
  Theorem~\ref{thm:stability-equivalence-sp2nC}. In the other case,
  when $(V,\varphi) = (V_a,\varphi_a)$, the antisymmetric map $f$
  defines a symplectic form on $V$. If we let $\psi = \beta f =
  f^{-1}\gamma$, one easily checks that $\psi$ is symplectic. Thus, in this case, $(V,\varphi)$ comes in
  fact from an $\Sp(n,\CC)$-Higgs bundle $((V,f),\psi)$. This is a
  stable $\Sp(n,\CC)$-Higgs bundle, since $(V,\psi)$ is a stable
  $\GL(n,\CC)$-Higgs bundle (cf.\ the proof of
  Theorem~\ref{thm:stability-equivalence-sp2nC} below).
\end{remark}

\subsection{Proof of Theorem \ref{thm:stability-equivalence}}
\label{sec:proof-thm:stability-equivalence}

The proof of the theorem is split into several lemmas.

The following lemma proves (1) of Theorem \ref{thm:stability-equivalence}.

\begin{lemma}
Let $(V,\varphi=(\beta,\gamma))$ be an $\Sp(2n,\RR)$-Higgs bundle,
and let
$$\Phi=\left(
\begin{array}{cc} 0 & \beta \\ \gamma & 0\end{array}\right)
\colon V\oplus V^*\to K\otimes (V\oplus V^*).$$

The pair $(V,\varphi)$ is semistable if and only if for any
pair of subbundles
$A\subset V$ and $B\subset V^*$ satisfying
$B^{\perp}\subset A$ (equivalently, $\ A^{\perp}\subset B$) and
$\Phi(A\oplus B)\subset K\otimes (A\oplus B)$
we have $\deg(A\oplus B) \leq 0$.

The pair $(V,\varphi)$ is stable if and only if it is semistable
and for any pair of subbundles $A \subset V$ and $B \subset V^*$, 
such that $A\neq V$ and $B\neq V^*$,  satisfying $B^{\perp}\subset A$
(equivalently, $A^{\perp}\subset B$) and $\Phi(A\oplus B)\subset
K\otimes (A\oplus B)$, the inequality $\deg(A\oplus B) < 0$ holds.
\label{primerpas}
\end{lemma}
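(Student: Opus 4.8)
The plan is to prove the Lemma by reducing both equivalences to the stability criterion of Proposition~\ref{spnr-stability}, via an explicit bijective dictionary between the two-step filtrations $0\subset V_1\subset V_2\subset V$ used there and the pairs $(A,B)$ in the statement. Given a filtration I would set $A=V_2$ and $B=V_1^{\perp}$, with inverse $V_2=A$, $V_1=B^{\perp}$; since $\perp$ is an involution on subbundles ($A^{\perp\perp}=A$, $B^{\perp\perp}=B$), this is a genuine bijection. Under it, the inclusion $V_1\subset V_2$ becomes $B^{\perp}\subset A$, which upon applying $\perp$ is the same as $A^{\perp}\subset B$; this explains the two conditions imposed on $(A,B)$ and their asserted equivalence.

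The next step is to match the invariance conditions. For $(a,b)\in A\oplus B$ one has $\Phi(a,b)=(\beta(b),\gamma(a))$, so $\Phi(A\oplus B)\subset K\otimes(A\oplus B)$ is equivalent to the two inclusions $\beta(V_1^{\perp})\subset K\otimes V_2$ and $\gamma(V_2)\subset K\otimes V_1^{\perp}$. On the other hand, by Remark~\ref{rem:matrix-form} the conditions (\ref{eq:condicions2}) of Proposition~\ref{spnr-stability} amount to the four inclusions $\beta V_1^{\perp}\subset K\otimes V_2$, $\beta V_2^{\perp}\subset K\otimes V_1$, $\gamma V_2\subset K\otimes V_1^{\perp}$ and $\gamma V_1\subset K\otimes V_2^{\perp}$. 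I expect this comparison to be the main point of the proof: one must see that the two ``one-sided'' inclusions coming from $\Phi$-invariance already force the other two. This is exactly where the symmetry of the Higgs fields enters. Writing $\langle\cdot,\cdot\rangle$ for the pairing between $V$ and $V^*$ (twisted by $K$), symmetry of $\beta\in H^0(K\otimes S^2V)$ gives $\langle\beta\xi,\eta\rangle=\langle\beta\eta,\xi\rangle$, so $\beta V_1^{\perp}\subset K\otimes V_2$ (i.e.\ $\beta\xi$ annihilates $V_2^{\perp}$ for $\xi\in V_1^{\perp}$) is equivalent to $\beta V_2^{\perp}\subset K\otimes V_1$; symmetry of $\gamma$ likewise interchanges the two $\gamma$-inclusions. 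Hence $\Phi(A\oplus B)\subset K\otimes(A\oplus B)$ is equivalent to (\ref{eq:condicions2}).

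For the degrees, the exact sequence $0\to V_1^{\perp}\to V^*\to V_1^*\to 0$ gives $\deg V_1^{\perp}=\deg V_1-\deg V$, whence
$$\deg(A\oplus B)=\deg V_2+\deg V_1^{\perp}=\deg V_1+\deg V_2-\deg V=-\bigl(\deg V-\deg V_1-\deg V_2\bigr).$$
Thus $\deg(A\oplus B)\leq 0$ is equivalent to the semistability inequality $\deg V-\deg V_1-\deg V_2\geq 0$, and $\deg(A\oplus B)<0$ to the stability inequality $\deg V-\deg V_1-\deg V_2>0$.

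Finally I would check that the two notions of ``proper'' match. Under the dictionary the pair $(A,B)=(V,V^*)$ corresponds precisely to the trivial filtration $(V_1,V_2)=(0,V)$, which is the unique admissible datum for which $\deg(A\oplus B)=0$ holds tautologically; requiring ``at least one of $A,B$ proper'', i.e.\ $(A,B)\neq(V,V^*)$, is therefore the same as requiring ``at least one of $V_1,V_2$ proper'' in Proposition~\ref{spnr-stability}. Since the dictionary is a bijection carrying admissible filtrations to admissible pairs and translating the inequalities as above, quantifying the conditions of Proposition~\ref{spnr-stability} over all filtrations is identical to quantifying the conditions of the Lemma over all pairs $(A,B)$; this yields both the semistable and the stable equivalences.
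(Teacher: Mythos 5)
Your main steps --- the dictionary $A=V_2$, $B=V_1^{\perp}$ with inverse $V_1=B^{\perp}$, $V_2=A$; the use of the symmetry of $\beta$ and $\gamma$ to show that the two inclusions coming from $\Phi$-invariance are equivalent to the four inclusions of Remark~\ref{rem:matrix-form}, hence to (\ref{eq:condicions2}); and the degree formula $\deg(A\oplus B)=\deg V_1+\deg V_2-\deg V$ --- are exactly the paper's proof, and they are correct. The gap is in your final paragraph, where you gloss ``at least one of $A,B$ is proper'' as ``$(A,B)\neq(V,V^{*})$''. Under the only reading of ``proper'' that makes Proposition~\ref{spnr-stability} non-vacuous and consistent with Remark~\ref{phi=0} (namely: a subbundle that is neither $0$ nor the whole bundle), this gloss is false: the pairs $(A,B)=(V,0)$ and $(A,B)=(0,V^{*})$ differ from $(V,V^{*})$ yet contain no proper member. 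These pairs are admissible exactly when $\gamma=0$, respectively $\beta=0$, and then $\deg(A\oplus B)=\deg V$, respectively $-\deg V$. With your gloss, stability would demand $\deg V<0$ for every Higgs bundle with $\gamma=0$; this is contradicted by $(V,\varphi=0)$ with $V$ a stable vector bundle of degree $0$, which is a stable $\Sp(2n,\RR)$-Higgs bundle by Remark~\ref{phi=0}. The parallel identification on the filtration side (``at least one of $V_1,V_2$ proper'' with ``$(V_1,V_2)\neq(0,V)$'') fails for $V_1=V_2=V$, so the two misreadings do not legitimately cancel: what you prove is an equivalence between two misstated criteria, not the Lemma as stated.

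The fix is the direct matching the paper uses: $A=V_2$ is proper iff $V_2$ is, and $B=V_1^{\perp}$ is proper iff $V_1$ is (because $V_1^{\perp}\in\{0,V^{*}\}$ precisely when $V_1\in\{V,0\}$). Hence ``at least one of $A,B$ proper'' corresponds under the dictionary exactly to ``at least one of $V_1,V_2$ proper'', the three non-proper admissible pairs $(V,V^{*})$, $(V,0)$, $(0,V^{*})$ corresponding to the three non-proper filtrations $(0,V)$, $(V,V)$, $(0,0)$. With your last paragraph replaced by this observation, the rest of your argument goes through unchanged.
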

\begin{proof}
Suppose that $A\subset V$ and $B\subset V^*$ satisfy
the conditions of the lemma. Then
setting $V_2:=A$ and $V_1:=B^{\perp}$ we obtain a
filtration $0\subset V_1\subset V_2\subset V$ which,
thanks to Remark~\ref{rem:matrix-form}, satisfies
(\ref{eq:condicions2}).

Conversely, given a filtration $0\subset V_1\subset V_2\subset V$
for which (\ref{eq:condicions2}) holds, we get subbundles
$A:=V_2\subset V$ and $B:=V_1^{\perp}\subset V^*$
satisfying the conditions of the lemma.
Finally, we have
$$\deg (A\oplus B)=\deg (V_1^{\perp}\oplus V_2)=
\deg(V_1)+\deg(V_2)-\deg(V),$$
so the lemma follows from Proposition~\ref{spnr-stability}.
(For the case of stability, note that the condition $A\neq V$ and
$B\neq V^*$ corresponds to the filtration $0\subset V_1 \subset V_2\subset V$
not being   the trivial filtration
$0=V_1 \subset V_2= V$.)
\end{proof}

\begin{remark}
In the proof we have used the following formula:
if $F\subset E$ is an inclusion of vector bundles,
then $\deg F^{\perp}=\deg F-\deg E$.
To check this, observe that there is an exact sequence
$0\to F^{\perp}\to E^*\to F^*\to 0,$
and apply the additivity of the degree w.r.t. exact sequences
together with $\deg E^*=-\deg E$ and $\deg F^*=-\deg F$.
\label{formuleta}
\end{remark}

The following lemma proves
(2) of Theorem \ref{thm:stability-equivalence}.

\begin{lemma}
  Suppose that $(V,\varphi)$ is semistable, and define $\Phi\colon V\oplus
  V^*\to K\otimes (V\oplus V^*)$ as previously.  Then any subbundle
  $0\neq W'\subsetneq V\oplus V^*$ such that $\Phi(W')\subset K\otimes
  W'$ satisfies $\deg W'\leq 0$.  Furthermore, if $(V,\varphi)$ is
  stable and simple, one can get equality only if there is an
  isomorphism $f\colon V\to V^*$ such that $\beta f=f^{-1}\gamma$, and in this
  case $(W,\Phi)=H(V,\varphi)$ is polystable.
\label{segonpas}
\end{lemma}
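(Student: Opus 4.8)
The plan is to reduce everything to the simplified stability criterion of Proposition~\ref{spnr-stability}, by attaching to a $\Phi$-invariant subbundle $W'\subset V\oplus V^*$ four subbundles of $V$ and $V^*$. Write $p\colon V\oplus V^*\to V$ and $q\colon V\oplus V^*\to V^*$ for the projections, and set (as saturated subbundles)
$$A=p(W'),\quad B=q(W'),\quad A_0=W'\cap V,\quad B_0=W'\cap V^*.$$
From $\Phi(W')\subset K\otimes W'$ and the block shape of $\Phi$ one reads off $\gamma(A)\subset K\otimes B$, $\beta(B)\subset K\otimes A$, and likewise $\gamma(A_0)\subset K\otimes B_0$, $\beta(B_0)\subset K\otimes A_0$. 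The first point I would record is that, because $\beta$ and $\gamma$ are \emph{symmetric}, the inclusion $\gamma(A)\subset K\otimes B$ is equivalent to $\gamma(B^\perp)\subset K\otimes A^\perp$, and $\beta(B)\subset K\otimes A$ to $\beta(A^\perp)\subset K\otimes B^\perp$ (pass to the associated symmetric forms and use $(B^\perp)^\perp=B$).

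The key construction feeds the pair $(A,B)$ into Proposition~\ref{spnr-stability} through the filtration $0\subset V_1\subset V_2\subset V$ with $V_1=A\cap B^\perp$ and $V_2=A+B^\perp$. Using Remark~\ref{rem:matrix-form}, the four containments above verify condition~(\ref{eq:condicions2}) directly: for instance $\gamma(V_1)\subset K\otimes(B\cap A^\perp)=K\otimes V_2^\perp$ since $V_1\subset A$ and $V_1\subset B^\perp$, and the other three checks are identical. Semistability of $(V,\varphi)$ then gives $\deg V_1+\deg V_2\le\deg V$, while the sheaf sequence $0\to V_1\to A\oplus B^\perp\to V_2\to0$ gives $\deg V_1+\deg V_2\ge\deg A+\deg B^\perp$. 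Since $\deg B^\perp=\deg B+\deg V$ by Remark~\ref{formuleta}, these combine to $\deg A+\deg B\le0$. The same argument applied to $(A_0,B_0)$ yields $\deg A_0+\deg B_0\le0$. Finally the sequences $0\to A_0\to W'\to q(W')\to0$ and $0\to B_0\to W'\to p(W')\to0$ give $\deg W'\le\deg A_0+\deg B$ and $\deg W'\le\deg B_0+\deg A$; adding them,
$$2\deg W'\le(\deg A+\deg B)+(\deg A_0+\deg B_0)\le0,$$
which proves the first assertion.

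For the equality statement I would assume $(V,\varphi)$ stable and simple and trace back the case $\deg W'=0$. Then every inequality above is an equality; in particular $\deg V-\deg V_1-\deg V_2=0$ for the genuine filtration $(V_1,V_2)=(A\cap B^\perp,\,A+B^\perp)$. Strict stability~(\ref{eq:spn-simplified-stab}) forces this filtration to be trivial, i.e.\ $V=A\oplus B^\perp$ as a direct sum of subbundles; the symmetric containments make $\beta$ and $\gamma$ block-diagonal for this splitting, so $(V,\varphi)=(A,\varphi_A)\oplus(B^\perp,\varphi_{B^\perp})$ is a direct sum of $\Sp$-Higgs bundles, and Proposition~\ref{prop:simple-spnr} (together with Theorem~\ref{thm:stable-not-simple-spnr-higgs}) rules out a nontrivial such splitting. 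Hence $A=V$, $B=V^*$, and arguing symmetrically with $(A_0,B_0)$ gives $A_0=B_0=0$. Thus $p|_{W'}$ and $q|_{W'}$ are injective with full saturated image; comparing $\deg W'=0$ with $\deg V$ and $\deg V^*$ forces $\deg V=0$ and both maps to be isomorphisms. Therefore $W'$ is the graph of $f:=q|_{W'}\circ(p|_{W'})^{-1}\colon V\xrightarrow{\simeq}V^*$, and $\Phi$-invariance of the graph is precisely $\gamma=f\beta f$, i.e.\ $\beta f=f^{-1}\gamma$.

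Polystability of $(W,\Phi)$ is then immediate from Proposition~\ref{thm:sl(n,C)-stability}: the relation $\gamma=f\beta f$ makes both $\Graph(f)$ and $\Graph(-f)$ $\Phi$-invariant (the graph of $cf$ is $\Phi$-invariant iff $c^2=1$), and $V\oplus V^*=\Graph(f)\oplus\Graph(-f)$ furnishes the invariant complement required of every degree-$0$ invariant subbundle. The main obstacle I anticipate is the equality case: one must carry the saturations carefully so that the degree identities are identities of subbundles, and one must dispose of the degenerate outcomes of strict stability in which $V_2=0$ or $V_1=V$ (equivalently $\beta$ or $\gamma$ vanishes on a summand, forcing $W'=V^*$ or $W'=V$). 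These boundary cases are where the argument is most delicate and where both stability and simplicity of $(V,\varphi)$, as well as the self-duality $(W,\Phi)\cong(W^*,-\Phi^t)$ induced by $\Omega$, are genuinely needed; the semistability half, by contrast, is a routine consequence of the symmetric-filtration construction once the four containments are in hand.
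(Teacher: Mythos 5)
Your proposal is, in substance, the paper's own proof: you attach the same four subsheaves $A=p(W')$, $B=q(W')$, $A_0=W'\cap V$, $B_0=W'\cap V^*$ to an invariant subbundle, run the same degree bookkeeping, and finish with the same graph construction $W'=\Graph(f)$, $W''=\Graph(-f)$. Feeding $(A,B)$ into Proposition~\ref{spnr-stability} via the filtration $(A\cap B^{\perp},\,A+B^{\perp})$ rather than into Lemma~\ref{primerpas} via the pair $(A+B^{\perp},\,B+A^{\perp})$ is purely cosmetic, since $A\cap B^{\perp}=(B+A^{\perp})^{\perp}$ and Lemma~\ref{primerpas} is proved by exactly this translation. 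The semistability half is complete and correct, and your treatment of the main branch of the equality case (simplicity kills the splitting $V=A\oplus B^{\perp}$; then $W'\neq 0$ and $W'\neq V\oplus V^*$ force $A=V$, $B=V^*$, $A_0=B_0=0$, so $W'$ is a graph) coincides with the paper's, as does the $\Graph(f)\oplus\Graph(-f)$ argument for polystability.

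The two degenerate outcomes you defer are, however, a genuine gap, and the tools you name will not close it. Unwinding them: $(V_1,V_2)=(0,0)$ means $A=0$, so $W'$ is a full-rank subsheaf of $V^*$; invariance forces $\beta$ to vanish on $W'$, hence $\beta=0$, and the equality $\deg A+\deg B=0$ forces $\deg V=0$ and $W'=V^*$; dually, $(V_1,V_2)=(V,V)$ gives $\gamma=0$, $\deg V=0$, $W'=V$. Proposition~\ref{spnr-stability} as stated imposes no strict inequality in these cases, because neither $V_1$ nor $V_2$ is a proper nonzero subbundle; and such configurations really do pass every test in that proposition while being simple: take $V$ stable of degree zero, $\beta=0$ and $0\neq\gamma\in H^0(K\otimes S^2V^*)$ (nonzero sections exist by Riemann--Roch), so that $\Aut(V,\varphi)=\{\pm\Id\}$, $W'=V^*$ is $\Phi$-invariant of degree zero, and no intertwining $f$ can exist since $\beta=0\neq\gamma$. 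So these cases must be excluded by the stability hypothesis itself, correctly interpreted --- not by simplicity, and not by the self-duality of $H(V,\varphi)$, which plays no role here. Concretely, what is needed (and what the paper uses tacitly when it asserts that equality forces \emph{both} $A+B^{\perp}=V$ \emph{and} $B+A^{\perp}=V^*$) is that the pairs $(A,B)=(0,V^*)$, available exactly when $\beta=0$, and $(A,B)=(V,0)$, available exactly when $\gamma=0$, count as destabilizing in Lemma~\ref{primerpas}; equivalently, a stable $\Sp(2n,\RR)$-Higgs bundle satisfies $\deg V>0$ whenever $\beta=0$ and $\deg V<0$ whenever $\gamma=0$. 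This is the reading under which the lemma is true (it is also what the Hitchin equation forces, and what makes stable imply polystable), and with it both of your degenerate cases die instantly, since each requires $\deg V=0$. State and use that consequence of stability explicitly and your proof is complete; as written, the deferral leaves the lemma unproved at exactly the point where the paper itself is silent.
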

\begin{proof}
Fix a subbundle $W'\subset V\oplus V^*$
satisfying $\Phi(W')\subset K\otimes W'$.
We prove the lemma in various steps.

\noindent{\bf 1.}
Denote by $p:V\oplus V^*\to V$ and $q:V\oplus V^*\to V^*$
the projections, and define subsheaves $A=p(W')$ and
$B=q(W')$.
It follows from $\Phi (W')\subset K\otimes W'$
that $\beta (B)\subset K\otimes A$ and
$\gamma (A)\subset K\otimes B$ (for example, using that
$\Phi p = q\Phi$ and $\Phi q=p\Phi$).
Since both $\beta$ and $\gamma$ are symmetric we deduce that
$\beta (A^{\perp})\subset K\otimes B^{\perp}$
and $\gamma (B^{\perp})\subset K\otimes A^{\perp}$
as well. It follows from this that if we define subsheaves
$$A_0=A + B^{\perp}\subset V \qquad\text{ and }\qquad
B_0=B+A^{\perp}\subset V^*$$
then we have $B_0^{\perp}\subset A_0$,
$A_0^{\perp}\subset B_0$ and $\Phi(A_0\oplus B_0)\subset
K\otimes(A_0\oplus B_0)$.

We can apply Lemma \ref{primerpas} also to subsheaves by replacing any
subsheaf of $V$ or $V^*$ by its saturation, which is now a subbundle
of degree not less than that of the subsheaf.  Hence we deduce that
\begin{equation}
\deg A_0+\deg B_0=\deg(A + B^{\perp})+\deg(B+A^{\perp})\leq 0,
\label{desAB}
\end{equation}
and equality holds if and only if $A+B^{\perp}=V$ and $B+A^{\perp}=V^*$.

Now we compute (using repeatedly the formula in Remark \ref{formuleta})
\begin{align*}
\deg(A + B^{\perp}) &= \deg A+\deg B^{\perp}-\deg(A\cap B^{\perp}) \\
&=\deg A+\deg B-\deg V^*-\deg((A^{\perp}+B)^{\perp})\\
&=\deg A+\deg B-\deg V^*-\deg (A^{\perp}+B)+\deg V^*\\
&=\deg A+\deg B-\deg (A^{\perp}+B).
\end{align*}
Consequently $\deg A+\deg B=\deg(A + B^{\perp})+\deg (A^{\perp}+B)$,
so (\ref{desAB}) implies that
\begin{equation}
\deg A+\deg B\leq 0,
\label{desABfinal}
\end{equation}
with equality if and only if $A+B^{\perp}=V$ and $B+A^{\perp}=V^*$.

\noindent{\bf 2.}
Let now $A'=W'\cap V$ and $B'=W'\cap V^*$.
Using again that $\Phi(W')\subset K\otimes W'$ we prove
that $\beta (B')\subset K\otimes A'$ and
$\gamma (A')\subset K\otimes B'$. Now, the same reasoning
as above (considering $(A'+{B'}^{\perp})\oplus (B'+{A'}^{\perp})$
and so on) proves that
\begin{equation}
\deg A'+\deg B'\leq 0,
\label{desABfinal2}
\end{equation}
with equality if and only if
$A'+{B'}^{\perp}=V$ and ${A'}^{\perp}+B'=V^*$.

\noindent{\bf 3.} Observe that there are exact sequences of sheaves
$$0\to B'\to W'\to A\to 0
\qquad\text{ and }\qquad
0\to A'\to W'\to B\to 0,$$
from which we obtain the formulas
$$\deg W'=\deg A+\deg B'
\qquad\text{ and }\qquad
\deg W'=\deg B+\deg A'.$$
Adding up and using (\ref{desABfinal}) together with (\ref{desABfinal2})
we obtain the desired inequality
$$\deg W'\leq 0.$$

\noindent{\bf 4.} Finally we consider the case when $(V,\varphi)$ is
stable and simple.  Suppose that $\deg W'=0$. Then we have
equality both in (\ref{desABfinal}) and in (\ref{desABfinal2}).
Hence, $A+B^{\perp}=V$, $A^{\perp}+B=V^*$,
$A'+{B'}^{\perp}=V$ and ${A'}^{\perp}+B'=V^*$.
But $A^{\perp}+B=(A\cap B^{\perp})^{\perp}$
and ${A'}^{\perp}+B'=(A'\cap {B'}^{\perp})^{\perp}$, so we deduce that
$$A\oplus B^{\perp}=V
\qquad\text{ and }\qquad
A'\oplus {B'}^{\perp}=V.$$
If one of these decompositions were nontrivial then $V$ would
not be simple, in contradiction with our
assumptions. Consequently we must have
$A=V$, $B^{\perp}=0$ (because $W'\neq 0$) and similarly
$A'=0$, ${B'}^{\perp}=V^*$ (because $W'\neq V\oplus V^*$).
This implies that the projections $p:W'\to A$ and $q:W'\to B$
induce isomorphisms $u:W'\simeq V$ and $v:W'\simeq V^*$.
Finally, defining $f:=v\circ u^{-1}:V\to V^*$ we find
an isomorphism which satisfies $\beta f=f^{-1}\gamma$ because
$\Phi (W')\subset K\otimes W'$.

To prove that in this case $(W,\Phi)=H(V,\varphi)$ is strictly polystable
just observe that $W'=\{(u,fu)\mid u\in V\}$ and define
$W''=\{(u,-fu)\mid u\in V\}$. It is then straightforward to check that
$V\oplus V^*=W'\oplus W''$, that $\Phi (W')\subset K\otimes W'$ and that
$\Phi (W'')\subset K\otimes W''$.  Finally note that the Higgs bundle
$(W',\Phi)$ is stable: any $\Phi$-invariant subbundle $W_0 \subset W'$
is also a $\Phi$-invariant subbundle of $(V \oplus V^*, \Phi)$.
Hence, if $\deg W_0=0$ the argument of the previous paragraph shows that
$W_0$ has to have the same rank as $V$, so
$W_0 = W'$.  Analogously, one sees that $(W'',\Phi)$ is a stable Higgs
bundle.
\end{proof}

Combining Lemmas \ref{primerpas} and \ref{segonpas} we get the proof of
(3) of Theorem \ref{thm:stability-equivalence}.

We conclude with the following lemma, which proves
(4) of Theorem \ref{thm:stability-equivalence}.

\begin{lemma}\label{lem:semi-quad-higgs}
An $\Sp(2n,\RR)$-Higgs bundle $(V,\varphi=(\beta,\gamma))$ is
polystable if and only if $H(V,\varphi)$ is polystable.
\end{lemma}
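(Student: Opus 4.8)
The plan is to prove the two implications separately, in each case exploiting the Jordan--H\"older type structure of polystable objects together with the stable case already settled in Lemma~\ref{segonpas}.

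For the forward implication, suppose $(V,\varphi)$ is polystable. By the structure theorem for polystable $\Sp(2n,\RR)$-Higgs bundles (Proposition~\ref{prop:polystable-spnr-higgs}) we may write $(V,\varphi)=\bigoplus_i(V_i,\varphi_i)$, where each $(V_i,\varphi_i)$ is a stable $G_i$-Higgs bundle with $G_i\in\{\Sp(2n_i,\RR),\U(n_i),\U(p_i,q_i)\}$. Since $\beta_i$ and $\gamma_i$ are block diagonal for this decomposition, the construction $H$ is additive, $H(V,\varphi)=\bigoplus_i H(V_i,\varphi_i)$ as $\SL(2n,\CC)$-Higgs bundles, and a finite direct sum of polystable Higgs bundles all of slope $0$ is again polystable; so it suffices to show each $H(V_i,\varphi_i)$ is polystable. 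For a $\U(n_i)$-summand one has $\varphi_i=0$ and $\deg V_i=0$ (forced by Remark~\ref{phi=0}), so $H(V_i,0)=(V_i\oplus V_i^*,0)$ is a sum of two stable degree-zero bundles and hence polystable. For an $\Sp(2n_i,\RR)$-summand which is moreover simple, Lemma~\ref{segonpas} shows $H(V_i,\varphi_i)$ is stable, or (when the intertwining isomorphism $f$ exists) polystable; a stable but non-simple summand is first split into stable simple pieces via Theorem~\ref{thm:stable-not-simple-spnr-higgs} and the argument applied to each piece. For a $\U(p_i,q_i)$-summand, a direct inspection of the blocks shows that $H(V_i,\varphi_i)$ splits $\Phi$-invariantly as the underlying Higgs bundle of the $\U(p_i,q_i)$-Higgs bundle together with its dual, both polystable since the $\U(p_i,q_i)$-Higgs bundle is; hence $H(V_i,\varphi_i)$ is polystable.

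For the converse, suppose $(W,\Phi)=H(V,\varphi)$ is polystable. By the already established equivalence of semistability, $(V,\varphi)$ is semistable, and we decompose $(W,\Phi)=\bigoplus_a(W_a,\Phi_a)$ into stable Higgs bundles of slope $0$. The key point is that $(W,\Phi)$ carries two extra structures inherited from $(V,\varphi)$: the canonical symplectic form $\Omega$ on $W=V\oplus V^*$ of (\ref{eq:sp2nC-associated-higgs}), under which $\Phi\in\splie(W)$, and the real structure recording that $(V,\varphi)$ is an $\Sp(2n,\RR)$- rather than merely $\Sp(2n,\CC)$-Higgs bundle. These induce (anti)holomorphic symmetries of $(W,\Phi)$ permuting the stable summands $W_a$, each being sent to another summand or to the dual of one; tracking this action and regrouping the $W_a$ accordingly recovers a splitting of $(V,\varphi)$ into stable $\Sp(2n_i,\RR)$-, $\U(n_i)$- and $\U(p_i,q_i)$-Higgs bundles, which by Proposition~\ref{prop:polystable-spnr-higgs} exhibits $(V,\varphi)$ as polystable.

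The main obstacle is precisely this last step: descending the $\SL(2n,\CC)$-splitting of $(W,\Phi)$ to an $\Sp(2n,\RR)$-splitting of $(V,\varphi)$. A degree-zero $\Phi$-invariant subbundle $W'\subset W$ need not respect $W=V\oplus V^*$ nor the symmetry of $\beta$ and $\gamma$, so one must use $\Omega$ to replace it by an isotropic invariant subbundle with an isotropic invariant complement, and then invoke the correspondence of Lemma~\ref{primerpas} (pairing $A\subset V$, $B\subset V^*$ with $B^\perp\subset A$) to produce the splitting data demanded by the polystability condition. Equivalently, and perhaps more cleanly, one can argue through the Hitchin--Kobayashi correspondence of Theorem~\ref{higgs-hk}: a harmonic metric on the polystable $(W,\Phi)$ is unique up to the automorphisms of its stable factors, hence can be averaged into one invariant under the symmetries above; such an invariant metric is necessarily of product form $h\oplus h^{-*}$ and therefore solves the $\Sp(2n,\RR)$-Hitchin equation for $(V,\varphi)$, proving polystability.
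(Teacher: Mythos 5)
Your forward implication is sound, and it takes a genuinely different route from the paper: the paper disposes of that direction in one line, citing Lemmas~\ref{primerpas} and~\ref{segonpas}, whereas you decompose $(V,\varphi)$ via the structure theorem (Proposition~\ref{prop:polystable-spnr-higgs}) and treat each stable $\Sp(2n_i,\RR)$-, $\U(n_i)$- and $\U(p_i,q_i)$-summand separately, reducing the first type to Lemma~\ref{segonpas} through Theorem~\ref{thm:stable-not-simple-spnr-higgs}. This is legitimate (and arguably more careful than the paper's one-liner), modulo small verifications you gloss over, e.g.\ that every piece of $H(V_i,\varphi_i)$ really has degree zero; for a $\U(p_i,q_i)$-summand this needs the semistability inequality applied to the filtrations $V_1=V_2=\tilde V$ and $V_1=V_2=\tilde W^*$, which forces $\deg\tilde V+\deg\tilde W=0$.

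The converse, however, has a genuine gap, and it sits exactly where you yourself locate ``the main obstacle.'' Passing from a decomposition $(W,\Phi)=\bigoplus_a(W_a,\Phi_a)$ into stable slope-zero pieces to a splitting of $(V,\varphi)$ is the entire content of the paper's proof: the paper first proves, by induction on the number of summands, that \emph{every} degree-zero $\Phi$-invariant subbundle $U\subset W$ has the form $U=\psi\bigl(\bigoplus_{i\in I}W_i\bigr)$ for some automorphism $\psi$ of $W$ commuting with $\Phi$; it then introduces the $\Phi$-invariant subsheaves $R(W')=p(W')\oplus q(W')$ and $r(W')=(W'\cap V)\oplus(W'\cap V^*)$, shows they again have degree zero, and combines the two facts recursively to split $(V,\varphi)$ into summands $(V_i,\varphi_i)$ which are then checked to be stable. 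Your proposal replaces all of this with the assertion that the symplectic form and the extra structure encoding the real form induce symmetries ``permuting the stable summands,'' so that ``regrouping'' descends the splitting. That is not a proof: the $W_a$ need have no compatibility whatsoever with $V$ and $V^*$ (the very difficulty you name), the decomposition into stables is unique only up to automorphisms of $(W,\Phi)$, and arranging it compatibly with $W=V\oplus V^*$ is precisely the automorphism-twisting claim the paper establishes. Two further problems: first, your closing appeal to Proposition~\ref{prop:polystable-spnr-higgs} runs in the wrong direction --- it produces a decomposition \emph{from} polystability, whereas you need its (unstated) converse, namely that a direct sum of stable $G_i$-Higgs bundles of these types is a polystable $\Sp(2n,\RR)$-Higgs bundle, and that also requires proof. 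Second, the Hitchin--Kobayashi alternative is invalid as written: solutions of Hitchin's equations cannot be ``averaged,'' since the equations are nonlinear; the correct mechanism is uniqueness of the harmonic metric up to $\Aut(W,\Phi)$ together with a descent argument producing a solution invariant under the involution defining $V\oplus V^*$, which is a nontrivial piece of the general $G$-versus-$G^{\CC}$ theory of the companion paper, not a shortcut around the paper's argument.
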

\begin{proof}
  If $(V,\varphi)$ is polystable then, by Proposition
  \ref{prop:polystable-spnr-higgs} there is a splitting $(V,\varphi) =
  (V_1,\varphi_1) \oplus \dots \oplus (V_k,\varphi_k)$ such that each
  $(V_i,\varphi_i)$ is a stable and simple $G_i$-Higgs bundle
  belonging to one out of three possible types of Higgs bundles, which
  are labeled in Proposition \ref{prop:polystable-spnr-higgs} as types
  (1), (2) or (3) (recall that type (1) corresponds
  $G_i=\Sp(2n_i,\RR)$, type (2) to $G_i=\U(p_i,q_i)$ and type (3) to
  $G_i=\U(n_i)$).  By Lemma \ref{segonpas}, for each summand
  $(V_i,\varphi_i)$ belonging to type (1) the Higgs bundle
  $H(V_i,\varphi_i)$ is polystable. The corresponding statement for
  type (2) follows easily using the line of reasoning employed in
  \cite[Section~2.3]{gothen:2001},
  cf.~\cite[Remark~3.8]{bradlow-garcia-prada-gothen:2003}. Finally,
  the statement for type (3) summands is immediate.  This proves one
  direction of the equivalence.

We now prove the converse statement.
Assume that $(W,\Phi)=H(V,\varphi)$ is polystable, so that
$W=\bigoplus_{i=1}^N W_i$ with $\Phi W_i\subset K\otimes W_i$ and
every $(W_i,\Phi|_{W_i})$ is stable with $\deg W_i=0$.

\noindent{\bf 1.}
We claim that for any subbundle $U\subset W$ satisfying
$\deg U=0$ and $\Phi(U)\subset K\otimes U$ there is an isomorphism
$\psi:W\to W$ which commutes with $\Phi$
and a set $I\subset\{1,\dots,N\}$ such that
$U=\psi(\bigoplus_{i\in I}W_i)$. To prove the claim we
use induction on $N$ (the case $N=1$ being obvious).
Let $W_{\geq 2}=\bigoplus_{i\geq 2}W_i$ and denote by
$p_{\geq 2}:W\to W_{\geq 2}$ the projection. Then we have
an exact sequence
$$0\to W_1\cap U\to U\to p_{\geq 2}(U)\to 0.$$
Since both $W_1\cap U$ and $p_{\geq 2}(U)$ are invariant under
$\Phi$, by polystability their degrees must be $\leq 0$. And
since according to the exact sequence above the sum of
their degrees must be $0$, the only possibility is that
$$\deg W_1\cap U=0\qquad\text{ and }\qquad \deg p_{\geq 2}(U)=0.$$
Now we apply the induction hypothesis to the inclusion
$p_{\geq 2}(U)\subset W_{\geq 2}$ and deduce that there
is an isomorphism $\psi_2:W_{\geq 2}\to W_{\geq 2}$ commuting
with $\Phi$ and a
subset $I_2\subset\{2,\dots,N\}$ such that
$$p_{\geq 2}(U)=\psi_2(\bigoplus_{i\in I_2}W_i).$$
Since $\deg W_1\cap U=0$ and $W_1$ is stable, only two things can
happen. Either $W_1\cap U=W_1$ or $W_1\cap U=0$.
In the first case we have
$$U=W_1\oplus\bigoplus_{i\in I_2}\psi(W_i),$$
so putting $I=\{1\}\cap I_2$ and $\psi=\diag(1,\psi_2)$ the
claim is proved. If instead $W_1\cap U=0$ then there is a map
$\xi:p_{\geq 2}(U)\to W_1$ such that
$$U=\{(\xi(v),v)\in W_1\oplus p_{\geq 2}\}.$$
Since $U$ is $\Phi$-invariant we deduce that $\xi$ must commute with
$\Phi$.
If we now extend $\xi$ to $W_{\geq 2}$ by defining
$\xi(\psi_2(W_j))=0$ for any $j\in\{2,\dots,N\}\setminus I_2$ then
the claim is proved by setting $I=I_2$ and
$$\psi=\left(\begin{array}{cc}
1 & \xi\circ\psi_2 \\ 0 & \psi_2\end{array}\right).$$

\noindent{\bf 2.}
Define for any $W'\subset W$ the subsheaves
$R(W')=p(W')\oplus q(W')$ (recall that $p:W\to V$ and $q:W\to V^*$
are the projections) and $r(W')=(W'\cap V)\oplus (W'\cap V^*)$.
Reasoning as in the first step of the proof of Lemma \ref{segonpas}
we deduce that if $W'$ is $\Phi$-invariant then both $R(W')$ and
$r(W')$ are $\Phi$ invariant, so in particular we must have
$\deg R(W')\leq 0$ and $\deg r(W')\leq 0$. In case $\deg W'=0$
these inequalities imply $\deg R(W')=\deg r(W')=0$ (using the
exact sequences $0\to W'\cap V^*\to W'\to p(W')\to 0$ and
$0\to W'\cap V\to W'\to q(W')\to 0$).

Assume that there is some summand in $\{W_1,\dots,W_N\}$, say
$W_1$, such that $0\neq r(W_1)$ or $R(W_1)\neq W$. Suppose, for
example, that $W':=R(W_1)\neq W$ (the other case is similar).
Let $A=p(W_1)$ and $B=q(W_1)$, so that $W'=A\oplus B$.
By the observation above and the
claim proved in {\bf 1} we know that there is
an isomorphism $\psi:W\to W$ which commutes with $\Phi$
and such that, if we substitute
$\{W_i\}_{1\leq i\leq N}$ by $\{\psi(W_i)\}_{1\leq i\leq N}$
and we reorder the summands if necessary, then we may write
$W'=W_1\oplus\dots\oplus W_k$ for some $k<N$.
Now let $W''=W_{k+1}\oplus\dots\oplus W_N$.
We clearly have $W=W'\oplus W''$, so the inclusion of
$W''\subset W=V\oplus V^*$ composed with the projection
$V\oplus V^*\to V/A\oplus V^*/B=W/W'$ induces an isomorphism.
Consequently we have $V=A\oplus W''\cap V$. Let us rename
for convenience $V_1:=A$ and $V_2:=W''\cap V$. Then, using
the fact that each $W_i$ is $\Phi$-invariant we deduce that
we can split both $\beta$ and $\gamma$ as
\begin{align*}
\beta &= (\beta_1,\beta_2)
\in H^0(K\otimes S^2V_1)\oplus H^0(K\otimes S^2V_2), \\
\gamma &= (\gamma_1,\gamma_2)
\in H^0(K\otimes S^2V_1^*)\oplus H^0(K\otimes S^2V_2^*).
\end{align*}
Hence, if we put $\varphi_i=(\beta_i,\gamma_i)$ for $i=1,2$ then
we may write
$$(V,\varphi)=(V_1,\varphi_1)\oplus (V_2,\varphi_2).$$

\noindent{\bf 3.}
Our strategy is now to apply recursively the process described
in {\bf 2}. Observe that if $N\geq 3$ then for at least one
$i$ we have $R(W_i)\neq W$, because there must be a summand
whose rank is strictly less that the rank of $V$. Hence the
projection of this summand to $V$ is not exhaustive.

Consequently, we can apply the process and split $V$ in smaller
and smaller pieces, until we arrive at a decomposition
$$(V,\varphi)=(V_1,\varphi_1)\oplus\dots\oplus (V_j,\varphi_j)$$
such that we can not apply {\bf 2} to any $H(V_i,\varphi_i)$
For each  $(V_i,\varphi_i)$ there are two possibilities.
Either $H(V_i,\varphi_i)$ is stable, in which case $(V_i,\varphi_i)$
is stable (by Lemma \ref{primerpas}), or
$H(V_i,\varphi_i)$ splits in two stable Higgs bundles
$W_i'\oplus W_i''$ which satisfy:
$$R(W_i')=R(W_i'')=W\qquad\text{ and }\qquad
r(W_i')=r(W_i'')=0.$$
But in this case it is easy to check that $(V_i,\varphi_i)$ is
also stable.

By the preceding lemma, $(V,\varphi)$ is semistable.
Suppose it is not stable. Then there is a filtration
$0\subset V_1\subset V_2\subset V$ such that
$\Phi(V_2\oplus V_1^{\perp})\subset K\otimes (V_2\oplus V_1^{\perp})$
and $W':= V_2\oplus V_1^{\perp}=0$ has degree $\deg W'=0$.

Define $W_{\geq 2}=\bigoplus_{i\geq 2}W_i$, and
let $p_2:W\to W_{\geq 2}$ denote the projection.
We have an exact sequence
$$0\to W'\cap W_1\to W'\to p_2(W')\to 0.$$
It is easy to check that $\Phi(W'\cap W_1)\subset K\otimes (W'\cap
W_1)$ and that $\Phi(p_2(W'))\subset K\otimes p_2(W')$. Since both
$W_1$ and $W_{\geq 2}$ are polystable, we must have $\deg W'\cap
W_1\leq 0$ and $\deg p_2(W')\leq 0$. Finally, since $\deg W'=0$,
the exact sequence above implies that $\deg W'\cap W_1=0$ and
$\deg p_2(W')=0$. Now $W_1$ is stable, so $W'\cap W_1$ can only be
either $0$ or $W_1$. Reasoning inductively with $p_2(W')\subset
W_{\geq 2}$ in place of $W'\subset W$ we deduce that there must be
some $I\subset\{1,\dots,k\}$ such that
$$W'=\bigoplus_{i\in I}W_i.$$

Since each $(W_i,\Phi|_{W_i})$ is stable, it is easy to check
(for example using induction on $N$) that one must have
$ V_2\oplus V_1^{\perp}=W_j$ for some $j$.
This easily implies that
$V_2=V\cap W_j$ and if we define
$$V'=\bigoplus_{i\neq j} p(W_j)$$ then
$V=V'\oplus V_2$. Applying the same process to $V'$ and $V_2$
we arrive at the conclusion that $(V,\varphi)$ is polystable.
\end{proof}

\subsection{Proof of Theorem~\ref{thm:stability-equivalence-sp2nC}}
\label{sec:proof-thm:stability-equivalence-sp2nC}

An $\Sp(2n,\CC)$-Higgs bundle $((W,\Omega),\Phi)$ is stable if  the
$\SL(2n,\CC)$-Higgs bundle $(W,\Phi)$ is stable. Thus the result is
immediate from Theorem~\ref{thm:stability-equivalence}, unless we are
in Case~(\ref{item:intertwined}) of that Theorem. In that case, we have
seen in the last paragraph of the proof of Lemma~\ref{segonpas} that
  \begin{enumerate}
  \item There is an isomorphism $f$ as stated, except for the symmetry
    condition.
  \item There is an isomorphism
  \begin{math}
    V \oplus V^* = W' \oplus W'',
  \end{math}
  where $W' = \{(u,f(u)) \suchthat u \in V \}$ and $W'' = \{(u,-f(u))
  \suchthat u \in V \}$, and $W'$ and $W''$ are
  $\Phi$-invariant subbundles of $W$.
  \item The $\SL(2n,\CC)$-Higgs bundle $(W,\Phi)$ is strictly
  polystable, decomposing as the direct sum of stable $\GL(n,\CC)$-Higgs
  bundles:
  \begin{equation}\label{eq:decom-sl}
    (W,\Phi) = (W',\Phi') \oplus (W'',\Phi'').
  \end{equation}
  Note also that $(W',\Phi') \simeq (W'',\Phi'')$.
  \end{enumerate}
  Now, from Theorem~\ref{thm:sp(2n,C)-stability} we have that for the
  $\Sp(2n,\CC)$-Higgs bundle $((W,\Omega),\Phi)$ to be strictly
  semistable, it must have an isotropic $\Phi$-invariant subbundle of
  degree zero. But the decomposition (\ref{eq:decom-sl}) shows that
  the only degree zero $\Phi$-invariant subbundles are $W'$ and
  $W''$. The subbundle $W'$ is isotropic if and only if, for all local
  sections $u,v$ of $V$, we have
  \begin{displaymath}
      \Omega((u,f(u),(v,f(v)) = 0
      \iff \la u, f(v) \ra = \la v , f(u) \ra,
  \end{displaymath}
  that is, if and only if $f$ is symmetric. Analogously, $W''$ is
  isotropic if and only if $f$ is symmetric. The first part of the
  conclusion follows.

  For the second part, consider the $\GL(n,\RR)$-Higgs bundle
  $((V,f),\beta f)$. This is stable as a $\GL(n,\CC)$-Higgs bundle
  because $(V,\beta f) \simeq (W',\Phi')$, which is stable. Thus, in
  particular, $((V,f),\beta f)$ is stable as a $\GL(n,\RR)$-Higgs
  bundle (see Definition~\ref{thm:orthogonal-stability}).
  \qed

  \subsection{Milnor--Wood inequality and moduli space of
    $\Sp(2n,\RR)$-Higgs bundles}
\label{sec:spn-higgs-moduli}

The topological invariant attached to an $\Sp(2n,\RR)$-Higgs
bundle $(V,\beta,\gamma)$ is an element in the fundamental group
of $\U(n)$. Since $\pi_1(U(n))\simeq \ZZ$, this is an integer,
which coincides with the degree of $V$.

We have the following Higgs bundle incarnation of the Milnor--Wood
inequality \eqref{mw1} (see \cite[Proposition~3.21]{gothen:2001} or
\cite[Corollary~3.27]{bradlow-garcia-prada-gothen:2003}).

\begin{proposition}\label{bounds-semistability}
 Let $(V,\beta,\gamma)$ be a semistable $\Sp(2n,\RR)$-Higgs bundle and let
$d = \deg(V)$.  Then
\begin{eqnarray}
d&\leq \rank(\gamma) (g-1) \label{semistability}\\
-d&\leq \rank(\beta) (g-1),
\end{eqnarray}
\end{proposition}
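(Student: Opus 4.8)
The plan is to prove the first inequality $d \le \rank(\gamma)(g-1)$ by combining the symmetry of $\gamma$ with a single application of semistability to a well-chosen filtration; the second inequality then follows by the mirror argument (equivalently, by applying the first to the dual Higgs bundle $(V^*,\gamma^t,\beta^t)$, which has degree $-d$ and interchanges the roles of $\beta$ and $\gamma$).

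First I would set $r = \rank(\gamma)$ and let $N \subset V$ be the saturation of $\ker(\gamma\colon V \to K\otimes V^*)$, a subbundle of rank $n-r$. The key geometric observation is that, since $\gamma$ is symmetric, its image is constrained: for local sections $v$ of $V$ and $w$ of $N$ one has $\gamma(v)(w) = \gamma(w)(v) = 0$, so $\Im\gamma \subseteq K \otimes N^{\perp}$. Hence $\gamma$ induces a sheaf map $V/N \to K\otimes N^{\perp}$ between locally free sheaves of the same rank $r$, which is injective because it is generically an isomorphism and $V/N$ is torsion-free. Comparing degrees, and using Remark~\ref{formuleta} to write $\deg N^{\perp} = \deg N - \deg V$, I obtain
\[
\deg V - \deg N \;\le\; r(2g-2) + \deg N - \deg V,
\]
which simplifies to $\deg V - \deg N \le r(g-1)$.

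Next I would feed the filtration $0 \subset N \subset V$ (that is, $V_1 = N$ and $V_2 = V$) into Proposition~\ref{spnr-stability}. By Remark~\ref{rem:matrix-form}, the conditions \eqref{eq:condicions2} hold automatically: since $V_2 = V$ we have $V_2^{\perp}=0$ so $\beta V_2^{\perp}=0$ and $\gamma V_1=\gamma N = 0$, while $\beta V_1^{\perp}\subseteq K\otimes V = K\otimes V_2$ trivially and $\gamma V_2 \subseteq K\otimes N^{\perp} = K\otimes V_1^{\perp}$ by the symmetry observation above. Semistability then gives $\deg V - \deg N - \deg V \ge 0$, i.e.\ $\deg N \le 0$. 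Combining with the previous step yields $\deg V \le r(g-1) + \deg N \le r(g-1)$, which is \eqref{semistability}.

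For the second inequality I would run the symmetric argument with $s = \rank(\beta)$, taking $M \subset V^*$ to be the saturation of $\ker(\beta\colon V^* \to K\otimes V)$ and using the filtration $0 \subset M^{\perp} \subset V$ (i.e.\ $V_1=0$, $V_2=M^{\perp}$); symmetry of $\beta$ gives $\Im\beta \subseteq K\otimes M^{\perp}$, and the identical bookkeeping produces $-\deg V \le s(g-1)$. I expect the only delicate points to be purely organizational: verifying that each filtration genuinely satisfies the hypotheses of Proposition~\ref{spnr-stability} through the matrix description of Remark~\ref{rem:matrix-form}, and keeping careful track of saturations so that every sheaf injection used in the degree estimates is between bundles of equal rank (so that the cokernel is torsion and the degree inequality goes the right way). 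The degenerate cases $\gamma=0$ (where $r=0$, $N=V$) and $\beta=0$ (where $s=0$, $M^{\perp}=0$) are covered by the same filtrations and give directly $d\le 0$ and $-d\le 0$ respectively.
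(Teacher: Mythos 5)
Your proof is correct, but it takes a genuinely different route from the paper's. The paper only sketches its argument: it first invokes the equivalence between semistability of $(V,\beta,\gamma)$ and semistability of the associated $\SL(2n,\CC)$-Higgs bundle $(W,\Phi)=H(V,\varphi)$ (part (3) of Theorem~\ref{thm:stability-equivalence}), and then applies the slope criterion of Proposition~\ref{thm:sl(n,C)-stability} to $\Phi$-invariant subbundles of $V\oplus V^*$ built from the kernel and image of $\Phi$ --- in effect $N\oplus 0$, giving $\deg N\le 0$, and $V\oplus I_\gamma$ with $I_\gamma$ the saturation of $\gamma(V)\otimes K^{-1}$, giving $\deg V+\deg I_\gamma\le 0$ --- after which the same rank/degree count you perform finishes the argument. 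You instead stay entirely inside the intrinsic $\Sp(2n,\RR)$ semistability condition (Proposition~\ref{spnr-stability}, checked via Remark~\ref{rem:matrix-form}), and you exploit the symmetry of $\gamma$ to get $\Im\gamma\subseteq K\otimes N^{\perp}$; this makes the annihilator $N^{\perp}$ play the role of the saturated image (indeed symmetry forces $I_\gamma=N^{\perp}$, both being saturated of rank $r$), and its degree comes for free from Remark~\ref{formuleta} instead of from a second application of semistability. So you need only one semistability input ($\deg N\le 0$ from the filtration $V_1=N$, $V_2=V$), where the paper needs two but uses no symmetry. What each buys: your argument is more self-contained, since it does not rest on Theorem~\ref{thm:stability-equivalence}, which is a substantial result with its own section of proof; the paper's argument is uniform across groups --- it is the same kernel/image argument used for $\U(p,q)$-Higgs bundles in \cite{bradlow-garcia-prada-gothen:2003} --- and recycles machinery the paper has already established. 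Your bookkeeping (saturations so that the sheaf injection $V/N\to K\otimes N^{\perp}$ is between locally free sheaves of equal rank, verification of \eqref{eq:condicions2} for both filtrations, and the dual argument for the second inequality) is sound, including the degenerate cases $\gamma=0$ and $\beta=0$.
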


This is proved by first using the equivalence between the
semistability of $(V,\beta,\gamma)$ and the $\SL(2n,\CC)$-Higgs bundle
$(W,\Phi)$ associated to it, and then applying the semistability
numerical criterion to special Higgs subbundles defined by the kernel
and image of $\Phi$.

As a consequence of Proposition~\ref{bounds-semistability} we have
the  following.

\begin{proposition}\label{mw-higgs}
  Let $(V,\beta,\gamma)$ be a semistable $\Sp(2n,\RR)$-Higgs bundle and let
$d= \deg(V)$.  Then
  \begin{displaymath}
    \abs{d} \leq n(g-1).
  \end{displaymath}
  Furthermore,
  \begin{itemize}
  \item[$(1)$] $d = n(g-1)$ holds if and only if
  $\gamma\colon V \to V^* \otimes K$ is an isomorphism;
  \item[$(2)$] $d = -n(g-1)$ holds if and only if
  $\beta\colon V^* \to V \otimes K$ is an isomorphism.
  \end{itemize}
\end{proposition}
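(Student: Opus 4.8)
The plan is to derive everything from Proposition~\ref{bounds-semistability}, using only the elementary observation that, since $V$ has rank $n$, the generic ranks of the bundle maps satisfy $\rank(\gamma)\leq n$ and $\rank(\beta)\leq n$. First I would establish the inequality itself. Substituting these bounds into the two inequalities of Proposition~\ref{bounds-semistability} gives $d\leq \rank(\gamma)(g-1)\leq n(g-1)$ and $-d\leq \rank(\beta)(g-1)\leq n(g-1)$; since $g\geq 2$, so that $g-1>0$, together these yield $\abs{d}\leq n(g-1)$.

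For the equality statement $(1)$, the ``if'' direction is a direct degree computation: if $\gamma\colon V\to V^*\otimes K$ is an isomorphism then $\deg V=\deg(V^*\otimes K)$, i.e.\ $d=-d+n(2g-2)$, which forces $d=n(g-1)$. For the ``only if'' direction I would argue in two steps. Suppose $d=n(g-1)$. Feeding this into $d\leq\rank(\gamma)(g-1)$ and cancelling the positive factor $g-1$ gives $n\leq\rank(\gamma)$; combined with $\rank(\gamma)\leq n$ this shows $\rank(\gamma)=n$, so $\gamma$ is generically an isomorphism.

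The one point requiring a genuine (if short) argument is the upgrade from ``generically an isomorphism'' to ``an isomorphism''. Here I would use the determinant. Since $\gamma$ is a morphism of vector bundles of the same rank $n$ with generic rank $n$, it induces a nonzero holomorphic section $\det(\gamma)$ of $\Hom(\det V,\det(V^*\otimes K))=(\det V)^{-2}\otimes K^{n}$, a line bundle of degree $-2d+n(2g-2)$. When $d=n(g-1)$ this degree is $0$, and a nonzero holomorphic section of a degree-zero line bundle over a compact Riemann surface has no zeros. Hence $\det(\gamma)$ is nowhere vanishing and $\gamma$ is an isomorphism, completing $(1)$.

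Finally, statement $(2)$ follows by the same argument applied to the second inequality $-d\leq\rank(\beta)(g-1)$ of Proposition~\ref{bounds-semistability}, with $\beta\colon V^*\to V\otimes K$ in place of $\gamma$; alternatively it is immediate from $(1)$ via the duality $(V,\beta,\gamma)\mapsto(V^*,\gamma^t,\beta^t)$, which identifies $\cM_{-d}$ with $\cM_{d}$ and interchanges the roles of $\beta$ and $\gamma$. I expect the main, and essentially only non-routine, obstacle to be precisely the determinant/degree step guaranteeing nowhere-vanishing; everything else is bookkeeping with degrees.
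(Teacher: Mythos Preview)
Your proof is correct and is exactly the argument the paper has in mind: it states the proposition simply as a consequence of Proposition~\ref{bounds-semistability} without spelling out the details, and the steps you supply---bounding $\rank(\gamma),\rank(\beta)$ by $n$, then using the determinant section of the degree-zero line bundle $(\det V)^{-2}\otimes K^n$ to upgrade generic injectivity to an honest isomorphism---are precisely the routine verifications being left to the reader.
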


Recall from our general discussion in Section~\ref{sec:g-higgs-defs} that
$\cM_d(\Sp(2n,\RR))$ denotes the moduli space of $\Sp(2n,\RR)$-Higgs
bundles $(V,\beta,\gamma)$ with $\deg(V) = d$. For brevity we shall
henceforth write simply $\cM_d$ for this moduli space.

Combining Theorem \ref{alg-moduli} with Proposition \ref{prop:dim-moduli}
we have the following.

\begin{proposition}
\label{prop:expected-dim-MSp2n}
The moduli space $\cM_d$ is a complex algebraic variety. Its
expected dimension is  $(g-1)(2n^2+n)$.
\end{proposition}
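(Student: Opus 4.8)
The plan is to combine the two general results already established in the excerpt, specialized to $G=\Sp(2n,\RR)$, and then to perform the elementary dimension count for the symplectic Lie algebra. There is no deep content here beyond correctly invoking the earlier machinery and computing $\dim_{\CC}\Sp(2n,\CC)$.

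First I would settle algebraicity. The group $\Sp(2n,\RR)$ is a real form of the complex reductive algebraic group $\Sp(2n,\CC)$, hence is itself a real algebraic group. Theorem~\ref{alg-moduli} therefore applies directly and shows that $\cM_d=\cM_d(\Sp(2n,\RR))$ is a complex algebraic variety, giving the first assertion.

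For the expected dimension, I would note that $\Sp(2n,\RR)$ is connected and semisimple, with complexification $\Sp(2n,\RR)^{\CC}=\Sp(2n,\CC)$, so that Proposition~\ref{prop:dim-moduli} yields an expected dimension of $(g-1)\dim_{\CC}\Sp(2n,\CC)$. It then remains to compute this dimension. Writing $J$ for the matrix of the standard symplectic form (so that $J^t=-J$ and $J$ is invertible), an element $X\in\gllie(2n,\CC)$ lies in $\splie(2n,\CC)$ precisely when $X^tJ+JX=0$, which is equivalent to the matrix $JX$ being symmetric. Since $X\mapsto JX$ is then a linear isomorphism from $\splie(2n,\CC)$ onto the space of symmetric $2n\times 2n$ matrices, we obtain $\dim_{\CC}\Sp(2n,\CC)=\binom{2n+1}{2}=n(2n+1)=2n^2+n$. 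Substituting gives the expected dimension $(g-1)(2n^2+n)$, as claimed.

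The one point requiring care, rather than a genuine obstacle, is the meaning of ``expected dimension'': as set up in Section~\ref{sec:deformation-theory}, this notion presupposes the existence of stable and simple $\Sp(2n,\RR)$-Higgs bundles (so that a local universal family exists and $\HH^0=\HH^2=0$). The existence of such bundles in the relevant moduli spaces is recorded elsewhere in the paper (cf.\ the remark following Proposition~\ref{prop:dim-moduli}), so the formula is well posed, and the argument above is complete.
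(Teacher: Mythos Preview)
Your proof is correct and follows exactly the approach indicated in the paper, which simply states that the proposition follows by combining Theorem~\ref{alg-moduli} with Proposition~\ref{prop:dim-moduli}. Your additional explicit computation of $\dim_{\CC}\Sp(2n,\CC)=2n^2+n$ and your remark on the meaning of ``expected dimension'' are helpful elaborations of what the paper leaves implicit.
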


One has the following immediate duality result.

\begin{proposition}\label{prop:duality-iso}
  The map $(V,\beta,\gamma)\mapsto (V^*,\gamma^t,\beta^t)$ gives an
  isomorphism $\cM_d\simeq \cM_{-d}$.
\end{proposition}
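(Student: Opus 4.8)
The plan is to realize the stated map as a well-defined algebraic involution on polystable objects and then promote it to an isomorphism of moduli varieties. First I would make precise what $(V^*,\gamma^t,\beta^t)$ means: writing $V'=V^*$, a symmetric $\gamma\in H^0(S^2V^*\otimes K)$ is canonically a section $\gamma^t\in H^0(S^2V'\otimes K)$, while $\beta\in H^0(S^2V\otimes K)$ is canonically a section $\beta^t\in H^0(S^2(V')^*\otimes K)$. Thus $(V',\beta'=\gamma^t,\gamma'=\beta^t)$ is a genuine $\Sp(2n,\RR)$-Higgs bundle, and since $\deg V'=-\deg V=-d$ it lands in $\cM_{-d}$. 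The assignment is manifestly functorial, carrying isomorphic triples to isomorphic triples.

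The heart of the matter is preservation of polystability, and here I would route everything through the associated $\SL(2n,\CC)$-Higgs bundle $H(V,\varphi)=(V\oplus V^*,\Phi)$ of Section~\ref{sec:spnr-sl-higgs}. The key observation is that the fibrewise swap $\sigma\colon V\oplus V^*\xrightarrow{\simeq}V^*\oplus V$, $(v,\xi)\mapsto(\xi,v)$, is a vector bundle isomorphism intertwining the two Higgs fields: a direct check gives $\sigma\circ\Phi=\Phi'\circ\sigma$, where $\Phi'=\left(\begin{smallmatrix}0&\gamma\\\beta&0\end{smallmatrix}\right)$ is the Higgs field of $H(V^*,\gamma^t,\beta^t)$. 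Hence $H(V^*,\gamma^t,\beta^t)\cong H(V,\beta,\gamma)$ as $\SL(2n,\CC)$-Higgs bundles. Invoking the polystability equivalence of Theorem~\ref{thm:stability-equivalence}(4), I obtain the chain: $(V,\beta,\gamma)$ polystable $\iff H(V,\beta,\gamma)$ polystable $\iff H(V^*,\gamma^t,\beta^t)$ polystable $\iff (V^*,\gamma^t,\beta^t)$ polystable. This shows the map restricts to a well-defined map $\cM_d\to\cM_{-d}$.

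For bijectivity I would note that applying the construction twice returns $(V^{**},\gamma^{tt},\beta^{tt})$, canonically isomorphic to $(V,\beta,\gamma)$ via $V^{**}\cong V$; the map is therefore an involution and in particular a bijection on isomorphism classes of polystable objects. As an independent check one can verify preservation of (semi/poly)stability directly from Proposition~\ref{spnr-stability}: the annihilator operation $W\mapsto W^{\perp}$ is an inclusion-reversing bijection between filtrations $0\subset V_1\subset V_2\subset V$ and $0\subset V_2^{\perp}\subset V_1^{\perp}\subset V^*$ under which the conditions (\ref{eq:condicions2}) on $(\beta,\gamma)$ transform exactly into the corresponding conditions on $(\gamma^t,\beta^t)$, and the identity $\deg F^{\perp}=\deg F-\deg E$ of Remark~\ref{formuleta} gives $\deg V^*-\deg V_2^{\perp}-\deg V_1^{\perp}=\deg V-\deg V_1-\deg V_2$, so the numerical inequalities agree on the two sides.

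Finally, to upgrade the pointwise bijection to an isomorphism of complex algebraic varieties, I would observe that dualization and transposition are algebraic operations available over an arbitrary base: a family of polystable $\Sp(2n,\RR)$-Higgs bundles over a scheme $S$ yields, by applying $(V,\beta,\gamma)\mapsto(V^*,\gamma^t,\beta^t)$ fibrewise, another family over $S$, and hence a morphism $\cM_d\to\cM_{-d}$ through the functoriality of the moduli construction (Theorem~\ref{alg-moduli}). The same recipe on $\cM_{-d}$ provides a two-sided inverse, so the morphism is an isomorphism. The main obstacle is genuinely the preservation of polystability in the first step, which the reduction to the $\SL(2n,\CC)$-picture dispatches cleanly; the algebraicity in families is then a formality.
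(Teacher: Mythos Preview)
Your argument is correct, but the paper gives no proof at all: the proposition is introduced with the phrase ``One has the following immediate duality result'' and left at that. So there is no approach to compare against in any substantive sense.

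If anything, your detour through $H(V,\varphi)$ and Theorem~\ref{thm:stability-equivalence} is more elaborate than what the authors likely have in mind. The ``immediate'' verification is precisely the direct one you sketch as an independent check: the annihilator bijection $0\subset V_1\subset V_2\subset V \longleftrightarrow 0\subset V_2^{\perp}\subset V_1^{\perp}\subset V^*$ matches the conditions of Proposition~\ref{spnr-stability} for $(V,\beta,\gamma)$ with those for $(V^*,\gamma^t,\beta^t)$, and the degree identity $\deg V^*-\deg V_2^{\perp}-\deg V_1^{\perp}=\deg V-\deg V_1-\deg V_2$ transports the numerical inequality unchanged. This already gives preservation of (semi)stability and, with the obvious compatibility with splittings, of polystability; dualization being involutive and algebraic in families then finishes. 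Your $\SL(2n,\CC)$ route is a pleasant alternative and has the virtue of making the isomorphism $H(V^*,\gamma^t,\beta^t)\cong H(V,\beta,\gamma)$ explicit, but it is not needed here.
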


As a corollary of Proposition \ref{mw-higgs}, we obtain
the following.

\begin{proposition}\label{non-emptiness-higgs}
The moduli space $\cM_d$ is empty unless
$$
\abs{d} \leq n(g-1).
$$
\end{proposition}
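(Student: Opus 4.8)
The plan is to deduce this directly from the Milnor--Wood inequality already established in Proposition~\ref{mw-higgs}, using only the formal observation that $\cM_d$ is a moduli space of \emph{polystable} objects. First I would recall that, by Definition~\ref{def:L-twisted-pairs-stability}, every polystable $\Sp(2n,\RR)$-Higgs bundle is in particular semistable. Since $\cM_d$ is by construction the set of isomorphism classes of polystable $\Sp(2n,\RR)$-Higgs bundles $(V,\beta,\gamma)$ with $\deg V = d$, any point of $\cM_d$ furnishes in particular a \emph{semistable} $\Sp(2n,\RR)$-Higgs bundle of degree $d$.

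Next I would apply Proposition~\ref{mw-higgs} to such a bundle: it asserts that every semistable $\Sp(2n,\RR)$-Higgs bundle with $\deg V = d$ satisfies $\abs{d} \leq n(g-1)$. Taking the contrapositive, if $\abs{d} > n(g-1)$ then no semistable---and hence no polystable---$\Sp(2n,\RR)$-Higgs bundle of degree $d$ can exist, so the set parametrized by $\cM_d$ is empty. There is no genuine obstacle in this argument: all of the analytic and numerical content (passing to the associated $\SL(2n,\CC)$-Higgs bundle $H(V,\varphi)$ via Theorem~\ref{thm:stability-equivalence} and applying the semistability criterion to the subbundles built from $\ker$ and $\operatorname{im}\Phi$) has already been carried out in Propositions~\ref{bounds-semistability} and~\ref{mw-higgs}. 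The only thing left to verify is the elementary implication ``polystable $\Rightarrow$ semistable,'' which is immediate from the definitions, so the proof is a one-line corollary.
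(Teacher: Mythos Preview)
Your argument is correct and matches the paper's approach exactly: the paper simply states this proposition ``as a corollary of Proposition~\ref{mw-higgs}'' without further proof, and your reasoning (polystable $\Rightarrow$ semistable, then apply the Milnor--Wood bound) is precisely what is intended.
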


\subsection{Smoothness and polystability of  $\Sp(2n,\RR)$-Higgs bundles}
\label{sec:smoothness-polystability}

We study now the smoothness properties of the moduli space. As a
corollary of Proposition \ref{prop:smoothness} and
Theorem~\ref{thm:stability-equivalence-sp2nC} we have the following.

\begin{proposition}\label{cor:smooth-points}
  Let $(V,\varphi)$ be an $\Sp(2n,\RR)$-Higgs bundle which is stable
  and simple and assume that there is no symmetric isomorphism
  $f\colon V \xrightarrow{\simeq} V^*$ intertwining $\beta$ and
  $\gamma$.  Then $(V,\varphi)$ represents a smooth point of the
  moduli space of polystable $\Sp(2n,\RR)$-Higgs bundles.
\end{proposition}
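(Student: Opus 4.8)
The plan is to deduce the statement directly by chaining Theorem~\ref{thm:stability-equivalence-sp2nC} with the ``in particular'' clause of Proposition~\ref{prop:smoothness}. First I would observe that the hypotheses of the present Proposition furnish exactly the input required by Theorem~\ref{thm:stability-equivalence-sp2nC}: $(V,\varphi)$ is stable and simple as an $\Sp(2n,\RR)$-Higgs bundle, and there is no symmetric isomorphism $f\colon V\xrightarrow{\simeq}V^*$ intertwining $\beta$ and $\gamma$, i.e.\ satisfying $\beta f=f^{-1}\gamma$. The only exceptional alternative in the conclusion of that Theorem is ruled out by this last assumption, so $(V,\varphi)$ must be stable when regarded as an $\Sp(2n,\CC)$-Higgs bundle.

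Next I would invoke that $\Sp(2n,\CC)$ is the complexification of $\Sp(2n,\RR)$, so that ``stable as an $\Sp(2n,\CC)$-Higgs bundle'' is precisely the hypothesis ``stable as a $G^\CC$-Higgs bundle'' appearing in the final assertion of Proposition~\ref{prop:smoothness} with $G=\Sp(2n,\RR)$. Since $(V,\varphi)$ is moreover simple by assumption, that assertion applies verbatim and yields that $(V,\varphi)$ is a smooth point of the moduli space of polystable $\Sp(2n,\RR)$-Higgs bundles.

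There is essentially no genuine obstacle, as all the substantive work has already been carried out in the cited results: the translation of the ``no symmetric $f$'' condition into $\Sp(2n,\CC)$-stability is the content of Theorem~\ref{thm:stability-equivalence-sp2nC}, while the vanishing $\HH^2(C^\bullet_{\Sp(2n,\RR)}(V,\varphi))=0$ that ultimately guarantees smoothness (through the slice construction underlying Proposition~\ref{prop:smoothness}) is supplied by Proposition~\ref{prop:hh-vanishing} once $\Sp(2n,\CC)$-stability is known, using that $\Sp(2n,\RR)$ is semisimple. The one point I would be careful about is to confirm that the symmetry of $f$ is exactly the condition separating Theorem~\ref{thm:stability-equivalence-sp2nC} from the merely intertwining $f$ occurring in Case~(\ref{item:intertwined}) of Theorem~\ref{thm:stability-equivalence}; once this matching of hypotheses is checked, the result is immediate.
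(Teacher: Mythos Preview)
Your proposal is correct and follows exactly the paper's own argument: the paper states this Proposition as a direct corollary of Proposition~\ref{prop:smoothness} and Theorem~\ref{thm:stability-equivalence-sp2nC}, which is precisely the chain of implications you describe. Your additional remarks about Proposition~\ref{prop:hh-vanishing} and the role of the symmetry condition on $f$ are accurate elaborations, but the core argument is identical.
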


So, a stable $\Sp(2n,\RR)$-Higgs bundle $(V,\varphi)$ in $\cM_d$
with $d\neq 0$ can only fail to be a smooth point of the moduli
space if it is not simple --- this gives rise to an orbifold-type
singularity --- or if, in spite of being simple, there is an
isomorphism $V \simeq V^*$ intertwining $\beta$ and $\gamma$. Of
course, this can only happen if $d=\deg(V)=0$.  Generally, if
$(V,\varphi)$ is polystable, but not stable it is also a singular
point of $\mathcal{M}_d$.

We shall  need the following analogue of
Proposition~\ref{cor:smooth-points} for $\U(n)$-, $\GL(n,\CC)$- $\U(p,q)$- and
$\GL(n,\RR)$-Higgs bundles

\begin{proposition}
  \label{prop:smooth-u-upq-glnr}
  \begin{enumerate}
  \item\label{item:u-smooth} A stable $\U(n)$-Higgs bundle represents
    a smooth point in the moduli space of $\U(n)$-Higgs bundles.
\item\label{item:gl-smooth} A stable $\GL(n,\CC)$-Higgs bundle represents
    a smooth point in the moduli space of $\GL(n,\CC))$-Higgs bundles.

  \item\label{item:upq-smooth} A stable $\U(p,q)$-Higgs bundle $(V,W,\beta,\gamma)$
represents a
    smooth point of the moduli space of $\U(p,q)$-Higgs bundles
    unless there exists an isomorphism $f:V\to W$ such that $\beta f=
    f^{-1}\gamma$. In this case $p=q$ and $(V,\beta f)$ defines a
    stable $\GL(p,\CC)$-Higgs bundle.

  \item\label{item:glnr-smooth} A $\GL(n,\RR)$-Higgs bundle which is stable as a
    $\GL(n,\CC)$-Higgs bundle represents a smooth point in the
    moduli space of $\GL(n,\RR)$-Higgs bundles.
  \end{enumerate}
\end{proposition}

\begin{proof}
  (\ref{item:u-smooth}) A stable $\U(n)$-Higgs bundles is nothing but
  a stable vector bundle, so this is classical.

  (\ref{item:gl-smooth}) This is also classical (\cite{hitchin:1987a}).

  (\ref{item:upq-smooth}) We first observe that a stable
  $\U(p,q)$-Higgs bundle is simple. After this we prove that a stable  $\U(p,q)$-Higgs
  bundle is stable as a $\GL(p+q,\CC)$-bundle unless there exists an isomorphism $f:V\to W$ such that $\beta f=
    f^{-1}\gamma$. The proof follows along the same lines as that
  of Theorem \ref{thm:stability-equivalence-sp2nC}, although it is
  simpler since $f$ is not symmetric. Now the stability as a
  $\GL(p+q,\CC)$-Higgs bundle ensures that the appropriate $\HH^2$ obstruction for
  smoothness vanishes.

  (\ref{item:glnr-smooth}) A stable $\GL(n,\RR)$-Higgs bundle is
  simple if and only if as a $\GL(n,\CC)$-Higgs bundle it is stable.
 The rest follows by a similar argument to the last one in
  (\ref{item:upq-smooth}).
\end{proof}

It will be convenient to make the following definition for
$\GL(n,\RR)$-Higgs bundles, analogous to the way we associate
$\Sp(2n,\RR)$-Higgs bundles to vector bundles and $\U(p,q)$-Higgs
bundles in (\ref{eq:un-sp2n}) and (\ref{eq:upq-sp2n}), respectively
(cf.\ Theorem~\ref{thm:stability-equivalence-sp2nC}).
Given a $\GL(n,\RR)$-Higgs bundle $((W,Q),\psi)$, let $f\colon W \to
W^*$ be the symmetric isomorphism associated to $Q$. Define an
associated $\Sp(2n,\RR)$-Higgs bundle
\begin{equation}
  \label{eq:gln-spn-higgs}
  (V,\varphi) = \upsilon_*^{\GL(n,\RR)}((W,Q),\psi)
\end{equation}
by setting
\begin{displaymath}
  V = W, \quad \beta = \psi \quad\text{and}\quad \gamma=f\psi f.
\end{displaymath}
Again we shall
slightly abuse language, saying simply that
$\upsilon_*^{\GL(n,\RR)}((W,Q),\psi)$ is a $\GL(n,\RR)$-Higgs
bundle, whenever no confusion is likely too occur.

Putting everything together we obtain our main result of this section:
a structure theorem for polystable $\Sp(2n,\RR)$-Higgs bundles.
\begin{theorem}
  \label{thm:polystable-spnr-higgs}
  Let $(V,\varphi)$ be a polystable $\Sp(2n,\RR)$-Higgs bundle. Then
  there is a decomposition $(V,\varphi) = (V_1,\varphi_1) \oplus \dots
  \oplus (V_k,\varphi_k)$, unique up to reordering, such that each of
  the $\Sp(2n_i,\RR)$-Higgs bundles $(V_i,\varphi_i)$ is one of the
  following:
  \begin{enumerate}
  \item A stable and simple $\Sp(2n_i,\RR)$-Higgs bundle, which is
  stable as a $\Sp(2n,\CC)$-Higgs bundle.
  \item A stable $\U(p_i,q_i)$-Higgs bundle with $n_i = p_i+q_i$,
  which is stable as a $\GL(p_i+q_i,\CC)$-Higgs bundle
  \item A stable $\U(n_i)$-Higgs bundle.
  \item A $\GL(n_i,\RR)$-Higgs bundle which is stable as a
    $\GL(n_i,\CC)$-Higgs bundle.
\item A stable $\GL(n_i,\CC)$-Higgs bundle.

  \end{enumerate}
  Each $(V_i,\varphi_i)$ is a smooth point in the moduli space of
  $G_i$-Higgs bundles, where $G_i$ is the corresponding real group
  $\Sp(2n_i,\RR)$, $\U(p_i,q_i)$, $\U(n_i)$ or $\GL(n_i,\RR)$.
\end{theorem}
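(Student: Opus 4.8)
The plan is to assemble the statement from the structural and smoothness results already established, the key point being that the four types in the conclusion correspond exactly to the possible ``atoms'' into which a polystable $\Sp(2n,\RR)$-Higgs bundle decomposes. First I would invoke Proposition~\ref{prop:polystable-spnr-higgs} to obtain a decomposition $(V,\varphi) = \bigoplus_i (V_i,\varphi_i)$, unique up to reordering, in which each $(V_i,\varphi_i)$ is a stable $G_i$-Higgs bundle with $G_i \in \{\Sp(2n_i,\RR), \U(n_i), \U(p_i,q_i)\}$. The $\U(n_i)$- and $\U(p_i,q_i)$-summands are already of types (3) and (2); the work lies entirely in analysing the stable $\Sp(2n_i,\RR)$-summands.

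For such a summand I would first dispose of the non-simple case using Theorem~\ref{thm:stable-not-simple-spnr-higgs}: a stable but non-simple $\Sp(2n_i,\RR)$-Higgs bundle either has $\varphi_i = 0$, in which case by Remark~\ref{phi=0} it is a stable vector bundle of degree zero and hence a $\U(n_i)$-Higgs bundle of type (3), or else it decomposes (uniquely up to reordering) as a direct sum of pairwise non-isomorphic stable \emph{and simple} $\Sp$-Higgs bundles, each with non-zero Higgs field. Replacing each non-simple summand by these pieces, I may assume that every remaining symplectic summand is stable and simple.

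The decisive step is then to split the stable simple summands into types (1) and (4) by means of Theorem~\ref{thm:stability-equivalence-sp2nC}. That theorem provides a clean dichotomy: a stable simple $\Sp(2n_i,\RR)$-Higgs bundle $(V_i,\varphi_i)$ is either stable as an $\Sp(2n_i,\CC)$-Higgs bundle --- giving type (1) --- or else there is a symmetric isomorphism $f\colon V_i \xrightarrow{\simeq} V_i^*$ intertwining $\beta_i$ and $\gamma_i$, in which case $((V_i,f),\beta_i)$ is a $\GL(n_i,\RR)$-Higgs bundle that is stable even as a $\GL(n_i,\CC)$-Higgs bundle and satisfies $(V_i,\varphi_i) = \upsilon_*^{\GL(n_i,\RR)}((V_i,f),\beta_i)$ as in (\ref{eq:gln-spn-higgs}), giving type (4). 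Since being stable as an $\Sp(2n_i,\CC)$-Higgs bundle is an isomorphism invariant, this dichotomy is intrinsic, so the refinement preserves the uniqueness already supplied by Proposition~\ref{prop:polystable-spnr-higgs} and Theorem~\ref{thm:stable-not-simple-spnr-higgs}.

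Finally, the smoothness assertion is obtained type by type: type (1) is covered by Proposition~\ref{cor:smooth-points} (stable, simple, and with no symmetric intertwining isomorphism, which is precisely the characterisation of type (1)), while types (3), (2) and (4) are covered respectively by parts (\ref{item:u-smooth}), (\ref{item:upq-smooth}) and (\ref{item:glnr-smooth}) of Proposition~\ref{prop:smooth-u-upq-glnr}. I expect the main obstacle to be organisational rather than technical: one must check that the two decomposition statements are compatible --- a stable $\Sp$-summand from Proposition~\ref{prop:polystable-spnr-higgs} may itself be reducible, which is possible precisely because $\Sp(2n,\RR)$-stability is governed by isotropic-type filtrations and not by an ordinary slope condition --- and that passing to the finest decomposition keeps the list of summands exhaustive, mutually exclusive, and unique up to reordering.
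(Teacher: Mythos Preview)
Your proposal is correct and follows essentially the same route as the paper: the paper's own proof simply cites Propositions~\ref{prop:polystable-spnr-higgs}, \ref{cor:smooth-points}, \ref{prop:smooth-u-upq-glnr} and Theorems~\ref{thm:stable-not-simple-spnr-higgs}, \ref{thm:stability-equivalence-sp2nC}, and you have accurately reconstructed how these five ingredients fit together --- first the Jordan--H\"older decomposition into $\Sp$, $\U(n)$ and $\U(p,q)$ pieces, then refining stable non-simple $\Sp$-summands via Theorem~\ref{thm:stable-not-simple-spnr-higgs}, then splitting stable simple $\Sp$-summands into types (1) and (4) via the $\Sp(2n,\CC)$-stability dichotomy, and finally reading off smoothness type by type. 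Your discussion of uniqueness and of the compatibility of the two decomposition steps is in fact more explicit than what the paper records.
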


\begin{proof}
  This follows from Propositions~\ref{prop:polystable-spnr-higgs},
  \ref{cor:smooth-points} and \ref{prop:smooth-u-upq-glnr} and
  Theorems~\ref{thm:stable-not-simple-spnr-higgs} and
  \ref{thm:stability-equivalence-sp2nC}
\end{proof}

\section{Maximal degree  $\Sp(2n,\RR)$-Higgs bundles and the Cayley correspondence}
\label{maximal}

\subsection{Cayley correspondence}

In this section we shall describe the $\spn$ moduli space for the
extreme value $\abs{d} = n(g-1)$.  In fact, for the rest of this
section we shall assume that $d = n(g-1)$.  This involves no loss of
generality, since, by Proposition \ref{prop:duality-iso}, $(V,\varphi)
\mapsto (V^*,\varphi^t)$ gives an isomorphism between the $\spn$
moduli spaces for $d$ and $-d$. The main result is
Theorem~\ref{thm:cayley}, which we refer to as the \emph{Cayley
 correspondence}. This is stated as Theorem~\ref{cayley} in the
Introduction, where the reason for the name is also explained.

Let  $(V,\beta,\gamma)$ be an $\Sp(2n,\RR)$-Higgs bundle with
$d=n(g-1)$ such that $\gamma\in H^0(K\otimes S^2V^*)$ is an
isomorphism. Let $L_0=K^{-1/2}$ be a fixed  square root of $K^{-1}$, and
define $ W:=V\otimes L_0$. Then $q:=\gamma\otimes I_{L_0}:
W \to W^*$ is a symmetric isomorphism defining a non-degenerate 
symmetric bilinear  form $Q$ on $W$,  in other words,  $(W,Q)$ is an
$\O(n,\CC)$-holomorphic bundle. The $K^2$-twisted endomorphism
$\psi:W\to W \otimes K^2$ defined by $\psi: = 
\beta \otimes I_{L_0^{-1}}\circ  (\gamma\otimes I_{L_0})$ is $Q$-symmetric and
hence $(W,Q,\psi)$ defines a $K^2$-twisted $\GL(n,\RR)$-Higgs
pair (in the sense of
Section \ref{glnr-higgs}), 
from which we can recover the original $\Sp(2n,\RR)$-Higgs bundle.

We shall need the following Lemma. In the statement  we use the notions
of $\beta$- and $\gamma$-invariance of filtrations introduced in
Definition~\ref{def:beta-gamma-invariant}.

\begin{lemma}
  \label{lem:cayley-filtrations}
  Let $(V,\beta,\gamma)$ be a $\Sp(2n,\RR)$-Higgs bundle with
  $d=n(g-1)$ such that $\gamma$ is an isomorphism. Let $(W,Q,\psi)$ be
  the corresponding $K^2$-twisted $\GL(n,\RR)$-Higgs pair. Let
  \begin{equation}
    \label{eq:V-filtration}
    0 \subset V_1 \subset V_2 \subset V
  \end{equation}
  be a filtration and let $W_1 \subset W$ be the subbundle defined by
  $W_1 = V_1 \otimes L_0$. Then the following statements hold:
  \begin{enumerate}
  \item The filtration (\ref{eq:V-filtration}) is
    $\gamma$-invariant if and only if $W_1 \subset W$ is isotropic.
  \item Assume that the filtration (\ref{eq:V-filtration}) is
    $(\beta,\gamma)$-invariant. Then $W_1
    \subset W$ is $\psi$-invariant.
  \item Assume that $V_2 = V_1^{\perp_\gamma}$. Then the filtration
    (\ref{eq:V-filtration}) is $(\beta,\gamma)$-invariant if and only $W_1\subset
    W$ is isotropic and $\psi$-invariant.
  \end{enumerate}
\end{lemma}

\begin{proof}
  Statement (1) is immediate using that $\gamma$-invariance of the
  filtration is equivalent to $V_1^{\perp_\gamma} \subset V_2$ (see
  Remark~\ref{K-pairing}).

  For Statement (2), note that $\psi=(\beta\otimes
  \Id_{L_0^{-1}})\circ(\gamma\otimes\Id_{L_0})$. Hence, using the conditions for
  $(\beta,\gamma)$-invariance given in Remark~\ref{K-pairing}, we obtain
  \begin{displaymath}
    \psi(W_1) = (\beta\otimes
    \Id_{L_0^{-1}})(\gamma(V_1)\otimes L_0)
    \subset (\beta\otimes \Id_{L_0^{-1}})(V_2^{\perp}\otimes L_0^{-1})
    \subset V_1 \otimes L_0 K^2
    = W_1\otimes K^2.
  \end{displaymath}

  In order to prove Statement (3), in view of (1) and (2), we
  can use (\ref{eq:perp-gamma}) of Remark~\ref{K-pairing},
  $\psi$-invariance of $W_1$ and $V_2^{\perp_\gamma} = V_1 =
  W_1\otimes L_0^{-1}$ to deduce that
  \begin{displaymath}
    \beta(V_2^{\perp}) = \beta
    (\gamma\otimes\Id_{K^{-1}}(V_2^{\perp_\gamma}\otimes K^{-1}))
    = (\psi\otimes\Id_{L_0})(W_1\otimes L_0)
    \subset W_1 \otimes L_0K^2
    = V_1\otimes K.
  \end{displaymath}
  Thus the result follows from the $\beta$-invariance condition given
  in Remark~\ref{rem:matrix-form}.

\end{proof}

\begin{theorem}\label{equivalence-stability}
Let  $(V,\beta,\gamma)$ be a   $\Sp(2n,\RR)$-Higgs bundle with $d=n(g-1)$
such that $\gamma$ is an isomorphism. Let $(W,Q,\psi)$ be the corresponding
$K^2$-twisted $\GL(n,\RR)$-Higgs pair. Then $(V,\beta,\gamma)$ is semistable
(respectively stable, polystable)
if and only if $(W,Q,\psi)$ is semistable
(respectively stable, polystable).
\end{theorem}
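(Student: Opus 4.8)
The plan is to exploit the tautological isometric isomorphism furnished by $\gamma$. Since $\gamma\colon V \to V^*\otimes K$ is an isomorphism and $L_0 = K^{1/2}$, tensoring by $L_0^{-1}$ turns $\gamma$ into an isomorphism $\Theta = \gamma\otimes I_{L_0^{-1}}\colon V\otimes L_0^{-1} \xrightarrow{\simeq} V^*\otimes L_0 = W$. Because $\gamma$ is symmetric and $K$-valued, it endows $V\otimes L_0^{-1}$ with an (untwisted) orthogonal structure which $\Theta$ carries precisely to $Q$; thus $\Theta$ is an isometry. First I would record the resulting dictionary: a subbundle $V_1\subseteq V$ is $\gamma$-isotropic if and only if $\Theta(V_1\otimes L_0^{-1})\subseteq W$ is $Q$-isotropic; one has $\deg \Theta(V_1\otimes L_0^{-1}) = \deg V_1 - (\rk V_1)(g-1)$, so that $\mu(V_1) < g-1$ (resp.\ $\le$, $=$) corresponds exactly to $\deg W' < 0$ (resp.\ $\le$, $=$); and, writing $\tilde\beta = (\beta\otimes 1)\circ\gamma$ and $\tilde\psi = \psi\circ Q$, a direct check of definitions shows $\tilde\psi\circ\Theta = (\Theta\otimes I_{K^2})\circ(\tilde\beta\otimes I_{L_0^{-1}})$, whence $\tilde\beta(V_1)\subseteq V_1\otimes K^2$ if and only if $\tilde\psi(W')\subseteq W'\otimes K^2$ for $W' = \Theta(V_1\otimes L_0^{-1})$.

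With this dictionary, stability and semistability are immediate. By Proposition~\ref{cor:isomorphism-filtration}, $(V,\beta,\gamma)$ is stable if and only if $\mu(V_1) < g-1$ for every $\gamma$-isotropic, $\tilde\beta$-invariant $V_1$; by Proposition~\ref{thm:orthogonal-stability} together with the Remark following it, $(W,Q,\psi)$ is stable if and only if $\deg W' < 0$ for every $Q$-isotropic, $\tilde\psi$-invariant $W'$. The dictionary matches these two conditions term by term, and the same comparison with non-strict inequalities settles the semistable case. Since $\Theta$ is an isomorphism, both directions follow simultaneously.

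The delicate point is polystability, and this is where I expect the main work. Here I would use the filtration form of both polystability conditions --- the one for $\Sp(2n,\RR)$-Higgs bundles from Section~\ref{sec:polystability} and Proposition~\ref{prop:gln-alpha-stability} for $\GL(n,\RR)$-Higgs pairs --- rather than the simplified versions, so that the two match graded piece by graded piece. Using that $\gamma$ is an isomorphism, Proposition~\ref{lem:filtration-isotropic} forces every filtration $\VVV$ with $\varphi\in H^0(N(\VVV,\lambda))$ to be self-dual, $V_{k-j} = V_j^{\perp_\gamma}$, it makes the $\gamma$-part of the condition $\varphi\in H^0(N(\VVV,\lambda))$ automatic while its $\beta$-part becomes the $\psi$-condition, and it forces the weights to satisfy $\lambda_i + \lambda_{k-i+1} = 0$, which is exactly the defining condition of $\Lambda(\WWW)$. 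Setting $W_j = \Theta(V_j\otimes L_0^{-1})$ then yields a bijection between the admissible pairs $(\VVV,\lambda)$ and $(\WWW,\lambda)$ under which $N(\VVV,\lambda)$ corresponds to $N(\WWW,\lambda)$ and the self-duality $W_{k-j} = W_j^{\perp_Q}$ holds.

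The core of the argument, and the step I expect to be the main obstacle, is then to reconcile the two degree functionals and the two field-decomposition conditions exactly. After a careful bookkeeping of the $L_0$-twist together with the symmetry $\lambda_i + \lambda_{k-i+1}=0$ and the value $\deg V = n(g-1)$, the numbers $d(\VVV,\lambda)$ and $d(\WWW,\lambda)$ should agree; and a splitting $V \simeq V_1 \oplus V_2/V_1\oplus\cdots$ realizing $\Sp(2n,\RR)$-polystability should transport through $\Theta$ to the isomorphism $W\simeq W_1\oplus W_2/W_1\oplus\cdots$ required by Proposition~\ref{prop:gln-alpha-stability}, with the compatibility of $\psi$ with the $Q$-pairing between graded pieces following from the corresponding compatibility of $\beta$ and $\gamma$, and the converse obtained by reversing the construction. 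The real difficulty is verifying that the shift from the $K$-twisting of $\beta$ to the $K^2$-twisting of $\psi$, combined with the $L_0$-shift in degrees, matches these structures precisely, so that no polystable splitting is gained or lost in either direction.
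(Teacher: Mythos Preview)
Your treatment of stability and semistability is essentially the paper's: you invoke Proposition~\ref{cor:isomorphism-filtration} on the $\Sp(2n,\RR)$ side and Proposition~\ref{thm:orthogonal-stability} on the $\GL(n,\RR)$ side, and match them via the isometry $\Theta$. That is exactly what the paper does (its one-line translation $W_1 = V_1^*\otimes L_0$ is just your dictionary stated more tersely).

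For polystability you diverge, and there you make life harder than necessary and introduce a gap. The paper does \emph{not} pass to the long-filtration formulations: it simply says ``similarly'', meaning that one stays with the simplified conditions --- Proposition~\ref{thm:orthogonal-stability} already contains a simplified polystability criterion (complementary isotropic subbundle), and the same translation that proved Proposition~\ref{cor:isomorphism-filtration} upgrades at once to an equivalence of these complement-conditions. Your plan instead matches the full filtration polystability statements from Section~\ref{sec:polystability} and Proposition~\ref{prop:gln-alpha-stability}, and here two claims are not justified. First, Proposition~\ref{lem:filtration-isotropic} is stated only for two-step filtrations $0\subset V_1\subset V_2\subset V$; it does not directly give $V_{k-j}=V_j^{\perp_\gamma}$ for arbitrary $k$. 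Second, and more seriously, the assertion that $\gamma$ being an isomorphism ``forces the weights to satisfy $\lambda_i+\lambda_{k-i+1}=0$'' is not true as stated: in the $\Sp(2n,\RR)$ condition the sequence $\lambda$ is an arbitrary increasing sequence, and the constraint $\varphi\in H^0(N(\VVV,\lambda))$ with $\gamma$ nondegenerate restricts the \emph{filtration}, not the numerical weights. Without that symmetry you do not land inside $\Lambda(\WWW)$, so the bijection of admissible $(\VVV,\lambda)$ and $(\WWW,\lambda)$ you assert does not hold on the nose.

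These issues are not fatal --- one can reduce a general $(\VVV,\lambda)$ to a self-dual one with symmetric weights, or bypass the whole thing by observing that the Cayley correspondence respects direct sums so that ``semistable and isomorphic to a direct sum of stables'' transfers automatically --- but the cleanest fix is simply to follow the paper and keep to the simplified (single isotropic subbundle plus complement) criteria on both sides.
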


\begin{proof}
We start by proving the equivalence of the semistability conditions.

Assume that $(V,\beta,\gamma)$ is a semistable $\Sp(2n,\RR)$-Higgs
bundle with $d=n(g-1)$, so that $\gamma$ is an isomorphism. Let
$W_1\subset W$ be a $\psi$-invariant $Q$-isotropic subbundle, i.e.
\begin{equation}\label{Q-isotropic}
 W_1\subset W_1^{\perp_Q}.
\end{equation}
We shall prove that $\deg(W_1)\leq 0$. Define
\begin{displaymath}
  V_1:=W_1\otimes L_0^{-1}\quad \text{and} \quad V_2:=W_1^{\perp_Q}\otimes L_0^{-1},
\end{displaymath}
and note that
\begin{equation}\label{deg-g}
  \deg(W_1) = \deg(V_1)-\rk(V_1)(g-1).
\end{equation}
By (\ref{Q-isotropic}) we have a filtration $0\subset V_1\subset
V_2\subset V$, which moreover satisfies
$V_2=V_1^{\perp_\gamma}$. Thus, by (3) of
Lemma~\ref{lem:cayley-filtrations}, it is $(\beta,\gamma)$-invariant and so, by
semistability of $(V,\beta,\gamma)$, we have
\begin{equation}
  \label{eq:beta-gamma-ss}
  \deg(V)-\deg(V_1)-\deg(V_2) \geq 0.
\end{equation}

Now, we have an exact sequence
\begin{equation}
0\to V_1^{\perp_{\gamma}}\to V\to V_1^*\otimes K\to 0,
\label{eq:deg-V-perp-gamma}
\end{equation}
defined by composing the isomorphism $\gamma$ with the projection 
$V^*\otimes K\to V_1^*\otimes K$.
Thus
\begin{equation}
\label{eq:deg-V-perp-gamma}
\deg(V_1^{\perp_{\gamma}})=\deg(V)+\deg(V_1)-(2g-2)\rk(V_1)
\end{equation}
and, since $V_2=V_1^{\perp_\gamma}$, it follows that
\begin{equation}
  \label{eq:deg-V-V1}
  \deg(V)-\deg(V_1)-\deg(V_2) = -2\deg(V_1)+(2g-2)\rk(V_1)
  = -2\deg(W_1),
\end{equation}
where we have used (\ref{deg-g}) in the second identity. Hence
(\ref{eq:beta-gamma-ss}) implies that $\deg(W_1)\leq 0$ and therefore,
according to Definition~\ref{spnr-stability}, $(W,Q,\psi)$ is
semistable.

To prove the converse, assume now that $(W,Q,\psi)$ is 
semistable. Consider a $(\beta,\gamma)$-invariant filtration 
$$
0\subset V_1\subset V_2\subset V.
$$
Let
\begin{displaymath}
  W_1 := V_1 \otimes L_0.
\end{displaymath}
Then (1) and (2) of Lemma~\ref{lem:cayley-filtrations} imply that
$W_1$ is isotropic and $\psi$-invariant. Hence, by semistability of
$(W,Q,\psi)$, we have $\deg(W_1) \leq 0$, i.e.,
\begin{equation}
  \label{eq:deg-V1}
  \deg(V_1) \leq \rk(V_1)(g-1).
\end{equation}

In order to complete the proof of semistability of $(V,\beta,\gamma)$,
we shall consider a second $\psi$-invariant $Q$-isotropic subbundle
$W_2\subset W$. For this, we view $V_2\cap V_2^{\perp_\gamma} \subset
V$ as a subbundle (i.e., define it in sheaf theoretic terms and take
its saturation) and let
\begin{displaymath}
  W_2 = V_2\cap V_2^{\perp_\gamma} \otimes L_0 \subset W.
\end{displaymath}
Similarly, we consider the subbundle $V_2+V_2^{\perp_\gamma} \subset
V$. Define
\begin{displaymath}
  V_1'= V_2\cap V_2^{\perp_{\gamma}}\quad\text{and}\quad
  V_2'= V_2+V_2^{\perp_\gamma}.
\end{displaymath}
Then we have a filtration
\begin{equation}
  \label{eq:pprime-filtration}
  0 \subset V_1' \subset V_2' \subset V.
\end{equation}
We shall see that this filtration is $(\beta,\gamma)$-invariant and that
$V_2' \subset V_1'^{\perp_\gamma}$. Once this is established, it
will follow from (3) of Lemma~\ref{lem:cayley-filtrations}
that $W_2$ is isotropic and $\psi$-invariant.

We start by showing $\gamma$-invariance. There is an inclusion
\begin{equation}
V_2+V_2^{\perp_{\gamma}}\subset (V_2\cap
V_2^{\perp_{\gamma}})^{\perp_{\gamma}},
\label{eq:VVperp-inc}
\end{equation}
which, in turn follows from $V_2\subset (V_2\cap
V_2^{\perp_{\gamma}})^{\perp_{\gamma}}$ and
$V_2^{\perp_{\gamma}}\subset (V_2\cap
V_2^{\perp_{\gamma}})^{\perp_{\gamma}}$, which are obvious. Hence
$V_2' \subset V_1'^{\perp_\gamma}$, from which $\gamma$-invariance
of (\ref{eq:pprime-filtration})  follows (cf.\ Remark~\ref{K-pairing}). To show $\beta$-invariance,
note that we have inclusions
$$V_1\subset V_1'=V_2\cap V_2^{\perp_{\gamma}},\qquad V_2\subset V_2'=V_2+V_2^{\perp_{\gamma}}.$$
The first one follows from $V_1\subset V_2$ and from applying $^{\perp_{\gamma}}$ to the
inclusion $V_2\subset V_1^{\perp_{\gamma}}$, which gives
$V_1\subset V_2^{\perp_{\gamma}}$. The second inclusion is
obvious. From the first inclusion we have $V_1'^{\perp} \subset
V_1^{\perp}$ and hence, using the second inclusion and $\beta$-invariance of the original
filtration $0 \subset V_1 \subset V_2 \subset V$ we have
\begin{displaymath}
  \beta(V_1'^{\perp}) \subset \beta(V_1^\perp) \subset V_2\otimes K
  \subset V_2' \otimes K.
\end{displaymath}
Thus (\ref{eq:pprime-filtration}) is also $\beta$-invariant (using
Remark~\ref{K-pairing}) and we have established $(\beta,\gamma)$-invariance of
(\ref{eq:pprime-filtration}). Note that we did not use that $\gamma$
is non-degenerate for this.

Now, since $\gamma$ is non-degenerate, the sequence
\begin{equation}
\label{eq:V2V2perp-ses}
  0\to V_2\cap V_2^{\perp_{\gamma}}\to V_2\oplus V_2^{\perp_{\gamma}}
  \to V_2+V_2^{\perp_{\gamma}}\to 0
\end{equation}
is generically short exact. Hence both bundles in
(\ref{eq:VVperp-inc}) have the same rank and, therefore,
\begin{displaymath}
  V_2+V_2^{\perp_{\gamma}} = (V_2\cap V_2^{\perp_{\gamma}})^{\perp_{\gamma}}.
\end{displaymath}
In other words, $V_2' = V_1'^{\perp_\gamma}$ and, as noted above, it
follows from (3) of Lemma~\ref{lem:cayley-filtrations} that $W_2
\subset W$ is isotropic and $\psi$-invariant. Thus, by semistability
of $(W,Q,\psi)$ we have that $\deg(W_2) \leq 0$. This, arguing as in
(\ref{eq:deg-V-V1}), is equivalent to
\begin{equation}
  \label{eq:deg-V-primes}
  \deg(V) - \deg(V_1') - \deg(V_2') \geq 0.
\end{equation}
But, using the generically short exact sequence
(\ref{eq:V2V2perp-ses}), we have that
\begin{displaymath}
  \deg(V_1') + \deg(V_2') \geq \deg(V_2) + \deg(V_2^{\perp_\gamma})
\end{displaymath}
Hence (\ref{eq:deg-V-primes}) implies that
\begin{equation}
  \label{eq:deg-V2-V2perp}
  \deg(V) - \deg(V_2) - \deg(V_2^{\perp_\gamma}) \geq 0
\end{equation}
and so, using (\ref{eq:deg-V-perp-gamma}) for $V_2^{\perp_\gamma}$, we obtain
\begin{displaymath}
  \deg(V_2) \leq (g-1)\rk(V_2).
\end{displaymath}
Adding this to (\ref{eq:deg-V1}) we get
$$\deg(V_1)+\deg(V_2) \leq (g-1)\rk(V_1)+(g-1)\rk(V_2).$$
Now we can estimate, using the inclusion $V_2\subset V_1^{\perp_{\gamma}}$,
\begin{equation}
  \label{eq:rkV2-estimate}
  \rk(V_2)\leq \rk(V_1)^{\perp_{\gamma}}=\rk(V)-\rk(V_1),
\end{equation}
where $\rk(V_1^{\perp_{\gamma}})=\rk(V)-\rk(V_1)$ follows from the fact that
$\gamma$ is nondegenerate.
Consequently,
$$\deg(V_1)+\deg(V_2) \leq (g-1)\rk(V).$$
On the other hand, the nondegeneracy of $\gamma$ implies that $V\simeq
V^*\otimes K$, which gives $\deg(V)=(g-1)\rk(V)$. Summing up, we
finally get
$$\deg(V_1)+\deg(V_2) \leq \deg(V),$$
concluding the proof of semistability of $(V,\beta,\gamma)$

The proof of the statement for stability is essentially the same,
observing that the trivial filtration $0=V_1\subset V_2=V$ corresponds
to the trivial subbundle $0\subset W$.
 
We turn now to the proof of the equivalence of the polystability
conditions. We will first prove that if $(V,\beta,\gamma)$ is
polystable then $(W,Q,\psi)$ is also polystable. For this, suppose
that $W'\subset W$ is a $\psi$-invariant $Q$-isotropic non-trivial
subbundle such that $\deg(W')=0$. According to Definition
\ref{thm:orthogonal-stability}, we have to show that there exists a
$Q$-coisotropic $\psi$-invariant subbundle $W''\subset W$ such that
$W=W'\oplus W''$.  We proceed as in the proof that semistability of
$(V,\beta,\gamma)$ implies semistability of $(W,Q,\psi)$. So, let
$V_1:=W'\otimes L_0^{-1}$ and $V_2:=W'^{\perp_Q}\otimes L_0^{-1}$. Our
previous arguments now show that the filtration $0\subset V_1\subset
V_2\subset V$ is a non-trivial $(\beta,\gamma)$-invariant filtration,
and that $\deg(W')=0$ is equivalent to
$$
\deg(V)-\deg(V_1) -\deg (V_2)=0.
$$
Moreover, since $(V,\beta,\gamma)$ is polystable, from Definition
\ref{spn-poly} and Remark \ref{f-decomposition}, there are subbundles
$F_1$ $F_2$ and $F_3$ of $V$, such that $V=F_1\oplus F_2\oplus F_3$,
with $V_1=F_1$, $V_2=F_1\oplus F_2$, and $\beta(F_i^*)\subset
F_{4-i}\otimes K$ and $\gamma(F_i)\subset F_{4-i}^*\otimes K$ for
$i=1,2,3$.  Let $W'':=(F_2\oplus F_3)\otimes L_0$. The
$\psi$-invariance of $W''$ follows clearly from the
$(\beta,\gamma)$-invariance of the $F_i$. Moreover, the
$\gamma$-invariance of the $F_i$ implies that $(F_2 \oplus
F_3)^{\perp_\gamma} = F_3$, i.e. $W''^{\perp_Q}\subset W''$, which is
the coisotropic condition.

To prove the converse, suppose that $(W,Q,\psi)$ is
polystable. Consider a general $(\beta,\gamma)$-invariant filtration
$0\subset V_1\subset V_2\subset V$ with
\begin{equation}
  \label{eq:deg-VV1V2=0}
  \deg(V)-\deg(V_1)-\deg(V_2) = 0.
\end{equation}
The arguments of the proof that semistability of $(W,Q,\psi)$ implies
semistability of $(V,\beta,\gamma)$ of course continue to hold and,
moreover, the condition
(\ref{eq:deg-VV1V2=0}) forces all inequalities in that proof to be
equalities. In particular, equality must hold in
(\ref{eq:rkV2-estimate}) and so we conclude that in fact
\begin{displaymath}
V_2=V_1^{\perp_\gamma}.
\end{displaymath}
Therefore, it suffices to consider $(\beta,\gamma)$-invariant
filtrations of the form $0\subset V_1\subset V_1^{\perp_\gamma}\subset
V$ with
$$\deg(V)-\deg(V_1)-\deg(V_1^{\perp_\gamma})=0.$$
Now, running backwards the argument given above that polystability of
$(V,\beta,\gamma)$ implies polystability of $(W,Q,\psi)$, we see that
there is a decomposition $V= F_1 \oplus F_2 \oplus F_3$ with
$V_1=F_1$, $V_1^{\perp_\gamma}=F_1\oplus F_2$ and satisfying the
$(\beta,\gamma)$-invariance conditions.

This concludes the proof.

\end{proof}

\begin{theorem}
\label{thm:cayley}
Let $\cM_{\max}$ be the moduli space of polystable
$\Sp(2n,\RR)$-Higgs bundles with $d=n(g-1)$ and let $\cM'$ be
the moduli space of polystable $K^2$-twisted $\GL(n,\RR)$-Higgs pairs.
The map $(V,\beta,\gamma)\mapsto (W,Q,\psi)$ defines an
isomorphism of complex algebraic varieties
$$
\cM_{{\mathrm{max}}}\simeq \cM'.
$$
\end{theorem}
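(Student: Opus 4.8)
The plan is to reduce the statement to two facts that are already established and then carry out a standard families argument. The stability dictionary of Theorem~\ref{equivalence-stability} matches polystability on both sides, and Proposition~\ref{mw-higgs} guarantees that in the maximal case $\gamma$ is automatically an isomorphism, so the assignment $(V,\beta,\gamma)\mapsto(W,Q,\psi)$ is defined on all of $\cM_{\max}$. What remains is therefore (a) to show that this assignment is a bijection on isomorphism classes by exhibiting its inverse, and (b) to upgrade the resulting bijection to an isomorphism of complex algebraic varieties by checking that both directions are functorial for algebraic families.

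First I would write the inverse construction explicitly. Given a polystable $K^2$-twisted $\GL(n,\RR)$-Higgs pair $(W,Q,\psi)$, set $V=W^*\otimes L_0$, so that $W=V^*\otimes L_0$, and recover $\gamma=Q\otimes I_{L_0}\colon V\to V^*\otimes K$ by inverting $Q=\gamma\otimes I_{L_0^{-1}}$; since $Q$ is a nondegenerate quadratic form, $\gamma$ is an isomorphism. Because $\gamma$ is invertible, the defining relation $\psi=(\gamma\otimes I_{K\otimes L_0})\circ(\beta\otimes I_{L_0})$ can be solved uniquely for $\beta\in H^0(K\otimes S^2V)$, namely $\beta\otimes I_{L_0}=(\gamma^{-1}\otimes I_{K\otimes L_0})\circ\psi$. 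This yields an $\Sp(2n,\RR)$-Higgs bundle $(V,\beta,\gamma)$ with $\gamma$ an isomorphism, hence with $d=n(g-1)$ by Proposition~\ref{mw-higgs}, which is polystable by Theorem~\ref{equivalence-stability}. The two constructions are manifestly inverse to one another, and both respect isomorphisms of the underlying objects: an isomorphism $V\xra{\simeq}V'$ intertwining the Higgs fields induces, after tensoring by $L_0$ and dualizing, an isomorphism $W\xra{\simeq}W'$ carrying $(Q,\psi)$ to $(Q',\psi')$, and conversely. This gives a bijection $\cM_{\max}\leftrightarrow\cM'$ of sets of isomorphism classes.

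To see that this bijection is an isomorphism of varieties, I would check that each construction is functorial for algebraic families and thus induces a morphism of moduli spaces. All the operations involved --- tensoring by the fixed line bundle $L_0=K^{1/2}$, dualizing, and composing bundle maps --- are algebraic and commute with base change, so any $S$-family of maximal $\Sp(2n,\RR)$-Higgs bundles is carried to an $S$-family of $K^2$-twisted $\GL(n,\RR)$-Higgs pairs; since $\gamma$ is an isomorphism on every fibre it is an isomorphism of the family, so $\gamma^{-1}$ is regular and the inverse construction is equally functorial. Using that $\cM_{\max}$ and $\cM'$ corepresent their respective moduli functors (for the existence and universal property of these moduli spaces, see Schmitt~\cite{schmitt:2008} and Theorem~\ref{alg-moduli}), the two constructions yield morphisms $\cM_{\max}\to\cM'$ and $\cM'\to\cM_{\max}$ which agree with the set-theoretic bijection on points; as both spaces are reduced varieties over $\CC$, morphisms agreeing on all closed points coincide, so these are mutually inverse morphisms and exhibit the claimed isomorphism.

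The step I expect to require the most care is this last one, the passage from a bijection to a morphism in both directions, since a bijective morphism of varieties need not be an isomorphism. The crux is the precise formulation of the moduli functor for $K^2$-twisted $\GL(n,\RR)$-Higgs pairs together with the remark that the fixed square root $L_0=K^{1/2}$ enters purely as a fixed datum on $X$, so that the entire correspondence is visibly algebraic and compatible with base change. Once this functoriality is in place, no further work is needed, as stability and polystability have already been matched by Theorem~\ref{equivalence-stability}.
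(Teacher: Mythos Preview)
Your proposal is correct and follows essentially the same approach as the paper: both invoke Proposition~\ref{mw-higgs} to ensure the map is well defined, Theorem~\ref{equivalence-stability} to match polystability, and Schmitt's existence results to pass to an isomorphism of varieties. Your write-up is simply more detailed, spelling out the inverse construction and the families argument explicitly, whereas the paper's proof dispatches all of this in a single sentence by appealing to the existence of local universal families.
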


\begin{proof}
  Let $(V,\beta,\gamma)$ be a semistable $\Sp(2n,\RR)$-Higgs bundle
  with $d=n(g-1)$.  By Proposition \ref{mw-higgs}, $\gamma$ is an
  isomorphism and hence the map $(V,\beta,\gamma)\mapsto (W,Q,\psi)$
  is well defined.  The result follows now from
  Theorem~\ref{equivalence-stability} 
and the existence of local
  universal families (see \cite{schmitt:2008}).
\end{proof}

\subsection{Invariants of $\GL(n,\RR)$-Higgs pairs}

To a $K^2$-twisted $\GL(n,\RR)$-Higgs pair $(W,Q,\psi)$ one can attach
topological invariants corresponding to the first and second
Stiefel-Whitney classes of a reduction to $\O(n)$ of the $\O(n,\CC)$
bundle defined by $(W,Q)$. The first class $w_1\in H^1(X,\ZZ_2)\simeq
\ZZ_2^{2g}$ measures the obstruction for the $\O(n)$-bundle to have an
orientation, i.e.\ to the existence of a reduction to a $\SO(n)$
bundle, while the second one $w_2\in H^2(X,\ZZ_2)\simeq \ZZ_2$ measures
the obstruction to lifting the $\O(n)$-bundle to a $\Pin(n)$-bundle,
where
\begin{displaymath}
  1 \to \ZZ_2 \to \Pin(n)  \to \O(n) \to 1.
\end{displaymath}
If we define
$$
\mathcal{\cM'}_{w_1,w_2}:=\{(W,Q,\psi)\in \mathcal{M}'\;\;\;\mbox{such that}
\;\;\; w_1(W)=w_1 \;\;\; \mbox{and}\;\;\; w_2(W)=w_2\},
$$
we have that
\begin{equation}\label{stiefel-whitney}
\cM'=\bigcup_{w_1,w_2} \cM'_{w_1,w_2}.
\end{equation}

We thus have, via the isomorphism given by Theorem \ref{thm:cayley},
that the moduli space $\cM_{\max}$ is partitioned in disjoint closed
subvarieties corresponding to fixing $(w_1,w_2)$.

\section{The Hitchin functional}
\label{sec:connected-components}

\subsection{The Hitchin functional}
\label{sec:morse}

In order to define this functional, we consider the moduli space
of $\Sp(2n,\RR)$-Higgs bundles $(V,\varphi)$ from the gauge theory
point of view, i.e., we use the identification of $\mathcal{M}_d$
with the moduli space $\Mg_d$ of solutions $(A,\varphi)$ to the
Hitchin equations given by Theorem \ref{higgs-hk}. There is an action of
$S^1$ on $\mathcal{M}_d$ via multiplication of $\varphi$ by
scalars:
\begin{math}
  (A,\varphi) \mapsto (A,e^{\mathbf{i}\theta}\varphi)
\end{math}.  Restricted to the smooth locus $\mathcal{M}^s_d$ this
action is Hamiltonian with symplectic moment map $-f$, where the
\emph{Hitchin functional} $f$ is defined by
\begin{equation}
  \label{eq:2.7}
  \begin{aligned}
    f\colon\mathcal{M}_d &\to \RR,\\ (A,\varphi) &\mapsto
    \tfrac{1}{2}\norm{\varphi}^2
    =\tfrac{1}{2}\norm{\beta}^2+\tfrac{1}{2}\norm{\gamma}^2.
  \end{aligned}
\end{equation}
Here $\norm{\cdot}$ is the $L^2$-norm obtained by using the
Hermitian metric in $V$ and integrating over $X$. The function $f$
is well defined on the whole moduli space (not just on the smooth
locus).  It was proved by Hitchin
\cite{hitchin:1987a,hitchin:1992} that $f$ is proper and therefore
it has a minimum on every closed subspace of $\mathcal{M} =
\bigcup_d \mathcal{M}_d$. Thus we have the following result.

\begin{proposition}\label{prop:ps-top}
  Let $\mathcal{M}' \subseteq \mathcal{M}$ be any closed subspace and
  let $\mathcal{N}' \subseteq \mathcal{M}'$ be the subspace of local
  minima of $f$ on $\mathcal{M}'$.  If $\mathcal{N}'$ is connected
  then so is $\mathcal{M}'$. \qed
\end{proposition}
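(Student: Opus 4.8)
The plan is to argue by contraposition: I will show that if $\mathcal{M}'$ is \emph{disconnected}, then $\mathcal{N}'$ is disconnected as well. The entire argument rests on the two properties of the Hitchin functional recorded just above the statement, namely that $f$ is \emph{proper} and that it is non-negative (being half an $L^2$-norm), so in particular $f$ is bounded below by $0$ on every subset.

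First I would fix a separation of $\mathcal{M}'$. Assuming $\mathcal{M}'$ is disconnected and nonempty (if it is empty there is nothing to prove), write $\mathcal{M}' = \mathcal{M}_1 \sqcup \mathcal{M}_2$ with $\mathcal{M}_1, \mathcal{M}_2$ nonempty and each one open and closed in $\mathcal{M}'$. Since $\mathcal{M}'$ is itself closed in $\mathcal{M}$ by hypothesis, each $\mathcal{M}_i$ is then closed in $\mathcal{M}$ as well. Consequently the restriction $f|_{\mathcal{M}_i}$ is again proper (a restriction of a proper map to a closed subspace is proper) and is bounded below by $0$; as $\mathcal{M}_i$ is nonempty, $f$ attains a minimum on $\mathcal{M}_i$, say at some $p_i \in \mathcal{M}_i$.

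Next I would check that each $p_i$ in fact lies in $\mathcal{N}'$. Here one uses that $\mathcal{M}_i$ is \emph{open} in $\mathcal{M}'$, so it is an open neighbourhood of $p_i$ inside $\mathcal{M}'$ on which $f \ge f(p_i)$; hence $p_i$ is a local minimum of $f$ on $\mathcal{M}'$, that is, $p_i \in \mathcal{N}' \cap \mathcal{M}_i$. Thus both $\mathcal{N}' \cap \mathcal{M}_1$ and $\mathcal{N}' \cap \mathcal{M}_2$ are nonempty, and since $\mathcal{M}_1$ and $\mathcal{M}_2$ are disjoint and relatively clopen in $\mathcal{M}'$, these two intersections split $\mathcal{N}'$ into two disjoint, relatively clopen, nonempty pieces. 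Hence $\mathcal{N}'$ is disconnected, which is exactly the contrapositive of the claim.

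I do not expect a serious obstacle. The one point deserving care is the passage from a \emph{global} minimum on the clopen piece $\mathcal{M}_i$ to a \emph{local} minimum of $f$ on all of $\mathcal{M}'$; this is precisely where the openness of $\mathcal{M}_i$ in $\mathcal{M}'$ enters, in tandem with the fact that properness survives restriction to the closed subspaces $\mathcal{M}_i \subseteq \mathcal{M}$, which is what guarantees a minimiser exists to begin with.
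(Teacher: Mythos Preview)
Your argument is correct and is exactly the standard one: the paper itself gives no proof (the statement ends with \qed), relying on Hitchin's observation that $f$ is proper and bounded below, which is precisely what you use. There is nothing to add.
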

The following observation was also made by Hitchin \cite{hitchin:1992}.
\begin{proposition}\label{prop:hitchin-additive}
  The Hitchin functional is additive with respect to direct sum of
  $\Sp(2n,\RR)$-Higgs bundles, in other words,
  \begin{displaymath}
    f(\bigoplus (V_i,\varphi_i)) = \sum f(V_i,\varphi_i).
  \end{displaymath}
\end{proposition}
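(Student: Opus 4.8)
The plan is to compute $f$ using the harmonic metric provided by the solution to Hitchin's equation, and to exploit the fact that on a direct sum this metric, the Higgs field, and Hitchin's equation itself all respect the block decomposition. Recall from Theorems~\ref{higgs-hk} and~\ref{hk} that $f(V,\varphi)$ is computed by choosing the reduction $h$ of structure group to $H=\U(n)$ solving $F_h-[\varphi,\tau(\varphi)]=0$ and setting $f=\tfrac{1}{2}\norm{\varphi}^2$, the $L^2$-norm being taken with respect to $h$ and the metric on $X$. Since $f$ is a well-defined function on the moduli space, it does not depend on the choice of representative solution, so it suffices to evaluate it on any convenient harmonic metric.

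First I would check that the block-diagonal metric solves Hitchin's equation for the sum. Let $(V_i,\varphi_i)$ be polystable with harmonic metrics $h_i$, and form $(V,\varphi)=\bigoplus(V_i,\varphi_i)$, which is again polystable so that a harmonic metric exists. I claim $h=\bigoplus h_i$ is such a metric. Indeed, with respect to the splitting $V=\bigoplus V_i$ the curvature $F_h=\bigoplus F_{h_i}$ is block-diagonal; and since $\varphi=\sum\varphi_i$ with each $\varphi_i$ supported in the $i$-th diagonal block $S^2V_i\oplus S^2V_i^*$, the compact conjugate $\tau(\varphi)=\sum\tau(\varphi_i)$ is block-diagonal too, whence $[\varphi,\tau(\varphi)]$ is block-diagonal with $i$-th block $[\varphi_i,\tau(\varphi_i)]$, all cross terms vanishing because distinct blocks are disjoint. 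Thus the equation decouples into $F_{h_i}-[\varphi_i,\tau(\varphi_i)]=0$, each satisfied by hypothesis, and $h$ is a valid harmonic metric for $(V,\varphi)$.

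Second I would compute the norm using this metric. The direct-sum metric on $V$ induces an orthogonal decomposition $S^2V=\bigoplus_i S^2V_i\oplus\bigoplus_{i<j}(V_i\otimes V_j)$, and likewise for $S^2V^*$. Since $\beta=\sum_i\beta_i$ with $\beta_i\in H^0(K\otimes S^2V_i)$ lying in pairwise orthogonal summands, the pointwise norms satisfy $\abs{\beta}^2=\sum_i\abs{\beta_i}^2$, and integrating gives $\norm{\beta}^2=\sum_i\norm{\beta_i}^2$; the same holds for $\gamma$. Using the orthogonality $\norm{\varphi}^2=\norm{\beta}^2+\norm{\gamma}^2$ already recorded in~\eqref{eq:2.7}, I obtain
$$
f(V,\varphi)=\tfrac{1}{2}\norm{\beta}^2+\tfrac{1}{2}\norm{\gamma}^2
=\sum_i\Bigl(\tfrac{1}{2}\norm{\beta_i}^2+\tfrac{1}{2}\norm{\gamma_i}^2\Bigr)
=\sum_i f(V_i,\varphi_i),
$$
as claimed. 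The only point requiring care is the first step, namely that the harmonic metric for the direct sum is the block-diagonal sum of the individual harmonic metrics; this rests on the vanishing of the off-diagonal terms of $[\varphi,\tau(\varphi)]$, which follows from the block structure of $\varphi$. Once this is in hand, the computation of the norm is a routine orthogonality argument.
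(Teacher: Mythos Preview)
Your argument is correct. The paper does not actually supply a proof of this proposition; it simply states it and attributes the observation to Hitchin~\cite{hitchin:1992}. Your two-step argument---first showing that the block-diagonal metric $h=\bigoplus h_i$ solves Hitchin's equation for the sum (because the curvature, the Higgs field, and the bracket $[\varphi,\tau(\varphi)]$ all respect the block decomposition), then reading off additivity of the $L^2$-norm from orthogonality of the summands $S^2V_i\subset S^2V$---is exactly the natural proof and fills in what the paper leaves implicit.
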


Let $(V,\varphi)$ represent a smooth point of $\mathcal{M}_d$. Then the
moment map condition shows that the critical points of $f$ are exactly
the fixed points of the circle action.  These can be identified as
follows (cf.\ \cite{hitchin:1987a,hitchin:1992,simpson:1992}).

\begin{proposition}
  \label{prop:VHS}
  An $\Sp(2n,\RR)$-Higgs bundle $(V,\varphi)$ represents a fixed point
  of the circle action on $\mathcal{M}_d$ if and only if it is a
  \emph{complex variation of Hodge structure} (also called a
  \emph{Hodge bundle}): this means that there is a decomposition in
  holomorphic subbundles
  \begin{displaymath}
    V = \bigoplus F_{i}
  \end{displaymath}
  for real indices, or \emph{weights}, $i$ such that, attributing
  weight $-i$ to $F_i^*$, $\varphi=(\beta,\gamma)$ has weight one with
  respect to this decomposition; more explicitly this means that
  \begin{displaymath}
    \gamma\colon F_{i} \to F^*_{-i-1}\otimes K
    \qquad\text{and}\qquad
    \beta\colon F^*_{i} \to F_{-i+1}\otimes K.
  \end{displaymath}
\end{proposition}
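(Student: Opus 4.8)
The plan is to prove both implications, realising the weight decomposition of a fixed point as the eigenbundle decomposition of the infinitesimal generator of the circle action, and conversely building the circle action out of a given variation of Hodge structure. Throughout I use the action of a holomorphic automorphism $g$ of $V$ on the Higgs field, which (as recorded in the proof of Proposition~\ref{prop:simple-spnr}) is $\beta\mapsto g\beta g^{t}$ and $\gamma\mapsto (g^{-1})^{t}\gamma g^{-1}$; I write $g\cdot\varphi$ for the result and recall that $\lambda\,\Id$ acts by $(\beta,\gamma)\mapsto(\lambda^{2}\beta,\lambda^{-2}\gamma)$.

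For the easy implication, suppose $(V,\varphi)$ underlies a complex variation of Hodge structure $V=\bigoplus_i F_i$ with $\beta\colon F_i^{*}\to F_{-i+1}\otimes K$ and $\gamma\colon F_i\to F_{-i-1}^{*}\otimes K$. Let $\Psi\in H^{0}(\End V)$ act as multiplication by the weight $i$ on $F_i$ and set $g_\theta=\exp(\imag\theta\Psi)$, so that $g_\theta$ is the scalar $e^{\imag\theta i}$ on $F_i$. On the block of $\beta$ carrying $F_i^{*}$ to $F_{-i+1}$ the two scalar factors contributed by $g_\theta^{t}$ and $g_\theta$ multiply to $e^{\imag\theta i}e^{\imag\theta(-i+1)}=e^{\imag\theta}$, and the same holds for $\gamma$; hence $g_\theta\cdot\varphi=e^{\imag\theta}\varphi$, so $(V,\varphi)$ is a fixed point.

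For the converse I pass to the gauge-theoretic description (Theorems~\ref{higgs-hk} and~\ref{hk}). Let $h$ be the harmonic metric solving Hitchin's equation for $(V,\varphi)$. Since $[e^{\imag\theta}\varphi,\tau(e^{\imag\theta}\varphi)]=[\varphi,\tau(\varphi)]$, the same $h$ solves the equation for $(V,e^{\imag\theta}\varphi)$; as the point is fixed, $(V,e^{\imag\theta}\varphi)\simeq(V,\varphi)$, and uniqueness of the harmonic metric (together with reductivity of the automorphism group, Theorem~\ref{lemma:reductive-stabilizer}) lets me choose the intertwiners $g_\theta$ with $g_\theta\cdot\varphi=e^{\imag\theta}\varphi$ to be simultaneously holomorphic and $h$-unitary. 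The crucial step, which I expect to be the main obstacle, is to choose these $g_\theta$ coherently so that $\theta\mapsto g_\theta$ is a one-parameter subgroup of the $h$-unitary gauge group $U(h)$: the $g_\theta$ are determined only up to the compact group $\Aut(V,\varphi)\cap U(h)$, and smoothness of the point is what allows the ambiguities to be fixed consistently. The resulting weights are real, with $g_{2\pi}\in\Aut(V,\varphi)$ not necessarily trivial, which is why the indices in the statement need only be real.

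Granting the one-parameter subgroup, the conclusion is quick. The generator $\Psi:=\tfrac{1}{\imag}\tfrac{d}{d\theta}\big|_{0}g_\theta$ is holomorphic, i.e.\ $\dbar_A\Psi=0$, because the $g_\theta$ are holomorphic, and it is $h$-self-adjoint, $\Psi^{*h}=\Psi$, because they are $h$-unitary. The metric identity $\partial_A(\Psi^{*h})=(\dbar_A\Psi)^{*h}$ then gives $\partial_A\Psi=0$, so $\Psi$ is parallel for the Chern connection; consequently its eigenvalues are constant and real, its eigenbundles $F_i=\ker(\Psi-i\,\Id)$ are holomorphic subbundles, and $V=\bigoplus_i F_i$. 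Finally, differentiating $g_\theta\cdot\varphi=e^{\imag\theta}\varphi$ at $\theta=0$ yields $\Psi\beta+\beta\Psi^{t}=\beta$ and $\Psi^{t}\gamma+\gamma\Psi=-\gamma$; restricting to eigenblocks shows that a nonzero component of $\beta$ from $F_i^{*}$ lands in $F_{-i+1}\otimes K$ and a nonzero component of $\gamma$ from $F_i$ lands in $F_{-i-1}^{*}\otimes K$, which is exactly the asserted weight-one condition. Hence $(V,\varphi)$ is a complex variation of Hodge structure.
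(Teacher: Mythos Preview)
The paper does not actually prove this proposition: it simply states the result and cites Hitchin \cite{hitchin:1987a,hitchin:1992} and Simpson \cite{simpson:1992}. Your sketch is essentially the standard argument from those references, and both directions are set up correctly; the infinitesimal computation at the end is clean and matches the weight conventions used later in Section~\ref{sec:hodge-bundles}.

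There is, however, one genuine gap in your justification of the ``crucial step.'' You say that \emph{smoothness of the point} is what allows the $g_\theta$ to be chosen as a one-parameter subgroup. This is not the right hypothesis, and the proposition is asserted (and used) for arbitrary polystable $(V,\varphi)$, not just smooth points. What actually makes the lift work is that $\Aut(V,\varphi)$ is \emph{reductive} (Theorem~\ref{lemma:reductive-stabilizer}). Concretely, the pairs $(\lambda,g)$ with $g$ a holomorphic automorphism of $V$ and $g\cdot\varphi=\lambda\varphi$ form an algebraic group $\Gamma$ sitting in an exact sequence
\[
1\longrightarrow \Aut(V,\varphi)\longrightarrow \Gamma \longrightarrow \CC^{\times}\longrightarrow 1,
\]
where surjectivity on the right uses that $S^1$ is Zariski-dense in $\CC^{\times}$ together with the fixed-point hypothesis. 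Reductivity of the kernel forces $\Gamma$ itself to be reductive, so a Levi splitting gives an algebraic one-parameter subgroup $\lambda\mapsto g_\lambda$ of $\Gamma$, hence of $\Aut(V)$, with $g_\lambda\cdot\varphi=\lambda\varphi$. From here your eigenbundle argument goes through unchanged (and one does not even need the harmonic metric to see that the eigenbundles are holomorphic: the $g_\lambda$ are holomorphic and commute, so the weight spaces of the $\CC^{\times}$-action on $V$ are holomorphic subbundles). Equivalently, staying on the gauge side, the $h$-unitary choices assemble into a compact Lie group extension of $S^1$, and one lifts via a maximal torus; but again the point is compactness/reductivity, not smoothness of the moduli point.
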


The decomposition $V = \bigoplus F_i$ of Proposition~\ref{prop:VHS}
gives rise to corresponding decompositions
\begin{align}
  \label{eq:deceV}
  \End(V)_k&=\bigoplus_{i-j=k}F_i\otimes F_j^* , \\
  \label{eq:decV}
  (S^2V\otimes K)_{k+1}&=\bigoplus_{\substack{i+j=k+1 \\i<j}}
  F_i\otimes F_j\otimes K\oplus S^2F_{\frac{k+1}{2}}\otimes K, \\
  \label{eq:decVd}
  (S^2V^*\otimes K)_{k+1}&=\bigoplus_{\substack{-i-j=k+1\\ i<j}}
  F_i^*\otimes F_j^*\otimes K\oplus S^2F^*_{-\frac{k+1}{2}}\otimes K.
\end{align}
The map $\ad(\varphi)$ in the deformation complex
(\ref{eq:def-complex}) has weight $1$ with respect to these
decompositions, so that we can define complexes
\begin{equation}
\label{eq:mapAd}
  C^\bullet_k(V,\varphi) \colon \End(V)_k \xrightarrow{\ad(\varphi)}
  (S^2V\otimes K\oplus S^2V^*\otimes K)_{k+1}, \\
\end{equation}
for any $k$.  The deformation complex (\ref{eq:def-complex}) decomposes
accordingly as
\begin{displaymath}
  C^\bullet(V,\varphi) = \bigoplus C^\bullet_k(V,\varphi).
\end{displaymath}
We shall also need the positive weight subcomplex
\begin{equation}\label{eq:def-C-minus}
  C^\bullet_-(V,\varphi) = \bigoplus_{k>0} C^\bullet_k(V,\varphi).
\end{equation}
It can be shown (see, e.g., \cite[\S 3.2]{garcia-gothen-munoz:2007})
that $\HH^1(C^\bullet_k(V,\varphi))$ is the weight $-k$-subspace of
$\HH^1(C^\bullet(V,\varphi))$ for the infinitesimal circle
action. Thus $\HH^1(C^\bullet_-(V,\varphi))$ is the positive weight
space for the infinitesimal circle action.

\begin{proposition}
  \label{prop:minima-HH1}
  Let $(V,\varphi)$ be a polystable $\Sp(2n,\RR)$-Higgs bundle whose isomorphism
  class is fixed under the circle action.

  \begin{enumerate}
  \item Assume that $(V,\varphi)$ is simple and stable as an
    $\Sp(2n,\CC)$-Higgs bundle.  Then $(V,\varphi)$ represents a local
    minimum of $f$ if and only if $\HH^1(C^\bullet_-(V,\varphi))=0$.
  \item\label{item:non-minima} Suppose that there is a family
    $(V_t,\varphi_t)$ of polystable $\Sp(2n,\RR)$-Higgs bundles,
    pa\-ra\-me\-trized by $t$ in the open unit disk $D$, deforming
    $(V,\varphi)$ (i.e., such that $(V_0,\varphi_0) = (V,\varphi)$)
    and that the corresponding infinitesimal deformation is a non-zero
    element of $\HH^1(C^\bullet_-(V,\varphi))$.  Then $(V,\varphi)$ is
    not a local minimum of $f$ on $\mathcal{M}_d$.
  \end{enumerate}
\end{proposition}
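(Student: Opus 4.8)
The plan is to read both statements as an instance of the Morse theory of the moment map for the circle action $(A,\varphi)\mapsto(A,e^{\imag\theta}\varphi)$: the Hitchin functional $f$ is minus this moment map on the smooth locus, its critical points are the circle-fixed points, i.e.\ the Hodge bundles of Proposition~\ref{prop:VHS}, and the second variation of $f$ at such a point is diagonalized by the weight decomposition of the tangent space $\HH^1(C^\bullet(V,\varphi))$. Throughout I use the identification $\mathcal{M}_d\simeq\Mg_d$ of Theorem~\ref{hk} to work with solutions $(A,\varphi)$ of Hitchin's equations, so that $f=\tfrac12\norm{\varphi}^2$ is genuinely defined and varies smoothly along any family of polystable Higgs bundles, together with the fact recorded just before the statement that $\HH^1(C^\bullet_-(V,\varphi))$ is the positive-weight subspace for the infinitesimal circle action.

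For part~(1) I would first note that the hypotheses force $(V,\varphi)$ to be a smooth point of $\mathcal{M}_d$: being simple and stable as an $\Sp(2n,\CC)$-Higgs bundle, the last sentence of Proposition~\ref{prop:smoothness} applies directly. On the smooth locus the circle action is Hamiltonian with moment map $-f$, and $(V,\varphi)$, being a fixed point, is a critical point. By the local normal form for a Hamiltonian circle action near a fixed point, in suitable holomorphic coordinates one has $f=f(V,\varphi)-\sum_j\tfrac{n_j}{2}\abs{z_j}^2$, where the $n_j$ are the weights of the action on $T_{(V,\varphi)}\mathcal{M}_d=\HH^1(C^\bullet(V,\varphi))$ and the weight-zero directions are tangent to the fixed-point locus, along which $f$ is locally constant. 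Hence $f$ attains a local minimum at $(V,\varphi)$ if and only if there are no strictly positive weights, that is, if and only if the positive-weight space $\HH^1(C^\bullet_-(V,\varphi))$ vanishes; this yields both implications.

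For part~(2) I can no longer assume smoothness, so I would argue directly along the given family. Lifting $(V_t,\varphi_t)$ to a smooth family of solutions of Hitchin's equations with harmonic metrics depending smoothly on $t$, the restriction $g(t)=f(V_t,\varphi_t)$ is a smooth function on the disk $D$ with $g(0)=f(V,\varphi)$. Since the infinitesimal deformation $\dot v=\tfrac{d}{dt}(V_t,\varphi_t)|_{0}$ lies in a strictly positive weight space and $f$ is $S^1$-invariant, the first variation $g'(0)$ vanishes (averaging the invariance relation over the circle kills any nonzero-weight direction). The second variation is then computed intrinsically from the family by Hitchin's second-variation formula \cite{hitchin:1987a,hitchin:1992}: it is diagonal in the weight decomposition and assigns to a weight-$w$ deformation the eigenvalue $-w$ up to a positive constant. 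As $\dot v$ is a nonzero element of $\HH^1(C^\bullet_-(V,\varphi))$, all of whose weight components are strictly positive, this gives $g''(0)<0$, whence $g(t)<f(V,\varphi)$ for small $t\neq0$ and $(V,\varphi)$ is not a local minimum.

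The crux is part~(2): the entire difficulty is that $(V,\varphi)$ may be a singular point of $\mathcal{M}_d$, so the clean manifold Morse theory of part~(1) is unavailable, and this is precisely why the hypothesis is phrased as the existence of an \emph{actual} family realizing a class in $\HH^1(C^\bullet_-)$ rather than merely the non-vanishing of that group. The main obstacle is therefore to make the second-variation computation rigorous along a genuine path of solutions to Hitchin's equations --- showing that the Hessian of $f$ along the family is controlled by the weight of $\dot v$ independently of any smooth structure on the moduli space --- and to pin down the sign conventions so that the positive-weight space is indeed the space of descending directions for $f$. I expect the smooth dependence of the harmonic metric on $t$ and the bookkeeping of weights and signs to be the only genuinely delicate points.
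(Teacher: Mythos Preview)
Your proposal is correct and follows essentially the same approach as the paper's own proof: for (1), smoothness via Proposition~\ref{prop:smoothness} together with the Frankel-type identification of the Hessian eigenspaces of the moment map with the weight spaces of the circle action; for (2), passage to a family of solutions of Hitchin's equations and direct computation of the second variation along the path, citing Hitchin~\cite[\S8]{hitchin:1992}. The paper's proof is in fact a two-line sketch with references, so your version is considerably more explicit --- in particular your remark that $g'(0)=0$ by $S^1$-invariance and your identification of the delicate analytic points (smooth dependence of the harmonic metric on $t$, sign conventions) are exactly the issues one must unpack when reading the cited sources.
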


\begin{proof}
  (1) From Proposition~\ref{prop:smoothness}, when the hypotheses are
  satisfied, $(V,\varphi)$ represents a smooth point of the moduli
  space. Then one can use the moment map condition on $f$ to show that
  $\HH^1(C^\bullet_k(V,\varphi))$ is the eigenvalue $-k$ subspace of
  the Hessian of $f$ (cf.\ \cite[\S 3.2]{garcia-gothen-munoz:2007};
  this goes back to Frankel \cite{frankel:1959}, at least).  This
  proves (1).

  (2) Take a corresponding family of solutions to Hitchin's equations.
  One can then prove that the second variation of $f$ along this
  family is negative in certain directions (see Hitchin
  \cite[\S~8]{hitchin:1992}).
\end{proof}

\subsection{A cohomological criterion for minima}
\label{sec:cohom-crit-minima}

The following result was proved in \cite[Proposition~4.14\footnote{a
  corrected proof can be found in
  \cite[Lemma~3.11]{bradlow-garcia-prada-gothen:homotopy}} and
Remark~4.16]{bradlow-garcia-prada-gothen:2003}.  It is the key to
obtaining the characterization of the minima of the Hitchin functional
$f$.

\begin{proposition}
  \label{prop:ad-iso}
  Let $(V,\varphi)$ be a polystable $\Sp(2n,\RR)$-Higgs bundle whose
  isomorphism class is fixed under the circle action.  Then for any
  $k$ we have $\chi(C^\bullet_k(V,\varphi)) \leq 0$ and equality holds
  if and only if
  \begin{displaymath}
    \ad(\varphi)\colon \End(V)_k \to
  (S^2V\otimes K\oplus S^2V^*\otimes K)_{k+1}
  \end{displaymath}
  is an isomorphism.
\end{proposition}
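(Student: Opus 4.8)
The plan is to compute the Euler characteristic of the two-term complex
$$C^\bullet_k(V,\varphi)\colon \End(V)_k \xrightarrow{\ad(\varphi)}
(S^2V\otimes K\oplus S^2V^*\otimes K)_{k+1}$$
of \eqref{eq:mapAd} by breaking the map $\ad(\varphi)$ into its kernel and cokernel. From the four-term exact sequence $0\to\ker\ad(\varphi)\to\End(V)_k\to(S^2V\otimes K\oplus S^2V^*\otimes K)_{k+1}\to\operatorname{coker}\ad(\varphi)\to 0$ and additivity of $\chi$ in the hypercohomology long exact sequence one gets
$$\chi(C^\bullet_k(V,\varphi))=\chi\bigl(\ker\ad(\varphi)\bigr)-\chi\bigl(\operatorname{coker}\ad(\varphi)\bigr).$$
The ``if'' half of the equality statement is then immediate: if $\ad(\varphi)$ is a bundle isomorphism, both kernel and cokernel vanish and $\chi=0$. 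So the work is to show each term has the right sign and to run the equality analysis.

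First I would bound the kernel term. Passing to the associated $\SL(2n,\CC)$-Higgs bundle $(W,\Phi)=H(V,\varphi)$ with $W=V\oplus V^*$, which is polystable by Theorem~\ref{thm:stability-equivalence}, the endomorphism bundle $(\End(W),\ad(\Phi))$ is a polystable Higgs bundle of degree $0$; hence every $\ad(\Phi)$-invariant subsheaf has nonpositive degree. Now $\ker\ad(\varphi)$ is precisely $(\ker\ad(\Phi))\cap\End(V)_k$ (using that $\ad(\Phi)$ restricted to the subbundle $\End(V)\subset\End(W)$ agrees with $\ad(\varphi)$, as in \eqref{eq:spnr-def-complex}), so it is an $\ad(\Phi)$-invariant subsheaf, of degree $\le 0$. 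Since $g\ge 2$, Riemann--Roch gives $\chi(\ker\ad(\varphi))=\deg+\rk(1-g)\le 0$.

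For the cokernel I would use Serre duality together with the invariant pairings on $\lieh^\CC$ and $\mlie^\CC$. Up to torsion, $\operatorname{coker}\ad(\varphi)$ is Serre-dual to the kernel of the transpose $\ad(\varphi)^\vee$; and invariance of the Killing form identifies this transpose, up to sign, with the weight $(-k-1)$ component of $\ad(\varphi)\colon\mlie^\CC\to\lieh^\CC\otimes K$. Its kernel $\{m\in(S^2V\oplus S^2V^*)_{-k-1}\mid[\varphi,m]=0\}$ is again an $\ad(\Phi)$-annihilated, hence invariant, subsheaf of $\End(W)$, so of degree $\le 0$ and with $\chi\le 0$. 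Serre duality then yields $\chi(\operatorname{coker}\ad(\varphi))=-\chi(\ker\ad(\varphi)^\vee)+\length(T)\ge 0$, where $T$ is the torsion of the cokernel. Combining, $\chi(C^\bullet_k)=\chi(\ker\ad(\varphi))+\chi(\ker\ad(\varphi)^\vee)-\length(T)\le 0$.

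The equality case and the main obstacle come together here. Equality forces $\chi(\ker\ad(\varphi))=\chi(\ker\ad(\varphi)^\vee)=0$ and $\length(T)=0$; for a subsheaf of degree $\le 0$ with $\chi=0$ and $g\ge 2$, Riemann--Roch forces rank and degree both zero, so both kernels vanish, whence $\ad(\varphi)$ is injective with torsion-free cokernel, and the vanishing of the transpose kernel makes it also surjective --- an isomorphism. The delicate step, and where I expect the real difficulty, is the cokernel analysis: correctly realising $\ker\ad(\varphi)$ and the transpose kernel as honest $\ad(\Phi)$-invariant Higgs subsheaves of the polystable $(\End(W),\ad(\Phi))$ so that the semistability degree bound genuinely applies, and controlling the torsion in the Serre-duality identification. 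This is exactly the point that required a corrected argument in the literature, so I would treat it with particular care.
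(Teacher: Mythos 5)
You should first be aware that the paper contains no proof of this proposition: it is quoted from \cite[Proposition~4.14 and Remark~4.16]{bradlow-garcia-prada-gothen:2003}, with the footnote pointing to the corrected argument in \cite[Lemma~3.11]{bradlow-garcia-prada-gothen:homotopy}. So the comparison is with that cited proof, not with anything in the text.

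Your argument is correct, and it shares its backbone with the cited one: both rest on the fact that $(\End(W),\ad(\Phi))$, with $(W,\Phi)=H(V,\varphi)$, is a semistable Higgs bundle of degree zero (via Theorem~\ref{thm:stability-equivalence} and the Hitchin--Kobayashi correspondence, Theorem~\ref{higgs-hk}, applied to $\End(W)=W\otimes W^*$), so that every $\ad(\Phi)$-invariant subsheaf has nonpositive degree; and both write $\chi(C^\bullet_k)=\chi(\ker\ad(\varphi))-\chi(\operatorname{coker}\ad(\varphi))$ and bound the kernel term by noting it is an $\ad(\Phi)$-killed, hence invariant, subsheaf. Where you genuinely differ is the cokernel estimate, which is precisely the step that needed correction in the literature. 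The known argument handles it on the subsheaf side: it exhibits the cokernel twisted by $K^{-1}$ as an invariant quotient of the adjoint Higgs bundle (equivalently, builds an invariant subsheaf from the image sheaf together with the remaining weight summands), the $K$-twist providing the degree shift $\rk\cdot(2g-2)$ that upgrades $\deg\geq 0$ to $\chi\geq 0$. You instead dualize: by invariance of the Killing form the transpose of $\ad(\varphi)_k$ is, up to sign and a $K$-twist, the weight $-(k+1)$ piece $\ad(\varphi)\colon E(\liem^\CC)_{-k-1}\to E(\lieh^\CC)_{-k}\otimes K$, whose kernel is again an $\ad(\Phi)$-killed invariant subsheaf; Serre duality, with the torsion correctly tracked ($\chi(Q)=\chi(Q/T)+\length(T)$ and $(Q/T)^\vee\cong\ker\ad(\varphi)^\vee$), converts that kernel bound into $\chi(\operatorname{coker}\ad(\varphi))\geq 0$. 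Here the $K$-twist enters through $\chi(E^\vee\otimes K)=-\chi(E)$ rather than through a degree shift; the two devices are of equal strength and lead to the same equality analysis (both kernels and the torsion must vanish, forcing an isomorphism). Your route has the merit of making explicit the $k\leftrightarrow -k-1$ symmetry of the adjoint action; the subsheaf route avoids the duality bookkeeping. Two points you should state precisely to make this airtight: the inclusion $\End(V)\subset\End(W)$ must be the Cartan embedding $\psi\mapsto\diag(\psi,-\psi^t)$, not the block inclusion, since that is what makes $\ad(\Phi)$ restrict to the deformation-complex map of (\ref{eq:spnr-def-complex}); and the semistability of $(\End(W),\ad(\Phi))$ deserves an explicit justification along the lines indicated above, since it is the single fact carrying all the degree estimates.
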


\begin{corollary}\label{cor:ad-iso-minima}
  Let $(V,\varphi)$ be a simple $\Sp(2n,\RR)$-Higgs bundle which is
  stable as an $\Sp(2n,\CC)$-Higgs bundle. If $(V,\varphi)$ is fixed
  under the circle action then it represents a local minimum of $f$ if
  and only if the map
  \begin{displaymath}
    \ad(\varphi)\colon \End(V)_k \to
  (S^2V\otimes K\oplus S^2V^*\otimes K)_{k+1}
  \end{displaymath}
  is an isomorphism for all $k>0$.
\end{corollary}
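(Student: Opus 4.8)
The plan is to read off the corollary by combining the two preceding propositions with the graded decomposition of the deformation complex. Since $(V,\varphi)$ is simple and stable as an $\Sp(2n,\CC)$-Higgs bundle, it is a smooth point of $\mathcal{M}_d$ (Proposition~\ref{prop:smoothness}), so Proposition~\ref{prop:minima-HH1}(1) applies directly: $(V,\varphi)$ is a local minimum of $f$ if and only if $\HH^1(C^\bullet_-(V,\varphi)) = 0$. The first step is then to invoke the decomposition $C^\bullet_-(V,\varphi) = \bigoplus_{k>0} C^\bullet_k(V,\varphi)$ from \eqref{eq:def-C-minus}, which splits hypercohomology into a direct sum, and thereby rephrase the minimum condition as $\HH^1(C^\bullet_k(V,\varphi)) = 0$ for every $k > 0$.

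The second and central step is to convert each vanishing $\HH^1(C^\bullet_k)=0$ into the statement that $\ad(\varphi)$ is an isomorphism in weight $k$. Here Proposition~\ref{prop:ad-iso} is the bridge: it asserts $\chi(C^\bullet_k(V,\varphi)) \leq 0$, with equality precisely when $\ad(\varphi)\colon \End(V)_k \to (S^2V\otimes K\oplus S^2V^*\otimes K)_{k+1}$ is an isomorphism. The point to emphasize is that this proposition controls only the Euler characteristic, so to pass from $\chi = 0$ to $\HH^1 = 0$ I would need the outer cohomology to vanish. This is exactly what Proposition~\ref{prop:hh-vanishing} supplies: since $\Sp(2n,\CC)$ is the complexification of $\Sp(2n,\RR)$ and $(V,\varphi)$ is stable as an $\Sp(2n,\CC)$-Higgs bundle, one has $\HH^0(C^\bullet(V,\varphi)) = 0 = \HH^2(C^\bullet(V,\varphi))$. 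Because $C^\bullet(V,\varphi) = \bigoplus_k C^\bullet_k(V,\varphi)$, these vanishings descend to every graded piece, giving $\HH^0(C^\bullet_k) = 0 = \HH^2(C^\bullet_k)$ for all $k$, whence $\chi(C^\bullet_k) = -\dim \HH^1(C^\bullet_k)$.

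Assembling these, for each $k > 0$ the three conditions $\HH^1(C^\bullet_k) = 0$, $\chi(C^\bullet_k) = 0$, and the isomorphy of $\ad(\varphi)$ in weight $k$ are all equivalent, and the chain of equivalences from the first paragraph then yields the corollary in both directions. I do not anticipate a genuine obstacle, as the statement is essentially a bookkeeping synthesis of the surrounding results; the one step requiring care is the middle one, namely recognizing that Proposition~\ref{prop:ad-iso} on its own is insufficient and that the vanishing of $\HH^0$ and $\HH^2$ from Proposition~\ref{prop:hh-vanishing} is precisely what lets one trade the Euler characteristic for $\HH^1$. A secondary point to verify is that the graded-piece hypercohomology really splits off from the total complex, which is immediate from the weight decompositions \eqref{eq:deceV}--\eqref{eq:decVd} together with the fact that $\ad(\varphi)$ has weight one.
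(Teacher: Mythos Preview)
Your proposal is correct and follows essentially the same route as the paper: invoke Proposition~\ref{prop:hh-vanishing} to get $\HH^0(C^\bullet_k)=\HH^2(C^\bullet_k)=0$ for $k>0$, so that $\dim\HH^1(C^\bullet_k)=-\chi(C^\bullet_k)$, and then combine Proposition~\ref{prop:ad-iso} with part~(1) of Proposition~\ref{prop:minima-HH1}. The paper's own proof is just a terser version of exactly this argument.
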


\begin{proof}
  We have the vanishing
  $\HH^0(C_k^\bullet(V,\varphi))=\HH^2(C_k^\bullet(V,\varphi))=0$ for all
  $k>0$ from Proposition~\ref{prop:hh-vanishing}.  Hence $\dim
  \HH^1(C^\bullet_-(V,\varphi))=-\chi(C^\bullet_-(V,\varphi))$. Now the
  result is immediate from Proposition~\ref{prop:ad-iso} and (1) of
  Proposition~\ref{prop:minima-HH1}.
\end{proof}

\subsection{Minima of the Hitchin functional}
\label{sec:minima-hitchin-functional}

In order to describe the minima, it is convenient to define
the following subspaces of $\mathcal{M}_d$.

\begin{definition}
  \label{def:N_d}
  For each $d$, define the following subspace of $\mathcal{M}_d$.
  \begin{displaymath}
    \mathcal{N}_d = \{(V,\beta,\gamma) \in \mathcal{M}_d
    \suchthat \text{$\beta=0$ or $\gamma=0$}\}.
  \end{displaymath}
\end{definition}

\begin{remark}
\label{rem:N(d)-sign}
It is easy to see that polystability of $(V,\varphi)$ implies that, in
fact,
\begin{align*}
  \mathcal{N}_d &= \{(V,\beta,\gamma)
    \suchthat \beta=0\} &\text{for $d>0$,}\\
  \mathcal{N}_d &= \{(V,\beta,\gamma)
    \suchthat \gamma=0\} &\text{for $d<0$,}\\
  \mathcal{N}_d &= \{(V,\beta,\gamma)
    \suchthat \beta=\gamma=0\} &\text{for $d=0$.}
  \end{align*}
  Note, in particular, that for $d=0$ the vanishing of one of the
  sections $\beta$ or $\gamma$ implies the vanishing of the other one.
\end{remark}

\begin{proposition}
  \label{prop:N(d)-minima}
  Let $(V,\varphi)$ be a polystable $\Sp(2n,\RR)$-Higgs bundle with $\beta=0$ or
  $\gamma=0$.  Then $(V,\varphi)$ represents the absolute minimum of $f$ on
  $\mathcal{M}_d$. Thus $\mathcal{N}_d$ is contained in the subspace
  of local minima of $f$ on $\mathcal{M}_d$.
\end{proposition}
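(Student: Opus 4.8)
The plan is to reduce the statement to a one-line estimate on the $L^2$-norms $\norm{\beta}$ and $\norm{\gamma}$ by exploiting the Hitchin equation. Since $(V,\varphi)$ is polystable, Theorem~\ref{higgs-hk} furnishes a metric $h$ on $V$ solving $F_h-[\varphi,\tau(\varphi)]=0$, and by the definition of the Hitchin functional in \eqref{eq:2.7} the norms entering $f$ are exactly those computed with respect to this harmonic metric. The key observation is that the same equation holds at \emph{every} point of $\mathcal{M}_d$, so it will produce a lower bound for $f$ on all of $\mathcal{M}_d$, not merely on $\mathcal{N}_d$.

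First I would compute the $\End(V)$-component of $[\varphi,\tau(\varphi)]$. Writing $\varphi=\beta+\gamma$ and $\tau(\varphi)=\beta^{*}+\gamma^{*}$, and using that $S^2V$ and $S^2V^{*}$ are the two abelian factors $\liem^{+},\liem^{-}$ of the isotropy representation (so that the only surviving brackets are $[\beta,\beta^{*}]$ and $[\gamma,\gamma^{*}]$, both landing in $\End(V)$ but with opposite signs), the equation restricted to $V$ reads
\begin{equation*}
  F_h=\gamma^{*}\gamma-\beta\beta^{*}
\end{equation*}
as an $\End(V)$-valued $(1,1)$-form. Taking the trace, applying the Hodge contraction and integrating over $X$ turns the left-hand side into a nonzero multiple of $\deg V=d$ and the right-hand side into $\norm{\gamma}^2-\norm{\beta}^2$, up to a single positive constant which is the same for $\beta$ and $\gamma$ since the trace is cyclic and the two fields are paired symmetrically. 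This yields the identity
\begin{equation*}
  \norm{\gamma}^2-\norm{\beta}^2=C\,d,\qquad C>0,
\end{equation*}
valid for every polystable $(V,\beta,\gamma)\in\mathcal{M}_d$.

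Granting this identity, the proposition is immediate from \eqref{eq:2.7}. For $d>0$, substituting $\norm{\gamma}^2=\norm{\beta}^2+Cd$ gives $f=\norm{\beta}^2+\tfrac12 Cd\geq\tfrac12 Cd$, with equality if and only if $\beta=0$; by Remark~\ref{rem:N(d)-sign} this equality locus is precisely $\mathcal{N}_d$. The cases $d<0$ and $d=0$ are entirely symmetric, yielding $f\geq\tfrac12 C\abs{d}$ with equality on $\{\gamma=0\}=\mathcal{N}_d$, and $f\geq 0$ with equality on $\{\beta=\gamma=0\}=\mathcal{N}_d$, respectively. Since the lower bound $\tfrac12 C\abs{d}$ is a constant attained at every point of $\mathcal{N}_d$, each such point realizes the absolute minimum of $f$ on $\mathcal{M}_d$, and in particular is a local minimum, which is the assertion.

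The only real work is the trace computation above, and the one delicate point is the \emph{sign}: one must check that the two Higgs fields enter $F_h$ with opposite signs and then fix the overall orientation so that it is $\norm{\gamma}^2-\norm{\beta}^2$ (and not its negative) that equals $C\,d$. This sign is forced by consistency: for $d>0$ Remark~\ref{rem:N(d)-sign} shows $\mathcal{N}_d=\{\beta=0\}$ is non-empty, so the identity must be compatible with $\beta=0$, which selects the form written above. Everything else is a routine manipulation of norms together with the standard relation between $\int_X\Tr F_h$ and $\deg V$.
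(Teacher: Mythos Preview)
Your argument is correct and is precisely the method the paper has in mind: the proof in the paper consists only of a reference to \cite[Proposition~4.5]{bradlow-garcia-prada-gothen:2003}, where exactly this trace-of-the-Hitchin-equation computation is carried out (there for $\U(p,q)$), yielding the identity $\norm{\gamma}^2-\norm{\beta}^2=C\,d$ and hence $f\geq\tfrac12 C\abs{d}$ with equality on $\mathcal{N}_d$.

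One small comment: your consistency argument for the sign relies on $\mathcal{N}_d$ being non-empty for $d>0$, which Remark~\ref{rem:N(d)-sign} does not assert (it only describes $\mathcal{N}_d$ as a subset). The sign is in fact determined directly by the computation you wrote, $F_h=\gamma^{*}\gamma-\beta\beta^{*}$, once one tracks that $\tau(\varphi)=-\varphi^{*}$ in the paper's convention and that $\tfrac{i}{2\pi}\int_X\Tr F_h=\deg V$; no appeal to non-emptiness is needed.
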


\begin{proof}
  This can be seen in a way similar to the proof of
  \cite[Proposition~4.5]{bradlow-garcia-prada-gothen:2003}.
\end{proof}

\begin{theorem}\label{thm:all-minima}
  Let $(V,\beta,\gamma)$ be a polystable $\Sp(2n,\RR)$-Higgs bundle
  and assume that $n \geq 3$.  Then $(V,\beta,\gamma)$ represents a
  minimum of the Hitchin functional if and only if one of the
  following situations occurs:
  \begin{enumerate}
  \item $(V,\beta,\gamma)$ belongs to $\mathcal{N}_d$.

  \item The degree $d = - n(g-1)$ with $n=2q+1$ odd, and there exists a
    square root $L$ of $K$ such that the bundle $V$ is of the form
  \begin{displaymath}
    V = \bigoplus_{\lambda=-q}^{q}L^{-1}K^{-2\lambda}.
  \end{displaymath}
  With respect to this decomposition of $V$ and the
  corresponding decomposition of $V^*$, the maps $\beta$ and $\gamma$
  are of the form:
  \begin{displaymath}
    \beta =
      \begin{pmatrix}
        0 &  \cdots & 0 & 1 \\
        \vdots & \iddots &  \iddots & 0 \\
        0 & 1 &  \iddots & \vdots \\
        1 & 0 & \cdots & 0
      \end{pmatrix}
      \qquad\text{and}\qquad
    \gamma =
       \begin{pmatrix}
         0 &  \cdots & 0 & 0 \\
         \vdots & \iddots  & \iddots & 1 \\
         0 & 0 & \iddots & \vdots \\
         0 & 1 & \cdots & 0
      \end{pmatrix}
  \end{displaymath}
  where, in the matrix for $\beta$, we denote by $1$ the canonical
  section of
  $$
    \Hom((L^{-1}K^{-2\lambda})^*, L^{-1}K^{2\lambda})\otimes K \simeq
    \mathcal{O}
  $$
  and analogously for $\gamma$.

\item The degree $d = - n(g-1)$ with $n=2q+2$ even, and there exists a
  square root $L$ of $K$ such that the bundle $V$ is of the form
  \begin{displaymath}
    V = \bigoplus_{\lambda=-q}^{q+1}LK^{-2\lambda}.
  \end{displaymath}
  With respect to this decomposition of $V$ and the
  corresponding decomposition of $V^*$, the maps $\beta$ and $\gamma$
  are of the form given above.

\item The degree $d = n(g-1)$ and the dual $\Sp(2n,\RR)$-Higgs bundle
  $(V',\beta',\gamma') = (V^*,\gamma^t,\beta^t)$ is of the form given
  in (2) or (3) above.

\end{enumerate}
\end{theorem}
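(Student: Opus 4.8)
The plan is to prove the two implications separately, verifying the listed configurations directly and then showing that no others can occur.

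\emph{The configurations (1)--(4) are minima.} If $(V,\beta,\gamma)\in\mathcal{N}_d$ this is Proposition~\ref{prop:N(d)-minima}. For (2) and (3), I would first check that the displayed Hodge bundle --- with $V=\bigoplus L^{-1}K^{-2\lambda}$, resp.\ $\bigoplus LK^{-2\lambda}$, and the ladder maps $\beta,\gamma$ --- is stable and simple and also stable as an $\Sp(2n,\CC)$-Higgs bundle, so that the smooth-point criterion of Corollary~\ref{cor:ad-iso-minima} applies. It then suffices to verify, weight by weight on the decompositions (\ref{eq:deceV})--(\ref{eq:decVd}), that $\ad(\varphi)\colon\End(V)_k\to(S^2V\otimes K\oplus S^2V^*\otimes K)_{k+1}$ is an isomorphism for all $k>0$; this is a direct matrix computation once the weights of the line summands are recorded. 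Finally, (4) follows from (2) and (3) by the duality isomorphism $\cM_{-d}\simeq\cM_d$ of Proposition~\ref{prop:duality-iso}, which manifestly preserves $f$.

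\emph{Every minimum is of one of these forms.} Let $(V,\beta,\gamma)$ be a local minimum. If $\varphi=0$, or if exactly one of $\beta,\gamma$ vanishes, then $(V,\beta,\gamma)\in\mathcal{N}_d$, giving (1). So assume $\beta\neq0$ and $\gamma\neq0$. A minimum is a critical point, hence a complex variation of Hodge structure by Proposition~\ref{prop:VHS}, and by Theorem~\ref{thm:polystable-spnr-higgs} together with additivity of the functional (Proposition~\ref{prop:hitchin-additive}) it decomposes into stable summands, each a minimum in its own right. For a stable, simple $\Sp(2n_i,\RR)$-summand which is stable as an $\Sp(2n_i,\CC)$-Higgs bundle (the smooth locus, Section~\ref{sec:stable-simple-minima}), Corollary~\ref{cor:ad-iso-minima} forces $\ad(\varphi)$ to be an isomorphism in every positive weight; unwinding this on (\ref{eq:deceV})--(\ref{eq:decVd}) shows each $F_i$ is a line bundle, the nonzero weights are consecutive, and the Higgs field is the principal-nilpotent ladder of (2)--(3). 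In particular one of $\beta,\gamma$ becomes an isomorphism, so by Proposition~\ref{mw-higgs} such a summand has maximal $|d_i|=n_i(g-1)$.

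It remains to exclude the other stable types --- $\U(p,q)$, $\U(n)$, and $\GL(n,\RR)$ --- together with the non-simple and intertwined (hence non-smooth) cases of Section~\ref{sec:poly-stable-minima}, and to rule out reducible combinations. A $\U(n)$-summand has $\varphi=0$. For every remaining configuration carrying a nonzero Higgs field that is not the single chain of (2)/(3)/(4), I would invoke the deformation criterion of Proposition~\ref{prop:minima-HH1}(\ref{item:non-minima}): one constructs a polystable one-parameter deformation whose infinitesimal class is a nonzero element of the positive-weight space $\HH^1(C^\bullet_-(V,\varphi))$, so that the point fails to be a local minimum. Applied to a would-be minimum containing two nonzero-Higgs pieces, or a chain together with an $\mathcal{N}_d$-summand, this rigidity argument leaves only the single irreducible ladder, forcing $|d|=n(g-1)$ and the stated form.

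\textbf{The main obstacle} is precisely this last package: the weight-by-weight verification that $\ad(\varphi)$ is an isomorphism (equivalently $\chi(C^\bullet_k)=0$ via Proposition~\ref{prop:ad-iso}, using $\HH^0=\HH^2=0$ from Proposition~\ref{prop:hh-vanishing}) and, above all, the explicit construction of positive-weight destabilizing deformations in the non-smooth and reducible cases. It is in controlling these deformations that the hypothesis $n\geq3$ enters, excluding the exceptional low-rank minima present for $n=1,2$.
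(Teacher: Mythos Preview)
Your proposal is correct and follows essentially the same strategy as the paper: the smooth locus is handled via Corollary~\ref{cor:ad-iso-minima} (this is Theorem~\ref{thm:smooth-minima}, with the weight-by-weight analysis you describe carried out in Theorem~\ref{thm:minima} and Corollary~\ref{cor:minima}), and the remaining polystable, non--$\Sp(2n,\CC)$-stable or non-simple cases are eliminated by producing explicit positive-weight deformations as in Proposition~\ref{prop:minima-HH1}(\ref{item:non-minima}) (this is Proposition~\ref{prop:strict-polystable-minima} and the lemmas of Section~\ref{sec:poly-stable-minima}). The only organizational difference is that the paper does not separate the two implications but instead packages both directions into Theorem~\ref{thm:smooth-minima} and Proposition~\ref{prop:strict-polystable-minima} and invokes these directly.
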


\begin{definition}
  If $(V,\beta,\gamma)$ is a minimum which does not belong to
  $\mathcal{N}_d$ we say that it is a \textbf{quiver type} minimum.
\end{definition}

\begin{remark}
  The cases $n=1$ and $n=2$ are special and were treated in
  \cite{hitchin:1987a} and \cite{gothen:2001}, respectively (cf.\ (1)
  of Corollary~\ref{cor:minima} and Remark~\ref{rem:n-one-hodge}).
\end{remark}

\begin{proof}[Proof of Theorem~\ref{thm:all-minima}]
  This proof relies on the results of
  Sections~\ref{sec:stable-simple-minima} and
  \ref{sec:poly-stable-minima} below.

  Consider first the case of simple $\Sp(2n,\RR)$-Higgs bundles
  $(V,\varphi)$ which are stable as $\Sp(2n,\CC)$-Higgs bundles. In
  this case, the analysis of the minima is based on
  Corollary~\ref{cor:ad-iso-minima} and is carried out in
  Section~\ref{sec:stable-simple-minima} below. The main result is
  Theorem~\ref{thm:smooth-minima}, which says that
  Theorem~\ref{thm:all-minima} holds for such $(V,\varphi)$.

  Next, consider a polystable $\Sp(2n,\RR)$-Higgs bundle $(V,\varphi)$
  which is not simple and stable as an $\Sp(2n,\CC)$-Higgs bundle. Then
  the decomposition $(V,\varphi) = \bigoplus (V_i,\varphi_i)$ given in
  the structure Theorem~\ref{thm:polystable-spnr-higgs} is
  non-trivial. The main result of
  Section~\ref{sec:poly-stable-minima},
  Proposition~\ref{prop:strict-polystable-minima}, says that if such a
  $(V,\varphi)$ is a local minimum then it belongs to $\mathcal{N}_d$,
  i.e., $\beta=0$ or $\gamma=0$. This concludes the proof.
 \end{proof}

\section{Minima in the smooth locus of the moduli space}
\label{sec:stable-simple-minima}

In this section we consider simple $\Sp(2n,\RR)$-Higgs bundles
$(V,\phi)$ which are stable as $\Sp(2n,\CC)$-Higgs bundles.  Thus, by
Proposition~\ref{prop:smoothness}, they belong to the smooth locus of
the moduli space $\mathcal{M}_d$. In Theorem~\ref{thm:smooth-minima}
below we prove that the statement of Theorem~\ref{thm:all-minima}
holds in this case.

Our results are based on a careful analysis of the structure of
$\Sp(2n,\RR)$-Higgs bundles $(V,\varphi)$ satisfying the criterion of
Corollary~\ref{cor:ad-iso-minima}.

\subsection{Hodge bundles}
\label{sec:hodge-bundles}

In this subsection we give a description of simple
$\Sp(2n,\RR)$-Higgs bundles which are complex variations of Hodge
structure (cf.\ Proposition~\ref{prop:VHS}). Assume that the
$\Sp(2n,\RR)$-Higgs bundle $(V,\varphi)=(V,\beta,\gamma)$ is a
Hodge bundle, so that there is a splitting $V=\bigoplus_{i\in\RR}
F_i$ and
\begin{equation}
\label{eq:peces-beta-gamma} \beta\in
H^0(\bigoplus_{i+j=1}F_i\otimes F_j\otimes K),\qquad \gamma\in
H^0(\bigoplus_{-i-j=1}F^*_i\otimes F^*_j\otimes K),
\end{equation}
as described in Proposition~\ref{prop:VHS} (these tensor products
should be interpreted as subbundles of $S^2V\otimes K$ and
$S^2V^*K$, so for example when $i=j=\frac{1}{2}$ the summand
$F_i\otimes F_j\otimes K$ is to be thought of as the symmetric
product $S^2F_{\frac{1}{2}}\otimes K$). It is important to bear in
mind that the indices $i$ of the summands $F_i$ are in general
real numbers, not necessarily integers (in fact, we will deduce
below from the condition that $V$ is simple that $F_i$ is zero
unless $i$ belongs to $\frac{1}{2}+\ZZ$).

The following definitions will be useful in the subsequent
arguments. Let $\Gamma$ be the group of maps from $\RR$ to itself
generated by the functions $f,g:\RR\to\RR$ given by $f(x)=1-x$ and
$g(x)=-1-x$. Let $\OOO\subset\RR$ be an orbit of the action of
$\Gamma$. A parametrization of $\OOO$ is a surjective map
$r:\ZZ\to\OOO$ which satisfies $r(2k+1)=f(r(2k))$ and
$r(2k+2)=g(r(2k+1))$ for each integer $k$. Since the maps $f,g$
are involutions, any orbit of $\Gamma$ admits a parametrization.
We now have:

\begin{lemma}
\label{lemma:gamma-orbits} Let $\OOO\subset\RR$ be any orbit of
the action of $\Gamma$. Then $\OOO$ belongs to one of the
following sets of orbits:
\begin{enumerate}
\item $\ZZ$, \item $\frac{1}{2}+2\ZZ$, \item $-\frac{1}{2}+2\ZZ$,
\item $(\alpha+2\ZZ)\cup ((1-\alpha)+2\ZZ)$, where
$0<\alpha<\frac{1}{2}$ is a real number, \item $(-\alpha+2\ZZ)\cup
((\alpha-1)+2\ZZ)$, where $0<\alpha<\frac{1}{2}$ is a real number.
\end{enumerate}
Furthermore, any parametrization $r:\ZZ\to\OOO$ is bijective
unless $\OOO$ is either $\frac{1}{2}+2\ZZ$ or $-\frac{1}{2}+2\ZZ$.
\end{lemma}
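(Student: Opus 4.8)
The plan is to first make the group $\Gamma$ explicit and then simply read off the orbit structure. Since $f$ and $g$ are involutions, the only interesting elements of $\Gamma$ are the alternating products, so I would begin by computing $(f\circ g)(x)=x+2$ and $(g\circ f)(x)=x-2$. Thus $\Gamma$ contains all translations by even integers, and in fact $\Gamma$ is the infinite dihedral group generated by the two reflections $f$ (about $\tfrac12$) and $g$ (about $-\tfrac12$), with the translation subgroup of index two. Every element of $\Gamma$ is therefore a translation $t_{2k}$ or a product $t_{2k}\circ f$, so the orbit of any $x\in\RR$ is
$$
\OOO=(x+2\ZZ)\cup\bigl((1-x)+2\ZZ\bigr),
$$
the set $g(x)=-1-x$ producing nothing new since $-1-x\equiv 1-x\pmod 2$.

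Next I would classify the orbits according to whether the two cosets $x+2\ZZ$ and $(1-x)+2\ZZ$ of $2\ZZ$ coincide. They coincide exactly when $1-2x\in2\ZZ$, i.e.\ when $x\in\tfrac12+\ZZ$; in that case $\OOO$ is the single coset $x+2\ZZ$, which is $\tfrac12+2\ZZ$ or $-\tfrac12+2\ZZ$ according to the parity of $x-\tfrac12$, yielding cases (2) and (3). If $x\in\ZZ$ then the two cosets are $2\ZZ$ and $1+2\ZZ$, whose union is $\ZZ$, giving case (1). In the remaining case $x\notin\tfrac12\ZZ$, I would reduce $x$ modulo $2$ to a representative $\beta\in(0,2)$ and split according to $\beta\in(0,1)$ or $\beta\in(1,2)$, choosing $\alpha\in(0,\tfrac12)$ to be the appropriate coset representative (or its reflection); this puts $\OOO$ into the normal form (4) when $\beta\in(0,1)$ and (5) when $\beta\in(1,2)$. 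The one genuinely fiddly point is this last modular bookkeeping: one must verify that every such $x$ lands in exactly one of the two forms with the normalization $0<\alpha<\tfrac12$, and that (4) and (5) do not overlap.

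For the final assertion I would solve the parametrization recursion explicitly. Starting from $x_0=r(0)$, the relations $r(2k+1)=f(r(2k))$ and $r(2k+2)=g(r(2k+1))$, together with their inverses (using that $f,g$ are involutions to extend to negative indices), give by induction
$$
r(2m)=x_0-2m,\qquad r(2m+1)=(2m+1)-x_0\qquad(m\in\ZZ).
$$
Hence $r$ is injective separately on the even indices and on the odd indices, so it fails to be injective precisely when some even value equals some odd value, i.e.\ $x_0-2m=(1-x_0)+2m'$ for integers $m,m'$, which holds iff $x_0\in\tfrac12+\ZZ$. Since $r$ is surjective by definition, it is bijective iff injective; and the condition $x_0\in\tfrac12+\ZZ$ holds for every point of $\tfrac12+2\ZZ$ and of $-\tfrac12+2\ZZ$ and for no point of the other orbits. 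This shows $r$ is bijective exactly outside orbits (2) and (3), completing the argument.
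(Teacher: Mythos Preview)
Your argument is correct and follows essentially the same route as the paper's: both identify the index-two translation subgroup $x\mapsto x+2$ inside $\Gamma$, use it to write every orbit as a union of one or two cosets of $2\ZZ$, and then sort by coincidence of cosets. The paper phrases this as passing to the quotient $\RR/2\ZZ$ and classifying orbits there ``by hand,'' leaving both the case analysis and the bijectivity statement as direct checks; you have simply carried those checks out explicitly, including the closed-form $r(2m)=x_0-2m$, $r(2m+1)=(2m+1)-x_0$ for the parametrization. The only point you flag but do not fully write out---that for $x\notin\tfrac12\ZZ$ the orbit lands in exactly one of the normal forms (4) or (5) with $0<\alpha<\tfrac12$---is indeed routine modular bookkeeping (the two forms have coset representatives in $(0,1)$ and $(1,2)$ modulo $2$, respectively, so they are disjoint).
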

\begin{proof}
If two real numbers $x,y\in\RR$ satisfy $x-y\in 2\ZZ$ then
$f(x)-f(y)\in 2\ZZ$ and $g(x)-g(y)\in 2\ZZ$, so the action of
$\Gamma$ on $\RR$ descends to any action on $\RR/2\ZZ$. Since
$f(g(x))=2+x$, for any $\Gamma$-orbit $\OOO\subset\RR$ and any
$x\in\OOO$ we have $x+2\ZZ\subset\Gamma$. It follows that the
quotient map $\RR\to\RR/2\pi\ZZ$ gives a bijection between
$\Gamma$-orbits. Consequently, to classify the orbits of $\Gamma$
acting on $\RR$ is equivalent to classify the orbits on
$\RR/2\ZZ$. Such classification can be easily made by hand, so the
first statement of the lemma follows. The second statement can
also be checked directly in a straightforward way.
\end{proof}

\begin{lemma}
\label{lemma:orbites-possibles} Assume that $(V,\beta,\gamma)$ is
simple. Then there exists a unique $\Gamma$-orbit
$\OOO\subset\RR$, which is either $\frac{1}{2}+2\ZZ$ or
$-\frac{1}{2}+2\ZZ$, such that
$$V=\bigoplus_{i\in\OOO}F_i.$$
In other words, $F_i=0$ unless $i\in\OOO$.
\end{lemma}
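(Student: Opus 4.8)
The plan is to turn the hypothesis of simplicity into a source of automorphisms: whenever the support $\{i : F_i\neq 0\}$ of the Hodge decomposition fails to have the asserted shape, I would exhibit an element of $\Aut(V,\varphi)$ outside $\{\pm\Id\}$, contradicting Proposition~\ref{prop:simple-spnr}. The key preliminary observation is that, by \eqref{eq:peces-beta-gamma}, a nonzero component of $\beta$ joins $F_i$ to $F_j$ only when $i+j=1$ (that is, $j=f(i)$) and a nonzero component of $\gamma$ only when $i+j=-1$ (that is, $j=g(i)$). Hence $\beta$ and $\gamma$ join two summands only if their indices lie in the same $\Gamma$-orbit, so grouping indices by orbit yields a decomposition $V=\bigoplus_{\OOO}V_{\OOO}$ with $V_{\OOO}=\bigoplus_{i\in\OOO}F_i$, with respect to which $\varphi=(\beta,\gamma)$ splits as a direct sum $(V,\varphi)=\bigoplus_{\OOO}(V_{\OOO},\varphi_{\OOO})$. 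Throughout I will use that a diagonal endomorphism $\psi=\bigoplus_i\mu_i\,\Id_{F_i}$ lies in $\Aut(V,\varphi)$ exactly when $\mu_i\mu_j=1$ whenever $F_i,F_j$ are joined by a nonzero entry of $\beta$ or $\gamma$; this is immediate from the action $\beta\mapsto\psi\beta\psi^t$, $\gamma\mapsto(\psi^t)^{-1}\gamma\psi^{-1}$ (cf.\ the proof of Lemma~\ref{lem:iso-unstable}).

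First I would prove that the support meets only one $\Gamma$-orbit. If it met two distinct orbits, I would pick one present orbit $\OOO_1$ and define $\psi$ to be $+\Id$ on $V_{\OOO_1}$ and $-\Id$ on $\bigoplus_{\OOO\neq\OOO_1}V_{\OOO}$. Since $\beta$ and $\gamma$ preserve the orbit splitting, the scaling factor on each block is $(\pm1)^2=1$, so $\psi\in\Aut(V,\varphi)$; as both blocks are nonzero, $\psi\neq\pm\Id$, contradicting simplicity. Thus there is a single orbit $\OOO$ with $V=\bigoplus_{i\in\OOO}F_i$.

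Next I would rule out every orbit except $\pm\tfrac12+2\ZZ$. Suppose $\OOO$ is of type (1), (4) or (5) in Lemma~\ref{lemma:gamma-orbits}; then the parametrization $r\colon\ZZ\to\OOO$ is a bijection, so I may write $G_n=F_{r(n)}$, with $\beta$ joining $G_{2k}$ to $G_{2k+1}$ and $\gamma$ joining $G_{2k+1}$ to $G_{2k+2}$ (because $r(2k)+r(2k+1)=1$ and $r(2k+1)+r(2k+2)=-1$). For $c\in\CC^*$ set $\psi_c=\bigoplus_n c^{(-1)^n}\,\Id_{G_n}$; on each edge $\{n,n+1\}$ the relevant factor is $c^{(-1)^n+(-1)^{n+1}}=c^0=1$, so $\psi_c\in\Aut(V,\varphi)$. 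If the support contains indices of both parities, then $\psi_c$ has the distinct eigenvalues $c$ and $c^{-1}$, hence $\psi_c\neq\pm\Id$ for $c\neq\pm1$; if only one parity occurs, then no two consecutive $G_n$ are nonzero, forcing $\beta=\gamma=0$ and hence $\Aut(V,\varphi)\supseteq\CC^*\,\Id$. In either case $(V,\varphi)$ is not simple, a contradiction. Therefore $\OOO$ must be $\tfrac12+2\ZZ$ or $-\tfrac12+2\ZZ$, and uniqueness is automatic as these two orbits are disjoint.

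The one delicate point, and precisely the reason the argument stops at the two half-integer orbits, is the well-definedness of $\psi_c$: it requires distinct integers $n$ to index distinct summands, i.e.\ $r$ injective. This is exactly what fails for $\OOO=\pm\tfrac12+2\ZZ$, where $r$ is two-to-one (Lemma~\ref{lemma:gamma-orbits}); there $c^{(-1)^n}$ would try to assign two different scalars to the single summand $F_{1/2}$ (resp.\ $F_{-1/2}$), so the construction produces no extra automorphism and simplicity survives. Consequently the main effort is just the bookkeeping of which pairs $(i,j)$ carry nonzero entries of $\beta$ and $\gamma$ and the verification that the two explicit diagonal maps $\Id\oplus(-\Id)$ and $\psi_c$ genuinely preserve $\varphi$; once this is in place the contradictions follow directly.
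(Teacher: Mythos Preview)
Your proof is correct and follows essentially the same approach as the paper's: first use the automorphism $\Id\oplus(-\Id)$ to force a single $\Gamma$-orbit, then for a non-half-integer orbit use the bijective parametrization $r$ to split $V$ by parity and construct the one-parameter family $\psi_c=\theta\,\Id_{V'}\oplus\theta^{-1}\Id_{V''}$ of automorphisms. Your treatment is in fact slightly more careful than the paper's in explicitly handling the degenerate case where only one parity is present (forcing $\varphi=0$), and your closing remark explaining why the construction breaks down precisely for $\pm\tfrac12+2\ZZ$ is a nice addition.
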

\begin{proof}
For any two reals $i,j\in\RR$ let $\beta_{ij}$ be the piece of
$\beta$ contained in $H^0(F_i\otimes F_j\otimes K)$, and define
similarly $\gamma_{ij}\in H^0(F^*_i\otimes F^*_j\otimes K)$. It
follows from (\ref{eq:peces-beta-gamma}) that both $\beta_{ij}$
and $\gamma_{ij}$ vanish unless $i,j$ belong to the same
$\Gamma$-orbit. We now prove that there is a unique $\Gamma$-orbit
$\OOO$ such that $F_i\neq 0\Rightarrow i\in\OOO$. Suppose that
this is not the case. Then there exists a $\Gamma$-orbit $\OOO$
such that both bundles
$$V'=\bigoplus_{i\in\OOO}F_i\qquad\qquad\text{and}\qquad\qquad
V''=\bigoplus_{i\notin\OOO}F_i$$ are nonzero. Clearly, $V=V'\oplus
V''$. Furthermore, by the previous observation, defining
$$\beta'=\bigoplus_{i,j\in\OOO}\beta_{ij},\qquad
\beta''=\bigoplus_{i,j\notin\OOO}\beta_{ij},\qquad
\gamma'=\bigoplus_{i,j\in\OOO}\gamma_{ij},\qquad
\gamma''=\bigoplus_{i,j\notin\OOO}\gamma_{ij},$$ we have
$\beta=\beta'+\beta''$ and $\gamma=\gamma'+\gamma''$. It follows
that the automorphism of $V$ defined as
$\sigma=\Id_{V'}-\Id_{V''}$ fixes both $\beta$ and $\gamma$, so
$(V,\beta,\gamma)$ is not simple, contradicting our hypothesis.
Now let $\OOO$ be the $\Gamma$-orbit satisfying
$V=\bigoplus_{i\in\OOO}F_i$, and let $r:\ZZ\to\OOO$ be a
parametrization. Assume that $\OOO$ is not of the form
$\frac{1}{2}+2\ZZ$ nor of the form $-\frac{1}{2}+2\ZZ$. Then, by
Lemma \ref{lemma:gamma-orbits}, the map $r$ is a bijection. Define
then
$$V'=\bigoplus_{k\in\ZZ}F_{r(2k)},\qquad\qquad\text{and}\qquad\qquad
V''=\bigoplus_{k\in\ZZ}F_{r(2k+1)}.$$ Then we have
$$\beta\in H^0(V'\otimes V''\otimes K),\qquad\qquad
\gamma\in H^0((V')^*\otimes (V'')^*\otimes K).$$ Hence, any
automorphism of $V$ of the form $\sigma=\theta \Id_{V'}+
\theta^{-1}\Id_{V''}$, with $\theta\in\CC^*$, fixes both $\beta$
and $\gamma$, contradicting the assumption that $(V,\beta,\gamma)$
is simple. It follows that $\OOO$ is equal either to
$\frac{1}{2}+2\ZZ$ or to $-\frac{1}{2}+2\ZZ$, so the lemma is
proved.
\end{proof}

\subsection{Simple minima with $\beta\neq 0$ and $\gamma\neq 0$}
\label{sec:apply-criterion}

Assume, as in the previous subsection, that $(V,\beta,\gamma)$ is
simple and a Hodge bundle. Assume additionally that $\beta\neq 0$ and
$\gamma\neq 0$.

Denote as before by $\OOO\subset\RR$ the $\Gamma$-orbit satisfying
$V=\bigoplus_{i\in\OOO}F_i$. We claim that there are at least two
nonzero summands in the previous decomposition. Indeed, if there
is a unique nonzero summand $F_i$, then $\beta\neq 0$ implies
$2i=1$, whereas $\gamma\neq 0$ implies $2i=-1$. Since these
assumptions are mutually contradictory, the claim follows.

Now define $M_+=\sup\{i\mid F_i\neq 0\}$ and $M_-=\inf\{i\mid
F_i\neq 0\}$. We claim that $|M_+|\neq |M_-|$. Indeed, by Lemma
\ref{lemma:orbites-possibles} we have either
$\OOO=\frac{1}{2}+2\ZZ$ or $\OOO=-\frac{1}{2}+2\ZZ$. Suppose we
are in the first case. Then we can write $M_+=\frac{1}{2}+2k$,
$M_-=\frac{1}{2}+2l$ for some integers $k,l$. The equality
$|M_+|=|M_-|$ implies that $M_+=M_-$, so we conclude that there is
a unique nonzero $F_i$, contradicting our previous observation.
The case $\OOO=-\frac{1}{2}+2\ZZ$ is completely analogous.

In view of the preceding observation, we may distinguish two cases:
either $|M_+|>|M_-|$ or $|M_+|<|M_-|$.  Henceforth we shall assume,
for definiteness, that we are in the situation $|M_+|>|M_-|$.

\begin{remark}
  \label{rem:dual-minima}
  Recall from Proposition~\ref{prop:duality-iso} that, for each $d$,
  there is an isomorphism $\mathcal{M}_d \xrightarrow{\simeq}
  \mathcal{M}_{-d}$, given by the duality $(V,\beta,\gamma) \mapsto
  (V^*, \gamma^t,\beta^t)$.  Under this duality the two cases
  $|M_+|>|M_-|$ and $|M_+|<|M_-|$ get interchanged (in fact, as we
  shall see, the former situation corresponds to $d<0$, whereas the
  latter corresponds to $d>0$).
\end{remark}

Let $M=M_+$. We have $M=p+\frac{1}{2}$ for some integer $p$.
Define $m=-p+\frac{1}{2}$. We can write
  \begin{equation}\label{eq:V-decomp}
    V=\bigoplus_{\lambda=0}^p F_{M-2\lambda}.
  \end{equation}
A priori, in this decomposition there might be some summands which
are zero. Nevertheless, we will sea below that this is not the
case.

\begin{theorem}\label{thm:minima}
  Let $(V,\beta,\gamma)$ be simple and a Hodge bundle with $\beta\neq
  0$ and $\gamma \neq 0$. Assume additionally that $|M_+|>|M_-|$ so
  that $(V,\beta,\gamma)$ is of the form (\ref{eq:V-decomp}). Then the
  map
  \begin{displaymath}
    \ad(\varphi)\colon \End(V)_k \to
    (S^2V\otimes K\oplus S^2V^*\otimes K)_{k+1}
  \end{displaymath}
  is an isomorphism for all $k>0$ if and only if the following holds:
  \begin{itemize}
  \item[(i)] For any $0\leq\lambda\leq p$
    the rank of $F_{M-2\lambda}$ is $1$ (in particular, it is nonzero);
  \item[(ii)] for any $0\leq\lambda\leq p-1$ the piece of $\beta$ in
    $$F_{M-2\lambda}\otimes F_{m+2\lambda}\otimes K\subset S^2V\otimes
    K$$
    never vanishes;
  \item[(iii)] for any $1\leq\lambda\leq p-1$ the piece of $\gamma$ in
    $$F^*_{M-2\lambda}\otimes F^*_{m+2\lambda-2}\otimes K\subset
    S^2V^*\otimes K$$
    never vanishes.
  \end{itemize}
  Analogous statements hold in the case $|M_+|<|M_-|$ (cf.\
  Remark~\ref{rem:dual-minima}).
\end{theorem}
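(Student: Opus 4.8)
The plan is to invoke Corollary~\ref{cor:ad-iso-minima}, which reduces the statement to deciding when, for every $k>0$, the restriction of $\ad(\varphi)$ to the weight-$k$ piece is a fibrewise isomorphism of bundles. Writing $E_\lambda=F_{M-2\lambda}$ for $0\le\lambda\le p$, so that $V=\bigoplus_\lambda E_\lambda$, the first step is to make this weight-graded map explicit in terms of the blocks of $\beta$ and $\gamma$. Using the formula (\ref{eq:spnr-def-complex}) together with the weight constraints $i+j=1$ on $\beta$ and $i+j=-1$ on $\gamma$, one finds that $\beta$ has only blocks $\beta_\lambda$ pairing $E_\lambda$ with $E_{p-\lambda}$, and $\gamma$ only blocks $\gamma_\lambda$ pairing $E_\lambda$ with $E_{p+1-\lambda}$; in particular no $\gamma$-block meets $E_0=F_M$, which is exactly why condition (iii) starts at $\lambda=1$. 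Setting $s=k/2$, the source $\End(V)_k=\bigoplus_{a=0}^{p-s}\Hom(E_{a+s},E_a)$ maps summand by summand into a single symmetric-product block of $S^2V\otimes K$ via $\beta_{a+s}$ and (for $a\ge 1$) into a single block of $S^2V^*\otimes K$ via $\gamma_a$. This bookkeeping underlies everything that follows.

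For sufficiency ((i)--(iii) $\Rightarrow$ isomorphism) I would argue fibrewise. Granting (i), that all $E_\lambda$ are line bundles, a direct count shows source and target of the weight-$k$ map both have rank $p-s+1$, so it suffices to prove injectivity. If $\sum_a c_a\psi_a$ lies in the fibre kernel, vanishing of each target component gives a linear relation among the $c_a$ whose coefficients are values of the blocks $\beta_{a+s}$ or $\gamma_a$, hence nonzero by (ii)--(iii). The incidence of these relations is governed by the two reflections $c\mapsto p-s-c$ (from the $\beta$-targets) and $a\mapsto p+1-s-a$ (from the $\gamma$-targets), whose composite is translation by $1$; thus all indices $0,\dots,p-s$ lie on a single alternating $\beta/\gamma$ chain. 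The chain terminates at the unique ``diagonal'' target $S^2E_\bullet\otimes K$ or $S^2E_\bullet^*\otimes K$ (present according to the parity of $p-s$), which is fed by a single source and therefore pins one $c_a$ to zero; propagating along the chain forces all $c_a=0$, so the map is an isomorphism.

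For necessity the delicate point is (i), the rank-one statement; once it holds, (ii)--(iii) follow at once, since at the relevant extreme weights the map restricted to a single line-bundle summand \emph{is} the corresponding block, and a fibrewise isomorphism of line bundles is precisely a nowhere-vanishing section. To obtain (i) I would begin at the top weight $k=2p$, where $\ad(\varphi)$ is essentially $\psi\mapsto\beta_p\psi^t+\psi\beta_p\colon\Hom(E_p,E_0)\to S^2E_0\otimes K$: surjectivity forces the block $\beta_p$ to be surjective, hence $\rk E_p\ge\rk E_0$, and together with the rank identity $\rk E_0\cdot\rk E_p=\binom{\rk E_0+1}{2}$ this collapses to $\rk E_0=\rk E_p=1$ with $\beta_p$ nowhere vanishing. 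Then I would induct inward: at weight $k=2(p-j)$ the newly appearing symmetric-square summand $S^2E_\mu\otimes K$ (or $S^2E_\mu^*\otimes K$) is fed by the single block $\Hom(E_{\text{partner}},E_\mu)$ whose partner is already known to be a line bundle, so surjectivity of the weight-$k$ map onto it forces $\rk E_\mu\le 1$, while the exact rank identity simultaneously pins the remaining new rank to $1$ (the lowest cases, where no new diagonal block occurs, being handled by the rank identity alone).

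The main obstacle is precisely this inductive rank analysis. Because the incidence is two-to-one -- each target block is in general fed by two source summands -- a naive rank count does not by itself yield $\rk E_\lambda=1$; the argument genuinely needs the interplay of surjectivity onto the symmetric-square diagonal blocks with the exact rank identities, performed in the right order from the extremes toward the middle, together with the parity-dependent bookkeeping of which $\beta$- and $\gamma$-blocks occur at each weight. Once all $E_\lambda$ are line bundles, the remaining assertions and the analogous case $|M_+|<|M_-|$ (obtained from $(V,\beta,\gamma)\mapsto(V^*,\gamma^t,\beta^t)$ as in Remark~\ref{rem:dual-minima}) follow routinely.
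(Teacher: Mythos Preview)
Your approach is essentially the paper's: analyse $\ad(\varphi)_k$ weight by weight from the top $k=2p$, using surjectivity onto the diagonal symmetric-square block together with the exact rank identity to force each summand $F_{M-2\lambda}$ to be a line bundle and each relevant $\beta,\gamma$ block to be nowhere vanishing.

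Two small remarks. Your opening appeal to Corollary~\ref{cor:ad-iso-minima} is misplaced --- the theorem is already stated as a criterion on the bundle map $\ad(\varphi)_k$, so nothing needs reducing. More substantively, the paper organises the necessity induction slightly differently: before inducting it establishes the symmetry $r_{M-2\lambda}=r_{m+2\lambda}$ in one stroke, by observing that the summand $F_M\otimes F_{m+2\lambda}^*\subset\End(V)_{2p-2\lambda}$ can only land in $F_M\otimes F_{M-2\lambda}\otimes K$ (since no $\gamma$-block meets $F_M$), so injectivity forces $r_{m+2\lambda}\le r_{M-2\lambda}$, and swapping $\lambda\leftrightarrow p-\lambda$ gives equality. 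This reduces each inductive rank identity to a single unknown. Your alternative bookkeeping also works, but your description of the odd-parity step is off: there the diagonal target is $S^2E_\mu^*\otimes K$ with $E_\mu$ \emph{already} a known line bundle, so surjectivity onto it yields nonvanishing of a $\gamma$-block rather than a new rank bound, and it is the rank identity alone that pins the new rank --- exactly the parity-dependent subtlety you flag at the end.

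You also treat the sufficiency direction explicitly via your chain/propagation argument; the paper leaves this direction implicit.
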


\begin{proof}
  We already proved that the assumption $\beta\neq 0$ and $\gamma \neq 0$ implies that $p\geq 1$
  (for otherwise in the decomposition \eqref{eq:V-decomp} we would only have one summand).
  If we take the
  piece in degree $k=2p$ of the map $\ad(\varphi)$, we get
$$A:=\ad(\varphi)_{2p}:F_M\otimes F_m^*\to S^2F_M\otimes K,$$
which by assumption is an isomorphism. Computing the ranks $r_i =
\rk(F_i)$, we
deduce
$$r_Mr_m=\frac{r_M(r_M+1)}{2}.$$
To prove that $r_M=r_m=1$, we assume the contrary and show that
this leads to a contradiction. If $r_M>1$ then by the formula
above we must have $r_m<r_M$.
Let $b$ be the piece of $\beta$ in $F_M\otimes F_m\otimes K\subset
(S^2V\otimes K)_{2p}$. Then the map $A$ sends any
$e\in F_M\otimes F_m^*$ to
$$A(e)=eb+be^*.$$
The first summand denotes the composition of maps
$$F_M^*\stackrel{b}{\longrightarrow}
F_m\stackrel{e}{\longrightarrow}
F_M$$
and the second summand
$$F_M^*\stackrel{e^*}{\longrightarrow}
F_m^*\stackrel{b}{\longrightarrow}
F_M.$$
Take a basis $u_1,\dots,u_{r_M}$ of $F_M$ whose first
$r_m$ elements are a basis of $b(F_m^*)$, and take on
$F_M^*$ the dual basis. If we write the matrices of
$eb$ and $be^*$ with respect to these basis, one readily
checks that the $(r_M-r_m)\times(r_M-r_m)$ block in the
bottom left of both matrices vanishes. Consequently,
an element in $S^2F_M$ represented by a symmetric
matrix whose entry at the bottom left corner is nonzero
cannot belong to the image of $A$. Hence $A$ is not an isomorphism,
in contradiction to our assumption, so we deduce that
$$r_M=r_m=1.$$
One also deduces that the section $b\in H^0(F_M\otimes F_m\otimes K)$
never vanishes.
This proves statements (i) and (ii) when $\lambda=0$
or $p$.

\emph{Observation.} The following observation will be useful:
if $e\in F_i\otimes F_j^*\subset \End(V)$,
then any nonzero piece of $\ad(\varphi)(e)$ in the decomposition
(\ref{eq:decV}) belongs to a summand of the form
$F_i\otimes F_u\otimes K$, and any nonzero piece in
(\ref{eq:decVd}) belongs to a summand of the form
$F_j^*\otimes F_v^*\otimes K$ (in both cases the symmetrization
should be understood if the two indices coincide).
This follows from the fact that $\ad(\varphi)(e)$ is the sum
of compositions of $e$ with another map
(either on the right and on the left).
Hence each summand in $\ad(\varphi)(e)$ must share with
$e$ at least the domain or the target.

Now let us take any $k=2p-2\lambda\geq 1$, such that $\lambda\geq 1$,
so that $1\leq\lambda\leq p-1$.  Then we have
\begin{equation}
\End(V)_{2p-2\lambda}=
F_{M}\otimes F_{m+2\lambda}^*\oplus
F_{M-2}\otimes F_{m+2\lambda-2}^*\oplus
\dots\oplus
F_{M-2\lambda}\otimes F_{m}^*.
\label{eq:endo}
\end{equation}
We claim that there is no nonzero block in
$(S^2V^*\otimes K)_{2p-2\lambda+1}$ of the form
$F_{m+2\lambda}^*\otimes F_v^*\otimes K$.
Indeed, for that one should take
$v=-(2p-2\lambda+1)-(m+2\lambda)=-M-1$, but
$F_{-M-1}=0$, because $-M-1<m$. On the other hand,
$(S^2V^*\otimes K)_{2p-2\lambda+1}$ contains the block
$F_M\otimes F_{M-2\lambda}\otimes K$ and no other block
involving $F_M$. Hence we must have
$$\ad(\varphi)_k(F_{M}\otimes F_{m+2\lambda}^*)\subset
F_M\otimes F_{M-2\lambda}\otimes K.$$
Taking ranks and using the fact that $\ad(\varphi)_k$ is injective,
we deduce that
$$r_{m+2\lambda}\leq r_{M-2\lambda}.$$
Since $1\leq\lambda\leq p-1\Longleftrightarrow
1\leq p-\lambda\leq p-1$, we automatically deduce that
$$r_{m+2p-2\lambda}\leq r_{M-2p+2\lambda}.$$
But $m+2p=M$, so we conclude that
\begin{equation}
r_{m+2\lambda}=r_{M-2\lambda}.
\label{eq:simetria}
\end{equation}

Let us distinguish two possibilities.

\emph{Case (1).} Suppose that $\lambda=2l+1$ is odd.
Then we have
$$S^2F_{m+\lambda-1}^*\otimes K
\subset (S^2V^*\otimes K)_{2p-2\lambda+1},$$
and the observation above implies that
$$\ad(\varphi)_{2p-2\lambda}^{-1}(S^2F_{m+\lambda-1}^*\otimes K)
\subset F_{M-\lambda-1}\otimes F_{m+\lambda-1}^*.$$
The argument given above for $\lambda=0$ proves now that
the piece of $\gamma$ in
$$F_{M-\lambda-1}^*\otimes F_{m+\lambda-1}^*\otimes K$$
never vanishes.

\emph{Case (2).} Suppose that $\lambda=2l$ is even.
Then we have
$$S^2F_{M-\lambda}\otimes K
\subset (S^2V\otimes K)_{2p-2\lambda+1},$$
and the observation above implies that
$$\ad(\varphi)_{2p-2\lambda}^{-1}(S^2F_{M-\lambda}\otimes K)
\subset F_{M-\lambda}\otimes F_{m+\lambda}^*.$$
The argument given above for $\lambda=0$ proves now that
the piece of $\beta$ in
$$F_{M-\lambda}\otimes F_{m+\lambda}\otimes K$$
never vanishes.

These arguments prove statements (ii) and (iii).

We are now going to prove that for any
$1\leq\lambda\leq p/2$ the ranks $r_{M-2\lambda}=r_{m+2\lambda}=1$
using induction. Fix such a $\lambda$ and assume that for any
$0\leq l<\lambda$ we have $r_{M-2l}=r_{m+2l}=1$ (when $l=0$
we already know this is true).
Since $2p-2\lambda\geq 1$ we must have
\begin{equation}
\rk\End(V)_{2p-2\lambda}=
\rk (S^2V\otimes K\oplus S^2V^*\otimes K)_{2p-2\lambda+1}.
\label{eq:mateixrank}
\end{equation}
Using induction we can compute the left hand side:
\begin{align*}
\rk\End(V)_{2p-2\lambda}&=r_Mr_{m+2\lambda}+r_{M-2}r_{m+2\lambda-2}
+\dots+r_{M-2\lambda+2}r_{m+2}+r_{M-2\lambda}r_{m}\\
&=r_{m+2\lambda}+r_{M-2\lambda}+(\lambda-1).
\end{align*}
We now distinguish again two cases.

\emph{Case (1).}
Suppose that $\lambda=2l+1$ is odd. Then we compute
\begin{align*}
\rk(S^2V)_{2p-2\lambda+1}
&=r_Mr_{M-2\lambda}+r_{M-2}r_{M-2\lambda+2}+\dots+
r_{M-\lambda+1}r_{M-\lambda-1}\\
&=r_{M-2\lambda}+l
\end{align*}
and
\begin{align*}
\rk(S^2V^*)_{2p-2\lambda+1}
=&r_mr_{m+2\lambda-2}+r_{m+2}r_{m+2\lambda-4}+\dots
+r_{m+\lambda-3}r_{m+\lambda+1}\\
&+\binom{r_{m+\lambda-1}+1}{2}=l+1.
\end{align*}
Comparing the two computations it follows from (\ref{eq:mateixrank})
that
$$r_{m+2\lambda}=1,$$
and using (\ref{eq:simetria}) we deduce that
$$r_{M-2\lambda}=1.$$

\emph{Case (2).}
Now suppose that $\lambda=2l$ is even. Then we have
\begin{align*}
\rk(S^2V)_{2p-2\lambda+1}
=&r_Mr_{M-2\lambda}+r_{M-2}r_{M-2\lambda+2}+\dots+
r_{M-\lambda+2}r_{M-\lambda-2}\\
&+\binom{r_{M-\lambda}}{2}=r_{M-2\lambda}+l
\end{align*}
and
\begin{align*}
\rk(S^2V^*)_{2p-2\lambda+1}
&=r_mr_{m+2\lambda-2}+r_{m+2}r_{m+2\lambda-4}+\dots
+r_{m+\lambda-2}r_{m+\lambda}\\
&=l.
\end{align*}
Comparing again the two computations we deduce that
$$r_{m+2\lambda}=r_{M-2\lambda}=1.$$
This finishes the proof of statement (i) and thus the proof of the
Theorem in the case $|M_+| > |M_-|$.

Finally, in the case $|M_+| < |M_-|$ the analysis is completely analogous.
\end{proof}

\begin{corollary}\label{cor:minima}
  Let $(V,\beta,\gamma)$ be simple and a Hodge bundle with $\beta\neq
  0$ and $\gamma \neq 0$. Assume additionally that $|M_+|>|M_-|$ so
  that $(V,\beta,\gamma)$ is of the form (\ref{eq:V-decomp}).
  Assume that the map
  \begin{displaymath}
    \ad(\varphi)\colon \End(V)_k \to
    (S^2V\otimes K\oplus S^2V^*\otimes K)_{k+1}
  \end{displaymath}
  is an isomorphism for all $k>0$.  Then the following holds.
  \begin{enumerate}
  \item\label{cor:minima-n=2} If $n=2$ then $F_{\frac{3}{2}}\otimes F_{-\frac{1}{2}}\otimes
    K\simeq \OOO$.
  \item\label{cor:minima-odd}  If $n=2q+1 \geq 3$ is odd then $\beta\colon
  F^*_{\frac{1}{2}-2\lambda}\overset{\simeq}{\to}
  F_{\frac{1}{2}+2\lambda}K$ for any integer $-q\leq \lambda\leq q$.
  In particular, there exists a square root $L$ of $K$ such that for
  any integer $-q\leq \lambda\leq q$ we have
  $$
    F_{M-2(q-\lambda)}\simeq F_{m+2(\lambda+q)} \simeq
    F_{\frac{1}{2}+2\lambda} \simeq L^{-1}\otimes K^{-2\lambda},
  $$
  and the bundle $V$ is of the form
  \begin{displaymath}
    V = \bigoplus_{\lambda=-q}^{q}L^{-1}K^{-2\lambda}.
  \end{displaymath}
  \item\label{cor:minima-even}   If $n=2q +2\geq 4$ then $\gamma\colon
  F_{-\frac{1}{2}}\overset{\simeq}{\to} F^*_{-\frac{1}{2}}K$ and
  $\beta\colon F^*_{-\frac{1}{2}-2\lambda}\overset{\simeq}{\to}
  F_{-\frac{1}{2}+2\lambda}K$ for any integer $ -q\leq \lambda \leq
  q+1$.  In particular, there exists a square root $L$ of $K$ such
  that for any integer $ -q\leq \lambda \leq q+1$ we have
  $$
    F_{-\frac{1}{2}+2 \lambda}\simeq L\otimes K^{-2\lambda}
    \simeq F_{M-2(q+1-\lambda)}\simeq F_{m+2(\lambda+q)},
  $$
  and the bundle $V$ is of the form
  \begin{displaymath}
    V = \bigoplus_{\lambda=-q}^{q+1}LK^{-2\lambda}.
  \end{displaymath}
  \item\label{item:4}  For any $n\geq 2$, the degree of $V$ is $\deg(V)=n(1-g)$.
  \item\label{item:5} For any $n\geq 2$, an $\Sp(2n,\RR)$-Higgs bundle
    of the form described in
    (\ref{cor:minima-n=2})--(\ref{cor:minima-even}) above is stable as
    an $\SL(2n,\CC)$-Higgs bundle, and thus also as an
    $\Sp(2n,\CC)$-Higgs bundle.
  \end{enumerate}
  Analogous statements hold in the case $|M_+|<|M_-|$. In particular,
  in this case the degree of $V$ is $\deg(V)=n(g-1)$ (cf.\
  Remark~\ref{rem:dual-minima}).
\end{corollary}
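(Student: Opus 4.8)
The plan is to feed the structural output of Theorem~\ref{thm:minima} into a chain argument. First I would invoke Theorem~\ref{thm:minima}: since $\ad(\varphi)$ is an isomorphism in every positive weight, conclusions (i)--(iii) hold. Thus each summand $F_{M-2\lambda}$ $(0\le\lambda\le p)$ is a \emph{line} bundle, so the number of summands is $n=p+1$, and the distinguished pieces of $\beta$ and $\gamma$ are nowhere vanishing. Living in rank-one bundles, these pieces become isomorphisms of line bundles
$$\beta\colon F_{M-2\lambda}^*\xrightarrow{\simeq}F_{m+2\lambda}\otimes K\ (0\le\lambda\le p-1),\qquad \gamma\colon F_{M-2\lambda}\xrightarrow{\simeq}F_{m+2\lambda-2}^*\otimes K\ (1\le\lambda\le p-1);$$
transposing the $\lambda=0$ isomorphism (using that $\beta$ is symmetric) supplies the missing $\lambda=p$ case for $\beta$. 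Writing $a_\lambda:=F_{M-2\lambda}$, these read $a_\lambda\otimes a_{p-\lambda}\simeq K^{-1}$ and $a_\lambda\otimes a_{p-\lambda+1}\simeq K$. When $p=1$ (so $n=2$) only the first, with $\lambda=0$, survives, and it is exactly statement (\ref{cor:minima-n=2}).

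Next I would separate the parities. Since $n=p+1$, the orbit is $\frac12+2\ZZ$ when $p$ is even (case (\ref{cor:minima-odd}), $n$ odd) and $-\frac12+2\ZZ$ when $p$ is odd (case (\ref{cor:minima-even}), $n$ even). Dividing the two line-bundle identities for $1\le\lambda\le p-1$ yields $a_{p-\lambda+1}\simeq a_{p-\lambda}\otimes K^2$, i.e.\ $a_{j+1}\simeq a_j\otimes K^2$ for $1\le j\le p-1$; comparing the $\beta$-identities for $\lambda=0$ and $\lambda=1$ and substituting $a_p\simeq a_{p-1}K^2$ extends this to $j=0$, so that $a_j\simeq a_0\otimes K^{2j}$ for all $j$. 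The self-pairing at the centre then pins everything down: for $p=2q$ the $\beta$-identity at $\lambda=q$ gives $a_q^2\simeq K^{-1}$, and for $p=2q+1$ the $\gamma$-identity at the centre gives $a_{q+1}^2\simeq K$. In either case this exhibits a square root $L$ of $K$ (namely $L=a_q^{-1}$, resp.\ $L=a_{q+1}$), and back-substitution into the chain gives $F_{\frac12+2\lambda}\simeq L^{-1}K^{-2\lambda}$ (odd) resp.\ $F_{-\frac12+2\lambda}\simeq LK^{-2\lambda}$ (even), together with the stated $\beta$- and $\gamma$-isomorphisms and the explicit form of $V$. For (\ref{item:4}), a direct degree count works: the $K^{-2\lambda}$-contributions cancel over the (almost) symmetric index range, leaving $\deg V=\pm n\deg L=\pm\tfrac n2(2g-2)$, and tracking the sign gives $\deg V=n(1-g)$; the case $|M_+|<|M_-|$ follows from the duality isomorphism of Proposition~\ref{prop:duality-iso} (cf.\ Remark~\ref{rem:dual-minima}), which flips the degree to $n(g-1)$.

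For (\ref{item:5}) I would pass to the associated $\SL(2n,\CC)$-Higgs bundle $H(V,\varphi)=(V\oplus V^*,\Phi)$. The key observation is that $V\oplus V^*$ decomposes as a sum of $2n$ \emph{line} bundles $N_a$ whose Hodge weights run through the consecutive values $-\tfrac{2n-1}{2}+a$, and that $\Phi$ (which raises the weight by one) restricts to an isomorphism $N_a\xrightarrow{\simeq}N_{a+1}\otimes K$ between consecutive weight lines, the top map being zero; indeed each such map is one of the $\beta$- or $\gamma$-isomorphisms produced above. Thus $H(V,\varphi)$ is a cyclic Higgs bundle, $\deg N_{a+1}=\deg N_a-(2g-2)$, and $\sum_a\deg N_a=\deg(V\oplus V^*)=0$ forces $\deg N_a=(g-1)(2n-1-2a)$. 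The $\Phi$-invariant subbundles are then exactly the tails $\bigoplus_{a\ge k}N_a$ of the chain, and a short computation gives $\deg\bigoplus_{a\ge k}N_a=-(g-1)\,k(2n-k)<0$ for $0<k<2n$. Hence $H(V,\varphi)$ is stable as an $\SL(2n,\CC)$-Higgs bundle, and therefore (by Theorem~\ref{thm:stability-equivalence}) also as an $\Sp(2n,\CC)$-Higgs bundle.

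The main obstacle is the stability step: one must verify that the only $\Phi$-invariant subbundles of the cyclic bundle are the tails $\bigoplus_{a\ge k}N_a$. This is precisely where the isomorphism property of $\Phi$ on consecutive line summands (established via Theorem~\ref{thm:minima}) is essential — applying $\Phi$ to a section with nonzero component in $N_k$ forces nonzero components in all higher $N_a$, after which a rank count identifies the subbundle with a tail. The remaining index bookkeeping in setting up the chain and matching the half-integer weight labels is routine but should be carried out carefully, as the labelling in (\ref{cor:minima-odd})--(\ref{cor:minima-even}) is sensitive to the chosen parametrization.
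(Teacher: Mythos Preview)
Your proposal is correct and follows essentially the same route as the paper. You extract from Theorem~\ref{thm:minima} the line-bundle relations $a_\lambda\otimes a_{p-\lambda}\simeq K^{-1}$ and $a_\lambda\otimes a_{p-\lambda+1}\simeq K$ and solve them recursively, with the self-pairing at the centre producing the square root of $K$; the paper does exactly this but presents it diagrammatically (the ``connected graph with a single loop'' picture), and for part~(\ref{item:5}) both arguments recognise $H(V,\varphi)$ as a cyclic chain of line bundles whose only $\Phi$-invariant subbundles are the tails of negative degree --- your version is simply more explicit about the degree computation. One minor remark: the implication ``stable as $\SL(2n,\CC)$-Higgs bundle $\Rightarrow$ stable as $\Sp(2n,\CC)$-Higgs bundle'' follows immediately by comparing Propositions~\ref{thm:sl(n,C)-stability} and~\ref{thm:sp(2n,C)-stability} (fewer test subbundles in the latter), rather than from Theorem~\ref{thm:stability-equivalence}.
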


\begin{remark}\label{rem:n-one-hodge}
  In the case $n=1$ it is not possible for $(V,\varphi)$ to be a Hodge bundle with $\beta\neq 0$ and $\gamma\neq 0$.
\end{remark}

\begin{proof}[Proof of Corollary~\ref{cor:minima}]
  First we observe that, since the $F_i$ are all line bundles, we have
  $n=p+1$, $M=p +\frac{1}{2}$ and $m=-p +\frac{1}{2}$.

  (\ref{cor:minima-n=2}) In this case we have $n=2$, $p=1$, $M=3/2$, $m=-1/2$. Then,
  taking $\lambda=0$ in (ii) of Theorem \ref{thm:minima} we get
  $F_{\frac{3}{2}}\otimes F_{-\frac{1}{2}}\otimes K\simeq \OOO$.

  (\ref{cor:minima-odd}) In this case we have $n=p+1=2q+1$ so that $M=2q+1/2$ and
  $m=-2q+1/2$.  Hence, using (ii) and (iii) of Theorem
  \ref{thm:minima}, we can describe the structure of the maps $\beta$
  and $\gamma$ in the following diagram:
  \begin{displaymath}
  \xymatrix{
    \underset{\scriptstyle M}{\bullet} &
    \underset{\scriptstyle M-2}{\bullet} \ar@/_3pc/[rrrrrr]^{\gamma}&
    \cdots &
    \underset{\scriptstyle 1/2}{\bullet}\ar@(ur,ul)[]_{\beta}
      \ar@/_1pc/[r]^{\gamma}&
    \underset{\scriptstyle -3/2}{\bullet} &
    \cdots &
    \underset{\scriptstyle m+2}{\bullet} \ar@<-1ex>@/_2.5pc/[lllll]_{\beta} &
    \underset{\scriptstyle m}{\bullet} \ar@<-1ex>@/_4pc/[lllllll]_{\beta} &
  },
  \end{displaymath}
  where an arrow
  $\xymatrix{\underset{\smash{\scriptstyle i}}{\bullet}\ar@<1ex>[r]^{\beta}
    & \underset{\smash{\scriptstyle j}}{\bullet}} $
  means that there is an isomorphism $\beta\colon F^*_{i}\to
  F_j\otimes K$ (and thus $j=-i+1$); similarly, an arrow
  $\xymatrix{\underset{\scriptstyle i}{\bullet}\ar@<1ex>[r]^{\gamma}
    & \underset{\scriptstyle j}{\bullet}} $
  means that there is an isomorphism $\gamma\colon F_{i}\to
  F^*_j\otimes K$.  In particular, we see that the isomorphism
  $\beta\colon F^*_{\frac{1}{2}} \overset{\simeq}{\to}
  F_{\frac{1}{2}}\otimes K$ means that $F_{\frac{1}{2}} \simeq L^{-1}$
  for a square root $L$ of $K$. This proves the case $\lambda=0$ of
  (2). Now repeated application of (ii) and (iii) of Theorem
  \ref{thm:minima} proves the general case.  Note that this argument
  can be phrased as saying that the graph above is
  connected and its only closed loop is the one at $1/2$:  thus the
  remaining $F_i$ are uniquely determined by $F_{\frac{1}{2}}$.

  (\ref{cor:minima-even}) In this case we have $n=p+1=2q+2$ so that $M=2q+3/2$ and
  $m=-2q-1/2$ and, as above, we have a diagram
  \begin{displaymath}
  \xymatrix{
    \underset{\scriptstyle M}{\bullet} &
    \underset{\scriptstyle M-2}{\bullet} \ar@/_3pc/[rrrrrr]^{\gamma}&
    \cdots &
    \underset{\scriptstyle 3/2}{\bullet}&
      \underset{\scriptstyle -1/2}{\bullet}\ar@(dl,dr)[]^{\gamma}
      \ar@/_1pc/[l]_{\beta}&
    \cdots &
    \underset{\scriptstyle m+2}{\bullet} \ar@<-1ex>@/_2.5pc/[lllll]_{\beta} &
    \underset{\scriptstyle m}{\bullet} \ar@<-1ex>@/_4pc/[lllllll]_{\beta} &
  }.
  \end{displaymath}
  The argument is now analogous to the previous case.

  (\ref{item:4}) Easy from the formulas for $V$ given in
  (\ref{cor:minima-odd}) and (\ref{cor:minima-even}).

  (\ref{item:5}) Let $(V,\varphi)$ be of the kind described in
  (\ref{cor:minima-n=2})--(\ref{cor:minima-even}), and consider the
  associated $\SL(2n,\CC)$-Higgs bundle $(V \oplus V^*,\Phi) =
  H(V,\varphi)$. The $\Phi$-invariant subbundles of $V \oplus V^*$ are
  of the form $\bigoplus_{i \geq i_0} (F_i \oplus F_{-i}^*)$. From the
  given description, it is easy to check that such a subbundle, when
  proper and non-zero, has degree strictly negative.

  Finally, in the case $|M_+| < |M_-|$ the analysis is completely analogous.
\end{proof}

\subsection{Simple minima: final characterization}
Finally, we use the analysis carried out so far to determine the
minima of the Hitchin functional on the locus of the moduli
space corresponding to simple $\Sp(2n,\RR)$-Higgs bundles which are
stable as $\Sp(2n,\CC)$-Higgs bundles.

\begin{theorem}\label{thm:smooth-minima}
  Let $(V,\beta,\gamma)$ be a simple
  $\Sp(2n,\RR)$-Higgs bundle which is stable as an $\Sp(2n,\CC)$-Higgs
  bundle.
  \begin{enumerate}
  \item If $\abs{d} < n(g-1)$ then $(V,\beta,\gamma)$ represents a
  minimum of the Hitchin functional if and only if it belongs to
  $\mathcal{N}_d$.

\item If $\abs{d} = n(g-1)$ and $n \geq 3$ then $(V,\beta,\gamma)$
  represents a minimum of the Hitchin functional if and only if one of
  the following situations occurs:
      \begin{enumerate}
      \item[(i)] the $\Sp(2n,\RR)$-Higgs bundle $(V,\beta,\gamma)$ belongs to
        $\mathcal{N}_d$;
      \item[(ii)]the $\Sp(2n,\RR)$-Higgs bundle
        $(V,\beta,\gamma)$ is of the type described in
        (\ref{cor:minima-odd}) or (\ref{cor:minima-even}) of
        Corollary~\ref{cor:minima}.
      \item[(iii)]the dual $\Sp(2n,\RR)$-Higgs bundle
        $(V^*,\gamma^t,\beta^t)$ is of the type described in
        (\ref{cor:minima-odd}) or (\ref{cor:minima-even}) of
        Corollary~\ref{cor:minima} (cf.\
        Remark~\ref{rem:dual-minima}).
      \end{enumerate}
      In cases (ii) and (iii) we say that
      $(V,\beta,\gamma)$ is a \emph{quiver type} minimum.
  \end{enumerate}
\end{theorem}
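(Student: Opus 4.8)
The plan is to assemble the statement from three tools already in place: the cohomological criterion for minima among simple $\Sp(2n,\CC)$-stable Higgs bundles (Corollary~\ref{cor:ad-iso-minima}), the identification of the critical points of $f$ on the smooth locus with complex variations of Hodge structure (Proposition~\ref{prop:VHS}), and the detailed structural analysis of Hodge bundles satisfying the isomorphism criterion (Theorem~\ref{thm:minima} and Corollary~\ref{cor:minima}). Since $(V,\beta,\gamma)$ is simple and stable as an $\Sp(2n,\CC)$-Higgs bundle, Proposition~\ref{prop:smoothness} guarantees that it lies in the smooth locus $\mathcal{M}^s_d$, so the Morse-theoretic machinery applies directly.

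First I would reduce to the case of a Hodge bundle. A local minimum of the smooth function $f$ is in particular a critical point, and on $\mathcal{M}^s_d$ the moment-map property identifies critical points with the fixed points of the circle action, that is, with complex variations of Hodge structure (Proposition~\ref{prop:VHS}). Thus, for the ``only if'' direction it suffices to analyze Hodge bundles, to which Corollary~\ref{cor:ad-iso-minima} applies: such a $(V,\beta,\gamma)$ is a minimum if and only if $\ad(\varphi)_k$ is an isomorphism for all $k>0$.

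Next I would split according to the vanishing of the Higgs fields. If $\beta=0$ or $\gamma=0$, i.e.\ $(V,\beta,\gamma)\in\mathcal{N}_d$, then Proposition~\ref{prop:N(d)-minima} shows directly that it represents the absolute minimum, for any value of $d$; this supplies the $\mathcal{N}_d$ alternatives in both parts. If instead $\beta\neq 0$ and $\gamma\neq 0$, then the isomorphism criterion combined with Corollary~\ref{cor:minima} forces the explicit decomposition of $V$ into line bundles displayed in (\ref{cor:minima-odd})--(\ref{cor:minima-even}). Crucially, item~(\ref{item:4}) of that corollary computes $\deg V=\pm n(g-1)$, with the sign pinned down by whether $|M_+|>|M_-|$ or $|M_+|<|M_-|$ (cf.\ Remark~\ref{rem:dual-minima}). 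This at once yields part~(1): when $|d|<n(g-1)$ the case $\beta\neq 0\neq\gamma$ is impossible, so every minimum lies in $\mathcal{N}_d$, and the converse is Proposition~\ref{prop:N(d)-minima}. For part~(2) the two sign cases produce exactly alternatives (ii) and (iii), the latter through the duality $(V,\beta,\gamma)\mapsto(V^*,\gamma^t,\beta^t)$ of Proposition~\ref{prop:duality-iso}.

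Finally, for the ``if'' direction of part~(2) I would verify that any $(V,\beta,\gamma)$ of type (ii) or (iii) is genuinely a minimum. Such a bundle is a Hodge bundle whose explicit $\beta$ and $\gamma$ satisfy conditions (i)--(iii) of Theorem~\ref{thm:minima}, so $\ad(\varphi)_k$ is an isomorphism for all $k>0$; moreover item~(\ref{item:5}) of Corollary~\ref{cor:minima} certifies that it is stable as an $\SL(2n,\CC)$-, hence $\Sp(2n,\CC)$-, Higgs bundle, so Corollary~\ref{cor:ad-iso-minima} applies and returns a minimum. The only step that is verification rather than citation is this compatibility check that the explicit quiver-type data meet the nonvanishing hypotheses of Theorem~\ref{thm:minima}; since almost all of the substantive work is already contained in Theorem~\ref{thm:minima} and Corollary~\ref{cor:minima}, the proof is essentially bookkeeping, and the point requiring most care is matching the degree sign correctly with the two dual families of quiver-type minima.
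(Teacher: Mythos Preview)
Your proposal is correct and follows essentially the same route as the paper's proof: both combine Proposition~\ref{prop:N(d)-minima} for the $\mathcal{N}_d$ case with Corollary~\ref{cor:ad-iso-minima}, Theorem~\ref{thm:minima}, and Corollary~\ref{cor:minima} for the quiver-type case, using item~(\ref{item:4}) of Corollary~\ref{cor:minima} to separate the degree regimes. Your version is slightly more explicit in invoking Proposition~\ref{prop:VHS} to reduce to Hodge bundles and in citing item~(\ref{item:5}) for the ``if'' direction, but the argument is the same.
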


\begin{proof}
  If $(V,\beta,\gamma)$ belongs to $\mathcal{N}_d$ then we know from
  Proposition~\ref{prop:N(d)-minima} that it represents a minimum.
  And, if $(V,\beta,\gamma)$ (or the dual $(V^*,\gamma^t,\beta^t)$) is
  of the type described in (\ref{cor:minima-odd}) or
  (\ref{cor:minima-even}) of Corollary~\ref{cor:minima}, then
  Corollary~\ref{cor:ad-iso-minima} and Theorem~\ref{thm:minima} show
  that it represents a minimum.

  On the other hand, if $(V,\beta,\gamma)$ is a minimum which does not
  belong to $\mathcal{N}_d$, then Corollary~\ref{cor:ad-iso-minima},
  Theorem~\ref{thm:minima} and Corollary~\ref{cor:minima} show that it
  (or the dual $(V^*,\gamma^t,\beta^t)$) is of the type described in
  (\ref{cor:minima-odd}) or (\ref{cor:minima-even}) of
  Corollary~\ref{cor:minima}.
\end{proof}

\section{Minima on the entire moduli space}
\label{sec:poly-stable-minima}

\subsection{Main result and strategy of proof.}

In Section \ref{sec:stable-simple-minima} we characterized the minima
of the Hitchin functional on the locus of $\mathcal{M}_d$
corresponding to simple $\Sp(2n,\RR)$-Higgs bundles $(V,\varphi)$
which are stable as $\Sp(2n,\CC)$-Higgs bundles. In this section we
provide the remaining results required to extend this characterization
to the whole moduli space, thus completing the proof of
Theorem~\ref{thm:all-minima}. As explained in the proof of that
Theorem, what is required is to rule out certain type of potential
minima of the Hitchin functional. In each case this is done by using
(\ref{item:non-minima}) of Proposition~\ref{prop:minima-HH1}.  The
main result of this Section is the following.
\begin{proposition}
  \label{prop:strict-polystable-minima}
  Let $(V,\varphi=\beta+\gamma)$ be a polystable $\Sp(2n,\RR)$-Higgs bundle and
  assume that the decomposition $(V,\varphi) = (V_1,\varphi_1) \oplus
  \dots \oplus (V_k,\varphi_k)$ of
  Theorem~\ref{thm:polystable-spnr-higgs} is non-trivial. If
  $(V,\varphi)$ is a local minimum of the Hitchin functional then
  either $\beta=0$ or $\gamma=0$.
\end{proposition}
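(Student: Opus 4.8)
The plan is to argue by contradiction: assuming that $(V,\varphi)$ is a local minimum with both $\beta\neq 0$ and $\gamma\neq 0$, I would produce a one-parameter family of polystable $\Sp(2n,\RR)$-Higgs bundles deforming $(V,\varphi)$ whose infinitesimal deformation is a non-zero class in $\HH^1(C^\bullet_-(V,\varphi))$, and then invoke part~(\ref{item:non-minima}) of Proposition~\ref{prop:minima-HH1} to conclude that $(V,\varphi)$ is \emph{not} a local minimum --- the desired contradiction. Throughout I would use that a local minimum is a critical point of the Hitchin functional and hence a fixed point of the circle action, so that $(V,\varphi)$ is a complex variation of Hodge structure $V=\bigoplus_i F_i$ as in Proposition~\ref{prop:VHS}; moreover, since the decomposition of Theorem~\ref{thm:polystable-spnr-higgs} is unique up to reordering, the circle action preserves it and each summand $(V_i,\varphi_i)$ is itself a Hodge bundle.

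First I would record the structural constraints on the summands. By additivity of the Hitchin functional (Proposition~\ref{prop:hitchin-additive}), and since deformations of a single $(V_i,\varphi_i)$ within $\mathcal{M}_{d_i}(\Sp(2n_i,\RR))$ keeping the other summands fixed are deformations of $(V,\varphi)$, each $(V_i,\varphi_i)$ is a local minimum on its own moduli space. The $\U(n_i)$-type summands have $\varphi_i=0$, while the summands that can contribute to both $\beta$ and $\gamma$ are of $\U(p_i,q_i)$, $\GL(n_i,\RR)$ or stable--simple $\Sp(2n_i,\RR)$ type; for the last of these, Theorem~\ref{thm:smooth-minima} forces a rigid quiver-type structure. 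Since $\beta\neq 0$ and $\gamma\neq 0$ and the decomposition is non-trivial, I would isolate two summands and weights across which both a $\beta$-component and a $\gamma$-component are available (if a single summand carries both non-zero fields, pair it with any other summand, which exists by non-triviality).

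The core of the argument is the explicit construction of the destabilizing family. Writing the deformation complex in its weight decomposition $C^\bullet(V,\varphi)=\bigoplus_k C^\bullet_k(V,\varphi)$, I would exhibit an off-diagonal, strictly positive-weight deformation mixing the two chosen summands. The cleanest instance is a pure Higgs-field deformation $(V,\varphi+t\eta)$, where $\eta$ is a section of a positive-weight summand of $S^2V\otimes K\oplus S^2V^*\otimes K$ joining the two factors: such a straight-line family is automatically integrable, and one checks that $[\eta]$ is non-zero in $\HH^1(C^\bullet_-(V,\varphi))$, i.e.\ $\eta\notin\ad(\varphi)(H^0(\End V))$, and that polystability persists for small $t$. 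When no such purely off-diagonal Higgs deformation is available, I would instead deform the holomorphic structure of $V$ by a positive-weight extension class in $H^1(\End V)$ connecting the two summands, producing a genuine family of polystable Higgs bundles. The freedom to assign the Hodge weight of any $\U(n_i)$-type summand would be used to arrange that the mixing deformation lands in strictly positive weight.

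The main obstacle I anticipate is precisely the \emph{integrability} together with the bookkeeping: Proposition~\ref{prop:minima-HH1}(\ref{item:non-minima}) requires an actual family of polystable pairs realizing the positive-weight infinitesimal deformation, not merely the non-vanishing of $\HH^1(C^\bullet_-)$. I would therefore organize the proof as a finite case analysis over the unordered pairs of summand types and over the relative signs and weights of their Higgs contributions, constructing in each case an explicit family and verifying that it preserves polystability --- using the simplified criteria of Proposition~\ref{spnr-stability} and Theorem~\ref{thm:orthogonal-stability} --- and that its tangent vector has strictly positive weight. Establishing that such a mixing deformation always exists when both $\beta$ and $\gamma$ are non-zero is the crux; once it is in hand, Proposition~\ref{prop:minima-HH1} delivers the contradiction and hence the proposition.
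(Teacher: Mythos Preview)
Your strategy matches the paper's: reduce via additivity to minima on each factor, classify those, and then for each incompatible pair of summand types construct an explicit positive-weight deformation and invoke Proposition~\ref{prop:minima-HH1}(\ref{item:non-minima}). The paper organizes the case analysis into exactly three mixing lemmas (two $\mathcal N_d$-type summands with opposite nonvanishing field; a quiver-type summand paired with an $\mathcal N_d$-type; two quiver-type summands) together with a separate lemma showing that a $\GL(n_i,\RR)$-summand which is a minimum must have $\varphi_i=0$; in every case the deformation used is an extension of the underlying bundle rather than a pure Higgs-field deformation, and --- the one point your sketch underestimates --- polystability of the deformed family is \emph{not} an open condition, so it cannot be inferred for small $t$ but is verified by an explicit destabilizing-subbundle analysis in each case.
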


\begin{proof}
  The starting point is the structure
  Theorem~\ref{thm:polystable-spnr-higgs}. Recall that this describes
  a polystable $\Sp(2n,\RR)$-Higgs bundle as a direct sum
  \begin{equation}\label{eq:decomposition-stable-simple}
    (V,\varphi)
    = \bigoplus (V_i,\varphi_i),
  \end{equation}
  where each $\Sp(2n,\RR)$-Higgs bundle
  $(V_i,\varphi_i)$ comes from a $G_i$-Higgs bundle which is a smooth
  point in its respective moduli space. If $(V,\varphi)$ is a minimum,
  then Proposition~\ref{prop:hitchin-additive} implies that each
  $(V_i,\varphi_i)$ is a minimum on the corresponding moduli space of
  $G_i$-Higgs bundles. Consider each of the possible $G_i$'s in turn.

  \emph{The case $G_i = \Sp(2n_i,\RR)$.} This is the case
  covered by Theorem~\ref{thm:smooth-minima}. (Except for the case
  $n_i=2$, which will require special attention.)

  \emph{The case $G_i = \U(n_i)$.} In this case
  $\varphi_i=0$ for any $G_i$-Higgs bundle, as we have already seen.

  \emph{The case $G_i = \U(p_i,q_i)$.} In this case, the
  minima of the Hitchin functional were determined in
  \cite[Theorem~4.6]{bradlow-garcia-prada-gothen:2003}. There it is shown that a
  $\U(p_i,q_i)$-Higgs bundle $(\tilde V_i,\tilde W_i,
  \tilde\beta+\tilde\gamma)$ is a minimum if and only if
  $\tilde\beta=0$ or $\tilde\gamma=0$. Hence $(V_i,\varphi_i) =
  \upsilon_*^{\U(p_i,q_i)}(\tilde V_i,\tilde W_i,
  \tilde\beta+\tilde\gamma)$ (cf.\ (\ref{eq:upq-sp2n})) is a minimum
  if and only if $\beta_i=0$ or $\gamma_i=0$

  \emph{The case $G_i = \GL(n_i,\RR)$.} The moduli space of such Higgs
  bundles was studied in
  \cite{bradlow-garcia-prada-gothen:2004b}. Using the results of that
  paper we show in Lemma~\ref{lem:glnr-spnr-minima} below that a
  $\Sp(2n_i,\RR)$-Higgs bundle $(V_i,\varphi_i)$ coming from a
  $\GL(n_i,\RR)$-Higgs bundle is a minimum if and only if
  $\varphi_i=0$.

  A quiver type minimum $(V,\varphi)$ is simple and stable as a
  $\Sp(2n,\CC)$-Higgs bundle by (\ref{item:5}) of
  Corollary~\ref{cor:minima}.  Thus, to conclude the proof of the
  Proposition, it remains to show that if $(V,\varphi)$ is a minimum and
  the decomposition (\ref{eq:decomposition-stable-simple}) is
  non-trivial, then it belongs to $\mathcal{N}_d$, i.e., $\beta=0$ or
  $\gamma=0$. By the above analysis of the minima coming from
  $G_i$-Higgs bundles, it therefore suffices to show that
  $(V,\varphi)$ is not a minimum when the decomposition
  (\ref{eq:decomposition-stable-simple}) falls in one of the following
  cases:
  \begin{enumerate}
  \item There is a $(V_i,\varphi_i)$ in $\mathcal{N}_{d_i}$ with
    $\beta_i\neq 0$ and a $(V_j,\varphi_j)$ in $\mathcal{N}_{d_j}$
    with $\gamma_j \neq 0$.
  \item There is a $(V_i,\varphi_i)$ which is a quiver type minimum
    and a $(V_j,\varphi_j)$ which lies in $\mathcal{N}_{d_i}$.
  \item There are (distinct) $(V_i,\varphi_i)$ and $(V_j,\varphi_j)$
    which are quiver type minima.
  \end{enumerate}
  In order to accommodate the possibility $n_i=2$, the quiver type
  minima must here be understood to include all minima with $\beta\neq
  0$ and $\gamma \neq 0$ (cf.\ (\ref{cor:minima-n=2}) of
  Corollary~\ref{cor:minima}). The case $n_i=1$ is included since such
  minima must have $\beta=0$ or $\gamma=0$ (cf.\
  Remark~\ref{rem:n-one-hodge}).

  Note that, by Proposition~\ref{prop:hitchin-additive}, in fact it
  suffices to consider the case when $k=2$ in
  (\ref{eq:decomposition-stable-simple}). With this in mind, the
  results of Lemmas~\ref{lem:beta-gamma-minima},
  \ref{lem:beta-quiver-minima} and \ref{lem:quiver-2-minima} below
  conclude the proof.
\end{proof}

\subsection{Deforming a sum of minima in $\mathcal{N}_d$.}

\begin{lemma}
  \label{lem:beta-gamma-minima}
  Let $(V,\varphi)$ be a polystable $\Sp(2n,\RR)$-Higgs bundle which
  decomposes as a direct sum $(V,\varphi) = (V',\varphi') \oplus
  (V'',\varphi'')$ with $\varphi'=(\beta',\gamma')$ and
  $\varphi''=(\beta'',\gamma'')$. Suppose that $\beta'=0$,
  $\gamma'\neq 0$, $\beta''\neq 0$ and $\gamma''=0$. Suppose
  additionally that $(V',\varphi')$ and $(V'',\varphi'')$ are stable
  $\Sp(2n,\RR)$-Higgs bundles or stable $\U(p,q)$-Higgs bundles. Then
  $(V,\varphi)$ is not a minimum of $f$ on $\mathcal{M}_d$.
\end{lemma}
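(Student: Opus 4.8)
The plan is to apply part~(\ref{item:non-minima}) of Proposition~\ref{prop:minima-HH1}: I will construct a family of polystable $\Sp(2n,\RR)$-Higgs bundles deforming $(V,\varphi)$ whose infinitesimal deformation is a non-zero element of $\HH^1(C^\bullet_-(V,\varphi))$, i.e.\ a class of strictly positive weight for the circle action.

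First I would realize $(V,\varphi)$ as a Hodge bundle. Writing $\varphi=\beta+\gamma$ with $\beta=\beta''\in H^0(S^2V''\otimes K)$ supported on $V''$ and $\gamma=\gamma'\in H^0(S^2(V')^*\otimes K)$ supported on $V'$, the block-scalar gauge transformation $g(\theta)=e^{-\imag\theta/2}\,\Id_{V'}\oplus e^{\imag\theta/2}\,\Id_{V''}$ sends $\varphi$ to $e^{\imag\theta}\varphi$ while preserving the holomorphic structure. Thus $(V,\varphi)$ is a complex variation of Hodge structure (Proposition~\ref{prop:VHS}) with $V'$ of weight $-\tfrac12$, $V''$ of weight $+\tfrac12$ and $\varphi$ of weight one; since this circle implements the rescaling of the Higgs field exactly, it is the one governing the Hessian of $f$. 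With respect to this grading $\End(V)_1=\Hom(V',V'')$ and $(S^2V\otimes K\oplus S^2V^*\otimes K)_2=0$.

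The key computation is that $\ad(\varphi)$ annihilates $\Hom(V',V'')$: by~(\ref{eq:spnr-def-complex}), for $e\colon V'\to V''$ all four terms $\beta e^t$, $e\beta$, $\gamma e$, $e^t\gamma$ vanish, because $\beta$ has image in $V''$ and kills $(V')^*$, while $\gamma$ has image in $(V')^*$ and kills $V''$. Hence the weight-one piece of the deformation complex is $\Hom(V',V'')\to 0$, so $\HH^1(C^\bullet_1(V,\varphi))=H^1(\Hom(V',V''))$ sits entirely inside the positive-weight space $\HH^1(C^\bullet_-(V,\varphi))$. This is non-zero: by Remark~\ref{rem:N(d)-sign} the hypotheses $\gamma'\neq 0,\ \beta'=0$ force $\deg V'>0$ and $\beta''\neq 0,\ \gamma''=0$ force $\deg V''<0$, so $\deg\Hom(V',V'')=\rk V'\cdot\deg V''-\rk V''\cdot\deg V'<0$, and Riemann--Roch gives $\dim H^1(\Hom(V',V''))>0$. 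I would then integrate a non-zero class $a\in H^1(\Hom(V',V''))=\Ext^1(V',V'')$ to the family of extensions $0\to V''\to E_t\to V'\to 0$ of class $t\,a$, carrying the unchanged Higgs field $\varphi=\beta''+\gamma'$; since $V''\subset E_t$ and $(V')^*\subset E_t^*$ remain holomorphic subbundles, $\beta''$ and $\gamma'$ are automatically holomorphic on $E_t$ (equivalently $\ad(\varphi)a=0$), so $(E_t,\beta'',\gamma')$ is a genuine $\Sp(2n,\RR)$-Higgs bundle with infinitesimal deformation $a$. Testing $V_1=V_2=V''$ in Proposition~\ref{spnr-stability} yields the slack $\deg E_t-2\deg V''=\deg V'-\deg V''>0$, which together with the analogous checks on other subbundles makes $(E_t,\varphi)$ polystable for generic small $t$; Proposition~\ref{prop:minima-HH1}(\ref{item:non-minima}) then gives the conclusion.

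The main obstacle I anticipate is the weight bookkeeping together with the polystability of the deformed family. The delicate point is that the summands $(V',0,\gamma')$ and $(V'',\beta'',0)$ may themselves carry finer Hodge gradings, so one must be sure that the deformation is positive for the circle whose moment map is actually $f$; using the block-scalar circle above settles this, since its positive directions are genuinely $f$-decreasing. The other point requiring care is verifying full $\Sp$-stability of $E_t$ against all admissible filtrations, where the room is provided precisely by the degree inequality $\deg V''<0<\deg V'$.
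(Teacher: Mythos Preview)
Your outline matches the paper's approach exactly: the same weights $\pm 1/2$ on $V'$ and $V''$, the same identification of $C^\bullet_-$ with the complex $\Hom(V',V'')\to 0$, the same non-vanishing of $H^1(\Hom(V',V''))$ via Riemann--Roch, and the same family of extensions $0\to V''\to E_t\to V'\to 0$ with $\beta''$ and $\gamma'$ lifted through the inclusions $V''\hookrightarrow E_t$ and $(V')^*\hookrightarrow E_t^*$.

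The genuine gap is the polystability of $(E_t,\varphi_t)$, which you sketch with the wrong mechanism. Testing only the filtration $V_1=V_2=V''$ and invoking the degree margin $\deg V''<0<\deg V'$ is not what does the job; the argument in the paper (Lemma~\ref{lem:beta-gamma-family-polystable}) works instead by showing that any destabilizing pair $A\subset E_t$, $B\subset E_t^*$ induces, through the exact sequences, destabilizing pairs for $(V',\varphi')$ and $(V'',\varphi'')$, and then uses the stability of the summands to force a holomorphic splitting of $E_t$, contradicting $ta\neq 0$. There is moreover a subtlety in the $\U(p,q)$ case that you miss entirely: when, say, $(V',\varphi')$ comes from a stable $\U(p',q')$-Higgs bundle with $V'=V_1'\oplus V_2'$, the pair $(V_1',{V_2'}^*)$ is a legitimate degree-zero $\Sp$-subobject of $(V',\varphi')$, and to rule out the corresponding destabilizer of $E_t$ one must choose $a=a_1+a_2\in H^1(\Hom(V_1',V''))\oplus H^1(\Hom(V_2',V''))$ with \emph{both} $a_i$ non-zero; an arbitrary non-zero class will not do. The paper flags this explicitly in a footnote to the choice of $a$.
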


\begin{proof}
  We prove the Lemma by applying the criterion in
  (\ref{item:non-minima}) of Proposition~\ref{prop:minima-HH1}.  As a
  first step, we identify the complex $C^{\bullet}_-$ defined in
  (\ref{eq:def-C-minus}), and for that we need to know the weights of
  each piece $V',V''$. Recall that the weight of $\varphi',\varphi''$
  is always $1$.
\begin{enumerate}
\item Since $\gamma'\colon V' \to V'^*K$, the weight on $V'^*$ is
  $1+\lambda'=-\lambda'$, where $\lambda'$ is the weight on $V'$. Thus
  $\lambda'=-1/2$.
\item Similarly, the weight on $V''$ is $\lambda''=1/2$.
\end{enumerate}
{}From this it follows immediately that the complex $C^\bullet_-$ is
given by
\begin{displaymath}
  C^\bullet_- \colon \Hom(V',V'') \to 0,
\end{displaymath}
so that
\begin{displaymath}
  \HH^1(C^\bullet_-) = H^1(\Hom(V',V'')).
\end{displaymath}
Recall from Remark~\ref{rem:N(d)-sign} that $d'=\deg(V') \geq 0$ and
$d'' \leq 0$ so, by Riemann--Roch,
\begin{displaymath}
  H^1(\Hom(V',V'')) \neq 0.
\end{displaymath}
This proves that $C^{\bullet}_-$ has nonzero first hypercohomology. To
finish the argument we need to integrate any element of
$\HH^1(C^{\bullet}_-)$ to a deformation of $(V,\varphi)$ through
polystable $\Sp(2n,\RR)$-Higgs bundles.

Chose any\footnote{when one of $(V',\varphi')$ and $(V'',\varphi'')$
  is a $\U(p,q)$-Higgs bundle, this choice is not completely
  arbitrary, cf.\ the proof of
  Lemma~\ref{lem:beta-gamma-family-polystable} below.} nonzero element
$a\in H^1(\Hom(V',V''))$. Denote by $D$ the open unit disk. Define
$\VV'=D\times V'$ and $\VV''=D\times V''$, which we view as vector
bundles over $X\times D$. We denote by $\gamma'_D:\VV'\to\VV'^*\otimes
K$ (here $K$ denotes the pullback to $X\times D$) the extension of
$\gamma'$ which is constant on the $D$ direction, and we define
similarly $\beta''_D:\VV''^*\to\VV''\otimes K$. Take the extension
$$
0 \to \VV'' \to \VV \to \VV' \to 0
$$
classified by
$$
a \otimes 1 \in H^1(\Hom(\VV',\VV'')) =  H^1(X;\Hom(V',V'')) \otimes
H^0(D;\CC).
$$
The restriction of this to $X\times\{t\}$ is the extension
\begin{equation}\label{eq:ext-primed}
0 \to V'' \to V_t \to V' \to 0
\end{equation}
classified by $ta \in H^1(\Hom(V',V''))$.
Define $\gamma_D:\VV\to \VV^*\otimes K$ as the composition
$$\VV\longrightarrow \VV'\stackrel{\gamma'_D}{\longrightarrow}
\VV'^*\otimes K\to \VV^*\otimes K,$$ where the first arrow comes
from the exact sequence defining $\VV$ and the third one comes
from dualizing the same exact sequence and tensoring by the
pullback of $K$. Similarly, define $\beta_D:\VV^*\to\VV\otimes K$
as the composition
$$\VV^*\longrightarrow \VV''^*\stackrel{\beta''_D}{\longrightarrow}
\VV''\otimes K\to \VV\otimes K.$$ The resulting triple
$(\VV,\beta_D,\gamma_D)$ is a family of symplectic Higgs bundles
parametrized by the disk, whose restriction to the origin
coincides with $(V,\varphi)$, and which integrates the element $a$
in the deformation complex.

It remains to show that each member of the family
$(\VV,\beta_D,\gamma_D)$ is a polystable $\Sp(2n,\RR)$-Higgs
bundle. This is done in Lemma~\ref{lem:beta-gamma-family-polystable}
below. We have thus proved that $(V,\varphi)$ is not a local minimum.
\end{proof}

\begin{lemma}
  \label{lem:beta-gamma-family-polystable}
  The $\Sp(2n,\RR)$-Higgs bundle $(V_t,\varphi_t= \beta_t + \gamma_t)$
  on $X$, obtained by restricting to $X \times \{t\}$ the family
  $(\VV,\beta_D,\gamma_D)$ constructed in the proof of
  Lemma~\ref{lem:beta-gamma-minima}, is polystable.
\end{lemma}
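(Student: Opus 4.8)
The plan is to verify polystability of $(V_t,\varphi_t)$ directly from the simplified criteria of Proposition~\ref{spnr-stability} and its polystable analogue, exploiting the factorized form of the Higgs fields coming from the extension \eqref{eq:ext-primed}. Write $j\colon V''\to V_t$ for the inclusion and $\pi\colon V_t\to V'$ for the projection, so that by construction $\gamma_t=\pi^*\circ\gamma'\circ\pi$ and $\beta_t=j\circ\beta''\circ j^*$, where $\pi^*\colon V'^*\otimes K\to V_t^*\otimes K$ and $j^*\colon V_t^*\to V''^*$ are the duals of $\pi$ and $j$. In particular $\gamma_t$ vanishes on $V''=\ker\pi$ and has image in $(V'')^\perp\otimes K=\pi^*(V'^*)\otimes K$, while $\beta_t$ vanishes on $(V'')^\perp=\ker j^*$ and has image in $V''\otimes K$.

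First I would establish semistability. Given a test filtration $0\subset V_1\subset V_2\subset V_t$ satisfying the hypotheses of Proposition~\ref{spnr-stability} (equivalently, the containments of Remark~\ref{rem:matrix-form} for $\beta_t$ and $\gamma_t$), I would produce induced filtrations of the two summands by setting $V_i'=\pi(V_i)\subset V'$ and $V_i''=V_i\cap V''\subset V''$. Using the factorizations above together with Remark~\ref{rem:matrix-form}, the $\gamma_t$-containments translate into $\gamma'V_1'\subset K\otimes(V_2')^\perp$ and $\gamma'V_2'\subset K\otimes(V_1')^\perp$, i.e.\ the filtration $0\subset V_1'\subset V_2'\subset V'$ satisfies the test condition for $(V',\varphi')=(V',\gamma')$ (the condition on $\beta'=0$ being vacuous); likewise the $\beta_t$-containments give $\beta''(V_2'')^\perp\subset K\otimes V_1''$ and $\beta''(V_1'')^\perp\subset K\otimes V_2''$, so that $0\subset V_1''\subset V_2''\subset V''$ satisfies the test condition for $(V'',\varphi'')=(V'',\beta'')$. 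Replacing the induced subsheaves by their saturations as in the proof of Lemma~\ref{segonpas} only increases degrees, so semistability of the summands (both are polystable $\Sp$-Higgs bundles, hence semistable) yields
\begin{align*}
  \deg V'-\deg V_1'-\deg V_2' &\geq 0, \\
  \deg V''-\deg V_1''-\deg V_2'' &\geq 0.
\end{align*}
Adding these and using degree additivity in the exact sequences $0\to V_i''\to V_i\to V_i'\to 0$ and $0\to V''\to V_t\to V'\to 0$ gives
\begin{displaymath}
  \deg V_t-\deg V_1-\deg V_2
  =\bigl(\deg V'-\deg V_1'-\deg V_2'\bigr)+\bigl(\deg V''-\deg V_1''-\deg V_2''\bigr)\geq 0,
\end{displaymath}
which is precisely semistability of $(V_t,\varphi_t)$.

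For polystability I would analyze the equality case: if a filtration with strictly increasing weights achieves $\deg V_t-\deg V_1-\deg V_2=0$, then by the displayed identity both induced inequalities are equalities and the saturations do not change degree. Polystability of $(V',\varphi')$ and $(V'',\varphi'')$ as $\Sp$-Higgs bundles then forces holomorphic splittings of $V'$ and $V''$ adapted to the induced filtrations, with $\gamma'$ and $\beta''$ respecting the corresponding block decompositions. The remaining, and principal, task is to combine these two splittings into a single holomorphic splitting of the extension $0\to V''\to V_t\to V'\to 0$ compatible with $\varphi_t$, thereby producing the splitting demanded by the polystability condition; equivalently, one must show that the relevant component of the extension class $t\,a\in H^1(\Hom(V',V''))$ vanishes on the degree-zero graded pieces singled out by the equality case. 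This is where the main difficulty lies: $a$ is nonzero, so one has to check that, after restriction to these complementary graded pieces, the induced extension really does split, while also handling the possibility that several steps of the $V_t$-filtration collapse to a single step on $V'$ or on $V''$. I expect this to reduce to a weight/degree bookkeeping showing that only pieces in complementary degrees can be glued nontrivially. As an alternative that may streamline this step, one can transport the whole question to the associated $\SL(2n,\CC)$-Higgs bundle $H(V_t,\varphi_t)$ via Theorem~\ref{thm:stability-equivalence} and invoke the polystability criterion of Proposition~\ref{thm:sl(n,C)-stability}, using that $\Phi_t$ annihilates the $\Phi_t$-invariant subbundle $V''\oplus(V'')^\perp$ and descends to a nilpotent map on the quotient $V'\oplus V''^*$.
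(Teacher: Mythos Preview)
Your semistability argument is correct and parallels the paper's: reduce to the summands via induced subobjects and add the resulting inequalities. The paper phrases this in the $(A,B)$-language of Lemma~\ref{primerpas} rather than the filtration language of Proposition~\ref{spnr-stability}, but that is cosmetic.

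Where you fall short is the equality case. You invoke only \emph{polystability} of $(V',\varphi')$ and $(V'',\varphi'')$ and then try to glue the resulting splittings of $V'$ and $V''$ across the extension; you correctly identify this as the hard step and leave it unfinished. The paper's route is cleaner and avoids the gluing altogether: it uses that the summands are \emph{stable} (this is part of the hypothesis of Lemma~\ref{lem:beta-gamma-minima}). Stability forces any degree-zero induced subobject to be trivial on each summand; the only configuration that could produce a nontrivial subobject of $V_t$ is the one where the induced subobject is all of $V'$ on one side and zero on the other. But then that subobject projects isomorphically onto $V'$, furnishing a holomorphic splitting of the extension~\eqref{eq:ext-primed}, which contradicts $ta\neq 0$ for $t\neq 0$. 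Hence $(V_t,\varphi_t)$ is in fact \emph{stable} for $t\neq 0$ (and polystable at $t=0$ by hypothesis), and no gluing of splittings is needed. Your filtration framework can be pushed to the same conclusion, but the strategy you outline (produce splittings from polystability and glue) heads in the wrong direction.

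You also do not address the case in which one of the summands is a stable $\U(p,q)$-Higgs bundle rather than a stable $\Sp$-Higgs bundle. In that case there \emph{are} nontrivial degree-zero $\Sp$-subobjects of the summand (namely $V_1'\oplus {V_2'}^*$), so the dichotomy above fails. The paper handles this by a separate, longer diagram chase, which in addition requires the extension class $a\in H^1(\Hom(V_1',V''))\oplus H^1(\Hom(V_2',V''))$ to be chosen with both components nonzero; this is precisely the reason for the footnote in the proof of Lemma~\ref{lem:beta-gamma-minima}. The alternative via $H(V_t,\varphi_t)$ and Theorem~\ref{thm:stability-equivalence} that you mention is plausible in principle but does not obviously bypass this difficulty.
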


\begin{proof}
  It will be convenient to use the stability condition for
  $\Sp(2n,\RR)$-Higgs bundles as given in Lemma~\ref{primerpas}. Thus,
  if $(V_t,\varphi_t)$ is not stable, there are subbundles $A \subset
  V_t$ and $B \subset V_t^*$ such that $\gamma_t(A) \subset B \otimes
  K$ and $\beta_t(B) \subset A \otimes K$, and with $\deg(A \oplus B)
  = 0$. Since $X$ is a Riemann surface, the kernel of the restriction
  to $A$ of the sheaf map $V_t \to V''$ is locally free and
  corresponds to a subbundle $A' \subset A$. The quotient $A'' := A /
  A'$ then gives a subbundle $A'' \subset V''$ so that we have a
  commutative diagram with exact rows and columns:
  \begin{equation}
    \label{eq:cd-A}
    \begin{CD}
      & & 0 & & 0 & & 0  \\
      @. @VVV  @VVV  @VVV  @.\\
      0 @>>> A'' @>>> A @>>> A' @>>> 0 \\
      @. @VVV  @VVV  @VVV  @.\\
      0 @>>> V'' @>>> V_t @>>> V' @>>> 0.
    \end{CD}
  \end{equation}
  Similarly, we obtain subbundles $B'' \subset {V''}^{*}$ and $B'
  \subset {V'}^{*}$ and a diagram:
  \begin{equation}
    \label{eq:cd-B}
    \begin{CD}
      & & 0 & & 0 & & 0  \\
      @. @VVV  @VVV  @VVV  @.\\
      0 @<<< B' @<<< B @<<< B'' @<<< 0 \\
      @. @VVV  @VVV  @VVV  @.\\
      0 @<<< {V'}^{*} @<<< V_t @<<< {V''}^{*} @<<< 0.
    \end{CD}
  \end{equation}
  One easily checks that ${B'}^{,\perp} \subset A'$ and
  ${B''}^{,\perp} \subset A''$. By definition of $\gamma_t$, the
  diagram
  \begin{displaymath}
    \begin{CD}
      0 @>>> V' @>>> V_t @>>> V'' @>>> 0 \\
      @. @VV{\gamma'}V  @VV{\gamma_t}V  @.  @.\\
      0 @<<< {V'}^{*} @<<< V_t @<<< {V''}^{*} @<<< 0.
    \end{CD}
  \end{displaymath}
  commutes. Thus, since $\gamma_t(A) \subset B \otimes K$, we have
  that $\gamma'(A') \subset B' \otimes K$. Similarly, $\beta''(B'')
  \subset A''\otimes K$. It follows that the pair of subbundles $A'
  \subset V'$ and $B' \subset {V'}^*$ destabilizes $(V',\varphi')$ and
  that the pair of subbundles $A'' \subset V''$ and $B'' \subset {V''}^*$
  destabilizes $(V'',\varphi'')$.

  Consider now the case in which both $(V',\varphi')$ and
  $(V'',\varphi'')$ are stable $\Sp(2n,\RR)$-Higgs bundles. Then we
  must have $A' \oplus B' = V' \oplus V'^*$ or $A' \oplus B' = 0$ and
  similarly for $A'' \oplus B''$. The only case in which the original
  destabilizing subbundle $A \oplus B \subset V_t \oplus V_t^*$ is
  non-trivial is when $A' \oplus B' = V' \oplus V'^*$ and $A'' \oplus
  B'' = 0$ (or vice-versa). But, in this case, $V' \simeq A'\simeq A$
  and hence (\ref{eq:cd-A}) shows that the non-trivial extension
  (\ref{eq:ext-primed}) splits, which is a contradiction. Hence there
  is no non-trivial destabilizing pair of subbundles of
  $(V_t,\varphi_t)$, which is therefore stable.

  It remains to deal with case in which one, or both, of
  $(V',\varphi')$ and $(V'',\varphi'')$ are stable $\U(p,q)$-Higgs
  bundles. The remaining cases being similar, for definiteness we
  consider the case in which $(V'',\varphi'')$ is a stable
  $\Sp(2n'',\RR)$-Higgs bundle and
  $(V',\varphi')$ is a stable $\U(n_1',n_2')$-Higgs bundle, i.e.,
  \begin{displaymath}
    V' = V_1' \oplus V_2', \qquad \varphi' = \gamma' \in
    H^0(V_1'\otimes V_2' \otimes K).
  \end{displaymath}
  In addition to the cases considered above, we now also need to
  consider the case when $A' \oplus B'$ is non-trivial, say $A' \oplus B' = V_1' \oplus {V_2'}^*$. There are now two possibilities for $A'' \oplus B''$:
  either it is zero or it equals $V'' \oplus {V''}^*$; we leave the
  first (simpler) case to the reader and consider the second one. In
  this case, the element
  \begin{displaymath}
    a = a_1 + a_2 \in H^1(\Hom(V',V'')
      = H^1(\Hom(V_1',V'')) \oplus H^1(\Hom(V_2',V''))
  \end{displaymath}
  chosen in the proof of Lemma~\ref{lem:beta-gamma-minima} above must
  be taken such that both $a_1$ and $a_2$ are non-zero (this is
  possible by Riemann--Roch). Thus, for $i=1,2$ we have a commutative diagram
  \begin{displaymath}
    \begin{CD}
      0 @>>> V'' @>>> V_{t_i} @>>> V_i'  @>>> 0 \\
      @. @| @VVV @VVV \\
      0 @>>> V'' @>>> V_{t} @>>> V_1' \oplus V_2'  @>>> 0
    \end{CD}
  \end{displaymath}
  of non-trivial extensions, where the two vertical maps on the right
  are inclusions. This, together with (\ref{eq:cd-B}) for $B' =
  {V_2'}^*$ and $B''= {V''}^*$, gives rise to
  the commutative diagram
  \begin{displaymath}
    \begin{CD}
      0 @>>> {V_2'}^* @>>> B @>>> {V''}^* @>>> 0 \\
      @. @VVV @VVV @| \\
      0 @>>> {V_1'}^* \oplus {V_2'}^* @>>> V_t^* @>>> {V''}^* @>>> 0 \\
      @. @VVV @VVV @| \\
      0 @>>> {V_2'}^* @>>> V_{t_2}^* @>>> {V''}^* @>>> 0.
    \end{CD}
  \end{displaymath}
  The composites of the vertical maps on the left and on the right are
  isomorphisms. Hence the composite of the middle vertical maps is
  also an isomorphism and this provides a splitting of the extension
  \begin{displaymath}
    0 \to {V_1'}^* \to V_t^* \to V_{t_2}^* \to 0.
  \end{displaymath}
  Denote the splitting maps in the dual split extension by
  \begin{displaymath}
    i\colon V_1' \to V_t\qquad\text{and}\qquad p\colon V_t \to V_{t_2}.
  \end{displaymath}
  We now have a diagram
  \begin{displaymath}
    \begin{CD}
    0 @>>> V'' @>>> V_{t_1} @>>> V_1' @>>> 0 \\
    @. @| @VVV @VVV \\
    0 @>>> V'' @>>> V_t @>>> V_1' \oplus V_2' @>>> 0 \\
    @. @| @VV{p}V @VVV \\
    0 @>>> V'' @>>> V_{t_2} @>>> V_2' @>>> 0,
    \end{CD}
  \end{displaymath}
  where the vertical maps on the right are the natural inclusion and
  projection, respectively. Using the existence of the splitting map
  $i\colon V_1' \to V_t$ and the inclusion $V_{t_2} \to V_t$ one
  readily sees that this diagram commutes. This finally gives us the
  commutative diagram
  \begin{displaymath}
    \begin{CD}
    0 @>>> 0 @>>> V_t/V_{t_1} @>{\simeq}>> V_2' @>>> 0 \\
    @. @| @VVV @| \\
    0 @>>> V'' @>>> V_{t_2} @>>> V_2' @>>> 0,
    \end{CD}
  \end{displaymath}
  which shows that the sequence at the bottom is split, a
  contradiction.
\end{proof}

\subsection{Deforming a sum of a quiver type minimum and a minimum
  in $\mathcal{N}_d$. }

\begin{lemma}
  \label{lem:beta-quiver-minima}
  Let $(V,\varphi)$ be a polystable $\Sp(2n,\RR)$-Higgs bundle which
  decomposes as a direct sum $(V,\varphi) = (V',\varphi') \oplus
  (V'',\varphi'')$ with $\varphi'=(\beta',\gamma')$ and
  $\varphi''=(\beta'',\gamma'')$. Suppose that
  \begin{enumerate}
  \item $(V',\varphi')$ is a quiver type minimum,
  \item $(V'',\varphi'')$ is a minimum with $\beta''= 0$ or
    $\gamma''=0$ which is a stable $G''$-Higgs bundle for $G''$ one of
    the following groups: $\Sp(2n'',\RR)$, $\U(p'',q'')$, $\U(n'')$ or
    $\GL(n'',\RR)$.
  \end{enumerate}
  Then $(V,\varphi)$ is not a minimum of $f$ on $\mathcal{M}_d$.
\end{lemma}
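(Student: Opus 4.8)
The plan is to apply the non-minimality criterion of Proposition~\ref{prop:minima-HH1}(\ref{item:non-minima}). Both summands are fixed points of the circle action --- $(V',\varphi')$ is a Hodge bundle by the explicit description of quiver minima in Corollary~\ref{cor:minima}, and $(V'',\varphi'')$, being a minimum of $f$, is a critical point of the moment map and hence a fixed point --- so their direct sum $(V,\varphi)$ is a Hodge bundle whose isomorphism class is circle-fixed. It therefore suffices to exhibit a one-parameter family of polystable $\Sp(2n,\RR)$-Higgs bundles through $(V,\varphi)$ whose infinitesimal deformation is a nonzero class in $\HH^1(C^\bullet_-(V,\varphi))$, the positive-weight part of the deformation complex.

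First I would record the weight decomposition. On $(V',\varphi')$ it is the quiver chain $V'=\bigoplus_\lambda F_{M-2\lambda}$ of Corollary~\ref{cor:minima}, whose extreme weights have absolute value exceeding $\tfrac12$ and whose degree $\deg V'=\pm n'(g-1)$ has a definite sign (Corollary~\ref{cor:minima}(\ref{item:4})); on $(V'',\varphi'')$ it is the small-weight Hodge structure of a point of $\mathcal{N}_{d''}$, the sign of $d''$ deciding by Remark~\ref{rem:N(d)-sign} which of $\beta'',\gamma''$ vanishes (with $V''$ sitting at weight $0$ when $\varphi''=0$). The cross terms $\Hom(V',V'')$ and $\Hom(V'',V')$ of $\End(V)$ then split into circle weights; since $V'$ sweeps a whole chain of weights while $V''$ sits at a single small one, one of these has a component in some strictly positive weight $k$, and I would check that $H^1$ of that component is nonzero by Riemann--Roch, the definite signs of $\deg V'$ and $\deg V''$ forcing the relevant Hom bundle to have negative Euler characteristic.

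Having found a nonzero positive-weight class $a$, I would integrate it exactly as in the proof of Lemma~\ref{lem:beta-gamma-minima}: form the nontrivial extension of $V'$ by $V''$ (in whichever order carries the positive weight) over $X\times D$ with class $t\cdot a$, and extend $\beta$ and $\gamma$ over the family by composing $\beta',\gamma',\beta'',\gamma''$ with the maps of the extension sequence and its dual so as to keep them symmetric. Because the quiver summand carries Higgs fields in \emph{both} directions (unlike the configuration of Lemma~\ref{lem:beta-gamma-minima}, where $\beta'=0$ and $\gamma''=0$ let each field factor cleanly through one term of the sequence), arranging this composition so that the resulting $\VV$-family is genuinely a family of $\Sp(2n,\RR)$-Higgs bundles restricting to $(V,\varphi)$ at $t=0$ and inducing $a$ already requires some care.

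The main obstacle, however, is proving that every member $(V_t,\varphi_t)$ of the family is polystable; this is the analogue of Lemma~\ref{lem:beta-gamma-family-polystable} and is where essentially all the work lies. I would run the argument through the stability criterion of Lemma~\ref{primerpas}: a destabilizing pair of subbundles of $V_t$, after intersecting with and projecting onto the two terms of the extension, yields destabilizing data for $(V',\varphi')$ and $(V'',\varphi'')$; since $(V',\varphi')$ is stable as an $\Sp(2n,\CC)$-Higgs bundle (Corollary~\ref{cor:minima}(\ref{item:5})) and $(V'',\varphi'')$ is stable as a $G''$-Higgs bundle, the only configuration not excluded outright forces the extension class $t\cdot a$ to split, contradicting $a\neq0$ for $t\neq0$. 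The diagram chase is heavier than in Lemma~\ref{lem:beta-gamma-family-polystable} precisely because of the full quiver chain in $V'$, and the exceptional low-rank cases $n'=1,2$ (cf.\ Corollary~\ref{cor:minima}(\ref{cor:minima-n=2}) and Remark~\ref{rem:n-one-hodge}) will need separate attention.
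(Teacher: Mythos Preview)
Your overall strategy is right, and you correctly anticipate that the quiver summand carrying Higgs fields in both directions makes the construction harder than in Lemma~\ref{lem:beta-gamma-minima}. But the step you wave away with ``requires some care'' is exactly where the argument lives, and your proposed construction --- forming a one-step extension of $V'$ by $V''$ and composing the old Higgs fields with the extension maps --- does not work here.

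Concretely: with $\deg V'=n'(1-g)$ and $\beta''\neq0$, the weight of $V''$ is $\tfrac12$, while the quiver chain $V'=\bigoplus F_\lambda$ has weights running from $m=-p+\tfrac12$ to $M=p+\tfrac12$. The top positive weight in $\End(V)$ is $p$, and it is carried \emph{simultaneously} by $V''^*\otimes F_M\subset\Hom(V'',V')$ and by $V''\otimes F_m^*\subset\Hom(V',V'')$; the map in $C^\bullet_p$ sends this to the single piece $V''\otimes F_M\otimes K$. A nonzero hypercohomology class is a pair $(\delta,\epsilon)$ satisfying the compatibility $\beta''(\delta)+\beta'(\epsilon)=0$ in $H^1$, not a single extension class in one of the two $\Hom$'s. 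Integrating such a class does not give an extension of $V'$ by $V''$: the paper builds a rank-$3$ block bundle $W_t\simeq F_M\oplus V''\oplus F_m$ whose $\dbar$-operator is upper-triangular with first-order entries $t\,a_\delta$, $t\,a_\epsilon$ \emph{and} a second-order correction $t^2\gamma$ in the $(F_M,F_m)$ slot. The Higgs field $\nu_t=\beta'+\beta''+t\nu_1$ is then made holomorphic by solving a three-equation system, using first that $[\beta''(a_\delta)+\beta'(a_\epsilon)]=0$ in cohomology to find $\nu_1$, and then that $\beta':F_m^*\to F_M\otimes K$ is an isomorphism to choose $\gamma$ absorbing the remaining quadratic obstruction. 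The middle line bundles $F_\lambda$ for $m<\lambda<M$ are left completely alone, and $\gamma_t$ is defined by routing the single map $\gamma':F_{M-2}\to F_m^*\otimes K$ through the natural inclusion $F_m^*\hookrightarrow W_t^*$. None of this is visible from the template of Lemma~\ref{lem:beta-gamma-minima}.
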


\begin{proof}
  Consider for definiteness the case in which $(V',\varphi')$ is a
  quiver type minimum with $\deg(V') = n'(1-g)$ and $(V'',\varphi'')$
  has $\gamma''=0$ and $\beta''\neq 0$. The case in which $\beta''=0$
  and $\gamma''\neq 0$ can be treated along the same lines as the
  present case, so we will not give the details. The case in which
  $(V',\varphi')$ is a quiver type minimum with $\deg(V') = n'(g-1)$
  is obtained by symmetry. Note that some degenerate cases can occur,
  namely:
  \begin{enumerate}
  \item\label{item:1} $(V',\varphi')$ is a quiver type minimum with
    $\rk(V') = 2$ (cf.\ (\ref{cor:minima-n=2}) of
    Corollary~\ref{cor:minima}).
  \item\label{item:2} $(V'',\varphi'')$ has $\beta''=\gamma''=0$.
  \end{enumerate}
  With respect to Case~(\ref{item:1}), all we need for the arguments
  below is that $\beta\colon F_{\frac{3}{2}}^* \xra{\simeq}
  F_{-\frac{1}{2}}\otimes K$ is an isomorphism, which is guaranteed by
  (\ref{cor:minima-n=2}) of Corollary~\ref{cor:minima}. In what
  concerns Case~(\ref{item:2}), slight modifications are required in
  the arguments given below; we leave these to the reader.

  With these introductory remarks out of the way,
  Corollary~\ref{cor:minima} tells us that $V'$ decomposes as a direct
  sum of line bundles
  $V'=F_m\oplus\dots\oplus F_M$ and that restricting $\beta'$ we get
  an isomorphism
$$\beta':F_m^*\stackrel{\simeq}{\longrightarrow} F_M\otimes K.$$
Our first task is to identify nonzero elements in the first
hypercohomology of $C^{\bullet}_-$. A good place to look for them
is in the hypercohomology of the piece of highest weight in the
deformation complex, which is
\begin{equation}
\label{eq:1} V''^*\otimes F_M\oplus V''\otimes F_m^*\to V''\otimes
F_M\otimes K.
\end{equation}
This morphism cannot be an isomorphism, because the ranks do not
match. Thus Proposition~\ref{prop:ad-iso} implies that $\HH^1$ of this
complex is non-vanishing.

In the hypercohomology long exact sequence (cf.\
(\ref{eq:hyper-les})) of the complex (\ref{eq:1}), the map
$$H^0(V''^*\otimes F_M\oplus V''\otimes F_m^*)
= H^0(V''^*\otimes F_M)\oplus H^0(V''\otimes F_m^*) \to
H^0(V''\otimes F_M\otimes K)$$ is always onto because the map
$f:H^0(V''\otimes F_m^*)\to H^0(V''\otimes F_M\otimes K)$ is
induced by tensoring $\beta':F_m^*\to F_M\otimes K$ (which is an
isomorphism) with the identity map $V''\to V''$, so $f$ is also an
isomorphism. Hence the image of $H^0(V''\otimes F_M\otimes K) \to
\HH^1$ is zero, and this by exactness implies that $\HH^1 \to
H^1(V''^*\otimes F_M\oplus V''\otimes F_m^*)$ is injective. We now
want to characterize the image of this inclusion. Tensoring the
Higgs fields $\beta''$ and $\beta'$ with the identity on $F_M$
and $V''$ respectively, we get maps
$$\beta''\colon V''^*\otimes F_M \to V''\otimes F_M\otimes K,$$
and
$$\beta'\colon V''\otimes F_m^* \xra{\simeq} V''\otimes F_M\otimes K.$$
Now the map $\zeta$ in the long exact sequence
$$\HH^1 \to
H^1(V''^*\otimes F_M\oplus V''\otimes F_m^*)
\stackrel{\zeta}{\longrightarrow} H^1(V''\otimes F_M\otimes K) \to
\HH^2$$ can be interpreted as follows: given elements
$(\delta,\epsilon)\in H^1(V''^*\otimes F_M)\oplus H^1(V''\otimes
F_m^*)$,
$$\zeta(\delta,\epsilon)=-\beta''(\delta)-\beta'(\epsilon)\in
H^1(V''\otimes F_M\otimes K).$$ Hence we may take a nonzero pair
$(\delta,\eta)$ satisfying $\beta''(\delta)+\beta'(\epsilon)=0$
and corresponding to a nonzero element in the hypercohomology of
the complex (\ref{eq:1}). We next prove that the deformation along
$(\delta,\eta)$ is unobstructed, by giving an explicit
construction of a family of Higgs bundles $(V_t,\beta_t,\gamma_t)$
parametrized by $t\in\CC$ and restricting to $(V'\oplus
V'',\varphi'+\varphi'')$ at $t=0$.

Pick Dolbeault representatives
$a_{\delta}\in\Omega^{0,1}(V''^*\otimes F_M)$ and $a_{\epsilon}\in
\Omega^{0,1}(F_m^*\otimes V'')$ of $\delta$ and $\epsilon$. We are
going to construct a pair $(W_t,\nu_t)$ satisfying the following.
\begin{itemize}
\item There is a $C^{\infty}$ isomorphism of vector bundles
$W_t\simeq F_M\oplus V''\oplus F_m$ with respect to which the
$\dbar$ operator of $W_t$ can be written as
$$\dbar_{W_t}=\left(\begin{array}{ccc}
\dbar_{F_M} & ta_{\delta} & t^2\gamma \\
0 & \dbar_{V''} & t a_{\epsilon} \\
0 & 0 & \dbar_{F_m}\end{array}\right)=\dbar_0+t a_1+t^2 a_2,$$
where $\gamma\in \Omega^{0,1}(F_m^*\otimes F_M)$ will be specified
later, \item $\nu_t$ is a holomorphic section of
$H^0(S^2W_t\otimes K)$ of the form
$$\nu_t=\beta'+\beta''+t\nu_1.$$
\end{itemize}
Now the condition $\dbar_{W_t}\nu_t=0$ translates into
\begin{align*}
\dbar_0(\beta'+\beta'') & = 0,\\
\dbar_1\nu_1+a_1(\beta'+\beta'') & =0, \\
a_1\nu_1+a_2(\beta'+\beta'') & = 0. \end{align*} The first
equation is automatically satisfied. As for the second equation
note that
$$a_1(\beta'+\beta'')=\beta''(a_\delta)+\beta'(a_{\epsilon})\in
\Omega^{1,1}(V''\otimes_S F_M).$$ Since by hypothesis the
Dolbeault cohomology class represented by
$\beta''(a_\delta)+\beta'(a_{\epsilon})$ is equal to zero, we may
chose a value of $\nu_1\in\Omega^{0,1}(V''\otimes_S F_M)$ solving
the second equation. It remains to consider the third equation.
Note that $a_2\beta''=0$ and that
$a_2\beta'=\gamma(\beta')\in\Omega^{1,1}(F_M\otimes F_M)$. Since
$\beta'$ is an isomorphism, for any
$\eta\in\Omega^{1,1}(F_M\otimes F_M)$ there exist some $\gamma$
such that $\gamma(\beta')=\eta$. Taking $\eta=-a_1\nu_1$, we
obtain a value of $\gamma$ solving the third equation above.

It follows from the construction that there are short exact
sequences of holomorphic bundles
$$0\to F_M\to W_t\to Z_t\to 0,\qquad\qquad
0\to V''\to Z_t\to F_m\to 0.$$ Dualizing both sequences we have
inclusions $F_m^*\to Z_t^*$ and $Z_t^*\to W_t^*$ which can be
composed to get an inclusion
\begin{equation}
\label{eq:inclusion-F-m} F_m^*\to W_t^*.
\end{equation}
Now let
$$V_t=W_t\oplus \bigoplus_{m<\lambda<M} F_\lambda.$$
To finish the construction of the family of Higgs bundles we have
to define holomorphic maps
$$\beta_t:V_t^*\to V_t\otimes K,\qquad\qquad
\gamma_t:V_t\to V_t^*\otimes K$$ defining sections in
$H^0(S^2V_t\otimes K)$ and $H^0(S^2V_t^*\otimes K)$ respectively.
The following conditions are in fact satisfied by a unique choice
of maps $(\beta_t,\gamma_t)$:
\begin{itemize}
\item the restriction of $\beta_t$ to $W_t$ is equal to $\nu_t$,
\item the restriction of $\beta_t$ to $\bigoplus_{m<\lambda<M}
F_\lambda$ is equal to $\beta'$, \item the restriction of
$\gamma_t$ to $W_t$ is equal to $0$, \item the restriction of
$\gamma_t$ to $F_M\subset V_t$ is $0$, \item the restriction of
$\gamma_t$ to $F_{M-2}\subset V_t$ is the composition of
$\gamma':F_{M-2}\to F_m^*\otimes K$ with the inclusion
(\ref{eq:inclusion-F-m}) tensored by the identity on $K$, \item
the restriction of $\gamma_t$ to $\bigoplus_{m<\lambda<M-2}
F_\lambda$ is equal to $\gamma'$.
\end{itemize}

The proof of the lemma is completed by using
Lemma~\ref{lem:poly-stable-quiver-beta-deform}.
\end{proof}

\begin{lemma}
  \label{lem:poly-stable-quiver-beta-deform}
  The $\Sp(2n,\RR)$-Higgs bundle $(V_t,\varphi_t)$, obtained by
  restricting the family constructed in the proof of
  Lemma~\ref{lem:beta-quiver-minima} to $X \times \{t\}$, is
  polystable.
\end{lemma}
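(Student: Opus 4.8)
The task is to verify, for every $t$, the polystability criterion for $\Sp(2n,\RR)$-Higgs bundles in the form of Lemma~\ref{primerpas} and Proposition~\ref{spnr-stability}: that every pair of subbundles $A\subset V_t$, $B\subset V_t^*$ with $B^\perp\subset A$, $A^\perp\subset B$ and $\Phi_t(A\oplus B)\subset K\otimes(A\oplus B)$ satisfies $\deg(A\oplus B)\leq 0$, together with a direct-summand splitting whenever equality holds. The plan is to mimic the argument of Lemma~\ref{lem:beta-gamma-family-polystable}, using as rigidity inputs that $(V',\varphi')$ is stable even as an $\SL(2n',\CC)$-Higgs bundle by (\ref{item:5}) of Corollary~\ref{cor:minima} and that $(V'',\varphi'')$ is a stable $G''$-Higgs bundle. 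At $t=0$ the family specialises to the polystable direct sum $(V',\varphi')\oplus(V'',\varphi'')$, and the whole point is that for $t\neq 0$ the two extensions defining $W_t$ are non-split.

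First I would record the coarse structure of $V_t$. It carries the filtration coming from the iterated extension $0\to F_M\to W_t\to Z_t\to 0$, $0\to V''\to Z_t\to F_m\to 0$, together with the quiver summands $\bigoplus_{m<\lambda<M}F_\lambda$, and $\beta_t,\gamma_t$ are assembled from $\beta'$, $\beta''$, $\gamma'$ and the inclusion (\ref{eq:inclusion-F-m}); in particular $\beta'$ restricts to the isomorphism $F_m^*\xra{\simeq}F_M\otimes K$. Given a $\Phi_t$-invariant pair $(A,B)$ as above, I would intersect and project $A$ and $B$ against the subquotients $F_M$, $V''$, $F_m$ and the $F_\lambda$, forming the analogues of the commutative diagrams (\ref{eq:cd-A}) and (\ref{eq:cd-B}). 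This produces $\varphi'$- and $\varphi''$-compatible subobjects of $(V',\varphi')$ and $(V'',\varphi'')$; the stability of these two pieces then forces each induced subobject to be either zero or the whole bundle, reducing $(A,B)$ to finitely many combinatorial possibilities.

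For each surviving possibility the degree bookkeeping gives $\deg(A\oplus B)\leq 0$. When a subobject of degree zero meets $W_t$ non-trivially but is not a direct summand, it would provide a holomorphic splitting of one of the non-split sequences defining $W_t$, equivalently it would force one of the extension classes $a_\delta$ or $a_\epsilon$ to vanish cohomologically, contradicting $t\neq 0$; this is precisely the mechanism, generalising the final splitting contradiction in Lemma~\ref{lem:beta-gamma-family-polystable}, by which non-splitness rules out destabilisation. Any degree-zero subobject that does survive must then be compatible with $V_t=W_t\oplus\bigoplus_{m<\lambda<M}F_\lambda$ and split off as a $\Phi_t$-invariant direct summand, which is exactly the splitting required by Proposition~\ref{spnr-stability}, so $(V_t,\varphi_t)$ is polystable.

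The step I expect to be the main obstacle is controlling the interaction of $\varphi_t$ with the filtration. Because $\beta_t$ and $\gamma_t$ genuinely connect $W_t$ to the quiver summands $F_\lambda$ (through $\beta'$, $\gamma'$ and the inclusion (\ref{eq:inclusion-F-m})), a $\Phi_t$-invariant pair $(A,B)$ need not respect the coarse decomposition $V_t=W_t\oplus\bigoplus_{m<\lambda<M}F_\lambda$, so the projection/intersection argument above cannot be applied blindly. To handle this I would run a $t$-dependent version of the weight analysis of Section~\ref{sec:apply-criterion}, using $\Phi_t$-invariance to propagate the position of $A\oplus B$ from summand to summand along the quiver and thereby pin down its degree, and only then combine with the non-splitness of the extension. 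Establishing this compatibility, rather than the final degree computation, is where the real work lies.
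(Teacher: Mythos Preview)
Your proposal is correct and takes exactly the same approach as the paper: the paper's entire proof is the single sentence ``Analogous to the proof of Lemma~\ref{lem:beta-gamma-family-polystable},'' and you have spelled out precisely that analogy. In fact you give considerably more detail than the paper does, including an honest identification of where the extra work lies (the interaction of $\varphi_t$ with the quiver summands through the inclusion~(\ref{eq:inclusion-F-m})), which the paper leaves implicit.
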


\begin{proof}
  Analogous to the proof of Lemma~\ref{lem:beta-gamma-family-polystable}.
\end{proof}

\subsection{Deforming a sum of two quiver type minima.}

\begin{lemma}
  \label{lem:quiver-2-minima}
  Let $(V,\varphi)$ be a polystable $\Sp(2n,\RR)$-Higgs bundle which
  decomposes as a direct sum $(V,\varphi) = (V',\varphi') \oplus
  (V'',\varphi'')$ with $\varphi'=(\beta',\gamma')$ and
  $\varphi''=(\beta'',\gamma'')$. Suppose that both $(V',\varphi')$
  and $(V'',\varphi'')$ are quiver type minima.  Then $(V,\varphi)$ is
  not a minimum of $f$ on $\mathcal{M}_d$.
\end{lemma}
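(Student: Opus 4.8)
The plan is to apply the criterion of part~(\ref{item:non-minima}) of Proposition~\ref{prop:minima-HH1}, exactly as in the proofs of Lemmas~\ref{lem:beta-gamma-minima} and~\ref{lem:beta-quiver-minima}: I will exhibit a nonzero class in $\HH^1(C^\bullet_-(V,\varphi))$ and then integrate it to a family of polystable $\Sp(2n,\RR)$-Higgs bundles deforming $(V,\varphi)$. Since $(V,\varphi) = (V',\varphi')\oplus(V'',\varphi'')$ has no cross terms in $\varphi$, the deformation complex splits as a direct sum of complexes
$$C^\bullet(V,\varphi) = C^\bullet(V',\varphi')\oplus C^\bullet(V'',\varphi'')\oplus C^\bullet_{\mathrm{cross}},$$
where $C^\bullet_{\mathrm{cross}}\colon \Hom(V',V'')\oplus\Hom(V'',V')\xrightarrow{\ad(\varphi)}(V'\otimes V''\oplus V'^*\otimes V''^*)\otimes K$. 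As both summands are quiver type minima, hence simple and stable as $\Sp(2n,\CC)$-Higgs bundles by (\ref{item:5}) of Corollary~\ref{cor:minima}, part~(1) of Proposition~\ref{prop:minima-HH1} gives $\HH^1(C^\bullet_-(V',\varphi'))=\HH^1(C^\bullet_-(V'',\varphi''))=0$, so that $\HH^1(C^\bullet_-(V,\varphi)) = \HH^1(C^\bullet_{\mathrm{cross},-})$ and everything is concentrated in the cross term.

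By Proposition~\ref{prop:duality-iso} and Remark~\ref{rem:dual-minima} I may assume, after possibly dualizing, that $(V',\varphi')$ is of the type of Corollary~\ref{cor:minima} with $|M_+|>|M_-|$; two subcases remain, according to whether $(V'',\varphi'')$ is of the same type or of the dual one. Consider first the same-sign case, and write $V' = \bigoplus_{\lambda=0}^{p'}F'_{M'-2\lambda}$ and $V'' = \bigoplus_{\lambda=0}^{p''}F''_{M''-2\lambda}$ as in (\ref{eq:V-decomp}), all summands line bundles, with $M'=p'+\tfrac12$, $m'=-p'+\tfrac12$ and likewise for $V''$. A direct weight count shows that the highest weight of $C^\bullet_{\mathrm{cross}}$ is $k=p'+p''>0$, at which the complex reads
$$F'_{M'}\otimes (F''_{m''})^*\;\oplus\;F''_{M''}\otimes (F'_{m'})^*\xrightarrow{\ad(\varphi)}F'_{M'}\otimes F''_{M''}\otimes K,$$
a map of vector bundles of ranks $2$ and $1$. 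Since $\beta'\colon (F'_{m'})^*\xrightarrow{\simeq}F'_{M'}\otimes K$ and $\beta''\colon (F''_{m''})^*\xrightarrow{\simeq}F''_{M''}\otimes K$ are the isomorphisms furnished by Corollary~\ref{cor:minima}, each summand maps isomorphically onto the target; hence $\ad(\varphi)$ is surjective with line-bundle kernel $N$, and $C^\bullet_{\mathrm{cross},k}$ is quasi-isomorphic to $N$ in degree zero. Using the two wrap-around isomorphisms one computes $\deg N = \deg F'_{M'}+\deg F''_{M''}+(2g-2) = -2(n'+n''-2)(g-1)<0$ (here $n',n''\geq 2$, since quiver type minima have rank at least two); Riemann--Roch then gives $\HH^1(C^\bullet_{\mathrm{cross},k}) = H^1(N)\neq 0$, so $(V,\varphi)$ carries a nonzero positive-weight infinitesimal deformation.

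It remains to integrate a nonzero class $a\in H^1(N)$ to an actual family of polystable Higgs bundles, and this is the main obstacle; I will handle it exactly along the lines of Lemma~\ref{lem:beta-quiver-minima}. The class $a$ corresponds to a pair $(\delta,\epsilon)\in H^1(F'_{M'}\otimes (F''_{m''})^*)\oplus H^1(F''_{M''}\otimes (F'_{m'})^*)$ satisfying the kernel relation $\beta''(\delta)+\beta'(\epsilon)=0$; choosing Dolbeault representatives I will build a $C^\infty$ bundle $V_t$ with deformed $\dbar$-operator and Higgs field $(\beta_t,\gamma_t)$ restricting to $(V,\varphi)$ at $t=0$ and integrating $a$, solving $\dbar_{V_t}(\beta_t+\gamma_t)=0$ order by order, which is possible precisely because $\beta'$ and $\beta''$ are isomorphisms on the relevant summands. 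The polystability of each member $(V_t,\varphi_t)$ will be established by a diagram chase entirely parallel to Lemmas~\ref{lem:beta-gamma-family-polystable} and~\ref{lem:poly-stable-quiver-beta-deform}. Finally, in the mixed-sign subcase the same weight count shows that at its top weight the target of $C^\bullet_{\mathrm{cross}}$ vanishes, so the relevant class lives in $H^1$ of a single negative-degree line bundle $F'_{M'}\otimes (F''_{m''})^*$ and integrates as an honest extension, as in Lemma~\ref{lem:beta-gamma-minima}; this subcase is strictly easier. Invoking part~(\ref{item:non-minima}) of Proposition~\ref{prop:minima-HH1} in each case then shows that $(V,\varphi)$ is not a local minimum of $f$.
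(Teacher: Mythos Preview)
Your approach is correct and follows the same strategy as the paper: identify a nonzero class in the top-weight cross piece of $C^\bullet_-$ and integrate it to a family, then apply Proposition~\ref{prop:minima-HH1}(\ref{item:non-minima}). Your treatment of the nonvanishing is actually more explicit than the paper's --- you compute the kernel $N$ and its degree, whereas the paper simply observes that the ranks of source and target differ and invokes Proposition~\ref{prop:ad-iso}. Your separate discussion of the mixed-sign subcase (one summand with $d'=n'(1-g)$, the other with $d''=n''(g-1)$) is also a welcome addition; the paper's proof tacitly assumes both summands have minimal degree.

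There is one technical point you should spell out more carefully. You say the integration can be carried out ``exactly along the lines of Lemma~\ref{lem:beta-quiver-minima}'', but in that lemma one has $\gamma''=0$, so only $\gamma'$ needs to be extended to the deformed bundle. Here both $\gamma'$ and $\gamma''$ are nonzero, and the extension of $\gamma''\colon F''_{M''-2}\to (F''_{m''})^*\otimes K$ to the deformation requires a projection from the deformed piece back onto $F''_{m''}$. The paper handles this by packaging only the four extreme line bundles $F'_{m'},F'_{M'},F''_{m''},F''_{M''}$ into a single bundle $W_{t\delta,t\epsilon}$ and observing that, because the extension class $\epsilon$ lives in $H^1((F'_{m'})^*\otimes F''_{M''})$ rather than in all of $H^1((F'_{m'})^*\otimes V_0'')$, there is a surjection $W_{t\epsilon}\to F''_{m''}$ deforming the obvious projection; composing with $\gamma''$ then gives the required extension. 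This is not automatic from the template of Lemma~\ref{lem:beta-quiver-minima}, and your phrase ``solving $\dbar_{V_t}(\beta_t+\gamma_t)=0$ order by order, which is possible precisely because $\beta'$ and $\beta''$ are isomorphisms'' does not quite cover it --- the issue for $\gamma''$ is structural, not about $\beta$ being an isomorphism.
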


\begin{proof}
  Suppose we have two minima which are quiver pairs (minimal degree)
$$V'=F'_{m'}\oplus\dots\oplus F'_{M'}=\bigoplus F_\lambda'
\qquad\text{and}\qquad V''=F''_{m''}\oplus\dots\oplus
F''_{M''}=\bigoplus F_\mu''.$$ All morphisms
$\beta',\beta'',\gamma',\gamma''$ are isomorphisms. We want to
deform $V'\oplus V''$.

The same ideas as before tell us (looking at the negative
deformation complex) that we should look at the piece of the exact
sequence of maximal weight, which is
$$C^{\bullet}:F'^*_{m'}\otimes F''_{M''}\oplus
F''^*_{m''}\otimes F'_{M'}\to F'_{M'}\otimes F''_{M''}\otimes K.$$
Define $V_0'':=F''_{m''}\oplus F''_{M''}$. The restriction of the
$\beta''$ to $V_0''$ defines an isomorphism
$$\beta_0'':V_0^*\to V_0''\otimes K,$$
so we can apply exactly the same construction as before, replacing
$V''$ by $V_0''$, and obtain a deformation $W_{t\delta,t\epsilon}$
of the bundle
$$F'_{m'}\oplus F'_{M'}\oplus V_0''=F'_{m'}\oplus F'_{M'}\oplus
F''_{m''}\oplus F''_{M''}.$$ A very important point, however, is
that now the extension classes of the bundles $W_\delta$ and
$W_\epsilon$ are more restricted, since they belong respectively
to the groups $H^1(F''^*_{m''}\otimes F'_{M'})$ and
$H^1(F'^*_{m'}\otimes F''_{M''})$. In particular, to define
$W_{t\epsilon}$ the line bundle $F'_{m'}$ only {\it merges} with
$F''_{M''}$, and not with $F''_{m''}$. This implies that there is
a map
\begin{equation}
\label{eq:4} W_{t\epsilon}\to F''_{m''}
\end{equation}
which deforms the projection $V_0''\to F''_{m''}$.

We leave all the remaining $F'_\lambda$ and $F''_\mu$ untouched.
There are only two maps which have to be deformed (apart from the
$\beta$'s which are internal in $W_{\delta,\epsilon}$). These are
$$\gamma':F'_{m'} \to F'^*_{M'-2}\otimes K
\qquad\text{and}\qquad \gamma'':F''_{m''} \to F''^*_{M''-2}\otimes
K.$$ The first one can be deformed to a map
$$\gamma'_{\delta,\epsilon}:W_{t\delta,t\epsilon}\to
F'^*_{M'-2}\otimes K$$ exactly as in the previous section. As for
$\gamma''$, we combine the projection $W_{t\delta,t\epsilon}\to
W_{t\epsilon}$ with the map in (\ref{eq:4}) and with $\gamma''$ to
obtain the desired deformation
$$W_{t\delta,t\epsilon}\to F''^*_{M''-2}\otimes K.$$
Lemma~\ref{lem:poly-stable-quiver-2-deform} below completes the
proof.
\end{proof}

\begin{lemma}
  \label{lem:poly-stable-quiver-2-deform}
  The $\Sp(2n,\RR)$-Higgs bundle $(V_t,\varphi_t)$, obtained by
  restricting the family constructed in the proof of
  Lemma~\ref{lem:quiver-2-minima} to $X \times \{t\}$, is
  polystable.
\end{lemma}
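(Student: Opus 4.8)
The plan is to mimic the proof of Lemma~\ref{lem:beta-gamma-family-polystable}, working throughout with the stability criterion of Lemma~\ref{primerpas}, phrased in terms of pairs of subbundles $A\subset V_t$ and $B\subset V_t^*$ satisfying $B^\perp\subset A$, $A^\perp\subset B$ and $\Phi_t(A\oplus B)\subset K\otimes(A\oplus B)$. First I would record that the two quiver type summands $(V',\varphi')$ and $(V'',\varphi'')$ are stable as $\Sp(2n',\RR)$- and $\Sp(2n'',\RR)$-Higgs bundles: by (\ref{item:5}) of Corollary~\ref{cor:minima} they are even stable as $\SL(2n,\CC)$-Higgs bundles, whence stable in the real sense by Theorem~\ref{thm:stability-equivalence}. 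Consequently any invariant pair of the above type for $(V',\varphi')$ or for $(V'',\varphi'')$ is either zero or exhausts the whole bundle. Since at $t=0$ the object is the polystable sum $(V',\varphi')\oplus(V'',\varphi'')$, it suffices to show that for $t\neq 0$ no proper pair $(A,B)$ destabilizes $(V_t,\varphi_t)$; stability for $t\neq 0$ together with polystability at $t=0$ then gives polystability of the whole family.

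The technical heart is to propagate a hypothetical destabilizing pair $(A,B)$ through the filtration of $V_t$. By construction $V_t=W_{t\delta,t\epsilon}\oplus\bigoplus_{m'<\lambda<M'} F'_\lambda\oplus\bigoplus_{m''<\mu<M''} F''_\mu$, where the second and third summands are honest direct summands, while $W_{t\delta,t\epsilon}$ fits into the two exact sequences $0\to F'_{M'}\to W_t\to Z_t\to 0$ and $0\to V_0''\to Z_t\to F'_{m'}\to 0$ recorded in the proof of Lemma~\ref{lem:quiver-2-minima} (with $V_0''=F''_{m''}\oplus F''_{M''}$), of extension classes $t\delta\in H^1(F''^*_{m''}\otimes F'_{M'})$ and $t\epsilon\in H^1(F'^*_{m'}\otimes F''_{M''})$. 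Intersecting and projecting $A$ (and dually $B$) against these sub/quotient pieces and the direct summands — as in the commutative diagrams (\ref{eq:cd-A}) and (\ref{eq:cd-B}) — and using that $\beta_t,\gamma_t$ restrict to the isomorphisms $\beta',\beta''$ and to $\gamma',\gamma''$ off the two deformed directions, one obtains induced $\varphi'$- and $\varphi''$-invariant pairs of subbundles on the two stable summands whose total degrees are nonpositive and sum to $\deg(A\oplus B)$.

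Stability of the two summands then forces each induced pair to be trivial or full, and the only way to assemble a proper degree-zero destabilizer of $(V_t,\varphi_t)$ is to take one summand full and the other zero; but this exhibits $A\oplus B$ as a splitting of one of the two extensions defining $W_{t\delta,t\epsilon}$, contradicting the non-vanishing of $\delta$ and $\epsilon$. I expect this last step to be the main obstacle: in contrast with the single extension handled in Lemma~\ref{lem:beta-gamma-family-polystable}, here there are two extension classes pointing in opposite directions (one merging $F'_{m'}$ only with $F''_{M''}$, the other $F'_{M'}$ only with $F''_{m''}$) together with the $\gamma$-deformation internal to $W_t$, so one must verify that no combination of partial splittings of the two extensions can simultaneously produce a genuine invariant destabilizing subbundle and keep $\delta,\epsilon\neq 0$. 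Once the proper case is excluded, $(V_t,\varphi_t)$ is stable for $t\neq 0$, and the argument of Lemma~\ref{lem:beta-gamma-family-polystable} applies verbatim to conclude that $(V_t,\varphi_t)$ is polystable for all $t$.
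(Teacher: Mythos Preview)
Your proposal is correct and takes essentially the same approach as the paper: the paper's entire proof is the single sentence ``Analogous to the proof of Lemma~\ref{lem:beta-gamma-family-polystable},'' and what you have written is precisely a careful sketch of how that analogy plays out, including an honest identification of the one place (the two separate extension classes $\delta,\epsilon$) where the bookkeeping is more involved than in the single-extension case.
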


\begin{proof}
  Analogous to the proof of Lemma~\ref{lem:beta-gamma-family-polystable}.
\end{proof}

\subsection{$\GL(n,\RR)$-Higgs bundles.}

In this section, we will assume that
$$
(V,\varphi) = \upsilon_*^{\GL(n,\CC)}((W,Q),\psi)
$$
is an $\Sp(2n,\RR)$-Higgs bundle associated to a $\GL(n,\RR)$-Higgs
bundle $((W,Q),\psi)$. Recall that $d = \deg(V)=0$ in this case.
\begin{lemma}
  \label{lem:glnr-spnr-minima}
  Let $(V,\varphi)$ be the $\Sp(2n,\RR)$-Higgs bundle associated to a
  $\GL(n,\RR)$-Higgs bundle $((W,Q),\psi)$ as in
  (\ref{eq:gln-spn-higgs}). If $(V,\varphi)$ is a minimum of $f$ on
  $\mathcal{M}_0$ then $\varphi=0$.
\end{lemma}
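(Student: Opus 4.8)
The plan is to show that a $\GL(n,\RR)$-Higgs bundle $((W,Q),\psi)$ which gives rise to a minimum of $f$ on $\mathcal{M}_0$ must have $\psi = 0$, and hence the associated $\Sp(2n,\RR)$-Higgs bundle has $\varphi = 0$. The key is that the $\GL(n,\RR)$-Higgs bundle moduli space has its own Hitchin functional, whose critical points and minima were studied in \cite{bradlow-garcia-prada-gothen:2004b}, and I want to relate minimality of $(V,\varphi)$ on $\mathcal{M}_0$ to minimality of $((W,Q),\psi)$ on the $\GL(n,\RR)$-moduli space.

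First I would recall the structure of $(V,\varphi)$: from \eqref{eq:gln-spn-higgs} we have $V = W$, $\beta = \psi$ and $\gamma = f\psi f$, where $f\colon W \to W^*$ is the symmetric isomorphism attached to $Q$. Thus $\beta$ and $\gamma$ are essentially identified via $Q$, and in particular $\varphi = 0$ if and only if $\psi = 0$. The strategy is then contrapositive: assuming $\psi \neq 0$, I would produce a deformation of $(V,\varphi)$ through polystable $\Sp(2n,\RR)$-Higgs bundles realizing a nonzero element of $\HH^1(C^\bullet_-(V,\varphi))$, and invoke \eqref{item:non-minima} of Proposition~\ref{prop:minima-HH1} to conclude $(V,\varphi)$ is not a minimum. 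To identify the relevant deformation, I would use the fact that a minimum of $f$ is fixed by the circle action, so $((W,Q),\psi)$ is a complex variation of Hodge structure for the $\GL(n,\RR)$ theory; the minima of the $\GL(n,\RR)$-Hitchin functional are governed by an $\ad(\psi)$-isomorphism criterion analogous to Corollary~\ref{cor:ad-iso-minima}, and the results of \cite{bradlow-garcia-prada-gothen:2004b} describe exactly which Hodge-type configurations are minima.

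The main obstacle, and the substantive content of the argument, is to compare the two notions of minimum: a priori $((W,Q),\psi)$ could be a minimum of the $\GL(n,\RR)$-functional while $(V,\varphi)$ fails to be a minimum of the $\Sp(2n,\RR)$-functional, because the relevant negative-weight deformation complex $C^\bullet_-(V,\varphi)$ on the $\Sp(2n,\RR)$ side is strictly larger than its $\GL(n,\RR)$ counterpart. I therefore expect to exhibit a deformation that is \emph{not} internal to the orthogonal structure: concretely, since $d = \deg V = 0$ here, when $\psi \neq 0$ the quiver-type configurations forcing minimality only arise at the extreme degree $|d| = n(g-1)$ (cf.\ Corollary~\ref{cor:minima} and Theorem~\ref{thm:smooth-minima}), so at $d = 0$ any $\psi \neq 0$ gives a configuration with nonvanishing $\HH^1(C^\bullet_-)$. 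The technical heart will be to write down the deformation explicitly — breaking the symmetry imposed by $Q$ so as to land in a genuinely $\Sp(2n,\RR)$ direction — and then to verify polystability of each member of the family, by an argument parallel to Lemma~\ref{lem:beta-gamma-family-polystable}.

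Finally I would assemble the pieces: assuming $\psi \neq 0$, the circle-fixed property gives a Hodge decomposition; the degree-zero hypothesis rules out the quiver-type minima of Corollary~\ref{cor:minima}, so the $\ad(\varphi)$-isomorphism criterion of Corollary~\ref{cor:ad-iso-minima} must fail; this produces a nonzero class in $\HH^1(C^\bullet_-(V,\varphi))$, which I integrate to a family of polystable $\Sp(2n,\RR)$-Higgs bundles as above; Proposition~\ref{prop:minima-HH1}\eqref{item:non-minima} then shows $(V,\varphi)$ is not a local minimum of $f$ on $\mathcal{M}_0$, contradicting the hypothesis. Hence $\psi = 0$, and therefore $\varphi = 0$, as claimed.
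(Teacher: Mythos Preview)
Your overall strategy is right and matches the paper's: show that if $\psi \neq 0$ then there is a nonzero class in $\HH^1(C^\bullet_-(V,\varphi))$ which integrates to a family of polystable Higgs bundles, and invoke (\ref{item:non-minima}) of Proposition~\ref{prop:minima-HH1}. There is one minor issue and one real gap.

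The minor issue: you invoke Corollary~\ref{cor:ad-iso-minima}, but that requires $(V,\varphi)$ to be stable as an $\Sp(2n,\CC)$-Higgs bundle, and by Theorem~\ref{thm:stability-equivalence-sp2nC} the $\GL(n,\RR)$ case is precisely where this fails. You can still get $\HH^1(C^\bullet_-)\neq 0$ by combining Theorem~\ref{thm:minima} (which needs only simplicity) with Proposition~\ref{prop:ad-iso}, so this is essentially only a citation problem.

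The real gap is the integration step. You correctly flag it as ``the technical heart'' but then in your summary you pass over it (``which I integrate to a family \dots\ as above''). Your abstract argument via Corollary~\ref{cor:minima} only tells you that \emph{some} $C^\bullet_k$ with $k>0$ has nonzero $\HH^1$; it does not identify which $k$, what the Hodge pieces look like, or how to build a family tangent to a chosen class. Since $(V,\varphi)$ is not a smooth point of $\mathcal{M}_0$ here, you cannot bypass this by appealing to (1) of Proposition~\ref{prop:minima-HH1}. The paper fills the gap by first invoking the classification of \cite{bradlow-garcia-prada-gothen:2004b}: the only $\GL(n,\RR)$-minima with $\psi\neq 0$ lie in the Hitchin--Teichm\"uller component, where $W=\bigoplus_{i=-m}^{m} F_i$ with $F_i\simeq K^{-i}$ explicitly. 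Then the highest-weight piece of $C^\bullet_-$ is $\Hom(F_{-m},F_m)\to 0$, giving $\HH^1(C^\bullet_{2m})=H^1(K^{-2m})\neq 0$, and a nonzero class is integrated via the family of extensions $0\to \FF_m\to \WW_a\to \FF_{-m}\to 0$ with $\beta$ and $\gamma$ lifted through the extension maps; polystability of each member is then checked as in Lemma~\ref{lem:beta-gamma-family-polystable}. You mention the \cite{bradlow-garcia-prada-gothen:2004b} classification early on but drop it in your summary in favour of the abstract $d=0$ argument; reinstating the explicit form is exactly what makes the construction go through.
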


\begin{proof}
  In \cite{bradlow-garcia-prada-gothen:2004b} it is shown that there
  are two types of minima on the moduli space $\GL(n,\RR)$-Higgs
  bundles $((W,Q),\psi)$. The first type has $\psi=0$.  The second
  type corresponds to the minimum on the Hitchin--Teichm\"uller
  component and has non-vanishing Higgs field. They are of the
  form:
  \begin{displaymath}
    W = F_{-m} \oplus \dots \oplus F_{m}
  \end{displaymath}
  for line bundles $F_i$, indexed by integers for $n=2m+1$ odd and
  half-integers for $n=2m+1$ even. More precisely, $F_i \simeq K^{-i}$
  so that, in particular, $F_i \simeq F_{-i}^*$. With respect to this
  decomposition of $W$,
  \begin{displaymath}
    Q =
    \begin{pmatrix}
        0 &  \cdots& \cdots & 0 & 1 \\
        \vdots & & & \iddots & 0 \\
        \vdots & & 1 & & \vdots \\
        0 & \iddots & & & \vdots \\
        1 & 0 & \cdots& \cdots & 0
    \end{pmatrix} \qquad\text{and}\qquad
    \psi =
    \begin{pmatrix}
      0 &\cdots &\cdots &\cdots & 0 \\
      1 & 0 & \cdots & \cdots& 0\\
      0 & 1 & 0 & \cdots & 0 \\
      \vdots  &  & \ddots & & \vdots \\
      0 & \cdots & 0 & 1 & 0
    \end{pmatrix}.
  \end{displaymath}
  We shall apply the criterion in (\ref{item:non-minima}) of
  Proposition~\ref{prop:minima-HH1} to show that
  $\upsilon_*^{\GL(n,\CC)}((W,Q),\psi)$ is not a minimum of the
  Hitchin functional for such $((W,Q),\psi)$.

  Recall that $V=W$, $\beta=\psi f^{-1}$ and $\gamma=f\psi$, where
  $f\colon V \to V^*$ is the symmetric isomorphism associated to
  $Q$. Hence the components of $\beta$ and $\gamma$ are the canonical
  sections
  \begin{displaymath}
    \beta\colon F_i^* \to F_{-i+1}\otimes K\quad\text{and}\quad
    \gamma\colon F_i \to F_{-i-1}^* \otimes K.
  \end{displaymath}

  Since $\varphi$ has weight one, the weight of $F_i$ is $i$ (cf.\
  Proposition~\ref{prop:VHS}). It follows that the highest weight
  piece of the complex $C^{\bullet}_-$ defined in
  (\ref{eq:def-C-minus}) is
  \begin{displaymath}
    C^{\bullet}_{2m}\colon \Hom(F_{-m},F_{m}) \to 0.
  \end{displaymath}
  Hence
  \begin{displaymath}
    \HH^1(C^{\bullet}_{2m}) = H^1(\Hom(F_{-m},F_{m}))
    = H^1(K^{-2m}),
  \end{displaymath}
  which is non-vanishing. Take a non-zero $a \in
  H^1(\Hom(F_{-m},F_{m}))$. Let $D$ be the open unit disk and let
  $\FF_j$ be the pull-back of $F_j$ to $X \times D$. Let
  \begin{equation}\label{eq:6}
    0 \to \FF_{m} \to \WW_a \to \FF_{-m} \to 0
  \end{equation}
  be the extension with class
  \begin{displaymath}
    a \otimes 1 \in H^1(\Hom(\FF_{-m},\FF_{m})) \simeq
    H^1(X;\Hom(F_{-m},F_{m})) \otimes H^0(D;\CC).
  \end{displaymath}
  Then $\VV_a = \WW_a \oplus \bigoplus_{i < m} \FF_i$ is a family
  deforming $V$ which is tangent to $a$ at $t=0 \in D$. To obtain the
  required deformation of $(V,\varphi)$ it thus remains to define the
  Higgs field $\varphi_D \in H^0(S^2\VV_a \otimes K)$ deforming
  $\varphi$. The only pieces of $\varphi$ which do not automatically
  lift are the ones involving $F_{-m}$ and $F_{m}$, i.e., $\beta \in
  H^0(\Hom(F_{-m+1}^*,F_m)\otimes K)$ and $\gamma\in
  H^0(\Hom(F_{-m},F_{m-1}^*)\otimes K)$. In order to lift $\beta$,
  clearly we should define $\beta_D$ to be the composition
  \begin{displaymath}
    \FF_{-m+1}^* \xrightarrow{\beta} \FF_{m} \to \WW_a,
  \end{displaymath}
  where the last map is induced from the injection in (\ref{eq:6}). A
  similar construction gives the lift $\gamma_D$ of $\gamma$. We have
  thus constructed a family $(\VV_a,\beta_D,\gamma_D)$ which is
  tangent to $a \in H^1(C^{\bullet}_{2m}(V,\varphi))$ for $t = 0 \in
  D$. Hence Lemma~\ref{lem:poly-stable-gln-deform} below completes the
  proof.
\end{proof}

\begin{lemma}
  \label{lem:poly-stable-gln-deform}
  The $\Sp(2n,\RR)$-Higgs bundle $(V_t,\varphi_t)$, obtained by
  restricting $(\VV_a,\beta_D,\gamma_D)$ constructed in the proof of
  Lemma~\ref{lem:glnr-spnr-minima} above to $X \times \{t\}$, is
  polystable.
\end{lemma}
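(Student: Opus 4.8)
The plan is to follow the template of the proof of Lemma~\ref{lem:beta-gamma-family-polystable}, verifying (poly)stability of $(V_t,\varphi_t)$ through the simplified criterion of Lemma~\ref{primerpas}. We may assume $t\neq 0$, since $(V_0,\varphi_0)=(V,\varphi)=\upsilon_*^{\GL(n,\CC)}((W,Q),\psi)$ is polystable by hypothesis. Recall that $V_t=W_t\oplus\bigoplus_{-m<i<m}F_i$, where $W_t$ is the \emph{non-split} extension $0\to F_m\to W_t\to F_{-m}\to 0$ of class $ta\neq 0$, that each $F_i\simeq K^{-i}$ (so $\deg F_i=-i(2g-2)$ and $\deg V_t=0$), and that, away from the entries involving $F_{\pm m}$, the Higgs field $\varphi_t$ coincides with the canonical weight-one isomorphisms $\beta,\gamma$ of the undeformed quiver bundle $(V,\varphi)$.

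The decisive structural point is that passing from $V$ to $V_t$ destroys the sub-line-bundle $F_{-m}\subset V$ of \emph{highest} degree $m(2g-2)$: for $t\neq 0$ the line bundle $F_{-m}$ survives only as the quotient $W_t/F_m$, not as a subbundle. First I would establish semistability. By Lemma~\ref{primerpas} a failure of semistability produces subbundles $A\subset V_t$ and $B\subset V_t^*$ with $\beta_t(B)\subset A\otimes K$, $\gamma_t(A)\subset B\otimes K$ and $\deg(A\oplus B)>0$; using Remark~\ref{rem:matrix-form} to record the invariance conditions and Remark~\ref{formuleta} for the degrees of annihilators, I would split $A$ and $B$ along the exact sequence defining $W_t$ and along the remaining weight summands exactly as in the diagram chases of Lemma~\ref{lem:beta-gamma-family-polystable}. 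Since the restriction of $\varphi_t$ to the complement of $W_t$ is identical to that of $(V,\varphi)$, the induced subobjects would assemble into a configuration of the undeformed polystable bundle of non-negative total degree that meets $W_t$ only through $F_m$; pushing the required positive degree through $W_t$ then forces a holomorphic splitting $F_{-m}\to W_t$, contradicting $ta\neq 0$.

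For polystability it then remains to analyze a $\varphi_t$-invariant pair $A\oplus B$ with $\deg(A\oplus B)=0$ and to show it respects a genuine holomorphic splitting of $(V_t,\varphi_t)$ (compatibly with the isotropy decomposition of Lemma~\ref{segonpas}), or else cannot exist. By the same weight analysis such a pair is graded with respect to the $F_i$, except for its interaction with $W_t$; the crucial step is to rule out a degree-zero invariant piece meeting $W_t$ properly, since, just as for semistability, this would split the extension $0\to F_m\to W_t\to F_{-m}\to 0$ or produce a complementary invariant line subbundle mapping isomorphically to $F_{-m}$, again contradicting $ta\neq 0$. Hence every degree-zero invariant subbundle avoids $W_t$, reducing to the already-understood situation at $t=0$ and yielding the required splitting; thus $(V_t,\varphi_t)$ is polystable.

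I expect the main obstacle to be precisely this bookkeeping: tracing the invariance conditions $\beta_t(B)\subset A\otimes K$ and $\gamma_t(A)\subset B\otimes K$ through the non-split extension $W_t$ and through the compositions defining $\beta_D,\gamma_D$, so as to pin down which subbundles can be $\varphi_t$-invariant and to extract from each putative destabilizer an honest splitting of the extension. Granting this, the contradiction with the non-vanishing of the class $ta$ is the same mechanism that forces the analogous extensions in Lemmas~\ref{lem:beta-gamma-family-polystable} and \ref{lem:poly-stable-quiver-beta-deform} to split, and the argument requires no new analytic input beyond Lemma~\ref{primerpas} and the degree formula of Remark~\ref{formuleta}.
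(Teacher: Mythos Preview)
Your proposal is correct and takes essentially the same approach as the paper: the paper's entire proof of this lemma is the single sentence ``Analogous to the proof of Lemma~\ref{lem:beta-gamma-family-polystable}.'' Your sketch simply fills in how that template adapts to the present setting, and the mechanism you identify --- that a would-be destabilizer forces a splitting of the nontrivial extension $0\to F_m\to W_t\to F_{-m}\to 0$ --- is exactly the one used in Lemma~\ref{lem:beta-gamma-family-polystable}.
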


\begin{proof}
  Analogous to the proof of Lemma~\ref{lem:beta-gamma-family-polystable}.
\end{proof}

\section{Counting components: main results}
\label{sec:count-comp-main}

\subsection{Connected components of $\cM_d$ for $d=0$ and  $\abs{d}=n(g-1)$}
\label{counting}

With the description of the minima of the Hitchin functional given in
Theorem~\ref{thm:all-minima} at our disposal we are now in a position
to complete the count of connected components of the moduli space in
the situation of $d=0$ and $\abs{d} = n(g-1)$.

\begin{proposition}
  The quiver type minima belong to a Hitchin--Teichm\"uller component
  of the moduli space.  In particular, they are stable and simple and
  correspond to smooth points of the moduli space.
\end{proposition}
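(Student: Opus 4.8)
The plan is to read off the structural properties of a quiver type minimum from the minima analysis of the previous sections, and then to identify such a minimum with the base point of one of Hitchin's sections.

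First I would establish that a quiver type minimum $(V,\varphi)$ is stable and simple. By definition it does not lie in $\mathcal{N}_d$, so $\beta\neq 0$ and $\gamma\neq 0$. Proposition~\ref{prop:strict-polystable-minima} asserts that any local minimum whose decomposition in the structure Theorem~\ref{thm:polystable-spnr-higgs} is non-trivial must satisfy $\beta=0$ or $\gamma=0$; hence the decomposition of $(V,\varphi)$ must be trivial, so $(V,\varphi)$ is a single summand of one of the four listed types. By Proposition~\ref{prop:hitchin-additive} it is then a minimum on the moduli space of $G_1$-Higgs bundles for the corresponding group $G_1$. The types $\U(n)$ and $\GL(n,\RR)$ force $\varphi=0$ at a minimum (the former tautologically, the latter by Lemma~\ref{lem:glnr-spnr-minima}), while the $\U(p,q)$ type forces $\beta=0$ or $\gamma=0$ (the $\U(p,q)$-minima having $\tilde\beta=0$ or $\tilde\gamma=0$, as used in the proof of Proposition~\ref{prop:strict-polystable-minima}); each of these would place $(V,\varphi)$ in $\mathcal{N}_d$, contrary to assumption. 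Thus $G_1=\Sp(2n,\RR)$ and $(V,\varphi)$ is a stable and simple $\Sp(2n,\RR)$-Higgs bundle. Finally, item~(5) of Corollary~\ref{cor:minima} shows that a Higgs bundle of this explicit form is stable as an $\SL(2n,\CC)$-, and hence as an $\Sp(2n,\CC)$-Higgs bundle.

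Smoothness is then immediate: being simple and stable as an $\Sp(2n,\CC)$-Higgs bundle, $(V,\varphi)$ represents a smooth point of $\mathcal{M}_d$ by Proposition~\ref{prop:smoothness} (applied with $G^{\CC}=\Sp(2n,\CC)$).

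The remaining, and main, point is membership in a Hitchin--Teichm\"uller component. Here I would recall Hitchin's construction \cite{hitchin:1992} for the split real form $\Sp(2n,\RR)$: after fixing a square root $L=K^{1/2}$ and a principal $\mathfrak{sl}_2$-triple in $\mathfrak{sp}(2n,\CC)$, one obtains a section $s$ of the Hitchin fibration on $\bigoplus_{i=1}^{n}H^0(K^{2i})$ whose image lies in a single connected component, homeomorphic to a vector space and hence connected. I would then compute the weight decomposition of the $2n$-dimensional representation under this principal $\mathfrak{sl}_2$ and check that the base point $s(0)$, obtained by setting all differentials to zero, is a $\CC^*$-fixed point whose underlying data is precisely $V=\bigoplus_{\lambda}L^{\pm1}K^{-2\lambda}$ with the anti-diagonal maps $\beta,\gamma$ of Corollary~\ref{cor:minima}. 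Matching this with the given quiver minimum, for the appropriate choice of $L$, shows that $(V,\varphi)=s(0)$ lies in the image of $s$, hence in the corresponding Hitchin--Teichm\"uller component; since $K$ has $2^{2g}$ square roots, this is consistent with the $2^{2g}$ such components. The hard part is exactly this explicit identification of the principal-$\mathfrak{sl}_2$ base point with the bundle $\bigoplus_{\lambda}L^{\pm1}K^{-2\lambda}$ and its prescribed Higgs field; once it is in place, the component assertion follows formally from the connectedness of the Hitchin section.
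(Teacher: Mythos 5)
Your proposal is correct. For the central claim --- membership in a Hitchin--Teichm\"uller component --- you take the same route as the paper: its entire proof is a one-line appeal to Hitchin's explicit description in \cite{hitchin:1992} of the Higgs bundles in that component, which is precisely the identification of the quiver minimum with the origin of the Hitchin section (all differentials set to zero) that you outline; like the paper, you leave that weight computation at citation level, so you are no less rigorous than the source. Where you genuinely differ is in the treatment of the secondary assertions: the paper reads stability, simplicity and smoothness off from membership in the Hitchin component (``in particular''), whereas you establish them first and independently of Hitchin, using the paper's own machinery --- Proposition~\ref{prop:strict-polystable-minima} forces the decomposition of Theorem~\ref{thm:polystable-spnr-higgs} to be trivial, the $\U(n_i)$, $\U(p_i,q_i)$ and $\GL(n_i,\RR)$ minima results rule out the other summand types, item~(\ref{item:5}) of Corollary~\ref{cor:minima} gives stability as an $\Sp(2n,\CC)$-Higgs bundle, and Proposition~\ref{prop:smoothness} gives smoothness. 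This reversal buys self-containedness: the smoothness and simplicity of these points then rest only on results already proved in the paper rather than on properties of Hitchin's section (and it is in fact the same package of facts the paper itself invokes inside the proof of Proposition~\ref{prop:strict-polystable-minima}). The mild cost is length: since Theorem~\ref{thm:all-minima} already hands you the explicit quiver form, stability and simplicity can also be obtained more directly from that form, without first passing through the decomposition argument.
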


\begin{proof}
  This is immediate from the description of the $\Sp(2n,\RR)$-Higgs
  bundles of the Hitchin--Teichm\"uller component given in
  \cite{hitchin:1992}.
\end{proof}

\begin{proposition}\label{prop:char-classes}
  Assume that $d= -n(g-1)$ and let $(V,\beta,\gamma)$ be a quiver type
  minimum for the Hitchin functional.  Let $L_0$ be a fixed square
  root of the canonical bundle, giving rise to the Cayley
  correspondence isomorphism $\mathcal{M}_{-n(g-1)}
  \xrightarrow{\simeq} \mathcal{M}'$ of Theorem~\ref{thm:cayley}, via
  $V \mapsto W \otimes L_0$.  Then the following holds.
  \begin{enumerate}
  \item The second Stiefel--Whitney class $w_2(W) \in H^2(X,\ZZ_2)$
    vanishes.
  \item If $n$ is odd, the first Stiefel--Whitney class $w_1(W)$
    corresponds to the two-torsion point $L^{-1}L_0$ in the Jacobian
    of $X$ under the standard identification $J_2 \simeq H^1(X,\ZZ_2)$.
  \item If $n$ is even, the first Stiefel--Whitney class $w_1(W) \in
    H^1(X,\ZZ_2)$ vanishes.
  \end{enumerate}
\end{proposition}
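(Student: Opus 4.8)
The plan is to compute the Stiefel–Whitney classes directly from the explicit description of the quiver type minima given in Corollary~\ref{cor:minima}, using the Cayley correspondence to pass from the $\Sp(2n,\RR)$-Higgs bundle $(V,\beta,\gamma)$ to the orthogonal bundle $(W,Q)$ with $W = V^*\otimes L_0$. The crucial input is that, for a quiver type minimum with $d=-n(g-1)$, the bundle $V$ is a direct sum of line bundles: $V = \bigoplus_{\lambda} L^{-1}K^{-2\lambda}$ (for $n$ odd) or $V = \bigoplus_\lambda LK^{-2\lambda}$ (for $n$ even). Dualizing and twisting by $L_0 = K^{1/2}$, I get an explicit line bundle decomposition of $W$, and the quadratic form $Q$ has the anti-diagonal form pairing the summand indexed by $\lambda$ with the one indexed by $-\lambda$.

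First I would write out the summands of $W$ explicitly. For $n$ odd, $W = V^*\otimes L_0 = \bigoplus_{\lambda=-q}^{q} LK^{2\lambda}\otimes L_0$; for $n$ even, $W = \bigoplus_{\lambda=-q}^{q+1} L^{-1}K^{2\lambda}\otimes L_0$. The key structural fact is that $Q$ identifies the summand at $\lambda$ with the dual of the summand at $-\lambda$ (for $n$ odd) or at some reflected index (for $n$ even). The total determinant line bundle $\det W$ then governs $w_1$, and its being a square (or the structure of the pairing) governs whether the bundle reduces to $\SO(n)$. The plan is: compute $\det W$ as a product of the line bundle summands, observe the telescoping cancellation of the $K^{2\lambda}$ factors among paired indices, and read off $w_1$ as the class of the ``leftover'' line bundle — which for $n$ odd is $L^{-1}L_0$ and for $n$ even is trivial.

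For $w_2$, the strategy is to exhibit an explicit isotropic/orthogonal structure that produces a reduction lifting to $\Pin(n)$, or equivalently to show $w_2 = 0$ by a splitting argument. Since $W$ is an orthogonal direct sum of hyperbolic planes $LK^{2\lambda}L_0 \oplus L^{-1}K^{-2\lambda}L_0^{-1}$ (paired index $\lambda$ with $-\lambda$), together with at most one ``central'' self-paired line bundle, each hyperbolic plane carries a canonical $\O(2,\CC)$-structure whose associated real $\O(2)$-bundle is easily seen to lift to $\Pin(2)$; the Whitney sum formula for $w_2$ then reduces the computation to the central summand, which contributes nothing. I would verify that the hyperbolic summands are orthogonal-trivial in the relevant $\ZZ_2$-cohomological sense, so that $w_2(W)$ vanishes.

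The main obstacle I expect is the bookkeeping around the parity of $n$ and the index conventions: correctly matching the weights $\lambda$ (which run over integers or half-integers depending on $n$) to the line bundle summands of $W$, and ensuring the pairing under $Q$ is exactly as claimed so that the telescoping in $\det W$ is clean. In particular, identifying precisely which torsion line bundle represents $w_1$ in the odd case — namely checking that $L^{-1}L_0$ is indeed $2$-torsion (this follows since $L^2 = K = L_0^2$, so $(L^{-1}L_0)^2 = \OOO$) and that it is the correct obstruction class under the identification $J_2 \simeq H^1(X,\ZZ_2)$ — is where care is needed. The rest should follow from the Whitney sum formula and the explicit diagonal/anti-diagonal shape of $Q$ and $\psi$.
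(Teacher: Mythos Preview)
Your overall strategy is sound and is exactly what the paper intends: its entire proof reads ``Easy (similar to the arguments given in \cite{hitchin:1992} for $G = \SL(n,\RR)$)'', and Hitchin's argument there is precisely to read off the Stiefel--Whitney classes from the explicit line-bundle decomposition, together with the observation that the orthogonal bundle splits as a sum of hyperbolic planes (plus possibly one self-dual line). So there is no alternative route to compare; you are supplying the details the paper omits.

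There is, however, a concrete slip in your setup that you should fix before the telescoping works. The Cayley correspondence of Theorem~\ref{thm:cayley} is written for $d = n(g-1)$, where $\gamma$ is the isomorphism and $W = V^* \otimes L_0$. In this proposition $d = -n(g-1)$, so by Proposition~\ref{mw-higgs} it is $\beta\colon V^* \to V\otimes K$ that is the isomorphism, and the dual Cayley construction yields $W = V \otimes L_0$ with $Q = \beta \otimes I_{L_0^{-1}}\colon W^* \to W$. With your formula $W = V^*\otimes L_0$ the paired summands multiply to $(LL_0)^2 = K^2$ rather than $\OOO$, so $\det W$ is not even $2$-torsion. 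Once you use $W = V\otimes L_0$, everything you wrote goes through: for $n$ odd the summands are $L^{-1}L_0 K^{-2\lambda}$, the central one ($\lambda=0$) is the $2$-torsion line $L^{-1}L_0$ giving $w_1$, and the off-centre pairs $W_\lambda \oplus W_{-\lambda}$ are hyperbolic planes with vanishing $w_1,w_2$; for $n$ even the pairing is $\lambda \leftrightarrow 1-\lambda$ (not $\lambda\leftrightarrow -\lambda$), there is no central summand, and $W$ is a pure sum of hyperbolic planes, so both classes vanish. This is exactly the bookkeeping you flagged, but as written your formula for $W$ would give the wrong (in fact non-torsion) determinant.
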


\begin{proof}
  Easy (similar to the arguments given in \cite{hitchin:1992} for $G =
  \SL(n,\RR)$).
\end{proof}

\begin{theorem}\label{connected-components-higgs}
Let $X$ be a compact Riemann surface of genus $g$.
Let $\cM_d$ be the moduli space of polystable
$\Sp(2n,\RR)$-Higgs bundles of degree $d$.  Let $n\geq 3$. Then

\begin{enumerate}
\item $\cM_0$ is non-empty and connected;
\item $\cM_{\pm n(g-1)}$ has $3. 2^{2g}$ non-empty connected components.
\end{enumerate}
\end{theorem}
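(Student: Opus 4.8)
The plan is to read off the component count from the characterization of the minima of the Hitchin functional $f$ in Theorem~\ref{thm:all-minima}, exploiting the general Morse-theoretic principle behind Proposition~\ref{prop:ps-top}: since $f$ is proper and attains its minimum on every connected component of any closed subspace $\mathcal{M}'\subseteq\mathcal{M}$, the inclusion of the subvariety $\mathcal{N}'$ of local minima induces a surjection $\pi_0(\mathcal{N}')\to\pi_0(\mathcal{M}')$, so that $\mathcal{M}'$ has \emph{at most} as many components as $\mathcal{N}'$. For part (1), take $\mathcal{M}'=\cM_0$. By Theorem~\ref{thm:all-minima} together with Remark~\ref{rem:N(d)-sign}, the minima are exactly $\mathcal{N}_0=\{(V,\beta,\gamma)\mid\beta=\gamma=0\}$, which by Remark~\ref{phi=0} is the moduli space of polystable bundles of rank $n$ and degree $0$. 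This is non-empty and connected by Narasimhan--Seshadri \cite{narasimhan-seshadri:1965}, so Proposition~\ref{prop:ps-top} gives that $\cM_0$ is non-empty and connected.

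For part (2), by the duality isomorphism of Proposition~\ref{prop:duality-iso} I may assume $d=n(g-1)$. The Cayley correspondence (Theorem~\ref{thm:cayley}) identifies $\cM_{\max}$ with the moduli space $\cM'$ of polystable $K^2$-twisted $\GL(n,\RR)$-Higgs pairs, and the Stiefel--Whitney decomposition \eqref{stiefel-whitney} gives $\cM'=\bigsqcup_{w_1,w_2}\cM'_{w_1,w_2}$ with $w_1\in H^1(X,\ZZ_2)\cong\ZZ_2^{2g}$ and $w_2\in H^2(X,\ZZ_2)\cong\ZZ_2$. Since $(w_1,w_2)$ is a locally constant topological invariant, each $\cM'_{w_1,w_2}$ is a union of connected components, so it suffices to count components piece by piece. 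Each piece is non-empty, since it contains the \emph{orthogonal locus} $\{(W,Q,0)\}$ of polystable orthogonal bundles with the prescribed $(w_1,w_2)$, and for $n\geq 3$ every such pair $(w_1,w_2)$ is realized.

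I would then analyze the minima of $f$ on each $\cM'_{w_1,w_2}$. By Theorem~\ref{thm:all-minima}, transported through Cayley, these are of two kinds: the orthogonal locus $\mathcal{O}_{w_1,w_2}$ (corresponding to $\beta=0$, i.e.\ $\psi=0$), and the quiver type minima. The first input is that $\mathcal{O}_{w_1,w_2}$ is connected, being the moduli space of polystable orthogonal bundles of fixed topological type; this is the analogue of Narasimhan--Seshadri for $\O(n,\CC)$-bundles and follows from the known structure of such moduli spaces. The second input is Proposition~\ref{prop:char-classes}: every quiver type minimum has $w_2=0$, and as the square root $L$ of $K$ ranges over the $2^{2g}$ theta characteristics it yields exactly one rigid (isolated) minimum, lying in a Hitchin--Teichm\"uller component; moreover $w_1=L^{-1}L_0$ when $n$ is odd (so the $2^{2g}$ quiver minima are distributed one to each piece with $w_2=0$) and $w_1=0$ when $n$ is even (so all $2^{2g}$ of them lie in $\cM'_{0,0}$).

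Combining these, the minima in $\cM'_{w_1,w_2}$ form the connected set $\mathcal{O}_{w_1,w_2}$ together with the isolated quiver minima it contains, and the Morse principle above bounds $|\pi_0(\cM'_{w_1,w_2})|$ above by the resulting number of components. The reverse inequality, which I expect to be the \textbf{main obstacle}, is the \emph{separation}: one must show each quiver minimum lies in a component distinct from $\mathcal{O}_{w_1,w_2}$ and from the other quiver minima. For this I would invoke Hitchin \cite{hitchin:1992}, recalled in the Proposition just above the statement: the quiver type minima lie in the $2^{2g}$ Hitchin--Teichm\"uller components, each of which is a \emph{full} connected component of the moduli space, so each quiver minimum is automatically separated from the orthogonal locus (where $\psi=0$) and from the others. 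Assembling the count, for $n$ odd the $2^{2g}$ pieces with $w_2=1$ contribute one component each while the $2^{2g}$ pieces with $w_2=0$ contribute two each, giving $2^{2g}+2\cdot 2^{2g}=3\cdot 2^{2g}$; for $n$ even the $2^{2g+1}-1$ pieces other than $\cM'_{0,0}$ contribute one each and $\cM'_{0,0}$ contributes $1+2^{2g}$, giving $(2^{2g+1}-1)+(1+2^{2g})=3\cdot 2^{2g}$. Through the Cayley isomorphism and Proposition~\ref{prop:duality-iso} the same count holds for $\cM_{\pm n(g-1)}$, completing the proof.
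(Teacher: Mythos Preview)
Your proof is correct and follows essentially the same approach as the paper: both use Theorem~\ref{thm:all-minima} to identify the minima, the Cayley correspondence and Stiefel--Whitney decomposition for the maximal case, connectedness of the moduli of polystable $\O(n,\CC)$-bundles for each $(w_1,w_2)$, and Hitchin's result that the Teichm\"uller components are genuine connected components to effect the separation. The paper packages the count slightly differently (taking the complement $\tilde{\mathcal{M}}_{w_1,w_2}$ of the Hitchin components rather than your parity-of-$n$ casework) and is more explicit that Ramanathan's connectedness result needs a small extension since $\O(n,\CC)$ is not connected, but the substance is the same.
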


\begin{proof}
  (1) When $d=0$, we have from Theorem~\ref{thm:all-minima} that the
  subspace of minima of the Hitchin functional on $\mathcal{M}_0$ is
  $\mathcal{N}_0$.  It is immediate from
  Theorem~\ref{thm:stability-equivalence} that $\mathcal{N}_0$ is
  isomorphic to the moduli space of poly-stable vector bundles of
  degree zero. This moduli space is well known to be non-empty and
  connected and hence $\mathcal{M}_0$ is non-empty and connected.

  (2) For definiteness assume that $d = -n(g-1)$.  The decomposition
  (\ref{stiefel-whitney}) given by the Cayley correspondence gives a
  decomposition
  \begin{equation}\label{stiefel-whitney2}
    \cM_{-n(g-1)}=\bigcup_{w_1,w_2} \cM_{w_1,w_2},
  \end{equation}
  where $\cM_{w_1,w_2}$ corresponds to $\cM'_{w_1,w_2}$ under the
  Cayley correspondence.

  For each possible value of $(w_1,w_2)$, there may be one or more
  corresponding Hitchin-Teichm\"uller components contained in
  $\mathcal{M}_{w_1,w_2}$ (cf.\ Proposition~\ref{prop:char-classes});
  denote by $\tilde{\mathcal{M}}_{w_1,w_2}$ the complement to these.
  Since minima in $\mathcal{N}_{-n(g-1)}$ (i.e.\ with $\gamma = 0$)
  clearly do not belong to Hitchin--Teichm\"uller components, we see
  that the subspace of minima of $\tilde{\mathcal{M}}_{w_1,w_2}$
  consists of those $(V,\beta,\gamma)$ which have $\gamma=0$.  Thus,
  under the Cayley correspondence, this subspace of minima is
  identified with the moduli space of poly-stable
  $\mathrm{O}(n,\CC)$-bundles with the given Stiefel--Whitney classes
  $(w_1,w_2)$. 
  
  If $G$ is a connected reductive algebraic group over $\CC$,
  Ramanathan proves in \cite[Proposition~4.2]{ramanathan:1975} that the
  moduli space of $G$-principal bundles of a fixed topological type is
  connected. However, the hypothesis that $G$ is connected is not used
  in Ramanathan's proof (all that is required for his argument is that,
  when considering a holomorphic family of $G$-principal bundles, 
  semistability is an open condition in the analytic topology), so the
  statement extends immediately to
  $G=\mathrm{O}(n,\CC)$. It
  follows that the subspace of minima on
  $\tilde{\mathcal{M}}_{w_1,w_2}$ is connected and, hence, this space
  itself is connected by Proposition~\ref{prop:ps-top}.  Additionally,
  each $\tilde{\mathcal{M}}_{w_1,w_2}$ is non-empty (see, e.g.,
  \cite{oliveira:2005}). Therefore, there is one connected component
  $\tilde{\mathcal{M}}_{w_1,w_2}$ for each of the $2^{2g+1}$ possible
  values of $(w_1,w_2)$.  Adding to this the $2^{2g}$
  Hitchin--Teichm\"uller components gives a total of $3.2^{2g}$
  connected components, as stated.

  This accounts for all the connected components of
  $\mathcal{M}_{-n(g-1)}$ since there are no other minima of the
  Hitchin functional.
\end{proof}

\subsection{Representations and $\Sp(2n,\RR)$-Higgs bundles}

Let $\cR:=\cR(\Sp(2n,\RR))$ be the moduli space of reductive
representations of $\pi_1(X)$ in $\Sp(2n,\RR)$. Since
$\U(n)\subset \Sp(2n,\RR)$ is a maximal compact subgroup, we have
$$
\pi_1(\Sp(2n,\RR))\simeq \pi_1(\U(n))\simeq \ZZ,
$$
and the topological invariant attached to a representation
$\rho\in\cR$ is hence an element $d=d(\rho)\in\ZZ$. This integer is
called the \textbf{Toledo invariant} and coincides with the first
Chern class of a reduction to a $\U(n)$-bundle of the flat
$\Sp(2n,\RR)$-bundle associated to $\rho$.

Fixing the invariant $d\in \ZZ$ we consider, as in \eqref{eq:RdG},
$$
\mathcal{R}_d:=\{\rho \in \mathcal{R}\;\;\;\mbox{such that}
\;\;\; d(\rho)=d\}.
$$

\begin{proposition}\label{rep-duality}
The transformation  $\rho\mapsto {(\rho^t)}^{-1}$ in $\cR$ induces an
isomorphism of the moduli  spaces $\cR_d$ and $\cR_{-d}$.
\end{proposition}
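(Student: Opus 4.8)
The plan is to realize $\rho\mapsto(\rho^t)^{-1}$ as the representation-theoretic shadow of the Higgs bundle duality of Proposition~\ref{prop:duality-iso}, and to read off the effect on the Toledo invariant from there. First I would check well-definedness at the group level: the assignment $\tau\colon A\mapsto (A^t)^{-1}$ preserves $\Sp(2n,\RR)\subset\GL(2n,\RR)$, since $A^tJA=J$ gives $((A^t)^{-1})^tJ(A^t)^{-1}=J$, and it is a homomorphism because the two order-reversals cancel, $((AB)^t)^{-1}=(A^t)^{-1}(B^t)^{-1}$. Thus post-composition $\rho\mapsto\tau\circ\rho$ carries homomorphisms $\pi_1(X)\to\Sp(2n,\RR)$ to homomorphisms, preserves reductivity (automorphisms send sums of irreducibles to sums of irreducibles), and is equivariant for overall conjugation, so it descends to a continuous self-map of $\cR$. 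Since $\tau$ is an involution, this descended map is its own inverse and is therefore a homeomorphism of $\cR$ onto itself.

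It then remains to see that this homeomorphism sends $\cR_d$ to $\cR_{-d}$. Here I would invoke the non-abelian Hodge theorem (Theorem~\ref{na-Hodge}), which gives homeomorphisms $\cR_d\simeq\cM_d$ and $\cR_{-d}\simeq\cM_{-d}$. Under this correspondence the contragredient representation $\tau\circ\rho$ corresponds to the dual Higgs bundle: if $(V,\beta,\gamma)$ is attached to $\rho$, then $(V^*,\gamma^t,\beta^t)$ is attached to $\tau\circ\rho$. This is exactly the map of Proposition~\ref{prop:duality-iso}, which is an isomorphism $\cM_d\simeq\cM_{-d}$; since the Toledo invariant is the degree of $V$ and $\deg V^*=-\deg V$, it changes from $d$ to $-d$. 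Chasing the square
\[
\begin{CD}
\cR_d @>>> \cR_{-d}\\
@VVV @VVV\\
\cM_d @>>> \cM_{-d}
\end{CD}
\]
(top row $\tau$, bottom row the Higgs duality, vertical arrows non-abelian Hodge) then yields the asserted isomorphism $\cR_d\simeq\cR_{-d}$, together with its explicit description on representations.

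I expect the main obstacle to be the middle compatibility: verifying that, under non-abelian Hodge theory, the group-level operation $\rho\mapsto(\rho^t)^{-1}$ really corresponds to the Higgs-level duality $(V,\beta,\gamma)\mapsto(V^*,\gamma^t,\beta^t)$, and in particular pinning down the sign of the Toledo invariant. The delicate point is that the standard representation of $\Sp(2n,\RR)$ is self-dual via the symplectic form, so the dual flat bundle $E_\rho^\vee=E_{\tau\circ\rho}$ is abstractly isomorphic to $E_\rho$; one must check that a $\U(n)$-reduction producing the rank-$n$ complex bundle $V$ of degree $d$ dualizes to the \emph{distinguished} (harmonic, positivity-compatible) $\U(n)$-reduction of $E_\rho^\vee$ associated with $V^*$, so that the correct degree $-d$ is recorded. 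Alternatively, and more robustly, I would simply \emph{define} the Toledo invariant on the Higgs side as $\deg V$ (legitimate by non-abelian Hodge), in which case the sign flip is immediate from Proposition~\ref{prop:duality-iso} and the only remaining work is the clean identification of the contragredient with the dual Higgs bundle, which follows from matching $(V^*,\gamma^t,\beta^t)$ with the dual of the associated $\Sp(2n,\CC)$-Higgs bundle $(V\oplus V^*,\Phi)$ of \eqref{eq:sp2nC-associated-higgs}.
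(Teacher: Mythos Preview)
The paper states Proposition~\ref{rep-duality} without proof, so there is nothing to compare against directly. Your proposal, however, has a genuine gap --- and in fact the proposition as literally written is problematic.

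The issue is the one you yourself flag at the end but do not follow to its conclusion. For $A\in\Sp(2n,\RR)$ the relation $A^tJA=J$ gives $(A^t)^{-1}=JAJ^{-1}$, and since $J\in\Sp(2n,\RR)$ (from $J^t=-J$ and $J^2=-I$ one checks $J^tJJ=J$), the automorphism $\tau\colon A\mapsto(A^t)^{-1}$ is \emph{inner}. Hence $\rho\mapsto(\rho^t)^{-1}$ is conjugation by $J$ and descends to the \emph{identity} on $\cR$. In particular it preserves the Toledo invariant and cannot carry $\cR_d$ to $\cR_{-d}$ for $d\neq 0$. Your commutative square therefore cannot commute: the top row is the identity while the bottom row (the Higgs duality of Proposition~\ref{prop:duality-iso}) is not. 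The ``delicate point'' you identify --- self-duality of the standard representation via the symplectic form --- is exactly what kills the argument, not merely a technicality to be checked.

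What is true is that $\cR_d\simeq\cR_{-d}$, and there are two honest ways to exhibit it. One is your own fallback: transport the Higgs-bundle duality $(V,\beta,\gamma)\mapsto(V^*,\gamma^t,\beta^t)$ across the non-abelian Hodge homeomorphism of Proposition~\ref{Md-Rd}; the resulting map on representations is well-defined but is not $\rho\mapsto(\rho^t)^{-1}$, and indeed depends on the chosen complex structure on $X$. The other is purely representation-theoretic: conjugate by an element $D\in\GSp(2n,\RR)$ of similitude $-1$, for instance $D=\mathrm{diag}(I_n,-I_n)$. Since $D^tJD=-J$, conjugation by $D$ is an automorphism of $\Sp(2n,\RR)$ that is \emph{not} inner (were it conjugation by some $g\in\Sp(2n,\RR)$, then $D^{-1}g$ would be central, forcing $D=\pm g\in\Sp(2n,\RR)$, a contradiction). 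On the maximal compact $\U(n)\subset\Sp(2n,\RR)$ this automorphism restricts to complex conjugation, which sends a $\U(n)$-reduction of degree $d$ to one of degree $-d$; thus $\rho\mapsto D\rho D^{-1}$ genuinely induces $\cR_d\simeq\cR_{-d}$.
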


As shown in Turaev \cite{turaev:1984} (cf.\ also Domic--Toledo
\cite{domic-toledo:1987}, the Toledo invariant $d$ of a representation
satisfies the Milnor--Wood type inequality
\begin{equation}\label{mw2}
\abs{d} \leq n(g-1).
\end{equation}

As a consequence we have the following.

\begin{proposition}
The moduli space $\cR_d$ is empty unless
$$
\abs{d} \leq n(g-1).
$$
\end{proposition}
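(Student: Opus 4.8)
The plan is to deduce this emptiness statement immediately from the Milnor--Wood inequality (\ref{mw2}) that has just been recorded. The key point is that, by definition, $\cR_d$ consists of classes of reductive representations $\rho$ whose Toledo invariant equals $d$. Since every $\rho\in\cR$ satisfies $\abs{d(\rho)}\leq n(g-1)$ by (\ref{mw2}), there can be no representation with Toledo invariant $d$ once $\abs{d}>n(g-1)$, and hence $\cR_d=\emptyset$ in that range. This is the entire argument, and there is essentially no obstacle to it beyond having (\ref{mw2}) available.

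For completeness I would also observe that the same conclusion can be obtained entirely on the Higgs bundle side, giving a derivation that is self-contained within the present paper and independent of the bounded cohomology arguments of Turaev and Domic--Toledo. Indeed, by the non-abelian Hodge correspondence of Theorem~\ref{na-Hodge} there is a homeomorphism $\cR_d\simeq\cM_d$, and Proposition~\ref{non-emptiness-higgs} asserts that $\cM_d$ is empty unless $\abs{d}\leq n(g-1)$ (this in turn resting on the Higgs-theoretic bound of Proposition~\ref{mw-higgs}). Transporting the emptiness of $\cM_d$ across the homeomorphism then yields the claim for $\cR_d$.

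The one point that must be verified carefully — really a bookkeeping matter rather than a genuine difficulty — is that the integer $d$ labelling $\cR_d$ is the same one that governs emptiness on the Higgs side. This is ensured by the identification $\pi_1(\Sp(2n,\RR))\simeq\pi_1(\U(n))\simeq\ZZ$, under which the topological invariant $c(\rho)$ of a representation corresponds to $\deg V$ of the associated Higgs bundle, both normalized to the same integer $d$; so no reindexing or sign discrepancy arises under $\cR_d\simeq\cM_d$. Confirming this compatibility of invariants is the only step worth writing out, and once it is in place either route closes the proof in a single line.
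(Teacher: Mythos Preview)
Your proposal is correct and your primary argument is exactly the paper's: the proposition is stated there immediately after (\ref{mw2}) with the words ``As a consequence we have the following,'' and no further proof is given. Your additional remarks about the alternative Higgs-bundle route via Proposition~\ref{non-emptiness-higgs} and Theorem~\ref{na-Hodge} are valid supplementary observations, but they go beyond what the paper itself does.
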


As a special case of
Theorem \ref{na-Hodge} we have the following.

\begin{proposition}\label{Md-Rd}
The moduli spaces $\cR_d$ and $\cM_d$ are homeomorphic.
\end{proposition}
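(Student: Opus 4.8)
The plan is to deduce this directly from the non-abelian Hodge Theorem, Theorem~\ref{na-Hodge}, by checking that $G=\Sp(2n,\RR)$ fits its hypotheses. First I would recall that $\Sp(2n,\RR)$ is a connected semisimple real Lie group, and that (as noted at the start of Section~\ref{sec:spnr-higgs}) its maximal compact subgroup is $H=\U(n)$, so that $H^{\CC}=\GL(n,\CC)$. Thus the input data $G$ and $H\subseteq G$ required by Theorem~\ref{na-Hodge} are exactly the ones present here.

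Next I would match up the topological invariants on the two sides. Theorem~\ref{na-Hodge} provides a homeomorphism $\mathcal{R}_d(G)\simeq\mathcal{M}_d(G)$ for each $d\in\pi_1(G)\simeq\pi_1(H)$. Since $\pi_1(\Sp(2n,\RR))\simeq\pi_1(\U(n))\simeq\ZZ$, the invariant $d$ is an integer. I would then observe that under this identification $d$ is, on the Higgs bundle side, the degree $\deg V$ used to define $\cM_d$ in Section~\ref{sec:spn-higgs-moduli}, and, on the representation side, the Toledo invariant defining $\cR_d$ --- both being the first Chern class of the reduction to the maximal compact $\U(n)$. Hence the spaces $\cR_d$ and $\cM_d$ appearing in the statement coincide with $\mathcal{R}_d(\Sp(2n,\RR))$ and $\mathcal{M}_d(\Sp(2n,\RR))$, respectively.

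With these identifications in place, the asserted homeomorphism $\cR_d\simeq\cM_d$ is precisely the content of Theorem~\ref{na-Hodge} specialized to $G=\Sp(2n,\RR)$. There is no genuine obstacle at this point: all of the analytic content --- the Hitchin--Kobayashi correspondence of Theorems~\ref{higgs-hk} and~\ref{hk} together with the Corlette--Donaldson half of the correspondence --- is already packaged into Theorem~\ref{na-Hodge} and established in the companion paper \cite{garcia-prada-gothen-mundet:2009a}. The only thing that really requires verification is the bookkeeping of the invariant $d$ under the isomorphism $\pi_1(\Sp(2n,\RR))\simeq\ZZ$, which is immediate from the description of the invariants just recalled. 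This completes the proof.
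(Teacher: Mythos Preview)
Your proposal is correct and matches the paper's approach exactly: the paper simply states this proposition as ``a special case of Theorem~\ref{na-Hodge}'' without further argument, and you have spelled out precisely the verification (that $\Sp(2n,\RR)$ is connected semisimple with $\pi_1\simeq\ZZ$, and that the invariant $d$ agrees on both sides) needed to justify that specialization.
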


{}From Proposition \ref{Md-Rd} and Theorem
\ref{connected-components-higgs} we have the main
result of this paper regarding the connectedness properties of $\cR$
given by the following.

\begin{theorem}\label{connected-components-rep}
Let $X$ be a compact oriented surface of genus $g$.
Let $\cR_d$ be the moduli space of reductive representations of $\pi_1(X)$
in $\Sp(2n,\RR)$.  Let $n\geq 3$. Then
\begin{enumerate}
\item $\cR_0$ is non-empty and connected;
\item $\cR_{\pm n(g-1)}$ has $3. 2^{2g}$ non-empty connected components.
\end{enumerate}
\end{theorem}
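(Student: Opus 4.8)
The plan is to deduce this statement directly from its Higgs-bundle counterpart, Theorem~\ref{connected-components-higgs}, by transporting the count along the non-abelian Hodge correspondence. The decisive tool is Proposition~\ref{Md-Rd}, which asserts that $\cR_d$ and $\cM_d$ are homeomorphic. Since the number of connected components of a topological space is a homeomorphism invariant, and since non-emptiness is likewise preserved, it suffices to invoke the corresponding assertions for $\cM_d$.

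First I would recall that Theorem~\ref{connected-components-higgs} establishes, for $n\geq 3$, that $\cM_0$ is non-empty and connected and that $\cM_{\pm n(g-1)}$ has $3\cdot 2^{2g}$ non-empty connected components. Applying the homeomorphism $\cR_d \simeq \cM_d$ of Proposition~\ref{Md-Rd} with $d=0$ yields part~(1), and with $d=\pm n(g-1)$ yields part~(2). The duality isomorphism $\cR_d \simeq \cR_{-d}$ of Proposition~\ref{rep-duality} (mirroring Proposition~\ref{prop:duality-iso} on the Higgs side) shows that there is in any case no asymmetry between $d=n(g-1)$ and $d=-n(g-1)$, so it would be enough to treat a single sign should one wish to invoke the Higgs-side count only for $d=-n(g-1)$, where the Cayley correspondence of Theorem~\ref{thm:cayley} was set up.

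Since every step is a transport of an already-established statement through a homeomorphism, there is no genuine obstacle at this final stage: all the substantive work --- the identification of the minima of the Hitchin functional in Theorem~\ref{thm:all-minima}, the Cayley correspondence of Theorem~\ref{thm:cayley}, and the connectedness argument for the moduli of $\O(n,\CC)$-bundles together with the count of Hitchin--Teichm\"uller components --- is packaged into Theorem~\ref{connected-components-higgs}. The only point worth stating carefully is that Proposition~\ref{Md-Rd} is a genuine homeomorphism and not merely a bijection, which is precisely what Theorem~\ref{na-Hodge} provides; this guarantees that the partition into connected components matches on both sides and hence that the final count is exactly $3\cdot 2^{2g}$.
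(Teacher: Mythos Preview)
Your proposal is correct and matches the paper's approach exactly: the paper deduces Theorem~\ref{connected-components-rep} directly from Proposition~\ref{Md-Rd} and Theorem~\ref{connected-components-higgs}, with no additional argument beyond the observation that a homeomorphism preserves the number and non-emptiness of connected components. Your extra remarks about the duality isomorphism and the fact that Theorem~\ref{na-Hodge} supplies a genuine homeomorphism are accurate elaborations but not needed for the paper's terse derivation.
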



\begin{thebibliography}{99}

\bibitem{biswas-ramanan:1994}
I.~Biswas and S.~Ramanan, \emph{An infinitesimal study of the moduli of
 {H}itchin pairs}, J. London Math. Soc. (2) \textbf{49} (1994), 219--231.


\bibitem{bradlow-garcia-prada-gothen:2003}
S.~B. Bradlow, O. Garc{\'\i}a-Prada, and P~.B. Gothen,
 \emph{Surface group representations and $\mathrm{U}(p,q)$-{H}iggs
  bundles}, J. Differential Geom. \textbf{64} (2003), 111--170.

\bibitem{bradlow-garcia-prada-gothen:2004b}
\bysame, \emph{Representations of surface groups in the general linear group},
  Proceedings of the XII Fall Workshop on Geometry and Physics (H.~Albuquerque,
  R.~Caseiro, J.~Clemente-Gallardo, J.~M.~Nunes da~Costa, and J.~Teles, eds.),
  Publicaciones de la RSME, vol.~7, 2004, pp.~83--94.

\bibitem{bradlow-garcia-prada-gothen:2005}
\bysame, \emph{Maximal surface group representations in isometry groups of
  classical Hermitian symmetric spaces}, Geometriae Dedicata \textbf{122}
  (2006), 185--213.

\bibitem{bradlow-garcia-prada-gothen:homotopy}
\bysame, \emph{Homotopy groups of moduli spaces of representations}, Topology
  \textbf{47} (2008), 203--224.

\bibitem{bradlow-garcia-prada-mundet:2003}
S.~B. Bradlow, O.~Garc{\'{\i}}a-Prada, and I.~Mundet~i Riera, \emph{Relative
  {H}itchin-{K}obayashi correspondences for principal pairs}, Quart. J. Math.
  \textbf{54} (2003), 171--208.

\bibitem{burger-iozzi:2004}
M.~Burger and A.~Iozzi, \emph{Bounded {K}\"ahler class rigidity of actions on
  {H}ermitian symmetric spaces}, Ann. Sci. \'Ecole Norm. Sup. (4) \textbf{37}
  (2004), no.~1, 77--103.

\bibitem{burger-iozzi:2007}
\bysame, \emph{Bounded differential forms, generalized {M}ilnor-{W}ood
  inequality and an application to deformation rigidity}, Geom. Dedicata
  \textbf{125} (2007), 1--23.

\bibitem{burger-iozzi-labourie-wienhard:2005}
M.~Burger, A.~Iozzi, F.~Labourie, and A.~Wienhard, \emph{Maximal
  representations of surface groups: symplectic {A}nosov structures}, Pure
  Appl. Math. Q. \textbf{1} (2005), no.~3, 543--590.

\bibitem{burger-iozzi-wienhard:2003}
M.~Burger, A.~Iozzi, and A.~Wienhard, \emph{Surface group representations with
  maximal {T}oledo invariant}, C. R. Math. Acad. Sci. Paris \textbf{336}
  (2003), no.~5, 387--390.

\bibitem{burger-iozzi-wienhard:2006}
\bysame, \emph{Surface group representations with maximal {T}oledo invariant},
  Ann. of Math. (2) \textbf{172} (2010), no.~1, 517--566.

\bibitem{burger-iozzi-wienhard:2007}
\bysame, \emph{Hermitian symmetric spaces and {K}\"ahler rigidity}, Transform.
  Groups \textbf{12} (2007), no.~1, 5--32.

\bibitem{corlette:1988}
K.~Corlette, \emph{Flat ${G}$-bundles with canonical metrics}, J. Differential
  Geom. \textbf{28} (1988), 361--382.

\bibitem{domic-toledo:1987}
A.~Domic and D.~Toledo, \emph{The {G}romov norm of the {K}aehler class of
  symmetric domains}, Math. Ann. \textbf{276} (1987), 425--432.

\bibitem{donaldson:1987}
S.~K. Donaldson, \emph{Twisted harmonic maps and the self-duality equations},
  Proc. London Math. Soc. (3) \textbf{55} (1987), 127--131.

\bibitem{donaldson-kronheimer:1990}
S.~K. Donaldson and P.~B. Kronheimer, \emph{The geometry of four-manifolds},
  Oxford Mathematical Monographs, The Clarendon Press Oxford University Press,
  New York, 1990.

\bibitem{fock-goncharov:2006}
V.~V. Fock and A.~B. Goncharov, \emph{Moduli spaces of local systems and higher
  {T}eichmuller theory}, Publ. Math. Inst. Hautes \'Etudes Sci. \textbf{103}
  (2006), 1--211.

\bibitem{frankel:1959}
T.~Frankel, \emph{Fixed points and torsion on {K}\"{a}hler manifolds}, Ann. of
  Math. (2) \textbf{70} (1959), 1--8.

\bibitem{garcia-prada-gothen-mundet:2009a}
O.~Garc{\'\i}a-Prada, P.~B. Gothen, and I.~Mundet i~Riera,
  \emph{The Hitchin--Kobayashi correspondence, Higgs pairs and surface
  group representations}, v3: August 2012,
  \texttt{arXiv:0909.4487 [math.AG]}.

\bibitem{garcia-gothen-munoz:2007}
O.~Garc{\'\i}a-Prada, P.~B. Gothen, and V.~Mu{\~{n}}oz, \emph{Betti numbers of
  the moduli space of rank 3 parabolic {H}iggs bundles}, Mem. Amer. Math. Soc.
  \textbf{187} (2007), no.~879, viii+80.

\bibitem{garcia-prada-mundet:2004}
O.~Garc{\'\i}a-Prada and I.~Mundet i~Riera, \emph{Representations of the
  fundamental group of a closed oriented surface in
  $\mathrm{Sp}(4,\mathbb{R})$}, Topology \textbf{43} (2004), 831--855.

\bibitem{goldman:1980}
W.~M. Goldman, \emph{Discontinuous groups and the {E}uler class}, Ph.D. thesis,
  University of California, Berkeley, 1980.

\bibitem{goldman:1984}
\bysame, \emph{The symplectic nature of fundamental groups of
surfaces},
Adv. Math. \textbf{54} (1984), No. 2,  200--225.

\bibitem{goldman:1985}
\bysame, \emph{Representations of fundamental groups of surfaces}, Springer LNM
  1167, 1985, pp.~95--117.


\bibitem{goldman:1988}
\bysame, \emph{Topological components of spaces of representations}, Invent.
  Math. \textbf{93} (1988), 557--607.

\bibitem{gothen:2001}
P.~B. Gothen, \emph{Components of spaces of representations and stable
  triples}, Topology \textbf{40} (2001), 823--850.

\bibitem{hitchin:1987a}
N.~J. Hitchin, \emph{The self-duality equations on a {R}iemann surface}, Proc.
  London Math. Soc. (3) \textbf{55} (1987), 59--126.

\bibitem{hitchin:1992}
\bysame, \emph{{L}ie groups and {T}eichm\"{u}ller space}, Topology \textbf{31}
  (1992), 449--473.

\bibitem{huybrechts:2006} D.~Huybrechts, \emph{Fourier--Mukai
transforms in algebraic geometry}, Oxford University Press, 2006.


\bibitem{kobayashi:1987}
S. Kobayashi, \emph{Differential Geometry of Complex Vector Bundles},
Princeton University Press, 1987.

\bibitem{labourie:2005}
F.~Labourie, \emph{Cross ratios, {A}nosov representations and the energy
  functional on {T}eichm\"uller space},
Ann. Sci. \'Ecole Norm. Sup. (4) \textbf{41} (2008), no.~3, 437--469.


\bibitem{labourie:2006}
\bysame, \emph{Anosov flows, surface groups and curves in projective space},
  Invent. Math. \textbf{165} (2006), no.~1, 51--114.

\bibitem{milnor:1957}
J.~W. Milnor, \emph{On the existence of a connection with curvature zero},
Commm. Math. Helv. {\bf 32} (1958), 215--223.


\bibitem{mundet:2000}
I.~Mundet i Riera, \emph{A Hitchin--Kobayashi correspondence for K\"ahler
fibrations}, J. Reine Angew. Math. {\bf 528} (2000), 41--80.

\bibitem{narasimhan-seshadri:1965}
M.~S. Narasimhan and C.~S. Seshadri, \emph{Stable and unitary vector bundles on
  a compact {R}iemann surface}, Ann. of Math.(2) \textbf{82} (1965), 540--567.

\bibitem{oliveira:2005} A.~G. Oliveira, \emph{Higgs bundles, quadratic
    pairs and the topology of moduli spaces}, Ph.D. thesis, University
  of Porto, 2008.

\bibitem{ramanathan:1975}
A.~Ramanathan, \emph{Stable principal bundles on a compact {R}iemann surface},
  Math. Ann. \textbf{213} (1975), 129--152.

\bibitem{ramanathan:1996} \bysame, \emph{Moduli for principal
    bundles over algebraic curves: I and II}, Proc. Indian
  Acad. Sci. Math. Sci.  \textbf{106} (1996), 301--328 and 421--449.

 \bibitem{Ri}
 R.W. Richardson, Conjugacy classes of $n$-tuples in Lie algebras
 and algebraic groups, {\em Duke Math. J.} {\bf 57} (1988) 1--35.

\bibitem{schmitt:2005}
A.~H.~W. Schmitt, \emph{Moduli for decorated tuples for sheaves and
representation spaces for quivers},   Proc. Indian Acad. Sci.
Math. Sci.  \textbf{115} (2005), 15--49.

\bibitem{schmitt:2008} \bysame, \emph{Geometric invariant theory
    and decorated principal bundles}, Z\"urich Lectures in Advanced
  Mathematics, European Mathematical Society, 2008.


\bibitem{simpson:1988}
C.~T. Simpson, \emph{Constructing variations of {H}odge structure using
  {Y}ang-{M}ills theory and applications to uniformization}, J. Amer. Math.
  Soc. \textbf{1} (1988), 867--918.

\bibitem{simpson:1992}
\bysame, \emph{Higgs bundles and local systems}, Inst. Hautes {\'E}tudes Sci.
  Publ. Math. \textbf{75} (1992), 5--95.

\bibitem{simpson:1994}
\bysame, \emph{Moduli of representations of the fundamental group of a smooth
  projective variety {I}}, Publ. Math., Inst. Hautes \'Etud. Sci. \textbf{79}
  (1994), 47--129.

\bibitem{simpson:1995}
\bysame, \emph{Moduli of representations of the fundamental group of a smooth
  projective variety {II}}, Publ. Math., Inst. Hautes \'Etud. Sci. \textbf{80}
  (1995), 5--79.

\bibitem{turaev:1984}
V.~G. Turaev, \emph{A cocycle of the symplectic first {C}hern class and the
  {M}aslov index}, Funct. Anal. Appl. \textbf{18} (1984), 35--39.

\bibitem{wienhard:2006}
A.~Wienhard, \emph{The action of the mapping class group on maximal
  representations}, Geom. Dedicata \textbf{120} (2006), 179--191.

\bibitem{wood:1971}
J.~W. Wood, \emph{Bundles with totally disconnected structure group}, Comment.
  Math. Helv. \textbf{46} (1971), 257--273.

\end{thebibliography}
\end{document}